\documentclass[11pt]{article}
\usepackage[left=1in,top=0.8in,right=1in,bottom=0.8in,head=0in,nofoot]{geometry}

\setlength{\footskip}{24pt} 
\usepackage{setspace,url,bm,amsmath} 

\usepackage{titlesec} 
\titlelabel{\thetitle.\quad} 
\titleformat*{\section}{\bf\Large\center}

\usepackage{graphicx} 
\usepackage{bbm}
\usepackage{latexsym}
\usepackage{caption}
\usepackage[margin=20pt]{subcaption}
\usepackage{enumerate}

\usepackage{arydshln}

\usepackage{tikz}

\usepackage{tikzsymbols}

\usepackage{wasysym}
\def\smiley{$\checkmark$}
\def\frownie{$\times$}
\def\neutranie{$\ocircle$}
\def\ssize{\Large}





\newcommand{\GG}[1]{}

\usepackage{amsthm}
\usepackage{amssymb}
\usepackage{amsmath}
\usepackage{color}

\usepackage{comment}
\theoremstyle{definition}

\newtheorem*{theorem*}{Theorem}
\newtheorem{theorem}{Theorem}
\newtheorem*{rmk*}{Remark}
\newtheorem{proposition}{Proposition}
\newtheorem{lemma}{Lemma}
\newtheorem{example}{Example}

\newtheorem{condition}{Condition}

\newtheorem{definition}{Definition}
\newtheorem{rmk}{Remark}
\newtheorem{corollary}{Corollary}
\newtheorem*{corollary*}{Corollary}



\usepackage{natbib} 
\bibpunct{(}{)}{;}{a}{}{,} 

\usepackage{etoolbox} 
\apptocmd{\sloppy}{\hbadness 10000\relax}{}{} 

\usepackage{multibib}
\newcites{sec}{References}

\usepackage{color}
\usepackage{listings}
\usepackage{hyperref}
\usepackage{booktabs}

\usepackage{lscape}

\DeclareMathOperator*{\argmin}{arg\,min}

\def\Var{\text{Var}}
\def\Cov{\text{Cov}}

\def\proj{\text{proj}}
\def\res{\text{res}}

\def\apprsim{\overset{.}{\sim}}

\def\rcoef{\gamma}

\RequirePackage[normalem]{ulem}

\usepackage{tikz}
\usetikzlibrary{fit, matrix}

\tikzset{ 
table/.style={
  matrix of nodes,
  row sep=-\pgflinewidth,
  column sep=-\pgflinewidth,
  nodes={rectangle,text width=14em,align=center},
  text depth=1.25ex,
  text height=2.5ex,
  nodes in empty cells
},
row 1/.style={nodes={fill=gray!80}},
row 2/.style={nodes={fill=gray!80}},
row 3/.style={nodes={text height=4ex,align=center}},
row 4/.style={nodes={text height=4ex,align=center}},
column 1/.style={nodes={fill=gray!80, text width=3em,align=center}},
column 2/.style={nodes={fill=gray!80, text width=3em,align=center}},
column 4/.style={nodes={text width=4em,align=center}},
}


\allowdisplaybreaks
\begin{document}
\onehalfspacing
\title{\bf 
Rerandomization and Regression Adjustment
}
\author{Xinran Li and Peng Ding
\footnote{Xinran Li, Department of Statistics, University of Illinois at Urbana-Champaign, IL 61820 (E-mail: xinranli@illinois.edu). Peng Ding, Department of Statistics, University of California, Berkeley, CA 94720 (E-mail: pengdingpku@berkeley.edu).
}
}
\date{}
\maketitle
\begin{abstract}
Randomization is a basis for the statistical inference of treatment effects without strong assumptions on the outcome-generating process. Appropriately using covariates further yields more precise estimators in randomized experiments. R. A. Fisher suggested blocking on discrete covariates in the design stage or conducting analysis of covariance (ANCOVA) in the analysis stage. We can embed blocking into a wider class of experimental design called rerandomization, and extend the classical ANCOVA to more general regression adjustment. Rerandomization trumps complete randomization in the design stage, and regression adjustment trumps the simple difference-in-means estimator in the analysis stage. It is then intuitive to use both rerandomization and regression adjustment. Under the randomization-inference framework, we establish a unified theory allowing the designer and analyzer to have access to different sets of covariates. We find that asymptotically (a) for any given estimator with or without regression adjustment, rerandomization never hurts either the sampling precision or the estimated precision, and (b) for any given design with or without rerandomization, our regression-adjusted estimator never hurts the estimated precision. Therefore, combining rerandomization and regression adjustment yields better coverage properties and thus improves statistical inference. To theoretically quantify these statements, we discuss optimal regression-adjusted estimators in terms of the sampling precision and the estimated precision, and then measure the additional gains of the designer and the analyzer. We finally suggest using rerandomization in the design and regression adjustment in the analysis followed by the Huber--White robust standard error.
\end{abstract}
{\bf Keywords}: covariate balance; experimental design; potential outcome; randomization

\section{Introduction}

In his seminal book {\it Design of Experiments}, \citet{Fisher:1935} first formally discussed the value of randomization in experiments: randomization balances observed and unobserved covariates on average, and serves as a basis for statistical inference. Since then, randomized experiments have been widely used in agricultural sciences \citep[e.g.,][]{Fisher:1935, kempthorne1952design}, industry \citep[e.g.,][]{box2005statistics, wu2011experiments}, and clinical trials \citep[e.g.,][]{rosenberger2015randomization}. Recent years have witnessed the popularity of using randomized experiments in social sciences \citep[e.g.,][]{duflo2007using, gerber2012field, ATHEY201773} and technology companies \citep[e.g.,][]{kohavi2017online}. Those modern applications often have richer covariates.

In completely randomized experiments, covariate imbalance often occurs by chance. \citet{Fisher:1935} proposed to use analysis of covariance (ANCOVA) to adjust for covariate imbalance and thus improve estimation efficiency. \citet{Fisher:1935}'s ANCOVA uses the coefficient of the treatment in the ordinary least squares (OLS) fit of the outcome on the treatment and covariates.  However, \citet{freedman2008regression_a, freedman2008regression_b} criticized ANCOVA by showing that it can be even less efficient than the simple difference-in-means estimator under \citet{Neyman:1923}'s potential outcomes framework.  \citet{freedman2008regression_a, freedman2008regression_b}'s analyses allowed for treatment effect heterogeneity, in contrast to the existing literature on ANCOVA which often assumed additive treatment effects \citep{Fisher:1935, kempthorne1952design, cox2000theory}. \citet{lin2013} proposed a solution to Freedman's critique by running the OLS of the outcome on the treatment, covariates, and their {\it interactions}. \citet{lidingclt2016} showed the ``optimality'' of \citet{lin2013}'s estimator  within a class of regression-adjusted estimators.

Aware of the covariate imbalance issue, \citet{fisher1926} also proposed a strategy to actively avoid it in experiments. With a few discrete covariates, he proposed to use blocking, that is, to conduct completely randomized experiments within blocks of covariates. This remains a powerful tool in modern experiments \citep{miratrix2013adjusting, higgins2016improving, ATHEY201773}. Blocking is a special case of rerandomization \citep{morgan2012rerandomization}, which rejects ``bad'' random allocations that violate certain covariate balance criterion. Rerandomization can also deal with more general covariates. \citet{morgan2012rerandomization} demonstrated that rerandomization improves covariate balance. \citet{asymrerand2106} further derived the asymptotic distribution of the difference-in-means estimator, and demonstrated that rerandomization improves its precision compared to complete randomization.

Rerandomization and regression adjustment are two ways to use covariates to improve efficiency. The former uses covariates in the design stage, and the latter uses covariates in the analysis stage. It is then natural to combine them in practice, i.e., to conduct rerandomization in the design and use regression adjustment in the analysis. Several theoretical challenges remain. First, how do we conduct statistical inference? We will derive the asymptotic distribution of the regression-adjusted estimator under rerandomization without assuming any outcome-generating model. Our theory is purely randomization-based, in which potential outcomes are fixed numbers and the only randomness comes from the treatment allocation.

Second, what is the optimal regression adjustment under rerandomization? The optimality depends on the criterion. We will introduce two notions of optimality, one based on the {\it sampling precision} and the other based on the {\it estimated precision}. Because our general theory allows the designer and analyzer to have different sets of covariates, it is possible that the estimated precision differs from the sampling precision asymptotically, even under the case with additive treatment effects. We will show that asymptotically, rerandomization never hurts either the sampling precision or the estimated precision, and \citet{lin2013}'s regression adjustment never hurts the estimated precision. Therefore, combining rerandomization and regression adjustment improves the coverage properties of the associated confidence intervals. Based on these findings, we suggest using \citet{lin2013}'s estimator in general settings, and show that the Huber--White variance estimator is a convenient approximation to its variance under rerandomization. Importantly, our theory does not rely on the linear model assumption.

Third, how do we quantify the gains from the designer and analyzer? In particular, if the analyzer uses an optimal regression adjustment, what is the additional gain of rerandomization compared to complete randomization? If the designer uses rerandomization, what is the additional gain of using an optimal regression adjustment compared to the simple difference-in-means? Our theory can quantitatively answer these questions.

This paper proceeds as follows. Section \ref{sec::framework} introduces the framework and notation. Section \ref{sec:sampling_dist} derives the sampling distribution of the regression-adjusted estimator under rerandomization. Section \ref{sec:opt_rem} discusses optimal regression adjustment 
 in terms of
the sampling precision. Section \ref{sec::estimatedprecision} addresses estimation and inference issues. Section \ref{sec::Coptimalreg} discusses optimal regression adjustment 
in terms of
the estimated precision. 
Section \ref{sec:gains} quantifies the gains from the analyzer and the designer in both the sampling precision and the estimated precision. 
Section \ref{sec::unification} unifies the discussion and gives practical suggestions. 
Section \ref{sec::illustrations} uses  examples to illustrate the theory. Section \ref{sec::discussion} concludes, and the Supplementary Material contains all the technical details.

\section{Framework and notation}\label{sec::framework}

Consider an experiment on $n$ units, with $n_1$ of them assigned to the treatment and $n_0$ of them assigned to the control. Let $r_1 = n_1/n$ and $r_0 = n_0/n$ be the proportions of units receiving the treatment and control. We use potential outcomes to define treatment effects \citep{Neyman:1923}. For unit $i$, let $Y_i(1)$ and $Y_i(0)$ be the potential outcomes under the  treatment and control, and $\tau_i = Y_i(1) - Y_i(0)$ be the individual treatment effect. For this finite population of $n$ units, the average potential outcome under treatment arm $z$ $(z=0,1)$ is $\bar{Y}(z) = n^{-1}\sum_{i=1}^{n}Y_i(z)$, and the average treatment effect is $\tau = n^{-1}\sum_{i=1}^n \tau_i = \bar{Y}(1)-\bar{Y}(0)$.
Let $Z_i$ be the treatment assignment for unit $i$ ($Z_i=1$ for the treatment; $Z_i=0$ for the control), and $\bm{Z}=(Z_1,Z_2, \ldots,Z_n)'$ be the treatment assignment vector.  The observed outcome for unit $i$ is $Y_i = Z_iY_i(1)+(1-Z_i)Y_i(0)$.

\subsection{Regression adjustment in the analysis}\label{sec:reg_in_ana}
In a completely randomized experiment (CRE), the probability that $\bm{Z}$ takes a particular value $\bm{z}=(z_1,\ldots,z_n)$ is 
$
\binom{n}{n_1}^{-1},
$ 
where $\sum_{i=1}^n z_i = n_1$ and $\sum_{i=1}^n (1-z_i) = n_0$ are fixed and do not depend on the values of covariates or potential outcomes. Equivalently, $\bm{Z}$ is a random permutation of a vector of $n_1$ 1's and $n_0$ 0's.  Let $\bm{w}_{i}=(w_{i1}, \ldots, w_{iJ})'$ be the $J$ observed pretreatment covariates available to the analyzer. For descriptive convenience, we center these covariates at mean zero, i.e., $n^{-1}\sum_{i=1}^{n}\bm{w}_i=\bm{0}.$ Let 
\begin{align*}
\hat{\tau} = n_1^{-1}\sum_{i=1}^n Z_i Y_i - n_0^{-1}\sum_{i=1}^n (1-Z_i) Y_i, \quad 
\hat{\bm{\tau}}_{\bm{w}} = n_1^{-1}\sum_{i=1}^n Z_i \bm{w}_{i}- n_0^{-1}\sum_{i=1}^n (1-Z_i) \bm{w}_{i} 
\end{align*}
be the difference-in-means of the outcome $Y$ and covariates $\bm{w}$, respectively. Without covariate adjustment, $\hat{\tau}$ is unbiased for $\tau$. After the experiment, the analyzer can improve the estimation precision for the average treatment effect by adjusting for the observed covariate imbalance $\hat{\bm{\tau}}_{\bm{w}}$. A general linear regression-adjusted estimator has the following equivalent forms: 
\begin{align}\label{eq:reg}
\hat{\tau}(\bm{\beta}_1, \bm{\beta}_0) & =
n_1^{-1}\sum_{i=1}^n Z_i\left(
Y_i - {\bm{\beta}}_{1}' \bm{w}_{i}
\right) - 
n_0^{-1}\sum_{i=1}^n (1-Z_i)\left(
Y_i - {\bm{\beta}}_0' \bm{w}_{i}
\right)
\nonumber\\& 
= \hat{\tau} - \left( r_0 \bm{\beta}_1 + r_1 \bm{\beta}_0\right)'\hat{\bm{\tau}}_{\bm{w}} 
=  \hat{\tau} - \bm{\rcoef}'\hat{\bm{\tau}}_{\bm{w}},
\end{align}
where $\bm{\beta}_1, \bm{\beta}_0$ and $\bm{\rcoef}  =  r_0 \bm{\beta}_1 + r_1 \bm{\beta}_0$ are $J$ dimensional coefficients. From \eqref{eq:reg}, $\hat{\tau}(\bm{\beta}_1, \bm{\beta}_0)$ depends on $(\bm{\beta}_1, \bm{\beta}_0)$ only through $\bm{\rcoef}  =  r_0 \bm{\beta}_1 + r_1 \bm{\beta}_0$. Therefore, the choice of $(\bm{\beta}_1, \bm{\beta}_0)$ is not unique to achieve the same efficiency gain. For simplicity, we will also call \eqref{eq:reg} an {\it adjusted estimator} from now on.

\citet{Fisher:1935}'s ANCOVA chose $\bm{\beta}_1 =  \bm{\beta}_0$ to be the coefficient of $\bm{w}$ in the OLS fit of the observed outcome $Y$ on the treatment $Z$ and covariates $\bm{w}$. \citet{freedman2008regression_b} criticized ANCOVA because (a) the resulting estimator can be even less efficient than $\hat{\tau}$ and (b) the standard error based on the OLS can be inconsistent under the potential outcomes framework. \citet{lin2013} fixed (a) by choosing $\bm{\beta}_1$ and $ \bm{\beta}_0$ to be the coefficients of $\bm{w}$ in the OLS fit of $Y$ on $\bm{w}$ for treated and control units, respectively. The resulting adjusted estimator is numerically identical to the coefficient of $Z$ in the OLS fit of $Y$ on $Z$, $\bm{w}$ and $Z\times \bm{w}$. \citet{lin2013} fixed (b) by using the Huber--White robust standard error for linear models. Asymptotically, \citet{lin2013}'s estimator has smaller standard error and estimated standard error than $\hat{\tau}$.

As a side note, \citet{lin2013}'s estimator also appeared in the semiparametric efficiency theory for the average treatment effect under independent sampling from a superpopulation \citep{koch1998issues, yang2001efficiency, leon2003semiparametric, tsiatis2008covariate, rubin2011targeted}.

\subsection{Rerandomization in the design}

The above regression adjustment uses covariates in the analysis stage. We can also use covariates in the design stage to improve the quality of randomization and the efficiency of estimates. Before conducting the experiment, the designer collects $K$ covariates $\bm{x}_{i}=(x_{i1}, \ldots, x_{iK})'$ for unit $i.$ Similarly, we center the covariates at mean zero, i.e., $n^{-1}\sum_{i=1}^{n}\bm{x}_i=\bm{0}.$ Note that we allow $\bm{x}$ to be different from $\bm{w}$. The CRE balances covariates on average, but an unlucky draw of the treatment vector can result in large covariate imbalance \citep{student:1938, cox:1982, cox:2009, Bruhn:2009, morgan2012rerandomization}. Therefore, it is sensible for the designer to check the covariate balance before conducting the experiment. 
Let 
$
\hat{\bm{\tau}}_{\bm{x}}  =n_1^{-1}\sum_{i=1}^n Z_i \bm{x}_{i}- n_0^{-1}\sum_{i=1}^n (1-Z_i) \bm{x}_{i}
$
be the difference-in-means of the covariates $\bm{x}$ between the treatment and control groups. It has mean zero under the CRE. However, imbalance in covariate distributions often occurs for a realized treatment allocation. We can discard those unlucky treatment allocations with large covariate imbalance, and rerandomize until the allocation satisfies a certain covariate balance criterion. This is rerandomization, which has the following steps: 
\begin{itemize}
\item[(S1)] collect covariate data and specify a covariate balance criterion;
\item[(S2)] randomize the units into treatment and control groups;
\item[(S3)]
if the allocation satisfies the balance criterion, proceed to (S4); otherwise, return to (S2);
\item[(S4)] conduct the experiment using the accepted allocation from (S3). 
\end{itemize}  

The balance criterion in (S1) can be a general function of the treatment assignment $\bm{Z}$ and the covariates $(\bm{x}_1, \ldots, \bm{x}_n)$.  
\citet{morgan2012rerandomization} focused on rerandomization using the Mahalanobis distance (ReM). ReM accepts a randomization if and only if $M\equiv \bm{\hat{\tau}}_{\bm{x}}'\{\Cov(\bm{\hat{\tau}}_{\bm{x}})\}^{-1}\bm{\hat{\tau}}_{\bm{x}} \leq a$, where $M$ is the Mahalanobis distance between the covariate means in two groups and $a>0$ is the predetermined threshold. \citet{asymrerand2106} derived the asymptotic distribution of $\hat{\tau}$ under ReM, and showed that it is more precise than $\hat{\tau}$ under the CRE. They further showed that when $a$ is small and $\bm{x}=\bm{w}$, the asymptotic variance of $\hat{\tau}$ under ReM is nearly identical to \citet{lin2013}'s adjusted estimator under the CRE. Therefore, rerandomization and regression adjustment both use covariates to improve efficiency of treatment effect estimates, but in the design and analysis stages, respectively.

\section{Sampling distributions of regression adjustment under ReM}\label{sec:sampling_dist}

Section \ref{sec::framework} shows that ReM trumps the CRE in the design stage and regression adjustment trumps the difference-in-means in the analysis stage. Therefore, it is natural to combine ReM and regression adjustment. Then a key question is how do we conduct statistical inference. This requires us to study the sampling distribution of the adjusted estimator $\hat{\tau}(\bm{\beta}_1, \bm{\beta}_0)$ in \eqref{eq:reg} under ReM.

\subsection{Basics of randomization-based inference}\label{sec::basics-for-theory}

To facilitate the discussion, we introduce some basic results from finite population causal inference. The first part describes fixed finite population quantities without randomness. The second part describes the repeated sampling properties and asymptotics under the CRE. 

\subsubsection{Finite population quantities, projections, and regularity conditions}

For the treatment arm $z$ $(z=0,1)$, let $S_{Y(z)}^2 = (n-1)^{-1}\sum_{i=1}^{n}\{Y_i(z)-\bar{Y}(z)\}^2$ be the finite population variance of the potential outcomes, and $\bm{S}_{Y(z),\bm{x}}=\bm{S}_{\bm{x},Y(z)}' = (n-1)^{-1}\sum_{i=1}^{n}  \{Y_i(z) - \bar{Y}(z)\} \bm{x}_i'$ be the finite population covariance between the potential outcomes and covariates. Let $\bm{S}_{\bm{x}}^2=(n-1)^{-1}\sum_{i=1}^{n}\bm{x}_i \bm{x}_i'$ be the finite population covariance of the  covariates. We can similarly define $\bm{S}_{Y(z), \bm{w}}$, $\bm{S}^2_{\bm{w}}$, and other covariances. 

We introduce linear projections among these fixed quantities. For example, the linear projection of the potential outcome $Y(z)$ on covariates $\bm{w}$ is $\bar{Y}(z) + \tilde{\bm{\beta}}_z'\bm{w}_i$ for unit $i$, with the coefficient
\begin{align}\label{eq:beta_tilde_z}
\tilde{\bm{\beta}}_z = \argmin_{\bm{b} \in \mathbb{R}^J}n^{-1}\sum_{i=1}^n \left\{ Y_i(z) - \bar{Y}(z) - \bm{b}'\bm{w}_i \right\}^2 =  \left(\bm{S}_{\bm{w}}^2\right)^{-1}\bm{S}_{\bm{w}, Y(z)}, \quad (z=0,1) . 
\end{align}
The residual from this projection is $Y_i(z) - \bar{Y}(z) - \tilde{\bm{\beta}}_z'\bm{w}_i$ for unit $i$. Let $S^2_{Y(z)\mid \bm{w}}\equiv \bm{S}_{Y(z),\bm{w}} (\bm{S}_{\bm{w}}^2)^{-1}\bm{S}_{\bm{w}, Y(z)}$ denote the finite population variance of the linear projections, and $S^2_{Y(z)\setminus \bm{w}} \equiv S^2_{Y(z)} - S^2_{Y(z)\mid \bm{w}}$ the finite population variance of the residuals. We can similarly define $S^2_{\tau \mid \bm{w}}$, $S^2_{\tau \setminus \bm{w}}$,  ${S}^2_{Y(z)\mid \bm{x}}$,  ${S}^2_{Y(z)\setminus \bm{x}},{S}^2_{\tau \mid \bm{x}}, \bm{S}^2_{\bm{w}\mid \bm{x}}$ and $\bm{S}^2_{\bm{w}\setminus \bm{x}}$.

The exact distributions of the estimators depend on unknown potential outcomes in general. We will use asymptotic approximations. Finite population asymptotics embeds the $n$ units into a sequence of finite populations with increasing sizes. Technically, all quantities above depend on $n$, but we keep their dependence on $n$ implicit for notational simplicity. Moreover, the sequence of finite populations must satisfy some regularity conditions to ensure the existence of the limiting distributions of the estimators. We use the regularity conditions motivated by the finite population central limit theorems \citep{lidingclt2016}.

\begin{condition}\label{con:fp}
As $n\rightarrow\infty$, 
the sequence of finite populations satisfies that, for $z=0,1$, 
\begin{itemize}
\item[(i)] $r_z = n_z/n$, the proportion of units receiving treatment $z$, has a positive limit; 
\item[(ii)] 
the finite population variances and covariances, 
$S_{Y(z)}^2, S^2_\tau, \bm{S}_{\bm{x}}^2, \bm{S}_{\bm{w}}^2, \bm{S}_{Y(z),\bm{x}}, \bm{S}_{Y(z),\bm{w}}$ and $\bm{S}_{\bm{x},\bm{w}}$, have limiting values, 
and the limits of $\bm{S}_{\bm{x}}^2$ and $\bm{S}_{\bm{w}}^2$ are nonsingular;
\item[(iii)] $\max_{1\leq i\leq n}|Y_i(z)-\bar{Y}(z)|^2/n\rightarrow 0$, $\max_{1\leq i\leq n}\|\bm{x}_i\|_2^2/n \rightarrow 0$, and 
$\max_{1\leq i\leq n}\|\bm{w}_i\|_2^2/n \rightarrow 0$.
\end{itemize}
\end{condition}

In Condition \ref{con:fp}, (i) and (ii) are natural, and (iii) holds almost surely if all the variables are independent and identically distributed (i.i.d.) draws from a superpopulation with more than two moments \citep{lidingclt2016}. 
Throughout the paper, we assume the numbers of covariates $K$ in the design and $J$ in the analysis are both fixed and do not increase with the sample size $n$.

\subsubsection{Repeated sampling inference under the CRE}
\label{sec::CRE-neyman}

Under the CRE, over all $\binom{n}{n_1}$ randomizations, $ n^{1/2}(\hat{\tau}-{\tau}, \hat{\bm{\tau}}_{\bm{x}}',  \hat{\bm{\tau}}_{\bm{w}}') ' $ has mean $\bm{0}$ and covariance 
\begin{align}
\bm{V} & \equiv
\begin{pmatrix}
{r_1^{-1}}S_{Y(1)}^2 + {r_0^{-1}}S_{Y(0)}^2 - S_{\tau}^2 & {r_1^{-1}}\bm{S}_{Y(1),\bm{x}}+{r_0^{-1}}\bm{S}_{Y(0),\bm{x}} & 
{r_1^{-1}}\bm{S}_{Y(1),\bm{w}}+{r_0^{-1}}\bm{S}_{Y(0),\bm{w}}
\\
{r_1^{-1}}\bm{S}_{\bm{x}, Y(1)}+{r_0^{-1}}\bm{S}_{\bm{x},Y(0)} & (r_1r_0)^{-1}\bm{S}_{\bm{x}}^2 & 
(r_1r_0)^{-1}\bm{S}_{\bm{x}, \bm{w}}\\
{r_1^{-1}}\bm{S}_{\bm{w}, Y(1)}+{r_0^{-1}}\bm{S}_{\bm{w},Y(0)} & 
(r_1r_0)^{-1}\bm{S}_{\bm{w}, \bm{x}} & (r_1r_0)^{-1}\bm{S}_{\bm{w}}^2
\end{pmatrix}  \nonumber \\
& \equiv \begin{pmatrix}
V_{\tau\tau} & \bm{V}_{\tau \bm{x}} & \bm{V}_{\tau \bm{w}}\\
\bm{V}_{\bm{x}\tau} & \bm{V}_{\bm{xx}}  & \bm{V}_{ \bm{xw}}\\
\bm{V}_{\bm{w}\tau} & \bm{V}_{ \bm{wx}} & \bm{V}_{\bm{ww}}
\end{pmatrix}.  \label{eq::vvvv}
\end{align}

The finite population central limit theorem of \citet{lidingclt2016} ensures that $n^{1/2}(\hat{\tau}-{\tau}, \hat{\bm{\tau}}_{\bm{x}}',  \hat{\bm{\tau}}_{\bm{w}}')'$ is asymptotically Gaussian with mean $\bm{0}$ and covariance matrix $\bm{V}$ under the CRE and Condition \ref{con:fp}. We use $\apprsim$ for two sequences of random vectors (or distributions) converging weakly to the same distribution. Therefore, 
$
n^{1/2}(\hat{\tau} -{\tau} , \hat{\bm{\tau}}_{\bm{x}}', \hat{\bm{\tau}}_{\bm{w}}')'
\apprsim
\mathcal{N}(\bm{0}, \bm{V}).
$

We define linear projections for random variables. We use $\mathbb{E}(\cdot), \Var(\cdot)$ and $\Cov(\cdot)$ for mean, variance and covariance, and $\proj(\cdot \mid \cdot)$ and $\res(\cdot \mid \cdot)$ for linear projections and corresponding residuals, exclusively under the CRE.  For example, the linear projection of $\hat{\tau}$ on $\hat{\bm{\tau}}_{\bm{w}}$ is 
$
\proj(\hat{\tau} \mid \hat{\bm{\tau}}_{\bm{w}}) = \tau + \tilde{\bm{\rcoef}}'\hat{\bm{\tau}}_{\bm{w}}, 
$
with the coefficient 
\begin{align}\label{eq:beta_tilde}
\tilde{\bm{\rcoef}} 
= \argmin_{\bm{b}\in \mathbb{R}^J}\mathbb{E}\left(
\hat{\tau} - \tau - \bm{b}'\hat{\bm{\tau}}_{\bm{w}}
\right)^2 
= \left\{\Cov\left(\hat{\bm{\tau}}_{\bm{w}}\right)\right\}^{-1}  \Cov\left(\hat{\bm{\tau}}_{\bm{w}}, \hat{\tau}\right)
 = \bm{V}_{\bm{ww}}^{-1} \bm{V}_{\bm{w}\tau} . 
\end{align}
The residual from this projection is 
$\res(\hat{\tau} \mid \hat{\bm{\tau}}_{\bm{w}}) = \hat{\tau} - \proj(\hat{\tau} \mid \hat{\bm{\tau}}_{\bm{w}}) = \hat{\tau} - \tau - \tilde{\bm{\rcoef}}'\hat{\bm{\tau}}_{\bm{w}}.$
We can similarly define 
$\proj(\hat{\bm{\tau}}_{\bm{x}} \mid \hat{\bm{\tau}}_{\bm{w}})$ and $\res(\hat{\bm{\tau}}_{\bm{x}} \mid \hat{\bm{\tau}}_{\bm{w}})$.

Finally, the three linear projection coefficients $\tilde{\bm{\beta}}_1, \tilde{\bm{\beta}}_0$ and $\tilde{\bm{\rcoef}}$ defined in \eqref{eq:beta_tilde_z} and \eqref{eq:beta_tilde} have the following relationship.

\begin{proposition}\label{prop:three_beta_tilde}
$r_0\tilde{\bm{\beta}}_1+r_1 \tilde{\bm{\beta}}_0 = \tilde{\bm{\rcoef}}.$
\end{proposition}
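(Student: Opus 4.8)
The plan is to prove the identity by direct substitution, using only the closed-form expressions for the three projection coefficients together with the block structure of $\bm{V}$ in \eqref{eq::vvvv}. There is no real ``hard part'' here: the statement is essentially an algebraic bookkeeping fact about how the $(r_1,r_0)$-weights in $\bm{V}$ interact with the definition of $\tilde{\bm{\rcoef}}$, and the only thing to be careful about is keeping the covariance-vs-its-transpose notation ($\bm{S}_{\bm{w},Y(z)}$ versus $\bm{S}_{Y(z),\bm{w}}$) consistent across \eqref{eq:beta_tilde_z}, \eqref{eq:beta_tilde}, and \eqref{eq::vvvv}.

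First I would read off from \eqref{eq::vvvv} the two relevant blocks, namely $\bm{V}_{\bm{ww}} = (r_1 r_0)^{-1}\bm{S}_{\bm{w}}^2$ and $\bm{V}_{\bm{w}\tau} = r_1^{-1}\bm{S}_{\bm{w},Y(1)} + r_0^{-1}\bm{S}_{\bm{w},Y(0)}$. Substituting these into the formula $\tilde{\bm{\rcoef}} = \bm{V}_{\bm{ww}}^{-1}\bm{V}_{\bm{w}\tau}$ from \eqref{eq:beta_tilde} gives
\begin{align*}
\tilde{\bm{\rcoef}}
= \big\{(r_1 r_0)^{-1}\bm{S}_{\bm{w}}^2\big\}^{-1}\big\{ r_1^{-1}\bm{S}_{\bm{w},Y(1)} + r_0^{-1}\bm{S}_{\bm{w},Y(0)} \big\}
= r_1 r_0\, (\bm{S}_{\bm{w}}^2)^{-1}\big\{ r_1^{-1}\bm{S}_{\bm{w},Y(1)} + r_0^{-1}\bm{S}_{\bm{w},Y(0)} \big\}.
\end{align*}

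Then I would distribute the scalar $r_1 r_0$ over the two terms, so that the factor $r_1^{-1}$ cancels to leave $r_0$ in front of the first term and $r_0^{-1}$ cancels to leave $r_1$ in front of the second term, obtaining $\tilde{\bm{\rcoef}} = r_0 (\bm{S}_{\bm{w}}^2)^{-1}\bm{S}_{\bm{w},Y(1)} + r_1 (\bm{S}_{\bm{w}}^2)^{-1}\bm{S}_{\bm{w},Y(0)}$. Finally I would recognize each of these two matrix--vector products as exactly $\tilde{\bm{\beta}}_1$ and $\tilde{\bm{\beta}}_0$, respectively, by \eqref{eq:beta_tilde_z}, which yields $\tilde{\bm{\rcoef}} = r_0\tilde{\bm{\beta}}_1 + r_1\tilde{\bm{\beta}}_0$ and completes the proof. (As a sanity check one can also note that the same identity follows structurally from $\hat\tau - \tau = r_0(\text{treated mean dev.}) + r_1(\text{control mean dev.})$ and the corresponding decomposition of $\hat{\bm\tau}_{\bm w}$, but the one-line substitution above is cleanest.)
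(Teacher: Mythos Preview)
Your proposal is correct and is essentially the same argument as the paper's proof: both use the block formulas $\bm{V}_{\bm{ww}} = (r_1 r_0)^{-1}\bm{S}_{\bm{w}}^2$ and $\bm{V}_{\bm{w}\tau} = r_1^{-1}\bm{S}_{\bm{w},Y(1)} + r_0^{-1}\bm{S}_{\bm{w},Y(0)}$ together with \eqref{eq:beta_tilde_z} and \eqref{eq:beta_tilde}, the only cosmetic difference being that the paper starts from $r_0\tilde{\bm{\beta}}_1 + r_1\tilde{\bm{\beta}}_0$ and simplifies to $\tilde{\bm{\rcoef}}$ whereas you run the chain of equalities in the reverse direction.
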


Proposition \ref{prop:three_beta_tilde} is related to the non-uniqueness of the regression adjustment in \eqref{eq:reg}. It is important for the discussion below. 

\subsection{Asymptotic distribution of regression adjustment under ReM}\label{sec:reg_rem}

Equipped with the tools in Section \ref{sec::basics-for-theory}, we now can derive the asymptotic distribution of $\hat{\tau}(\bm{\beta}_1, \bm{\beta}_0)$ under ReM. We first fix the coefficients $\bm{\beta}_1$ and $\bm{\beta}_0$, and will devote several sections to discuss the optimal choices of them.

For unit $i$, let $Y_i(z;\bm{\beta}_z) \equiv   Y_i(z) - {\bm{\beta}}_{z}'\bm{w}_{i}$ be the ``adjusted" potential outcome under the treatment level $z\ (z=0,1)$, 
and $\tau_i(\bm{\beta}_1, \bm{\beta}_0)\equiv \tau_i- (\bm{\beta}_1-\bm{\beta}_0)'\bm{w}_{i}$ be the ``adjusted" individual treatment effect. The average ``adjusted'' treatment effect $\tau(\bm{\beta}_1, \bm{\beta}_0) \equiv n^{-1}\sum_{i=1}^n\tau_i(\bm{\beta}_1, \bm{\beta}_0) =  \tau$ is identical to the average unadjusted  treatment effect because of the centering of $\bar{\bm{w}}=\bm{0}$. The ``adjusted" observed outcome is   $Y_i(\bm{\beta}_1,\bm{\beta}_0)=Z_iY_i(1;\bm{\beta}_1)+(1-Z_i)Y_i(0;\bm{\beta}_0)$. The adjusted estimator \eqref{eq:reg} is essentially the difference-in-means estimator with the ``adjusted'' potential outcomes. 
For $z=0,1,$ let
$S_{Y(z;\bm{\beta}_z)}^2$ and 
$
S_{Y(z;\bm{\beta}_z)\mid \bm{x}}^2
$ 
be the finite population variances of 
$Y_i(z;\bm{\beta}_z)$ and its linear projection on $\bm{x}_i$. Let
$S_{\tau(\bm{\beta}_1, \bm{\beta}_0)}^2$ 
and 
$S_{\tau(\bm{\beta}_1, \bm{\beta}_0)\mid \bm{x}}^2$
be the finite population variances of $\tau_i(\bm{\beta}_1, \bm{\beta}_0)$ and its linear projection on $\bm{x}_i$. 
From Section \ref{sec::CRE-neyman}, under the CRE, 
the variance of $n^{1/2} \{  \hat{\tau}(\bm{\beta}_1, \bm{\beta}_0) - \tau \} $ is 
\begin{align}\label{eq:V_tau_adj}
V_{\tau\tau}(\bm{\beta}_1, \bm{\beta}_0)
= {r_1^{-1}}S_{Y(1;\bm{\beta}_1)}^2 + {r_0^{-1}}S_{Y(0;\bm{\beta}_0)}^2 - S_{\tau(\bm{\beta}_1, \bm{\beta}_0)}^2,
\end{align}
and the squared multiple correlation between 
$\hat{\tau}(\bm{\beta}_1, \bm{\beta}_0)$ and $\hat{\bm{\tau}}_{\bm{x}}$ is  
\citep[][Proposition 1]{asymrerand2106}
\begin{align}\label{eq:R2_tau_x_beta}
R^2_{\tau, \bm{x}}(\bm{\beta}_{1}, \bm{\beta}_0)
= 
\frac{\Var\left\{\proj\left(\hat{\tau}(\bm{\beta}_{1}, \bm{\beta}_0) \mid \hat{\bm{\tau}}_{\bm{x}}\right)\right\}}{
\Var\left\{ \hat{\tau}(\bm{\beta}_1, \bm{\beta}_0) \right\}
}
=
\frac{r_1^{-1}S_{Y(1;\bm{\beta}_1)\mid \bm{x}}^2+r_0^{-1}S_{Y(0;\bm{\beta}_0)\mid \bm{x}}^2- S_{\tau(\bm{\beta}_1, \bm{\beta}_0)\mid \bm{x}}^2}{
{r_1^{-1}}S_{Y(1;\bm{\beta}_1)}^2 + {r_0^{-1}}S_{Y(0;\bm{\beta}_0)}^2 - S_{\tau(\bm{\beta}_1, \bm{\beta}_0)}^2
}. 
\end{align}

The asymptotic distribution of $\hat{\tau}(\bm{\beta}_1, \bm{\beta}_0)$ under ReM is a linear combination of two independent random variables $\varepsilon$ and $L_{K,a} $, where $\varepsilon \sim \mathcal{N}(0,1)$ is  a standard Gaussian random variable and $L_{K,a} \sim D_1\mid \bm{D}'\bm{D}\leq a$ is a truncated Gaussian random variable with $\bm{D} = (D_1,\ldots, D_K)\sim \mathcal{N}(\bm{0}, \bm{I}_K)$. Let $\mathcal{M}$ denote the event $M\leq a$.

\begin{theorem}\label{thm:adj_rem}
Under ReM and Condition \ref{con:fp}, 
\begin{eqnarray}\label{eq:adj_rem}
n^{1/2}\left\{\hat{\tau}(\bm{\beta}_1, \bm{\beta}_0)-\tau \right\} \mid 
\mathcal{M} 
\ \apprsim \ 
 V_{\tau\tau}^{1/2}(\bm{\beta}_1, \bm{\beta}_0)
\left[
\left\{1-R^2_{\tau, \bm{x}}(\bm{\beta}_{1}, \bm{\beta}_0)\right\}^{1/2}\cdot \varepsilon + \left\{R^2_{\tau, \bm{x}}(\bm{\beta}_{1}, \bm{\beta}_0)\right\}^{1/2} \cdot  L_{K,a}\right]. 
\end{eqnarray}
\end{theorem}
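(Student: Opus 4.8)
The plan is to reduce the claim to the unadjusted case established by \citet{asymrerand2106}. The key observation is that $\hat{\tau}(\bm{\beta}_1,\bm{\beta}_0)$ is exactly the difference-in-means estimator built from the adjusted potential outcomes $Y_i(z;\bm{\beta}_z)=Y_i(z)-\bm{\beta}_z'\bm{w}_i$, that $\tau(\bm{\beta}_1,\bm{\beta}_0)=\tau$ by the centering of $\bm{w}$, and that the ReM acceptance event $\mathcal{M}=\{M\le a\}$ depends only on $\hat{\bm{\tau}}_{\bm{x}}$, hence only on $\bm{Z}$ and the design covariates and not on the outcomes at all. So the whole problem is to understand the joint law of $(\hat{\tau}(\bm{\beta}_1,\bm{\beta}_0),\hat{\bm{\tau}}_{\bm{x}})$ under the CRE and then condition on a region cut out by the second coordinate only.

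First I would check that $Y_i(z;\bm{\beta}_z)$ together with $\bm{x}_i$ satisfy Condition \ref{con:fp}: parts (i)--(ii) are inherited because the relevant finite-population (co)variances of $Y(z;\bm{\beta}_z)$ are fixed linear combinations, through the constant vectors $\bm{\beta}_z$, of quantities assumed to converge, and the limit of $\bm{S}_{\bm{x}}^2$ remains nonsingular; part (iii) follows from $\max_i|Y_i(z;\bm{\beta}_z)-\bar{Y}(z;\bm{\beta}_z)|^2/n\le 2\max_i|Y_i(z)-\bar{Y}(z)|^2/n+2\|\bm{\beta}_z\|_2^2\max_i\|\bm{w}_i\|_2^2/n\to 0$.

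Next I would invoke the finite-population central limit theorem of \citet{lidingclt2016} for $n^{1/2}(\hat{\tau}(\bm{\beta}_1,\bm{\beta}_0)-\tau,\,\hat{\bm{\tau}}_{\bm{x}}')'$, which is then asymptotically Gaussian with mean zero, diagonal blocks $V_{\tau\tau}(\bm{\beta}_1,\bm{\beta}_0)$ and $\bm{V}_{\bm{xx}}$, and off-diagonal block the covariance of $\hat{\tau}(\bm{\beta}_1,\bm{\beta}_0)$ with $\hat{\bm{\tau}}_{\bm{x}}$. Decompose $n^{1/2}\{\hat{\tau}(\bm{\beta}_1,\bm{\beta}_0)-\tau\}$ into $\proj(\cdot\mid\hat{\bm{\tau}}_{\bm{x}})$ and $\res(\cdot\mid\hat{\bm{\tau}}_{\bm{x}})$; by \eqref{eq:R2_tau_x_beta} the residual has asymptotic variance $\{1-R^2_{\tau,\bm{x}}(\bm{\beta}_1,\bm{\beta}_0)\}V_{\tau\tau}(\bm{\beta}_1,\bm{\beta}_0)$ and the projection has asymptotic variance $R^2_{\tau,\bm{x}}(\bm{\beta}_1,\bm{\beta}_0)V_{\tau\tau}(\bm{\beta}_1,\bm{\beta}_0)$, and joint Gaussianity makes the two parts asymptotically independent, with the residual also asymptotically independent of $\hat{\bm{\tau}}_{\bm{x}}$. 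Writing $n^{1/2}\hat{\bm{\tau}}_{\bm{x}}=\bm{V}_{\bm{xx}}^{1/2}\bm{D}_n$ with $\bm{D}_n\apprsim\mathcal{N}(\bm{0},\bm{I}_K)$ and $M=\bm{D}_n'\bm{D}_n+o_P(1)$, the projection part is a scalar linear functional of $\bm{D}_n$, so by rotational invariance of the limiting $\bm{D}$ and of the constraint $\bm{D}'\bm{D}\le a$ it can be represented asymptotically as $\{R^2_{\tau,\bm{x}}(\bm{\beta}_1,\bm{\beta}_0)V_{\tau\tau}(\bm{\beta}_1,\bm{\beta}_0)\}^{1/2}(\bm{D}_n)_1$. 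Conditioning on $\mathcal{M}$ then leaves the residual term as $\{1-R^2_{\tau,\bm{x}}(\bm{\beta}_1,\bm{\beta}_0)\}^{1/2}V_{\tau\tau}^{1/2}(\bm{\beta}_1,\bm{\beta}_0)\,\varepsilon$ untouched and turns the projection term into $\{R^2_{\tau,\bm{x}}(\bm{\beta}_1,\bm{\beta}_0)\}^{1/2}V_{\tau\tau}^{1/2}(\bm{\beta}_1,\bm{\beta}_0)\,L_{K,a}$ with $L_{K,a}\sim D_1\mid\bm{D}'\bm{D}\le a$ independent of $\varepsilon$; adding the two pieces yields \eqref{eq:adj_rem}.

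The step I expect to be the main obstacle is making the \emph{conditional} weak convergence rigorous: joint weak convergence of $(\res(\cdot\mid\hat{\bm{\tau}}_{\bm{x}}),\bm{D}_n)$ does not by itself give convergence of the conditional law given $\{M\le a\}$. One needs that the limiting event $\{\bm{D}'\bm{D}\le a\}$ has positive probability and boundary of Lebesgue measure zero, together with $M-\bm{D}_n'\bm{D}_n=o_P(1)$, so that a Portmanteau argument lets the conditioning pass to the limit. This is exactly the device of \citet{asymrerand2106}; the content here is that, via the reduction above, it applies verbatim with the adjusted potential outcomes in place of the original ones.
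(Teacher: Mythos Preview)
Your proposal is correct and follows essentially the same route as the paper: verify that the adjusted potential outcomes $Y_i(z;\bm{\beta}_z)$ together with $\bm{x}_i$ satisfy Condition~\ref{con:fp}, then invoke Theorem~1 of \citet{asymrerand2106} verbatim. The paper's proof is in fact shorter than yours, because it simply cites that theorem as a black box rather than re-sketching the projection/residual decomposition, the rotational-invariance argument, and the conditional weak convergence device; your extra detail is correct but unnecessary once you recognize the reduction.
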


The $\varepsilon$ component in \eqref{eq:adj_rem} represents the part of $\hat{\tau}(\bm{\beta}_1, \bm{\beta}_0)$ that cannot be explained by $\hat{\bm{\tau}}_{\bm{x}}$ and is thus unaffected by rerandomization. The $L_{K,a}$ component in \eqref{eq:adj_rem} represents the part of $\hat{\tau}(\bm{\beta}_1, \bm{\beta}_0)$ that can be explained by $\hat{\bm{\tau}}_{\bm{x}}$ and is thus affected by rerandomization. Moreover, 
the asymptotic distribution \eqref{eq:adj_rem} is symmetric around zero, 
and the adjusted estimator is consistent for the average treatment effect, for any fixed values of the coefficients $\bm{\beta}_1$ and $\bm{\beta}_0$.  Theorem \ref{thm:adj_rem} immediately implies the following two important special cases.

\subsubsection{Special case: regression adjustment under the CRE}

The CRE is a special case of ReM with $a=\infty$. Therefore, Theorem \ref{thm:adj_rem} implies that under the CRE,  $n^{1/2}\{\hat{\tau}(\bm{\beta}_1, \bm{\beta}_0)-\tau\}$ is asymptotically Gaussian with mean $0$ and variance $V_{\tau\tau}(\bm{\beta}_1, \bm{\beta}_0)$.

\begin{corollary}\label{corr:reg_cre} 
Under CRE and Condition \ref{con:fp}, 
$
n^{1/2}\left\{\hat{\tau}(\bm{\beta}_1, \bm{\beta}_0)-\tau \right\} 
\apprsim 
V_{\tau\tau}^{1/2}(\bm{\beta}_1, \bm{\beta}_0) \cdot \varepsilon .
$
\end{corollary}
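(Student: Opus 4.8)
The plan is to read the corollary off Theorem~\ref{thm:adj_rem} by specializing it to $a=\infty$. First I would note that when $a=\infty$ the balance criterion $M\le a$ holds identically, so the conditioning event $\mathcal{M}$ has probability one and ``ReM with $a=\infty$'' is literally the CRE; consequently the left-hand side of \eqref{eq:adj_rem} is just the unconditional law of $n^{1/2}\{\hat{\tau}(\bm{\beta}_1,\bm{\beta}_0)-\tau\}$ under the CRE. On the right-hand side, the truncated Gaussian $L_{K,a}\sim D_1\mid \bm{D}'\bm{D}\le a$ degenerates, as $a=\infty$ makes the truncation vacuous, to $L_{K,\infty}\sim D_1\sim\mathcal{N}(0,1)$. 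So only the resulting linear combination of $\varepsilon$ and $L_{K,\infty}$ remains to be simplified.

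The second step is the distributional collapse. Since $R^2_{\tau,\bm{x}}(\bm{\beta}_1,\bm{\beta}_0)$ is a squared multiple correlation it lies in $[0,1]$, so $\{1-R^2_{\tau,\bm{x}}(\bm{\beta}_1,\bm{\beta}_0)\}^{1/2}$ and $\{R^2_{\tau,\bm{x}}(\bm{\beta}_1,\bm{\beta}_0)\}^{1/2}$ are well-defined nonnegative reals. Because $\varepsilon$ and $L_{K,\infty}$ are independent standard Gaussians, the combination $\{1-R^2_{\tau,\bm{x}}(\bm{\beta}_1,\bm{\beta}_0)\}^{1/2}\varepsilon+\{R^2_{\tau,\bm{x}}(\bm{\beta}_1,\bm{\beta}_0)\}^{1/2}L_{K,\infty}$ is mean-zero Gaussian with variance $\{1-R^2_{\tau,\bm{x}}(\bm{\beta}_1,\bm{\beta}_0)\}+R^2_{\tau,\bm{x}}(\bm{\beta}_1,\bm{\beta}_0)=1$, i.e. it has the same law as $\varepsilon$. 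Multiplying by the deterministic scalar $V_{\tau\tau}^{1/2}(\bm{\beta}_1,\bm{\beta}_0)$ yields the claimed $\mathcal{N}(0,V_{\tau\tau}(\bm{\beta}_1,\bm{\beta}_0))$ limit; the degenerate case $V_{\tau\tau}(\bm{\beta}_1,\bm{\beta}_0)=0$ is immediate since then both sides are identically zero.

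As a cross-check, and an independent self-contained derivation, I would observe that $\hat{\tau}(\bm{\beta}_1,\bm{\beta}_0)$ is exactly the difference-in-means estimator built from the fixed ``adjusted'' potential outcomes $Y_i(z;\bm{\beta}_z)=Y_i(z)-\bm{\beta}_z'\bm{w}_i$, and that these inherit Condition~\ref{con:fp}: their finite-population variances and covariances with $\bm{x}$ are fixed linear combinations of the original limiting quantities and hence converge, while the boundedness part (iii) follows from $\max_{i}|Y_i(z;\bm{\beta}_z)-\bar{Y}(z;\bm{\beta}_z)|^2/n\le 2\max_i|Y_i(z)-\bar{Y}(z)|^2/n+2\|\bm{\beta}_z\|_2^2\max_i\|\bm{w}_i\|_2^2/n\to 0$. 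The finite population central limit theorem of \citet{lidingclt2016}, already invoked in Section~\ref{sec::CRE-neyman}, then gives asymptotic normality with variance \eqref{eq:V_tau_adj}. I do not anticipate a real obstacle: the only points needing a word of care are that the $a=\infty$ specialization of Theorem~\ref{thm:adj_rem} is legitimate (its proof, which we may assume, is continuous in this limit) and that $R^2_{\tau,\bm{x}}(\bm{\beta}_1,\bm{\beta}_0)$ and $V_{\tau\tau}(\bm{\beta}_1,\bm{\beta}_0)$ are well-defined under Condition~\ref{con:fp}, which uses the nonsingularity of the limit of $\bm{S}_{\bm{x}}^2$.
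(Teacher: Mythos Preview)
Your proposal is correct and takes essentially the same approach as the paper: the paper's proof is the one-liner ``Corollary~\ref{corr:reg_cre} follows from Theorem~\ref{thm:adj_rem} with $a=\infty$,'' and your first two paragraphs simply fill in the details of that specialization (that $L_{K,\infty}\sim\mathcal{N}(0,1)$ and hence the linear combination collapses to $\varepsilon$). Your third-paragraph cross-check via the finite population CLT applied directly to the adjusted potential outcomes is a valid alternative derivation that the paper does not spell out here, though it is exactly how the paper proves Theorem~\ref{thm:adj_rem} itself.
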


Corollary \ref{corr:reg_cre} is a known result from \citet{lin2013} and \citet{lidingclt2016}. 

\subsubsection{Special case: no covariate adjustment under ReM}
Using Theorem \ref{thm:adj_rem} with $\bm{\beta}_1=\bm{\beta}_0=\bm{0}$, we can immediately obtain the asymptotic distribution of $\hat{\tau}\equiv \hat{\tau}(\bm{0}, \bm{0})$ under ReM. Let $R^2_{\tau, \bm{x}} \equiv R^2_{\tau, \bm{x}}(\bm{0}, \bm{0})$ be the squared multiple correlation between $\hat{\tau}$ and $\hat{\bm{\tau}}_{\bm{x}}$ under the CRE: 
\begin{align}\label{eq:R2_tau_x}
R^2_{\tau, \bm{x}}
& = 
\frac{\Var\left\{\proj(\hat{\tau} \mid \hat{\bm{\tau}}_{\bm{x}})\right\}}{
\Var(\hat{\tau})	
}
= 
\frac{\bm{V}_{\tau \bm{x}} \bm{V}_{\bm{x}\bm{x}}^{-1} \bm{V}_{ \bm{x}\tau}}{V_{\tau\tau}}
= 
\frac{r_1^{-1}S_{Y(1)\mid \bm{x}}^2+r_0^{-1}S_{Y(0)\mid \bm{x}}^2
- S_{\tau\mid \bm{x}}^2
}{{r_1^{-1}}S_{Y(1)}^2 + {r_0^{-1}}S_{Y(0)}^2 - S_{\tau}^2}. 
\end{align}
Then $\hat{\tau}$ has the following asymptotic distribution. 
\begin{corollary}\label{corr:diff_rem}
Under ReM and Condition \ref{con:fp}, 
\begin{eqnarray}\label{eq:diff_rerand}
n^{1/2}\left( \hat{\tau} - \tau \right)\mid \mathcal{M}
& \apprsim &
V_{\tau\tau}^{1/2}
\left\{
\left( 1-R^2_{\tau, \bm{x}} \right)^{1/2}\cdot \varepsilon + \left( R^2_{\tau, \bm{x}} \right)^{1/2} \cdot L_{K,a}\right\}.
\end{eqnarray}
\end{corollary}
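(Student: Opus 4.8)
The plan is to obtain this corollary as a direct specialization of Theorem~\ref{thm:adj_rem}, taking $\bm{\beta}_1 = \bm{\beta}_0 = \bm{0}$, so the entire task reduces to verifying that every ``adjusted'' quantity collapses to its unadjusted counterpart. First I would observe that with $\bm{\beta}_1 = \bm{\beta}_0 = \bm{0}$, the adjusted potential outcome $Y_i(z;\bm{0}) = Y_i(z) - \bm{0}'\bm{w}_i = Y_i(z)$ coincides with the original potential outcome, and likewise the adjusted individual treatment effect $\tau_i(\bm{0},\bm{0}) = \tau_i$. Consequently $\hat{\tau}(\bm{0},\bm{0})$ is exactly the difference-in-means $\hat{\tau}$, so the left-hand side of \eqref{eq:adj_rem} becomes $n^{1/2}(\hat{\tau}-\tau)\mid\mathcal{M}$, matching the left-hand side of \eqref{eq:diff_rerand}.

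Next I would confirm that the two scalar coefficients appearing in the theorem reduce correctly. Since $Y_i(z;\bm{0}) = Y_i(z)$ and $\tau_i(\bm{0},\bm{0}) = \tau_i$, the finite population variances entering \eqref{eq:V_tau_adj} satisfy $S^2_{Y(z;\bm{0})} = S^2_{Y(z)}$ and $S^2_{\tau(\bm{0},\bm{0})} = S^2_\tau$, whence $V_{\tau\tau}(\bm{0},\bm{0}) = V_{\tau\tau}$ upon comparison with the $(1,1)$ entry of $\bm{V}$ in \eqref{eq::vvvv}. Similarly, the projection variances in \eqref{eq:R2_tau_x_beta} reduce to $S^2_{Y(z;\bm{0})\mid\bm{x}} = S^2_{Y(z)\mid\bm{x}}$ and $S^2_{\tau(\bm{0},\bm{0})\mid\bm{x}} = S^2_{\tau\mid\bm{x}}$, so that $R^2_{\tau,\bm{x}}(\bm{0},\bm{0})$ is precisely the quantity defined as $R^2_{\tau,\bm{x}}$ in \eqref{eq:R2_tau_x}. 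Substituting $V_{\tau\tau}(\bm{0},\bm{0}) = V_{\tau\tau}$ and $R^2_{\tau,\bm{x}}(\bm{0},\bm{0}) = R^2_{\tau,\bm{x}}$ into \eqref{eq:adj_rem} then yields \eqref{eq:diff_rerand} verbatim.

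Because this is a pure specialization, there is no substantive analytic obstacle; the only point requiring care is bookkeeping---ensuring that the definition of $R^2_{\tau,\bm{x}}$ given in \eqref{eq:R2_tau_x} is literally the $\bm{\beta}_1=\bm{\beta}_0=\bm{0}$ instance of the general $R^2_{\tau,\bm{x}}(\bm{\beta}_1,\bm{\beta}_0)$ in \eqref{eq:R2_tau_x_beta}, and that the regularity Condition~\ref{con:fp} required to invoke Theorem~\ref{thm:adj_rem} is already assumed. Both checks are immediate, so the corollary follows with no additional asymptotic argument.
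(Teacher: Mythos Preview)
Your proposal is correct and follows exactly the paper's approach: the paper's proof is the single line ``Corollary~\ref{corr:diff_rem} follows from Theorem~\ref{thm:adj_rem} with $(\bm{\beta}_1,\bm{\beta}_0)=(\bm{0},\bm{0})$,'' and you have simply spelled out the bookkeeping behind that specialization.
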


Corollary \ref{corr:diff_rem} is a main result of \citet{asymrerand2106}.

\section{$\mathcal{S}$-optimal regression adjustment}\label{sec:opt_rem}

How do we choose the adjustment coefficients $(\bm{\beta}_1, \bm{\beta}_0)$ or $\bm{\rcoef}$? It is an important practical question. From Theorem \ref{thm:adj_rem}, the adjusted estimator is consistent for any fixed coefficients $\bm{\beta}_1$ and $\bm{\beta}_0$. Therefore, it is intuitive to choose the coefficients to achieve better precision. A measure of precision is based on the quantile ranges of an estimator. 

We introduce the notion of the $\mathcal{S}$-optimal adjusted estimator, using $\mathcal{S}$ to emphasize its dependence on the {\it sampling} distribution.

\begin{definition}\label{def:optimal_regression}
Given the design,
$\hat{\tau}(\bm{\beta}_1, \bm{\beta}_0)$ is $\mathcal{S}$-optimal if $n^{1/2}\{\hat{\tau}(\bm{\beta}_1, \bm{\beta}_0)-\tau\}$ has the shortest asymptotic $1-\alpha$ quantile range among all adjusted estimators in \eqref{eq:reg}, for any $\alpha \in (0,1).$ 
\end{definition}

In general, quantile ranges are not unique. Importantly, the asymptotic distribution in \eqref{eq:adj_rem} is symmetric and unimodal around $\tau$ \citep{asymrerand2106}. We consider only symmetric quantile ranges because they have the shortest lengths \citep[][Theorem 9.3.2]{Casella:2002aa}.
The $\mathcal{S}$-optimal adjusted estimator also has the smallest asymptotic variance among all estimators in \eqref{eq:reg} \citep{li2018rerandomization}. Conversely, if all adjusted estimators in \eqref{eq:reg} are asymptotically Gaussian, then the one with the smallest asymptotic variance is $\mathcal{S}$-optimal.

Theorem \ref{thm:adj_rem} shows a complicated relationship between the coefficients $(\bm{\beta}_1, \bm{\beta}_0)$ and the asymptotic distribution \eqref{eq:adj_rem}. Below we simplify \eqref{eq:adj_rem}. 
Let $\proj(\hat{\bm{\tau}}_{\bm{w}} \mid \hat{\bm{\tau}}_{\bm{x}}) \equiv \bm{V}_{\bm{wx}}\bm{V}_{\bm{xx}}^{-1}\hat{\bm{\tau}}_{\bm{x}}$ be the linear projection of $\hat{\bm{\tau}}_{\bm{w}}$ on $\hat{\bm{\tau}}_{\bm{x}}$, and 
$\res(\hat{\bm{\tau}}_{\bm{w}} \mid \hat{\bm{\tau}}_{\bm{x}})
\equiv \hat{\bm{\tau}}_{\bm{w}} - 
\bm{V}_{\bm{wx}}\bm{V}_{\bm{xx}}^{-1}\hat{\bm{\tau}}_{\bm{x}}$ be the residual from this linear projection. 
We further consider two projections. First, the linear projection of $\proj(\hat{\tau}\mid \hat{\bm{\tau}}_{\bm{x}})$ on 
$\proj(\hat{\bm{\tau}}_{\bm{w}} \mid \hat{\bm{\tau}}_{\bm{x}})$ has coefficient and squared multiple correlation
\begin{align}\label{eq:beta_proj}
\tilde{\bm{\rcoef}}_{\proj} 
=  (
\bm{V}_{\bm{wx}}\bm{V}_{\bm{xx}}^{-1}\bm{V}_{\bm{xw}}
)^{-1}
\bm{V}_{\bm{wx}}\bm{V}_{\bm{xx}}^{-1}\bm{V}_{\bm{x}\tau}
\text{ \ and \ }
R^2_{\proj} .
\end{align}
Second, the linear projection of $\res(\hat{\tau} \mid \hat{\bm{\tau}}_{\bm{x}})$ on $\res(\hat{\bm{\tau}}_{\bm{w}} \mid \hat{\bm{\tau}}_{\bm{x}})$ has coefficient and squared multiple correlation
\begin{align}\label{eq:beta_res}
\tilde{\bm{\rcoef}}_{\res} \equiv (
\bm{V}_{\bm{ww}} - \bm{V}_{\bm{wx}}\bm{V}_{\bm{xx}}^{-1}\bm{V}_{\bm{xw}}
)^{-1}
(\bm{V}_{\bm{w}\tau} - \bm{V}_{\bm{wx}}\bm{V}_{\bm{xx}}^{-1}\bm{V}_{\bm{x}\tau} )
\text{ \ and \ }
R^2_{\res} .
\end{align}

Technically, the expressions for $\tilde{\bm{\rcoef}}_{\proj}$ and $\tilde{\bm{\rcoef}}_{\res}$ above are well-defined only if the covariance matrices of  $\proj(\hat{\bm{\tau}}_{\bm{w}} \mid \hat{\bm{\tau}}_{\bm{x}})$ and $\res(\hat{\bm{\tau}}_{\bm{w}} \mid \hat{\bm{\tau}}_{\bm{x}})$ are nonsingular. 
Otherwise, they are not unique. However, this will not cause any issues in the later discussions because the linear projections themselves
are always unique. 

Recall that we have defined $\bm{V}$ in \eqref{eq::vvvv}, and $\bm{S}^2_{\bm{w}\mid \bm{x}}$ and 
$\bm{S}^2_{\bm{w}\setminus \bm{x}}$ as the finite population covariances of the linear projections of $\bm{w}$ on $\bm{x}$ and the corresponding residuals. 
The following proposition shows the relationship among the three linear projection coefficients $(\tilde{\bm{\rcoef}}, \tilde{\bm{\rcoef}}_{\proj}, \tilde{\bm{\rcoef}}_{\res})$. 

\begin{proposition}\label{lemma:relation_three}
$
\bm{S}^2_{\bm{w}\setminus \bm{x}}(\tilde{\bm{\rcoef}}-\tilde{\bm{\rcoef}}_{\res}) + 
\bm{S}^2_{\bm{w}\mid \bm{x}}
(\tilde{\bm{\rcoef}}-\tilde{\bm{\rcoef}}_{\proj})  =  \bm{0}.  
$
\end{proposition}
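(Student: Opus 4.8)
The plan is to reduce everything to the blocks of the covariance matrix $\bm{V}$ in \eqref{eq::vvvv} and then verify the identity by a short computation, each step of which is just a normal equation. First I would set up the dictionary between the finite-population matrices in the statement and the blocks of $\bm{V}$: from \eqref{eq::vvvv}, $\bm{V}_{\bm{ww}} = (r_1r_0)^{-1}\bm{S}^2_{\bm{w}}$, $\bm{V}_{\bm{wx}} = (r_1r_0)^{-1}\bm{S}_{\bm{w},\bm{x}}$ and $\bm{V}_{\bm{xx}} = (r_1r_0)^{-1}\bm{S}^2_{\bm{x}}$, hence
\begin{align*}
\bm{V}_{\bm{wx}}\bm{V}_{\bm{xx}}^{-1}\bm{V}_{\bm{xw}} = (r_1r_0)^{-1}\bm{S}_{\bm{w},\bm{x}}(\bm{S}^2_{\bm{x}})^{-1}\bm{S}_{\bm{x},\bm{w}} = (r_1r_0)^{-1}\bm{S}^2_{\bm{w}\mid\bm{x}},
\end{align*}
and therefore $\bm{V}_{\bm{ww}} - \bm{V}_{\bm{wx}}\bm{V}_{\bm{xx}}^{-1}\bm{V}_{\bm{xw}} = (r_1r_0)^{-1}(\bm{S}^2_{\bm{w}} - \bm{S}^2_{\bm{w}\mid\bm{x}}) = (r_1r_0)^{-1}\bm{S}^2_{\bm{w}\setminus\bm{x}}$. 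Cancelling the common factor $(r_1r_0)^{-1}$, the assertion to prove becomes
\begin{align*}
\left(\bm{V}_{\bm{ww}} - \bm{V}_{\bm{wx}}\bm{V}_{\bm{xx}}^{-1}\bm{V}_{\bm{xw}}\right)\left(\tilde{\bm{\rcoef}} - \tilde{\bm{\rcoef}}_{\res}\right) + \bm{V}_{\bm{wx}}\bm{V}_{\bm{xx}}^{-1}\bm{V}_{\bm{xw}}\left(\tilde{\bm{\rcoef}} - \tilde{\bm{\rcoef}}_{\proj}\right) = \bm{0}.
\end{align*}

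Next, expanding the two products and collecting the $\tilde{\bm{\rcoef}}$ terms, the left-hand side equals $\bm{V}_{\bm{ww}}\tilde{\bm{\rcoef}} - \left(\bm{V}_{\bm{ww}} - \bm{V}_{\bm{wx}}\bm{V}_{\bm{xx}}^{-1}\bm{V}_{\bm{xw}}\right)\tilde{\bm{\rcoef}}_{\res} - \bm{V}_{\bm{wx}}\bm{V}_{\bm{xx}}^{-1}\bm{V}_{\bm{xw}}\tilde{\bm{\rcoef}}_{\proj}$. Substituting the definitions \eqref{eq:beta_tilde}, \eqref{eq:beta_proj} and \eqref{eq:beta_res}, in each of the three products the matrix cancels the inverse that defines the coefficient, so the products equal $\bm{V}_{\bm{w}\tau}$, $\bm{V}_{\bm{w}\tau} - \bm{V}_{\bm{wx}}\bm{V}_{\bm{xx}}^{-1}\bm{V}_{\bm{x}\tau}$ and $\bm{V}_{\bm{wx}}\bm{V}_{\bm{xx}}^{-1}\bm{V}_{\bm{x}\tau}$ respectively; since the first is the sum of the other two, the whole expression is $\bm{0}$. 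This is essentially the entire argument: each of the three coefficient vectors is by construction the solution of a normal equation whose coefficient matrix is exactly what multiplies it here, so each multiplication merely recovers the right-hand side of that normal equation, and those right-hand sides cancel.

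The one place that requires care --- and the step I would flag as the main obstacle --- is the caveat noted after \eqref{eq:beta_res}: $\tilde{\bm{\rcoef}}_{\proj}$ and $\tilde{\bm{\rcoef}}_{\res}$ need not be unique when $\bm{V}_{\bm{wx}}\bm{V}_{\bm{xx}}^{-1}\bm{V}_{\bm{xw}}$ or $\bm{V}_{\bm{ww}} - \bm{V}_{\bm{wx}}\bm{V}_{\bm{xx}}^{-1}\bm{V}_{\bm{xw}}$ is singular, so I must check that the products appearing above are unambiguous. Writing $\bm{V}_{\bm{wx}}\bm{V}_{\bm{xx}}^{-1}\bm{V}_{\bm{xw}} = \bm{M}\bm{M}'$ with $\bm{M} = \bm{V}_{\bm{wx}}\bm{V}_{\bm{xx}}^{-1/2}$, both this matrix and $\bm{V}_{\bm{wx}}\bm{V}_{\bm{xx}}^{-1}\bm{V}_{\bm{x}\tau} = \bm{M}\,\bm{V}_{\bm{xx}}^{-1/2}\bm{V}_{\bm{x}\tau}$ have the same column space as $\bm{M}$, so for any generalized inverse $\bm{V}_{\bm{wx}}\bm{V}_{\bm{xx}}^{-1}\bm{V}_{\bm{xw}}\,\tilde{\bm{\rcoef}}_{\proj} = \bm{V}_{\bm{wx}}\bm{V}_{\bm{xx}}^{-1}\bm{V}_{\bm{x}\tau}$, and analogously $\left(\bm{V}_{\bm{ww}} - \bm{V}_{\bm{wx}}\bm{V}_{\bm{xx}}^{-1}\bm{V}_{\bm{xw}}\right)\tilde{\bm{\rcoef}}_{\res} = \bm{V}_{\bm{w}\tau} - \bm{V}_{\bm{wx}}\bm{V}_{\bm{xx}}^{-1}\bm{V}_{\bm{x}\tau}$; since the claimed identity only ever involves these products and never the coefficients in isolation, it holds verbatim. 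Alternatively --- and this is the write-up I would probably prefer --- one can avoid matrix inverses altogether by working with the random-variable projections of Section \ref{sec::basics-for-theory}: decompose $\hat{\bm{\tau}}_{\bm{w}} = \proj(\hat{\bm{\tau}}_{\bm{w}}\mid\hat{\bm{\tau}}_{\bm{x}}) + \res(\hat{\bm{\tau}}_{\bm{w}}\mid\hat{\bm{\tau}}_{\bm{x}})$, use that $\proj(\hat{\tau}\mid\hat{\bm{\tau}}_{\bm{w}})$ equals the sum of the projections of $\hat{\tau}$ onto these two $\Cov$-orthogonal components, and read the coefficient identity off from that decomposition; this makes the cancellation manifest and dispenses with the invertibility bookkeeping.
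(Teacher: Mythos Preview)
Your proof is correct. Your primary route is a direct matrix computation: rewrite the identity in the $\bm{V}$-blocks, expand, and recognize each product as the right-hand side of a normal equation so that everything cancels; your treatment of the singular case via the column-space argument is also clean. The paper instead argues probabilistically: it starts from $\Cov(\hat{\bm{\tau}}_{\bm{w}},\,\hat{\tau}-\tilde{\bm{\rcoef}}'\hat{\bm{\tau}}_{\bm{w}})=\bm{0}$, decomposes $\hat{\tau}-\tilde{\bm{\rcoef}}'\hat{\bm{\tau}}_{\bm{w}}$ into four pieces via $\proj(\cdot\mid\hat{\bm{\tau}}_{\bm{x}})$ and $\res(\cdot\mid\hat{\bm{\tau}}_{\bm{x}})$, shows two of the four have zero covariance with $\hat{\bm{\tau}}_{\bm{w}}$, and reads the identity off the remaining two using \eqref{eq:cov_W_proj_X}--\eqref{eq:cov_W_res_X}. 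This is essentially the alternative you sketch at the end, so you have in fact anticipated the paper's argument. Your algebraic version is shorter and makes the normal-equation structure explicit; the paper's version is more geometric and sidesteps any invertibility discussion from the outset since it works with projections rather than explicit coefficient formulas.
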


The linear projection coefficients $(\tilde{\bm{\rcoef}},   \tilde{\bm{\rcoef}}_{\proj}, \tilde{\bm{\rcoef}}_{\res}  )$ are different in general.
However, if any two of them are equal, all of them must be equal with nonsingular $\bm{S}^2_{\bm{w}\setminus \bm{x}}$ and $\bm{S}^2_{\bm{w}\mid \bm{x}}$. 
The following theorem decomposes the asymptotic distribution \eqref{eq:adj_rem} based on $( \tilde{\bm{\rcoef}}_{\proj}, \tilde{\bm{\rcoef}}_{\res})$.

\begin{theorem}\label{thm:reg_rem_beta}
Under ReM and Condition \ref{con:fp}, recalling that $\bm{\rcoef}  \equiv r_0 \bm{\beta}_1 + r_1 \bm{\beta}_0$, we have
\begin{eqnarray}\label{eq:adj_rem_beta}
& & n^{1/2}\{\hat{\tau}(\bm{\beta}_1, \bm{\beta}_0)-\tau\} \mid  \mathcal{M} \\
& \apprsim & 
\left\{
V_{\tau\tau}
\left(1-R^2_{\tau, \bm{x}}\right)
\left(1-R^2_{\res} \right)
+ \left(r_1r_0\right)^{-1}
\left(\bm{\rcoef}-\tilde{\bm{\rcoef}}_{\res}\right)'
\bm{S}^2_{\bm{w}\setminus \bm{x}}
\left(\bm{\rcoef}-\tilde{\bm{\rcoef}}_{\res}\right)
\right\}^{1/2} \cdot  \varepsilon  \nonumber \\
& & +  
\left\{ V_{\tau\tau}R^2_{\tau, \bm{x}}
\left(1 - R^2_{\proj}\right) + \left(r_1r_0\right)^{-1}
\left(\bm{\rcoef}-\tilde{\bm{\rcoef}}_{\proj}\right)'
\bm{S}^2_{\bm{w}\mid \bm{x}}
\left(\bm{\rcoef}-\tilde{\bm{\rcoef}}_{\proj}\right)
\right\}^{1/2} \cdot   L_{K,a}. \nonumber
\end{eqnarray}
\end{theorem}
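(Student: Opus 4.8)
The plan is to reduce Theorem~\ref{thm:reg_rem_beta} to a pair of scalar identities and then verify those by elementary least‑squares algebra. Theorem~\ref{thm:adj_rem} already writes $n^{1/2}\{\hat{\tau}(\bm{\beta}_1,\bm{\beta}_0)-\tau\}\mid\mathcal{M}$ as $a_1\varepsilon + a_2 L_{K,a}$ with $a_1 = [V_{\tau\tau}(\bm{\beta}_1,\bm{\beta}_0)\{1-R^2_{\tau,\bm{x}}(\bm{\beta}_1,\bm{\beta}_0)\}]^{1/2}$ and $a_2 = [V_{\tau\tau}(\bm{\beta}_1,\bm{\beta}_0)R^2_{\tau,\bm{x}}(\bm{\beta}_1,\bm{\beta}_0)]^{1/2}$, and the target \eqref{eq:adj_rem_beta} is again of the form $c_1\varepsilon + c_2 L_{K,a}$ with $c_1,c_2\geq 0$ the two bracketed square roots. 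Since $\varepsilon$ and $L_{K,a}$ are the \emph{same} variables in both representations and all four coefficients are nonnegative, it suffices to prove $a_1^2 = c_1^2$ and $a_2^2 = c_2^2$; this forces $a_1 = c_1$ and $a_2 = c_2$, so the two representations coincide term by term.

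For the two identities I would use the Neyman‑type orthogonal decomposition under the CRE. By \eqref{eq:V_tau_adj}--\eqref{eq:R2_tau_x_beta} together with Pythagoras for linear projections, $a_2^2 = n\,\Var\{\proj(\hat{\tau}(\bm{\beta}_1,\bm{\beta}_0)\mid\hat{\bm{\tau}}_{\bm{x}})\}$ and $a_1^2 = n\,\Var\{\res(\hat{\tau}(\bm{\beta}_1,\bm{\beta}_0)\mid\hat{\bm{\tau}}_{\bm{x}})\}$. Writing $\hat{\tau}(\bm{\beta}_1,\bm{\beta}_0) = \hat{\tau} - \bm{\rcoef}'\hat{\bm{\tau}}_{\bm{w}}$ as in \eqref{eq:reg} and using that projection onto $\hat{\bm{\tau}}_{\bm{x}}$ and the associated residual are linear operators, $\proj(\hat{\tau}(\bm{\beta}_1,\bm{\beta}_0)\mid\hat{\bm{\tau}}_{\bm{x}}) = \proj(\hat{\tau}\mid\hat{\bm{\tau}}_{\bm{x}}) - \bm{\rcoef}'\proj(\hat{\bm{\tau}}_{\bm{w}}\mid\hat{\bm{\tau}}_{\bm{x}})$, and similarly for the residual. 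Hence $a_2^2$ and $a_1^2$ are nonnegative‑definite quadratics in $\bm{\rcoef}$. Completing the square, $a_2^2$ attains its minimum $n\,\Var\{\proj(\hat{\tau}\mid\hat{\bm{\tau}}_{\bm{x}})\}(1-R^2_{\proj}) = V_{\tau\tau}R^2_{\tau,\bm{x}}(1-R^2_{\proj})$ at $\bm{\rcoef} = \tilde{\bm{\rcoef}}_{\proj}$ of \eqref{eq:beta_proj}, with curvature matrix $n\,\Cov\{\proj(\hat{\bm{\tau}}_{\bm{w}}\mid\hat{\bm{\tau}}_{\bm{x}})\} = \bm{V}_{\bm{wx}}\bm{V}_{\bm{xx}}^{-1}\bm{V}_{\bm{xw}}$; likewise $a_1^2$ attains $V_{\tau\tau}(1-R^2_{\tau,\bm{x}})(1-R^2_{\res})$ at $\bm{\rcoef} = \tilde{\bm{\rcoef}}_{\res}$ of \eqref{eq:beta_res}, with curvature $n\,\Cov\{\res(\hat{\bm{\tau}}_{\bm{w}}\mid\hat{\bm{\tau}}_{\bm{x}})\} = \bm{V}_{\bm{ww}} - \bm{V}_{\bm{wx}}\bm{V}_{\bm{xx}}^{-1}\bm{V}_{\bm{xw}}$. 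Finally, from the definitions in Section~\ref{sec::basics-for-theory}, $\bm{V}_{\bm{ww}} = (r_1r_0)^{-1}\bm{S}^2_{\bm{w}}$, $\bm{V}_{\bm{wx}}\bm{V}_{\bm{xx}}^{-1}\bm{V}_{\bm{xw}} = (r_1r_0)^{-1}\bm{S}^2_{\bm{w}\mid\bm{x}}$, and $\bm{S}^2_{\bm{w}\setminus\bm{x}} = \bm{S}^2_{\bm{w}} - \bm{S}^2_{\bm{w}\mid\bm{x}}$, so the two curvature matrices become $(r_1r_0)^{-1}\bm{S}^2_{\bm{w}\mid\bm{x}}$ and $(r_1r_0)^{-1}\bm{S}^2_{\bm{w}\setminus\bm{x}}$; reading off the completed‑square expansions yields exactly $a_2^2 = c_2^2$ and $a_1^2 = c_1^2$.

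The one delicate point I would treat explicitly is that Condition~\ref{con:fp} guarantees only $\bm{S}^2_{\bm{x}}$ and $\bm{S}^2_{\bm{w}}$ nonsingular, so $\bm{S}^2_{\bm{w}\mid\bm{x}}$ or $\bm{S}^2_{\bm{w}\setminus\bm{x}}$ (equivalently the curvature matrices above) may be singular and $\tilde{\bm{\rcoef}}_{\proj},\tilde{\bm{\rcoef}}_{\res}$ not unique. However, any null direction $\bm{v}$ of $\Cov\{\res(\hat{\bm{\tau}}_{\bm{w}}\mid\hat{\bm{\tau}}_{\bm{x}})\}$ satisfies $\bm{v}'\res(\hat{\bm{\tau}}_{\bm{w}}\mid\hat{\bm{\tau}}_{\bm{x}}) = 0$ almost surely and $\bm{v}'\bm{S}^2_{\bm{w}\setminus\bm{x}} = \bm{0}$, so both the quadratic $a_1^2$ (as a function of $\bm{\rcoef}$) and the form $(\bm{\rcoef}-\tilde{\bm{\rcoef}}_{\res})'\bm{S}^2_{\bm{w}\setminus\bm{x}}(\bm{\rcoef}-\tilde{\bm{\rcoef}}_{\res})$ are unchanged when $\tilde{\bm{\rcoef}}_{\res}$ is shifted along $\bm{v}$ (and symmetrically for $\tilde{\bm{\rcoef}}_{\proj}$ and $\bm{S}^2_{\bm{w}\mid\bm{x}}$); hence the asserted identity holds for any admissible choice of the two coefficients. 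Apart from this bookkeeping the argument is routine linear algebra on top of Theorem~\ref{thm:adj_rem}, and I do not anticipate a substantive obstacle.
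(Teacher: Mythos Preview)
Your proposal is correct and follows essentially the same route as the paper's proof: both start from Theorem~\ref{thm:adj_rem}, rewrite the squared coefficients as $n\Var\{\proj(\hat{\tau}(\bm{\beta}_1,\bm{\beta}_0)\mid\hat{\bm{\tau}}_{\bm{x}})\}$ and $n\Var\{\res(\hat{\tau}(\bm{\beta}_1,\bm{\beta}_0)\mid\hat{\bm{\tau}}_{\bm{x}})\}$, use linearity of $\proj(\cdot\mid\hat{\bm{\tau}}_{\bm{x}})$ and $\res(\cdot\mid\hat{\bm{\tau}}_{\bm{x}})$ applied to $\hat{\tau}-\bm{\rcoef}'\hat{\bm{\tau}}_{\bm{w}}$, and then decompose each resulting quadratic in $\bm{\rcoef}$ into its minimum value plus the quadratic deviation term with curvature matrices $(r_1r_0)^{-1}\bm{S}^2_{\bm{w}\mid\bm{x}}$ and $(r_1r_0)^{-1}\bm{S}^2_{\bm{w}\setminus\bm{x}}$. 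Your ``completing the square'' phrasing is exactly the orthogonal decomposition the paper writes out explicitly in its equations for $\proj$ and $\res$ of $\hat{\tau}(\bm{\beta}_1,\bm{\beta}_0)$; your treatment of the possible singularity of $\bm{S}^2_{\bm{w}\mid\bm{x}}$ and $\bm{S}^2_{\bm{w}\setminus\bm{x}}$ is slightly more explicit than the paper's, which simply remarks that the projections themselves are always unique.
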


The asymptotic distribution \eqref{eq:adj_rem_beta} has two independent components. The $\varepsilon$ component in \eqref{eq:adj_rem_beta} represents the part of $\hat{\tau}(\bm{\beta}_1, \bm{\beta}_0)$ that is orthogonal to $\hat{\bm{\tau}}_{\bm{x}}$. The coefficient of $\varepsilon$ attains its minimal value at  $\bm{\rcoef}=\tilde{\bm{\rcoef}}_{\res}$ with squared minimal value 
$
V_{\tau\tau}   (1-R^2_{\tau, \bm{x}})   (1-R^2_{\res}).
$
The first term $V_{\tau\tau}$ is the variance of $n^{1/2} (\hat{\tau} - \tau ) $. The second term $1-R^2_{\tau, \bm{x}}$ represents the proportion of the variance of $\hat{\tau}$ unexplained by $\hat{\bm{\tau}}_{\bm{x}}$. The third term $1-R^2_{\res}$ represents the proportion of the variance of $\hat{\tau}$ unexplained by $\hat{\bm{\tau}}_{\bm{w}}$, after projecting onto the space orthogonal to $\hat{\bm{\tau}}_{\bm{x}}$.

The $L_{K,a}$ component in \eqref{eq:adj_rem_beta} represents the linear projection of  $\hat{\tau}(\bm{\beta}_1, \bm{\beta}_0)$ on $\hat{\bm{\tau}}_{\bm{x}}$ with the ReM constraint. The coefficient of $L_{K,a}$ attains its minimal value at  $\bm{\rcoef}=\tilde{\bm{\rcoef}}_{\proj}$  
with squared minimal value 
$
V_{\tau\tau}  R^2_{\tau, \bm{x}}  ( 1 - R^2_{\proj} ).
$
The first term $V_{\tau\tau}$ is again the variance of $n^{1/2}(\hat{\tau}-\tau)$. The second term $R^2_{\tau, \bm{x}}$ represents the proportion of the variance of $\hat{\tau}$ explained by $\hat{\bm{\tau}}_{\bm{x}}$. The third term $1-R^2_{\proj}$ represents the proportion of the variance of $\hat{\tau}$ unexplained by $\hat{\bm{\tau}}_{\bm{w}}$, after projecting onto the space of $\hat{\bm{\tau}}_{\bm{x}}$.

Because $\tilde{\bm{\rcoef}}_{\res}$ and  $\tilde{\bm{\rcoef}}_{\proj}$ are different in general, the coefficients of $\varepsilon$ and $L_{K,a}$ cannot attain their minimal values simultaneously. Consequently,
the adjusted estimator $\hat{\tau}(\tilde{\bm{\beta}}_1, \tilde{\bm{\beta}}_0)$ may not be $\mathcal{S}$-optimal under ReM, i.e., it may not have the shortest asymptotic $1-\alpha$  quantile range among \eqref{eq:reg} for all $\alpha\in (0,1)$.

The $\mathcal{S}$-optimal adjustment is complicated under ReM, especially when the designer and the analyzer have different covariate information. We will consider different scenarios based on the relative amount of covariate information used by the designer and the analyzer.

\subsection{The analyzer has no less covariate information than the designer}\label{sec:ana_more_rem}

We first consider the scenario under which the covariates $\bm{w}$ in the analysis can linearly represent the covariates $\bm{x}$ in the design. 

\begin{condition}\label{con:ana_more}
There exists a constant matrix $\bm{B}_1 \in \mathbb{R}^{K\times J}$ such that $\bm{x}_i=\bm{B}_1 \bm{w}_i$ for all unit $i$. 
\end{condition}

Condition \ref{con:ana_more} holds when the analyzer has access to all the covariates used in the design, and possibly collects more covariates after the experiment. For example, Condition \ref{con:ana_more} holds if $\bm{x}$ is a subset of $\bm{w}$. Under Condition \ref{con:ana_more}, we can simplify the asymptotic distribution of the adjusted estimator under ReM. Analogous to \eqref{eq:R2_tau_x_beta} and \eqref{eq:R2_tau_x}, let $R^2_{\tau, \bm{w}}$ be the squared multiple correlation between $\hat{\tau}$ and $\hat{\bm{\tau}}_{\bm{w}}$ under the CRE \citep{asymrerand2106}: 
\begin{align}\label{eq:R2_mid_w}
R^2_{\tau, \bm{w}}
& = 
\frac{\Var\left\{\proj(\hat{\tau} \mid \hat{\bm{\tau}}_{\bm{w}})\right\}}{
\Var(\hat{\tau})	
}
= 
\frac{\bm{V}_{\tau \bm{w}} \bm{V}_{\bm{w}\bm{w}}^{-1} \bm{V}_{ \bm{w}\tau}}{V_{\tau\tau}}
= 
\frac{r_1^{-1}S_{Y(1)\mid \bm{w}}^2+r_0^{-1}S_{Y(0)\mid \bm{w}}^2
- S_{\tau\mid \bm{w}}^2
}{{r_1^{-1}}S_{Y(1)}^2 + {r_0^{-1}}S_{Y(0)}^2 - S_{\tau}^2}. 
\end{align}

\begin{corollary}\label{cor:analysis_more}
Under Conditions \ref{con:fp} and \ref{con:ana_more},
\begin{align*}
\tilde{\bm{\rcoef}}_{\proj}=\tilde{\bm{\rcoef}}_{\res}=\tilde{\bm{\rcoef}}, \quad 
R^2_{\res} =   ( R^2_{\tau,\bm{w}} - R^2_{\tau,\bm{x}} ) / (1-R^2_{\tau,\bm{x}}) ,  \quad
R^2_{\proj}=1,
\end{align*}
and the asymptotic distribution of $\hat{\tau}({\bm{\beta}}_1, {\bm{\beta}}_0)$ under ReM is 
\begin{align}\label{eq::corollary3}
&  \  n^{1/2}\{\hat{\tau}(\bm{\beta}_1, \bm{\beta}_0)-\tau\} \mid \mathcal{M}\\
\apprsim  &
\left\{
V_{\tau\tau}\left(1-R^2_{\tau, \bm{w}}\right)
+ 
\left(r_1r_0\right)^{-1}\left(\bm{\rcoef}-\tilde{\bm{\rcoef}}\right)'
\bm{S}^2_{\bm{w}\setminus \bm{x}}
\left(\bm{\rcoef}-\tilde{\bm{\rcoef}}\right)
\right\}^{1/2}  \cdot  \varepsilon  \nonumber    \\
& +  
\left\{ 
\left(r_1r_0\right)^{-1}
\left(\bm{\rcoef}-\tilde{\bm{\rcoef}}\right)'
\bm{S}^2_{\bm{w}\mid \bm{x}}
\left(\bm{\rcoef}-\tilde{\bm{\rcoef}}\right)
\right\}^{1/2} \cdot   L_{K,a}.  \nonumber
\end{align}
\end{corollary}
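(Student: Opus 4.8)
\emph{Proof strategy.} The plan is to first translate Condition \ref{con:ana_more} into a relation among the blocks of $\bm{V}$, then use it to verify the three displayed identities about the projection coefficients and squared multiple correlations, and finally substitute these into the general decomposition \eqref{eq:adj_rem_beta} of Theorem \ref{thm:reg_rem_beta}.

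Since $\bm{x}_i=\bm{B}_1\bm{w}_i$ for every $i$, we have $\hat{\bm{\tau}}_{\bm{x}}=\bm{B}_1\hat{\bm{\tau}}_{\bm{w}}$, and hence $\bm{V}_{\bm{x}\tau}=\bm{B}_1\bm{V}_{\bm{w}\tau}$, $\bm{V}_{\bm{xw}}=\bm{B}_1\bm{V}_{\bm{ww}}$ and $\bm{V}_{\bm{xx}}=\bm{B}_1\bm{V}_{\bm{ww}}\bm{B}_1'$. Because the limit of $\bm{S}_{\bm{x}}^2$, and hence that of $\bm{V}_{\bm{xx}}$, is nonsingular by Condition \ref{con:fp}(ii) while $\bm{V}_{\bm{ww}}$ is nonsingular, $\bm{B}_1$ must have full row rank $K$. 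In particular $\text{span}(\hat{\bm{\tau}}_{\bm{x}})\subseteq\text{span}(\hat{\bm{\tau}}_{\bm{w}})$, and since $\bm{B}_1\,\proj(\hat{\bm{\tau}}_{\bm{w}}\mid\hat{\bm{\tau}}_{\bm{x}})=\proj(\bm{B}_1\hat{\bm{\tau}}_{\bm{w}}\mid\hat{\bm{\tau}}_{\bm{x}})=\hat{\bm{\tau}}_{\bm{x}}$, the random vectors $\hat{\bm{\tau}}_{\bm{x}}$ and $\proj(\hat{\bm{\tau}}_{\bm{w}}\mid\hat{\bm{\tau}}_{\bm{x}})$ span the same linear space, while $\hat{\bm{\tau}}_{\bm{x}}$ together with $\res(\hat{\bm{\tau}}_{\bm{w}}\mid\hat{\bm{\tau}}_{\bm{x}})$ span the same space as $\hat{\bm{\tau}}_{\bm{w}}$.

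Given these facts, $R^2_{\proj}=1$ is immediate: $\proj(\hat{\tau}\mid\hat{\bm{\tau}}_{\bm{x}})-\tau$ lies in $\text{span}(\hat{\bm{\tau}}_{\bm{x}})=\text{span}\big(\proj(\hat{\bm{\tau}}_{\bm{w}}\mid\hat{\bm{\tau}}_{\bm{x}})\big)$, so it is perfectly explained by $\proj(\hat{\bm{\tau}}_{\bm{w}}\mid\hat{\bm{\tau}}_{\bm{x}})$. For the coefficient claims I would verify that $\tilde{\bm{\rcoef}}=\bm{V}_{\bm{ww}}^{-1}\bm{V}_{\bm{w}\tau}$ solves the normal equations defining both $\tilde{\bm{\rcoef}}_{\proj}$ in \eqref{eq:beta_proj} and $\tilde{\bm{\rcoef}}_{\res}$ in \eqref{eq:beta_res}; in each case this reduces, after cancellation, to the single identity $\bm{V}_{\bm{xw}}\bm{V}_{\bm{ww}}^{-1}\bm{V}_{\bm{w}\tau}=\bm{V}_{\bm{x}\tau}$, which holds because $\bm{V}_{\bm{xw}}=\bm{B}_1\bm{V}_{\bm{ww}}$ and $\bm{V}_{\bm{x}\tau}=\bm{B}_1\bm{V}_{\bm{w}\tau}$. (When $J\neq K$ the matrices $\bm{S}^2_{\bm{w}\mid\bm{x}}$ and $\bm{S}^2_{\bm{w}\setminus\bm{x}}$ are singular, so $\tilde{\bm{\rcoef}}_{\proj}$ and $\tilde{\bm{\rcoef}}_{\res}$ are only pinned down up to their null spaces; the content is that $\tilde{\bm{\rcoef}}$ is an admissible choice, and this suffices since only the quadratic forms $(\bm{\rcoef}-\tilde{\bm{\rcoef}}_{\proj})'\bm{S}^2_{\bm{w}\mid\bm{x}}(\bm{\rcoef}-\tilde{\bm{\rcoef}}_{\proj})$ and $(\bm{\rcoef}-\tilde{\bm{\rcoef}}_{\res})'\bm{S}^2_{\bm{w}\setminus\bm{x}}(\bm{\rcoef}-\tilde{\bm{\rcoef}}_{\res})$ enter \eqref{eq::corollary3}, and these are invariant; this is also consistent with Proposition \ref{lemma:relation_three}.) For $R^2_{\res}$, I would use the orthogonal decomposition $\proj(\hat{\tau}\mid\hat{\bm{\tau}}_{\bm{w}})=\proj(\hat{\tau}\mid\hat{\bm{\tau}}_{\bm{x}})+\proj\big(\res(\hat{\tau}\mid\hat{\bm{\tau}}_{\bm{x}})\mid\res(\hat{\bm{\tau}}_{\bm{w}}\mid\hat{\bm{\tau}}_{\bm{x}})\big)$, valid because the two summands on the right are orthogonal and jointly span $\text{span}(\hat{\bm{\tau}}_{\bm{w}})$; taking variances gives $R^2_{\tau,\bm{w}}V_{\tau\tau}=R^2_{\tau,\bm{x}}V_{\tau\tau}+R^2_{\res}(1-R^2_{\tau,\bm{x}})V_{\tau\tau}$, which rearranges to the stated formula.

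Finally, I would substitute $R^2_{\proj}=1$, $\tilde{\bm{\rcoef}}_{\proj}=\tilde{\bm{\rcoef}}_{\res}=\tilde{\bm{\rcoef}}$, and the identity $(1-R^2_{\tau,\bm{x}})(1-R^2_{\res})=1-R^2_{\tau,\bm{w}}$ into \eqref{eq:adj_rem_beta}: the term $V_{\tau\tau}R^2_{\tau,\bm{x}}(1-R^2_{\proj})$ in the $L_{K,a}$ coefficient vanishes, and the $\varepsilon$ coefficient collapses to $V_{\tau\tau}(1-R^2_{\tau,\bm{w}})+(r_1r_0)^{-1}(\bm{\rcoef}-\tilde{\bm{\rcoef}})'\bm{S}^2_{\bm{w}\setminus\bm{x}}(\bm{\rcoef}-\tilde{\bm{\rcoef}})$, which is exactly \eqref{eq::corollary3}. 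I expect the only delicate point to be the bookkeeping around the non-uniqueness of $\tilde{\bm{\rcoef}}_{\proj}$ and $\tilde{\bm{\rcoef}}_{\res}$ in the degenerate cases; everything else is routine linear-algebra manipulation of the blocks of $\bm{V}$.
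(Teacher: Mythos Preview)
Your proposal is correct and follows essentially the same route as the paper: translate Condition \ref{con:ana_more} into the linear relation $\hat{\bm{\tau}}_{\bm{x}}=\bm{B}_1\hat{\bm{\tau}}_{\bm{w}}$, use it to identify $\tilde{\bm{\rcoef}}_{\proj}=\tilde{\bm{\rcoef}}_{\res}=\tilde{\bm{\rcoef}}$, $R^2_{\proj}=1$, and the formula for $R^2_{\res}$, and then substitute into Theorem \ref{thm:reg_rem_beta}. The only stylistic differences are that the paper obtains $\tilde{\bm{\rcoef}}_{\res}=\tilde{\bm{\rcoef}}$ by combining $\tilde{\bm{\rcoef}}_{\proj}=\tilde{\bm{\rcoef}}$ with Proposition \ref{lemma:relation_three} (rather than checking the normal equations for $\tilde{\bm{\rcoef}}_{\res}$ directly), and it computes $R^2_{\res}$ by identifying the residual $\res(\hat{\tau}\mid\hat{\bm{\tau}}_{\bm{x}})-\tilde{\bm{\rcoef}}'\res(\hat{\bm{\tau}}_{\bm{w}}\mid\hat{\bm{\tau}}_{\bm{x}})$ with $\res(\hat{\tau}\mid\hat{\bm{\tau}}_{\bm{w}})$ rather than your complementary projection decomposition; both pairs of arguments are equivalent, and your treatment of the non-uniqueness of $\tilde{\bm{\rcoef}}_{\proj},\tilde{\bm{\rcoef}}_{\res}$ is, if anything, slightly more explicit than the paper's.
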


From Corollary \ref{cor:analysis_more}, the coefficients of $ \varepsilon$ and $L_{K,a}$ attain minimum values at the same $\bm{\rcoef} = \tilde{\bm{\rcoef}} .$ We can then derive the $\mathcal{S}$-optimal adjusted estimator and its asymptotic distribution. 

\begin{theorem}\label{thm:analysis_more_opt}
Under ReM and Conditions \ref{con:fp} and \ref{con:ana_more}, the $\mathcal{S}$-optimal adjusted estimator is attainable when $\bm{\rcoef}=\tilde{\bm{\rcoef}}$ or 
$r_0\bm{\beta}_1+r_1\bm{\beta}_0 = r_0 \tilde{\bm{\beta}}_1+r_1 \tilde{\bm{\beta}}_0 ,$ 
with the asymptotic distribution 
\begin{eqnarray}\label{eq:opt_ana_more}
n^{1/2}\left\{\hat{\tau}(\tilde{\bm{\beta}}_1, \tilde{\bm{\beta}}_0)-\tau \right\} \mid \mathcal{M}
& \apprsim & 
\left\{V_{\tau\tau}\left(1-R^2_{\tau,\bm{w}}\right)\right\}^{1/2} \cdot  \varepsilon.
\end{eqnarray}
\end{theorem}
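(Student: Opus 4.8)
The plan is to read the limiting law off Corollary~\ref{cor:analysis_more} and then convert the algebraic comparison of its two scale parameters into a comparison of quantile ranges via a peakedness argument. First I would record that, under Conditions~\ref{con:fp} and~\ref{con:ana_more}, Corollary~\ref{cor:analysis_more} says $n^{1/2}\{\hat{\tau}(\bm{\beta}_1,\bm{\beta}_0)-\tau\}\mid\mathcal{M}$ converges weakly to $A(\bm{\rcoef})^{1/2}\varepsilon+B(\bm{\rcoef})^{1/2}L_{K,a}$ with $\varepsilon$ and $L_{K,a}$ independent, where
\[
A(\bm{\rcoef})=V_{\tau\tau}(1-R^2_{\tau,\bm{w}})+(r_1r_0)^{-1}(\bm{\rcoef}-\tilde{\bm{\rcoef}})'\bm{S}^2_{\bm{w}\setminus \bm{x}}(\bm{\rcoef}-\tilde{\bm{\rcoef}}),
\qquad
B(\bm{\rcoef})=(r_1r_0)^{-1}(\bm{\rcoef}-\tilde{\bm{\rcoef}})'\bm{S}^2_{\bm{w}\mid \bm{x}}(\bm{\rcoef}-\tilde{\bm{\rcoef}}).
\]
Since $\bm{S}^2_{\bm{w}\setminus \bm{x}}$ and $\bm{S}^2_{\bm{w}\mid \bm{x}}$ are finite-population covariance matrices (of the residuals, and of the fitted values, of the projection of $\bm{w}$ on $\bm{x}$), both are positive semidefinite, so $A(\bm{\rcoef})\ge V_{\tau\tau}(1-R^2_{\tau,\bm{w}})$ and $B(\bm{\rcoef})\ge 0$ for all $\bm{\rcoef}\in\mathbb{R}^J$, and both bounds are attained \emph{simultaneously} at $\bm{\rcoef}=\tilde{\bm{\rcoef}}$. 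By Proposition~\ref{prop:three_beta_tilde}, $\bm{\rcoef}=\tilde{\bm{\rcoef}}$ is equivalent to $r_0\bm{\beta}_1+r_1\bm{\beta}_0=r_0\tilde{\bm{\beta}}_1+r_1\tilde{\bm{\beta}}_0$, so $(\tilde{\bm{\beta}}_1,\tilde{\bm{\beta}}_0)$ is one admissible choice. Because every estimator in \eqref{eq:reg} has a limiting law of this two-component form (with the same $\varepsilon$ and $L_{K,a}$, by Theorem~\ref{thm:adj_rem}), the search for the shortest quantile range reduces to optimizing over $\bm{\rcoef}\in\mathbb{R}^J$, i.e.\ over the pair $(A,B)$.

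Next I would establish the monotonicity step: for each fixed $\alpha\in(0,1)$ the symmetric $1-\alpha$ quantile range of $a^{1/2}\varepsilon+b^{1/2}L_{K,a}$ is nondecreasing in $a\ge 0$ and in $b\ge 0$, with the other held fixed. Both $\varepsilon$ and $L_{K,a}$ are symmetric about $0$ and unimodal ($L_{K,a}$ because its density is a one-dimensional marginal of the log-concave density of $\mathcal{N}(\bm{0},\bm{I}_K)$ restricted to a centered ball, hence log-concave), and rescaling a symmetric unimodal variable by a larger constant lowers the probability of every centered interval, so it becomes less peaked about $0$; by Birnbaum's peakedness inequality, convolving with the other (symmetric, unimodal) component preserves this ordering, and a peakedness ordering orders the symmetric $1-\alpha$ quantile ranges. (If the corresponding monotonicity lemma for this exact class of distributions is already available from \citet{asymrerand2106} or \citet{li2018rerandomization}, it can simply be cited.) Combining with the previous paragraph, for every $\alpha$ the asymptotic $1-\alpha$ quantile range of $n^{1/2}\{\hat{\tau}(\bm{\beta}_1,\bm{\beta}_0)-\tau\}$ is minimized over \eqref{eq:reg} at $\bm{\rcoef}=\tilde{\bm{\rcoef}}$; hence $\hat{\tau}(\tilde{\bm{\beta}}_1,\tilde{\bm{\beta}}_0)$ is $\mathcal{S}$-optimal.

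Finally, substituting $\bm{\rcoef}=\tilde{\bm{\rcoef}}$ gives $B(\tilde{\bm{\rcoef}})=0$, so the $L_{K,a}$ term disappears, while $A(\tilde{\bm{\rcoef}})=V_{\tau\tau}(1-R^2_{\tau,\bm{w}})$, which is exactly \eqref{eq:opt_ana_more}: the optimal adjusted estimator is no longer affected by rerandomization and its limit coincides with the CRE limit of Corollary~\ref{corr:reg_cre} for this choice of coefficients. I expect the only genuinely nontrivial step to be the second one—either citing or re-deriving that peakedness, and hence the ordering of symmetric quantile ranges, is preserved when one component is rescaled and then convolved with an independent symmetric unimodal variable—while taking care to justify the unimodality of $L_{K,a}$; everything else is bookkeeping plus positive semidefiniteness.
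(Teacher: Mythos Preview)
Your proposal is correct and follows essentially the same route as the paper: the paper also reads off Corollary~\ref{cor:analysis_more}, observes that both coefficients are simultaneously minimized at $\bm{\rcoef}=\tilde{\bm{\rcoef}}$, and then invokes a monotonicity lemma (their Lemma~A3, proved via symmetry/unimodality of $\varepsilon$ and $L_{K,a}$ together with a peakedness-under-convolution result from \citet{dharmadhikari1988}) to conclude $\mathcal{S}$-optimality. Your anticipated ``Birnbaum peakedness'' step is exactly what their Lemma~A2/A3 package does, and the unimodality of $L_{K,a}$ is indeed established separately (their Lemma~A1, citing \citet{asymrerand2106}), so nothing is missing.
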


From Proposition \ref{prop:three_beta_tilde}, an optimal choice is $(  \bm{\beta}_1, \bm{\beta}_0) = ( \tilde{\bm{\beta}}_1, \tilde{\bm{\beta}}_0 )$ under Condition \ref{con:ana_more}. An important feature of Theorem \ref{thm:analysis_more_opt} is that the limiting distribution in \eqref{eq:opt_ana_more} does not depend on covariates $\bm{x}$ and the threshold $a$ of ReM.
Theorem \ref{thm:analysis_more_opt} has many implications, 
as discussed below. 

\subsubsection{Special case: $\mathcal{S}$-optimal adjustment under the CRE}
\label{sec:sopt_cre}

Theorem \ref{thm:analysis_more_opt} holds for the CRE (ReM with $\bm{x} = \emptyset$ and $a = \infty$). 
It thus recovers the optimality property of \citet{lin2013}'s estimator under the CRE previously proved by \citet{lidingclt2016}. 
Therefore, 
	 when the analyzer has no less covariate information than the designer, the
	  $\mathcal{S}$-optimal adjusted estimators under ReM and the CRE are the same and follow the same asymptotic distribution.
	 This implies that, with more covariates in the analysis, there is no additional gain from the designer through ReM as long as the analyzer performs the optimal  adjustment.
Section \ref{sec:gains_samp_ana_more} later contains related discussions.
	 
\subsubsection{Special case: the designer and analyzer have the same covariates}\label{sec:sam_prec_same}

 Consider the case where the analyzer has the same covariates as the designer ($\bm{x} = \bm{w}$). Compare $\hat{\tau}$ under ReM to the $\mathcal{S}$-optimal adjusted estimator $\hat{\tau}(\tilde{\bm{\beta}}_1, \tilde{\bm{\beta}}_0)$ under the CRE. 
From Corollary \ref{corr:diff_rem} and Section \ref{sec:sopt_cre}, the former has an additional independent component of $(V_{\tau\tau}R^2_{\tau, \bm{x}})^{1/2} L_{K,a}$ in the asymptotic distribution. 
	When the threshold $a$ is small, this additional component is approximately zero, and thus they have almost the same asymptotic distribution.
	Therefore, we can view rerandomization as covariate adjustment in the design stage \citep{asymrerand2106}. 
	Moreover, the former has the following advantages. 
	First, rerandomization in the design stage does not use the outcome data. Second, $\hat{\tau}$ is simpler and thus provides a more transparent analysis \citep{cox2007, Freedman2008chance, Rosenbaum:2010, lin2013}. Using $\hat{\tau}$ in rerandomization can thus avoid bias due to a specification search  of the outcome model (i.e., data snooping). 
Remark \ref{rmk:gain_same_covariate} later contains related discussions.

\subsection{The analyzer has no more covariate information than the designer}\label{sec:des_more_rem}

We then consider the scenario under which the covariates $\bm{x}$ in the design can linearly represent
the covariates $\bm{w}$ in the analysis.

\begin{condition}\label{con:des_more}
There exists a constant matrix $\bm{B}_2\in \mathbb{R}^{J\times K}$ such that $\bm{w}_i=\bm{B}_2\bm{x}_i$ for all unit $i$. 
\end{condition}

Condition \ref{con:des_more} is reasonable when the analyzer has access to only part of the covariates used in the design due to privacy or other reasons. 
For example, 
Condition \ref{con:des_more} holds if $\bm{w} $ is a subset of $\bm{x} $. It also reflects the situation where the analyzer uses only the difference-in-means estimator with $\bm{w}  = \emptyset$ 
even though 
the designer conducts ReM with $\bm{x}$. Condition \ref{con:des_more} implies $\bm{S}^2_{\bm{w}\setminus \bm{x}} = 0 $, which further implies that the coefficient of $\varepsilon$ in the asymptotic distribution \eqref{eq:adj_rem_beta} does not depend on $(\bm{\beta}_1,\bm{\beta}_0)$. We can then simplify the asymptotic distribution of the adjusted estimator.

\begin{corollary}\label{cor:design_more}
Under Conditions \ref{con:fp} and \ref{con:des_more}, 
\begin{align*}
	\tilde{\bm{\rcoef}}_{\proj}=\tilde{\bm{\rcoef}}, \quad R^2_{\res} =  0,  \quad
	R^2_{\proj}= R^2_{\tau, \bm{w}}/R^2_{\tau, \bm{x}},
\quad 
	\bm{S}^2_{\bm{w}\setminus \bm{x}}=\bm{0}, \quad 
	\bm{S}^2_{\bm{w}\mid \bm{x}}=\bm{S}^2_{\bm{w}}, 
\end{align*}
	and the asymptotic distribution of $\hat{\tau}({\bm{\beta}}_1, {\bm{\beta}}_0)$ under ReM is 
\begin{eqnarray*}
& & n^{1/2}\left\{\hat{\tau}(\bm{\beta}_1, \bm{\beta}_0)-\tau \right\} \mid \mathcal{M}  \\ 
& \apprsim & 
\left\{
	V_{\tau\tau}\left(1-R^2_{\tau, \bm{x}}\right)
\right\} ^{1/2} \cdot  \varepsilon   
+  
\left\{ V_{\tau\tau}
\left(R^2_{\tau, \bm{x}} - R^2_{\tau, \bm{w}}\right) + \left(r_1r_0 \right)^{-1}
\left(\bm{\rcoef}-\tilde{\bm{\rcoef}} \right)'
\bm{S}^2_{\bm{w}}
\left(\bm{\rcoef}-\tilde{\bm{\rcoef}}\right)
\right\}^{1/2}  \cdot  L_{K,a}. 
\end{eqnarray*}
\end{corollary}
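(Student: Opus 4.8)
The plan is to reduce the statement to Theorem~\ref{thm:reg_rem_beta}: once the five finite-population quantities listed in the corollary are established, plugging them into the asymptotic distribution \eqref{eq:adj_rem_beta} yields the stated limit by direct substitution. Everything is driven by one structural observation. Since $\bm{w}_i = \bm{B}_2 \bm{x}_i$ for every unit and the difference-in-means operator is linear in the per-unit covariate vectors, $\hat{\bm{\tau}}_{\bm{w}} = \bm{B}_2 \hat{\bm{\tau}}_{\bm{x}}$ identically over all randomizations; equivalently, under the CRE each component of $\hat{\bm{\tau}}_{\bm{w}}$ lies in the linear span of the components of $\hat{\bm{\tau}}_{\bm{x}}$ as elements of $L^2$, and at the population level the projection of $\bm{w}_i$ on $\bm{x}_i$ is $\bm{w}_i$ itself.

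First I would record the finite-population identities. The population statement above gives $\bm{S}^2_{\bm{w}\setminus\bm{x}} = \bm{0}$ and hence $\bm{S}^2_{\bm{w}\mid\bm{x}} = \bm{S}^2_{\bm{w}}$. Passing to the CRE covariance \eqref{eq::vvvv}, the identity $\hat{\bm{\tau}}_{\bm{w}} = \bm{B}_2 \hat{\bm{\tau}}_{\bm{x}}$ yields the block relations $\bm{V}_{\bm{wx}} = \bm{B}_2 \bm{V}_{\bm{xx}}$, $\bm{V}_{\bm{ww}} = \bm{B}_2 \bm{V}_{\bm{xx}} \bm{B}_2'$, and $\bm{V}_{\bm{w}\tau} = \bm{B}_2 \bm{V}_{\bm{x}\tau}$. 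From these, $\res(\hat{\bm{\tau}}_{\bm{w}} \mid \hat{\bm{\tau}}_{\bm{x}}) = \hat{\bm{\tau}}_{\bm{w}} - \bm{V}_{\bm{wx}} \bm{V}_{\bm{xx}}^{-1} \hat{\bm{\tau}}_{\bm{x}} = \bm{0}$, so the $\res$-regression in \eqref{eq:beta_res} is run on the zero vector and $R^2_{\res} = 0$; the coefficient $\tilde{\bm{\rcoef}}_{\res}$ is then only defined up to the non-uniqueness already flagged after Proposition~\ref{lemma:relation_three}, but this is harmless because $\tilde{\bm{\rcoef}}_{\res}$ enters \eqref{eq:adj_rem_beta} only through $\bm{S}^2_{\bm{w}\setminus\bm{x}}(\bm{\rcoef} - \tilde{\bm{\rcoef}}_{\res}) = \bm{0}$. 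The identity $\tilde{\bm{\rcoef}}_{\proj} = \tilde{\bm{\rcoef}}$ follows by inserting the block relations into \eqref{eq:beta_proj}: $\bm{V}_{\bm{wx}} \bm{V}_{\bm{xx}}^{-1} \bm{V}_{\bm{xw}} = \bm{B}_2 \bm{V}_{\bm{xx}} \bm{B}_2' = \bm{V}_{\bm{ww}}$ and $\bm{V}_{\bm{wx}} \bm{V}_{\bm{xx}}^{-1} \bm{V}_{\bm{x}\tau} = \bm{B}_2 \bm{V}_{\bm{x}\tau} = \bm{V}_{\bm{w}\tau}$, so $\tilde{\bm{\rcoef}}_{\proj} = \bm{V}_{\bm{ww}}^{-1} \bm{V}_{\bm{w}\tau} = \tilde{\bm{\rcoef}}$, with invertibility of $\bm{V}_{\bm{ww}}$ guaranteed by Condition~\ref{con:fp}(ii).

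The only step that needs an argument rather than substitution is $R^2_{\proj} = R^2_{\tau,\bm{w}}/R^2_{\tau,\bm{x}}$. Here I would use the nested-subspace structure. Because the components of $\hat{\bm{\tau}}_{\bm{w}}$ lie in the span of those of $\hat{\bm{\tau}}_{\bm{x}}$, iterated $L^2$ projection gives $\proj(\proj(\hat{\tau}\mid\hat{\bm{\tau}}_{\bm{x}})\mid\hat{\bm{\tau}}_{\bm{w}}) = \proj(\hat{\tau}\mid\hat{\bm{\tau}}_{\bm{w}})$, while $\proj(\hat{\bm{\tau}}_{\bm{w}}\mid\hat{\bm{\tau}}_{\bm{x}}) = \bm{B}_2\hat{\bm{\tau}}_{\bm{x}} = \hat{\bm{\tau}}_{\bm{w}}$. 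Hence $R^2_{\proj}$, the squared multiple correlation in the regression of $\proj(\hat{\tau}\mid\hat{\bm{\tau}}_{\bm{x}})$ on $\proj(\hat{\bm{\tau}}_{\bm{w}}\mid\hat{\bm{\tau}}_{\bm{x}})$, equals $\Var\{\proj(\hat{\tau}\mid\hat{\bm{\tau}}_{\bm{w}})\} / \Var\{\proj(\hat{\tau}\mid\hat{\bm{\tau}}_{\bm{x}})\}$, which by \eqref{eq:R2_tau_x} and \eqref{eq:R2_mid_w} is $R^2_{\tau,\bm{w}}/R^2_{\tau,\bm{x}}$. Finally, substituting $R^2_{\res} = 0$, $\bm{S}^2_{\bm{w}\setminus\bm{x}} = \bm{0}$, $\tilde{\bm{\rcoef}}_{\proj} = \tilde{\bm{\rcoef}}$, $\bm{S}^2_{\bm{w}\mid\bm{x}} = \bm{S}^2_{\bm{w}}$, and $V_{\tau\tau} R^2_{\tau,\bm{x}}(1 - R^2_{\proj}) = V_{\tau\tau}(R^2_{\tau,\bm{x}} - R^2_{\tau,\bm{w}})$ into \eqref{eq:adj_rem_beta} produces exactly the claimed distribution.

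I expect the main obstacle to be bookkeeping around degeneracy rather than any substantive difficulty: $\bm{S}^2_{\bm{w}\setminus\bm{x}}$ and the covariance driving the $\res$-regression are singular (indeed zero) here, so $\tilde{\bm{\rcoef}}_{\res}$ and $R^2_{\res}$ must be interpreted through the projections themselves, exactly as the discussion after Proposition~\ref{lemma:relation_three} anticipates. One should also note the harmless edge case $R^2_{\tau,\bm{x}} = 0$: then $R^2_{\tau,\bm{w}} = 0$ as well (the span of $\hat{\bm{\tau}}_{\bm{w}}$ sits inside that of $\hat{\bm{\tau}}_{\bm{x}}$), the formal expression $R^2_{\tau,\bm{w}}/R^2_{\tau,\bm{x}}$ is $0/0$, but the $L_{K,a}$ coefficient in \eqref{eq:adj_rem_beta} already collapses to $\{(r_1r_0)^{-1}(\bm{\rcoef}-\tilde{\bm{\rcoef}})'\bm{S}^2_{\bm{w}}(\bm{\rcoef}-\tilde{\bm{\rcoef}})\}^{1/2}$ by way of $V_{\tau\tau}R^2_{\tau,\bm{x}}(1-R^2_{\proj}) = V_{\tau\tau}(R^2_{\tau,\bm{x}}-R^2_{\tau,\bm{w}}) = 0$, so no special treatment is needed. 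Everything else is linear-algebra substitution into the already-proved Theorem~\ref{thm:reg_rem_beta}.
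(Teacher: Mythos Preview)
Your proposal is correct and follows essentially the same approach as the paper: establish the five finite-population identities under Condition~\ref{con:des_more} and then substitute into Theorem~\ref{thm:reg_rem_beta}. The only minor difference is that the paper obtains $\tilde{\bm{\rcoef}}_{\proj}=\tilde{\bm{\rcoef}}$ by invoking Proposition~\ref{lemma:relation_three} (since $\bm{S}^2_{\bm{w}\setminus\bm{x}}=\bm{0}$ and $\bm{S}^2_{\bm{w}\mid\bm{x}}=\bm{S}^2_{\bm{w}}$ is nonsingular, the relation forces $\tilde{\bm{\rcoef}}_{\proj}=\tilde{\bm{\rcoef}}$), whereas you compute it directly from the block identities $\bm{V}_{\bm{wx}}\bm{V}_{\bm{xx}}^{-1}\bm{V}_{\bm{xw}}=\bm{V}_{\bm{ww}}$ and $\bm{V}_{\bm{wx}}\bm{V}_{\bm{xx}}^{-1}\bm{V}_{\bm{x}\tau}=\bm{V}_{\bm{w}\tau}$; both are equally short and valid.
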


Under Condition \ref{con:des_more}, $\res(\hat{\bm{\tau}}_{\bm{w}} \mid \hat{\bm{\tau}}_{\bm{x}}) = \bm{0}$, and thus as discussed earlier, the projection coefficient $\tilde{\bm{\rcoef}}_{\res}$ is not unique. Nevertheless, Corollary \ref{cor:design_more} does not depend on $\tilde{\bm{\rcoef}}_{\res}$.
Based on Corollary \ref{cor:design_more}, we can obtain the $\mathcal{S}$-optimal adjusted estimator and its asymptotic distribution under ReM. Let 
\begin{align}\label{eq:rho2_x_minus_w}
	\rho^2_{\tau, \bm{x}\setminus \bm{w}}
	= ( R^2_{\tau, \bm{x}}-R^2_{\tau, \bm{w}})/(1 - R^2_{\tau, \bm{w}})
	\in [0,1]
\end{align}
be the additional proportion of the variance of $\hat{\tau}$ explained by the covariates $\bm{x}$ in the design, after explained by the covariates $\bm{w}$ in the  analysis.

\begin{theorem}\label{thm:design_more_opt}
Under ReM and Conditions \ref{con:fp} and \ref{con:des_more}, the $\mathcal{S}$-optimal adjusted estimator is 
attainable when $\bm{\rcoef}=\tilde{\bm{\rcoef}}$ or 
$r_0\bm{\beta}_1+r_1\bm{\beta}_0 = r_0 \tilde{\bm{\beta}}_1+r_1 \tilde{\bm{\beta}}_0 ,$  with the asymptotic distribution
\begin{eqnarray}\label{eq:design_more_opt}
n^{1/2}\{\hat{\tau}(\tilde{\bm{\beta}}_1, \tilde{\bm{\beta}}_0)-\tau\} \mid 
\mathcal{M} 
& \apprsim & 
V_{\tau\tau}^{1/2}
\left\{
\left( 1-R^2_{\tau, \bm{x}} \right)^{1/2} \cdot  \varepsilon + 
\left( R^2_{\tau, \bm{x}}-R^2_{\tau, \bm{w}} \right)^{1/2} \cdot  L_{K,a}
\right\}\\
& \sim & 
\left\{ V_{\tau\tau}\left(1-R^2_{\tau, \bm{w}}\right) \right\}^{1/2}
\left\{
\left( 1-\rho^2_{\tau, \bm{x}\setminus\bm{w}} \right)^{1/2} \cdot \varepsilon + 
\left( \rho^2_{\tau, \bm{x}\setminus\bm{w}} \right)^{1/2}
\cdot L_{K,a}
\right\}. \nonumber
\end{eqnarray}
\end{theorem}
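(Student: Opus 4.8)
The plan is to derive the whole statement from Corollary~\ref{cor:design_more}, which under Conditions~\ref{con:fp} and~\ref{con:des_more} already presents $n^{1/2}\{\hat{\tau}(\bm{\beta}_1,\bm{\beta}_0)-\tau\}\mid\mathcal{M}$ as a sum of two independent components $c_1\,\varepsilon+c_2(\bm{\rcoef})\,L_{K,a}$, where $c_1=\{V_{\tau\tau}(1-R^2_{\tau,\bm{x}})\}^{1/2}$ carries no dependence on $(\bm{\beta}_1,\bm{\beta}_0)$ and $c_2(\bm{\rcoef})=\{V_{\tau\tau}(R^2_{\tau,\bm{x}}-R^2_{\tau,\bm{w}})+(r_1 r_0)^{-1}(\bm{\rcoef}-\tilde{\bm{\rcoef}})'\bm{S}^2_{\bm{w}}(\bm{\rcoef}-\tilde{\bm{\rcoef}})\}^{1/2}$ depends on $(\bm{\beta}_1,\bm{\beta}_0)$ only through $\bm{\rcoef}=r_0\bm{\beta}_1+r_1\bm{\beta}_0$. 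First I would note that $R^2_{\tau,\bm{x}}\ge R^2_{\tau,\bm{w}}$ under Condition~\ref{con:des_more} (so the leading term under the square root is nonnegative, consistent with $\rho^2_{\tau,\bm{x}\setminus\bm{w}}\in[0,1]$ in~\eqref{eq:rho2_x_minus_w}), and that since the limit of $\bm{S}^2_{\bm{w}}$ is positive definite by Condition~\ref{con:fp}(ii), the quadratic form $(\bm{\rcoef}-\tilde{\bm{\rcoef}})'\bm{S}^2_{\bm{w}}(\bm{\rcoef}-\tilde{\bm{\rcoef}})$ is nonnegative and vanishes if and only if $\bm{\rcoef}=\tilde{\bm{\rcoef}}$. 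Hence $c_2(\bm{\rcoef})$, and therefore the spread of the limiting law, is minimized over the whole family~\eqref{eq:reg} exactly at $\bm{\rcoef}=\tilde{\bm{\rcoef}}$, with minimal squared value $V_{\tau\tau}(R^2_{\tau,\bm{x}}-R^2_{\tau,\bm{w}})$; by Proposition~\ref{prop:three_beta_tilde} the choice $(\bm{\beta}_1,\bm{\beta}_0)=(\tilde{\bm{\beta}}_1,\tilde{\bm{\beta}}_0)$, equivalently $r_0\bm{\beta}_1+r_1\bm{\beta}_0=r_0\tilde{\bm{\beta}}_1+r_1\tilde{\bm{\beta}}_0$, attains $\bm{\rcoef}=\tilde{\bm{\rcoef}}$, so the optimum is attainable.

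Substituting $\bm{\rcoef}=\tilde{\bm{\rcoef}}$ into Corollary~\ref{cor:design_more} gives the first line of~\eqref{eq:design_more_opt} at once. The second line is then pure algebra: factor $\{V_{\tau\tau}(1-R^2_{\tau,\bm{w}})\}^{1/2}$ out of both coefficients and use the identities $1-R^2_{\tau,\bm{x}}=(1-R^2_{\tau,\bm{w}})(1-\rho^2_{\tau,\bm{x}\setminus\bm{w}})$ and $R^2_{\tau,\bm{x}}-R^2_{\tau,\bm{w}}=(1-R^2_{\tau,\bm{w}})\,\rho^2_{\tau,\bm{x}\setminus\bm{w}}$, both immediate from the definition~\eqref{eq:rho2_x_minus_w}; this also explains why $\sim$ rather than $\apprsim$ is used on the second line, since it is the same limiting law rewritten.

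What remains is to upgrade ``$c_2$ is minimized'' to ``$\mathcal{S}$-optimal'' in the sense of Definition~\ref{def:optimal_regression}, i.e.\ to the shortest asymptotic $1-\alpha$ quantile range for every $\alpha\in(0,1)$. For this I would run the peakedness argument familiar from \citet{asymrerand2106} and \citet{li2018rerandomization}: $\varepsilon$ and $L_{K,a}$ are independent and each symmetric and unimodal about $0$ ($\varepsilon$ is Gaussian, and $L_{K,a}\sim D_1\mid\bm{D}'\bm{D}\le a$ has a density in $d_1$ that is symmetric and nonincreasing in $|d_1|$), so every limiting law $c_1\varepsilon+c_2 L_{K,a}$ is symmetric and unimodal about $0$; for $c_1$ fixed, $c_2 L_{K,a}$ is more peaked for smaller $c_2$, and convolving with the fixed independent symmetric unimodal $c_1\varepsilon$ cannot reverse the peakedness ordering, so a smaller $c_2$ yields a weakly more peaked limiting law; a more peaked symmetric law has a weakly shorter symmetric $1-\alpha$ quantile range for every $\alpha$, and symmetric quantile ranges are the shortest $1-\alpha$ quantile ranges for symmetric unimodal laws \citep[Theorem~9.3.2]{Casella:2002aa}. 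Combining these, the law at $\bm{\rcoef}=\tilde{\bm{\rcoef}}$ is $\mathcal{S}$-optimal. (Equivalently, since within this one-parameter family a smaller $c_2$ also means a smaller asymptotic variance, one can appeal to the variance characterization of $\mathcal{S}$-optimality in \citet{li2018rerandomization}.) The main obstacle is this last step: one must ensure the peakedness comparison is valid uniformly over all $\bm{\rcoef}\in\mathbb{R}^J$ and all $\alpha$, which is exactly where the symmetry and unimodality of both $\varepsilon$ and $L_{K,a}$ and the stability of peakedness under convolution with a symmetric unimodal distribution are essential; the reduction to Corollary~\ref{cor:design_more} and the ensuing algebra are routine.
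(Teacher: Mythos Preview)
Your proposal is correct and follows essentially the same approach as the paper: reduce to Corollary~\ref{cor:design_more}, minimize the $L_{K,a}$ coefficient at $\bm{\rcoef}=\tilde{\bm{\rcoef}}$, invoke Proposition~\ref{prop:three_beta_tilde}, and substitute to obtain~\eqref{eq:design_more_opt}. The peakedness/convolution argument you spell out for $\mathcal{S}$-optimality is exactly what the paper packages as Lemma~\ref{lemma:qr_linear_comb_epsilon_L_Ka} (itself proved via Lemmas~\ref{lemma:L_ka_unimodal} and~\ref{lemma:order_sum}), so you have effectively reproduced that lemma's proof inline rather than citing it.
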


From Theorem \ref{thm:design_more_opt}, although the analyzer has less covariate information than the designer of ReM, 
s/he can still obtain the $\mathcal{S}$-optimal adjusted estimator using only the covariate information in the analysis.

Theorems \ref{thm:analysis_more_opt} and \ref{thm:design_more_opt} give identical optimal coefficients, but different asymptotic distributions of the optimal estimators. 
When the designer and the analyzer have the same covariates ($\bm{x} = \bm{w}$), 
both Theorems \ref{thm:analysis_more_opt} and \ref{thm:design_more_opt} hold and give identical results. 
Specifically, $\rho^2_{\tau, \bm{x}\setminus \bm{w}}$ in \eqref{eq:rho2_x_minus_w} reduces to zero, 
and the asymptotic distribution in \eqref{eq:design_more_opt} simplifies to \eqref{eq:opt_ana_more}, a Gaussian limiting distribution.

From Corollary \ref{corr:diff_rem} and Theorem \ref{thm:design_more_opt}, under ReM, 
the asymptotic distribution of the $\mathcal{S}$-optimal adjusted estimator  $\hat{\tau}(\tilde{\bm{\beta}}_1, \tilde{\bm{\beta}}_0)$
differs from that of $\hat{\tau}$ only in the coefficient of the truncated Gaussian random variable $L_{K,a}$. With a small threshold $a$, $L_{K,a}$ is close to zero and thus the gain from adjustment is small. 
Similar to the discussion in Section \ref{sec:sam_prec_same}, 
although $\hat{\tau}$ loses a little sampling precision compared to the optimal adjusted estimator, 
it does have the advantage of avoiding data snooping and improving transparency.

\subsection{General scenarios}\label{sec::generalS}

A practical complication is that the designer and analyzer may not communicate. Then it is possible that the designer and the analyzer do not use the same covariate information \citep[e.g.,][]{Bruhn:2009, ke2017errors}. Consequently, the analyzer has part of the covariate information in the design and additional covariate information. Neither Condition \ref{con:ana_more} or \ref{con:des_more} holds. Under general scenarios, unfortunately, the $\mathcal{S}$-optimal adjusted estimator may not exist, in the sense that there does not exist an estimator among \eqref{eq:reg} that has the shortest asymptotic $1-\alpha$ quantile range for all $\alpha\in (0,1)$.

Some sub-optimal strategies exist. First, we can consider the adjusted estimator with the smallest asymptotic variance or the shortest asymptotic $1-\alpha$ quantile range for a particular $\alpha\in (0,1)$. The Supplementary Material gives the formulas for the former. However, explicit formulas  for the latter do not exist.

Second, when $a$ is small, $L_{K,a} \approx 0$, and the asymptotic distribution \eqref{eq:adj_rem_beta} under ReM depends mainly on the $\varepsilon$ component. The coefficient of $\varepsilon$ attains its minimal value at $\tilde{\bm{\rcoef}}_{\res}$. 
Ignoring the $L_{K,a}$ component, $\tilde{\bm{\rcoef}}_{\res}$ gives the $\mathcal{S}$-optimal adjusted estimator. However, this result is not useful because it is infeasible for the analyzer to consistently estimate $\tilde{\bm{\rcoef}}_{\res}$ due to the incomplete information of the covariates  in the design.

Third, we can still use $\hat{\tau}(\tilde{\bm{\beta}}_1, \tilde{\bm{\beta}}_0)$ as a convenient adjusted estimator because we can easily obtain it via the OLS. Not surprisingly, this estimator is not $\mathcal{S}$-optimal in general and it can be even worse than $\hat{\tau}$ under ReM. 
When $a$  is small, the $\varepsilon$ components are the dominating terms in 
their asymptotic distributions under ReM. Therefore, we compare the coefficients of $\varepsilon$.

\begin{theorem}\label{thm:gen_ana_enough}
Under ReM and Condition \ref{con:fp}, 
the squared coefficient of $\varepsilon$ is $V_{\tau\tau}
\left(1-R^2_{\tau,\bm{w}}\right)\{1 - R^2_{\tau,\bm{x}}(\tilde{\bm{\beta}}_1, \tilde{\bm{\beta}}_0)\}$ in the asymptotic distribution \eqref{eq:adj_rem} of $\hat{\tau}(\tilde{\bm{\beta}}_1, \tilde{\bm{\beta}}_0)$, and is $V_{\tau\tau}\left(1 - R^2_{\tau,\bm{x}}\right)$ in the asymptotic distribution \eqref{eq:diff_rerand} of $\hat{\tau}$. The former is smaller than or equal to the latter if and only if 
\begin{align}\label{eq:cond_general}
 R^2_{\tau,\bm{w}}+ (1-R^2_{\tau,\bm{w}})\cdot R^2_{\tau,\bm{x}}(\tilde{\bm{\beta}}_1, \tilde{\bm{\beta}}_0)\geq  R^2_{\tau,\bm{x}} 
\end{align}
\end{theorem}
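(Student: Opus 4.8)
The plan is to read off the two $\varepsilon$-coefficients from the distributional results already in hand, reduce the comparison to the single identity $V_{\tau\tau}(\tilde{\bm{\beta}}_1, \tilde{\bm{\beta}}_0) = V_{\tau\tau}(1-R^2_{\tau,\bm{w}})$, and then finish with a short algebraic rearrangement. By Theorem \ref{thm:adj_rem}, the asymptotic distribution \eqref{eq:adj_rem} of $n^{1/2}\{\hat{\tau}(\tilde{\bm{\beta}}_1, \tilde{\bm{\beta}}_0)-\tau\}$ has $\varepsilon$-coefficient whose square is $V_{\tau\tau}(\tilde{\bm{\beta}}_1, \tilde{\bm{\beta}}_0)\{1 - R^2_{\tau,\bm{x}}(\tilde{\bm{\beta}}_1, \tilde{\bm{\beta}}_0)\}$, while by Corollary \ref{corr:diff_rem} the asymptotic distribution \eqref{eq:diff_rerand} of $n^{1/2}(\hat{\tau}-\tau)$ has $\varepsilon$-coefficient whose square is $V_{\tau\tau}(1-R^2_{\tau,\bm{x}})$. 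It then remains to (i) evaluate $V_{\tau\tau}(\tilde{\bm{\beta}}_1, \tilde{\bm{\beta}}_0)$ and (ii) compare the two squared quantities.

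For (i), by the second form of \eqref{eq:reg} together with Proposition \ref{prop:three_beta_tilde}, we have $\hat{\tau}(\tilde{\bm{\beta}}_1, \tilde{\bm{\beta}}_0) = \hat{\tau} - \tilde{\bm{\rcoef}}'\hat{\bm{\tau}}_{\bm{w}}$ where $\tilde{\bm{\rcoef}} = r_0\tilde{\bm{\beta}}_1 + r_1\tilde{\bm{\beta}}_0 = \bm{V}_{\bm{ww}}^{-1}\bm{V}_{\bm{w}\tau}$ is exactly the CRE projection coefficient of $\hat{\tau}$ on $\hat{\bm{\tau}}_{\bm{w}}$ in \eqref{eq:beta_tilde}; hence $\hat{\tau}(\tilde{\bm{\beta}}_1, \tilde{\bm{\beta}}_0)-\tau = \res(\hat{\tau}\mid\hat{\bm{\tau}}_{\bm{w}})$. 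Since $V_{\tau\tau}(\tilde{\bm{\beta}}_1, \tilde{\bm{\beta}}_0)$ is by definition $n$ times the CRE variance of $\hat{\tau}(\tilde{\bm{\beta}}_1, \tilde{\bm{\beta}}_0)-\tau$, the orthogonal decomposition $\Var(\hat{\tau}) = \Var\{\proj(\hat{\tau}\mid\hat{\bm{\tau}}_{\bm{w}})\} + \Var\{\res(\hat{\tau}\mid\hat{\bm{\tau}}_{\bm{w}})\}$ and the definition \eqref{eq:R2_mid_w} of $R^2_{\tau,\bm{w}}$ give $V_{\tau\tau}(\tilde{\bm{\beta}}_1, \tilde{\bm{\beta}}_0) = V_{\tau\tau} - V_{\tau\tau}R^2_{\tau,\bm{w}} = V_{\tau\tau}(1-R^2_{\tau,\bm{w}})$. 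Equivalently, one may invoke Theorem \ref{thm:analysis_more_opt} in the CRE special case ($\bm{x}=\emptyset$, $a=\infty$, under which Condition \ref{con:ana_more} holds vacuously) and match it with Corollary \ref{corr:reg_cre}. Substituting this into the square from Theorem \ref{thm:adj_rem} yields the stated $\varepsilon$-coefficient $V_{\tau\tau}(1-R^2_{\tau,\bm{w}})\{1-R^2_{\tau,\bm{x}}(\tilde{\bm{\beta}}_1, \tilde{\bm{\beta}}_0)\}$ for $\hat{\tau}(\tilde{\bm{\beta}}_1, \tilde{\bm{\beta}}_0)$.

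For (ii), assuming $V_{\tau\tau}>0$ (otherwise both coefficients vanish and there is nothing to prove), dividing the inequality ``former $\leq$ latter'' through by $V_{\tau\tau}$ reduces it to $(1-R^2_{\tau,\bm{w}})\{1-R^2_{\tau,\bm{x}}(\tilde{\bm{\beta}}_1, \tilde{\bm{\beta}}_0)\} \leq 1-R^2_{\tau,\bm{x}}$; expanding the left side, cancelling the constant term, and collecting terms rearranges this, step by step, into $R^2_{\tau,\bm{x}} \leq R^2_{\tau,\bm{w}} + (1-R^2_{\tau,\bm{w}})R^2_{\tau,\bm{x}}(\tilde{\bm{\beta}}_1, \tilde{\bm{\beta}}_0)$, which is precisely \eqref{eq:cond_general}. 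The argument is essentially obstacle-free: the only substantive point is step (i), and that is immediate once $\hat{\tau}(\tilde{\bm{\beta}}_1, \tilde{\bm{\beta}}_0)-\tau$ is recognized as the CRE residual of $\hat{\tau}$ on $\hat{\bm{\tau}}_{\bm{w}}$; everything else is scalar bookkeeping of the $R^2$ quantities.
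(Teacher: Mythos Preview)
Your proposal is correct and follows essentially the same approach as the paper: both read off the $\varepsilon$-coefficients from Theorem \ref{thm:adj_rem} and Corollary \ref{corr:diff_rem}, establish $V_{\tau\tau}(\tilde{\bm{\beta}}_1,\tilde{\bm{\beta}}_0)=V_{\tau\tau}(1-R^2_{\tau,\bm{w}})$ by recognizing $\hat{\tau}(\tilde{\bm{\beta}}_1,\tilde{\bm{\beta}}_0)-\tau$ as $\res(\hat{\tau}\mid\hat{\bm{\tau}}_{\bm{w}})$, and then perform the same algebraic rearrangement. Your added remark on the $V_{\tau\tau}=0$ edge case and the alternative derivation via Theorem \ref{thm:analysis_more_opt} in the CRE special case are minor elaborations but do not change the substance.
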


\begin{rmk}
A sufficient condition for \eqref{eq:cond_general} is $R^2_{\tau,\bm{w}} \geq R^2_{\tau,\bm{x}}$, which holds under Condition \ref{con:ana_more}. 
Recall that $R^2_{\tau,\bm{x}}$ in \eqref{eq:R2_tau_x} measures the covariate information of the designer, and $R^2_{\tau,\bm{w}}$ in \eqref{eq:R2_mid_w} measures the covariate information of the analyzer. From Theorem \ref{thm:gen_ana_enough}, when the analyzer has more covariate information, $\hat{\tau}(\tilde{\bm{\beta}}_1, \tilde{\bm{\beta}}_0)$ is more precise than $\hat{\tau}$ if  the threshold $a$ for ReM is small. 
\end{rmk}

\begin{rmk}\label{rmk:reg_worse}
A counterexample for \eqref{eq:cond_general} is that the finite population partial covariance between $Y(z)$ and $\bm{w}$ given $\bm{x}$ is zero for $z=0,1$. In this case,  the squared coefficient of $\varepsilon$ 
for $\hat{\tau}(\tilde{\bm{\beta}}_1, \tilde{\bm{\beta}}_0)$ is larger than or equal to that for $\hat{\tau}$. 
Intuitively, 
this is because 
the covariates in the analysis are unrelated to the potential outcomes after adjusting for the covariates in the design and using them only introduces additional variability. 
In the extreme case where $\bm{x}$ can linearly represent $Y(1)$ and $Y(0)$, 
the squared coefficient of $\varepsilon$ for $\hat{\tau}$ is zero, 
while that for $\hat{\tau}(\tilde{\bm{\beta}}_1, \tilde{\bm{\beta}}_0)$ is generally positive. See the Supplementary Material for more details.
\end{rmk}

Below we use a numerical example to illustrate the results above. It shows that   $\hat{\tau}(\tilde{\bm{\beta}}_1, \tilde{\bm{\beta}}_0)$ can be superior or inferior to $\hat{\tau}$.

\begin{example}\label{eg:sampling}
We choose $n=1000$, $r_1=r_0=0.5$, and generate the covariates and the potential outcomes using i.i.d. samples from the following model: 
\begin{align}\label{eq:model}
x,\eta, \delta \stackrel{\text{i.i.d.}}{\sim}  \mathcal{N}(0,1), \quad 
w = x + \eta, \quad 
Y(0) = 2x + \rho \eta + (1-\rho^2)^{1/2} \delta, \quad 
Y(1) = Y(0) + 1.
\end{align}
Once generated, the covariates and potential outcomes are all fixed. 
We use ReM based on the covariate $x$, and choose the threshold $a$ to be the 0.001th quantile of the $\chi^2_1$ random variable. 
We then use regression adjustment based on the covariate $w$. 
Figure \ref{fig:hist}(a) shows the histograms of $\hat{\tau}(\tilde{\bm{\beta}}_1, \tilde{\bm{\beta}}_0)$ and $\hat{\tau}$ under 
ReM when $\rho=0.9$.  In this case, regression adjustment increases the sampling precision. Figures \ref{fig:hist}(b) shows the histograms when $\rho=0$. In this case, regression adjustment decreases the sampling precision. The case with $\rho = 0$ reflects the scenario that the designer only gives a covariate with measurement error to the analyzer, possibly due to some privacy consideration. 
\end{example}

\begin{figure}[ht]
	\centering
	\begin{subfigure}{.5\textwidth}
		\centering
		\includegraphics[width=0.7\linewidth]{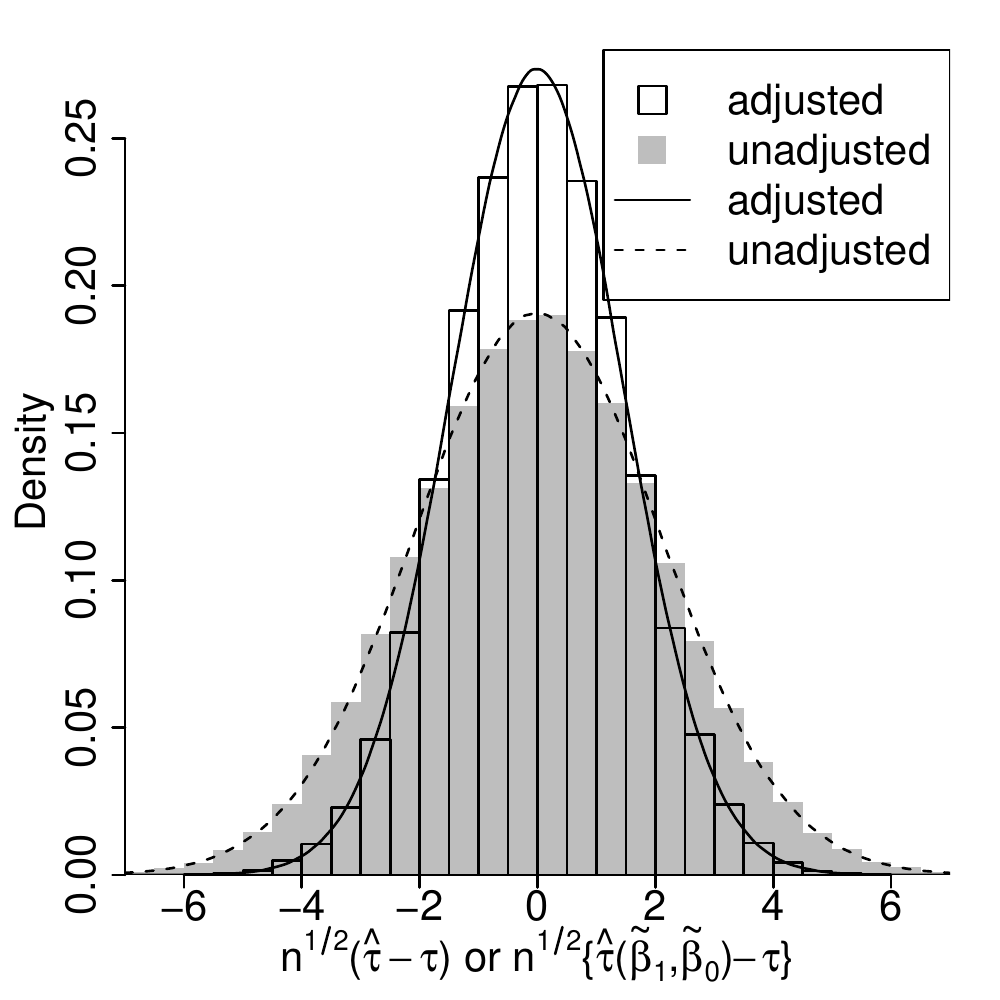}
		\caption{\centering $\rho=0.9$}\label{rho09}
	\end{subfigure}%
	\begin{subfigure}{.5\textwidth}
		\centering
				\includegraphics[width=0.7\linewidth]{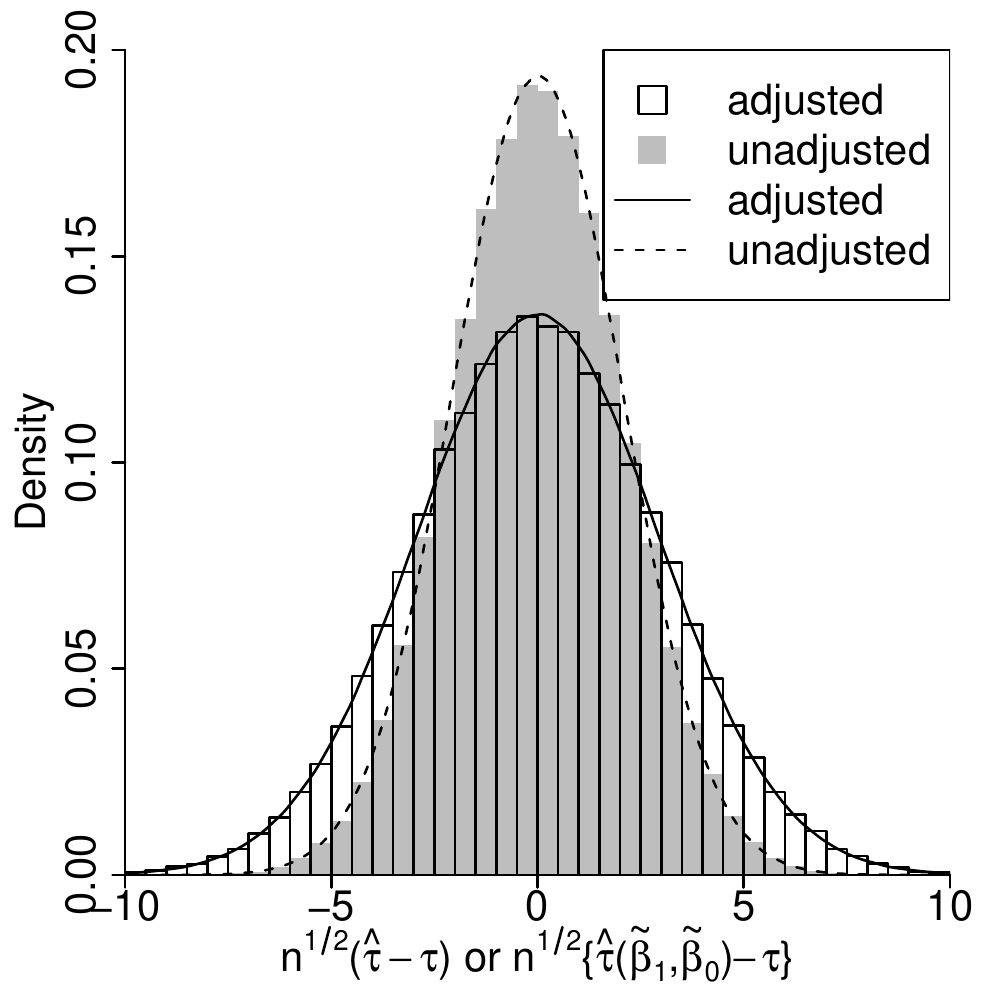}
		\caption{$\rho = 0$} \label{rho01}
	\end{subfigure}
	\caption{Histograms of  
		$n^{1/2}\{\hat{\tau}(\tilde{\bm{\beta}}_1, \tilde{\bm{\beta}}_0)-\tau\}$ and 
	 $n^{1/2}(\hat{\tau}-\tau)$ under ReM based on $10^5$ simulated treatment assignments.
	}
	\label{fig:hist}
\end{figure}

\section{Estimating the sampling distributions of the estimators}\label{sec::estimatedprecision}

Sections \ref{sec:sampling_dist} and \ref{sec:opt_rem}  focused on the asymptotic distribution of the adjusted estimator and discussed optimal choices of the coefficients. In practice, we usually report the uncertainty of estimators in terms of confidence intervals when conducting frequentist inference. Confidence intervals are related to the quantile ranges of the estimated distributions of the corresponding estimators. Therefore, compared to $\mathcal{S}$-optimality, a more practical definition of the optimal adjusted estimator should be based on the quantile ranges of the estimated distributions. This subtle issue does not exist in many other statistical inference problems, because usually consistent estimators exist for the true asymptotic distributions of the estimators. For example, in standard statistical problems, we can consistently estimate the variance of a Gaussian limiting distribution. Because of the possible miscommunication between the designer and analyzer, the analyzer may not be able to estimate all quantities based on the observed data in general. This is a feature of our framework.

In this section, we discuss the estimation of the sampling distributions for fixed $(\bm{\beta}_1, \bm{\beta}_0)$. 
In the next section, we will discuss the optimal choice of these coefficients.

\subsection{The analyzer knows all the information in the design}\label{sec:est_dist_rem_ana_know_all}

We first consider the scenario under which the analyzer knows all the information of the designer.

\begin{condition}\label{con:ana_more_bal_criterion}
The analyzer 
knows all the information in the design, 
including the covariates
$\bm{x}$ and the threshold $a$ for ReM.
\end{condition}
Condition \ref{con:ana_more_bal_criterion} implies Condition \ref{con:ana_more}. 
However, Condition \ref{con:ana_more} does not imply Condition \ref{con:ana_more_bal_criterion}, because the analyzer may not know which covariates are used in the design or which threshold $a$ is chosen for ReM. 
From Theorem \ref{thm:adj_rem}, 
the asymptotic distribution \eqref{eq:adj_rem} of $\hat{\tau}(\bm{\beta}_1, \bm{\beta}_0)$ under ReM depends on 
$V_{\tau\tau}(\bm{\beta}_1, \bm{\beta}_0)$ and $R^2_{\tau, \bm{x}}(\bm{\beta}_{1}, \bm{\beta}_0)$. Under treatment arm $z$ $(z=0,1)$, let
$s^2_{Y(z;\bm{\beta}_z)}$, $\bm{s}_{Y(z;\bm{\beta}_z),\bm{w}}$
and $\bm{s}_{Y(z;\bm{\beta}_z),\bm{x}}$
be the sample variance and covariances for the ``adjusted" observed  outcome $Y_i - \bm{\beta}_z' \bm{w}_{i}$, covariates in the analysis $\bm{w}_i$, and covariates in the design $\bm{x}_i$; let $s^2_{Y(z;\bm{\beta}_z)\mid \bm{x}}$ be the sample variance of the linear projection of $Y_i - \bm{\beta}_z' \bm{w}_{i}$ on $\bm{x}_i$. 
We estimate $V_{\tau\tau}(\bm{\beta}_1, \bm{\beta}_0)$ by 
\begin{eqnarray}
\hat{V}_{\tau\tau}(\bm{\beta}_1, \bm{\beta}_0)  &=&  
r_1^{-1}s^2_{Y(1;\bm{\beta}_1)} + r_0^{-1}s^2_{Y(0;\bm{\beta}_0)} \nonumber  \\
&&- 
(\bm{s}_{Y(1;\bm{\beta}_1),\bm{w}}-
\bm{s}_{Y(0;\bm{\beta}_0),\bm{w}} ) (\bm{S}_{\bm{w}}^{2})^{-1}
(\bm{s}_{\bm{w},Y(1;\bm{\beta}_1)}-\bm{s}_{\bm{w},Y(0;\bm{\beta}_0)} ), \label{eq:V_beta_hat}
\end{eqnarray}
 $R^2_{\tau, \bm{x}}(\bm{\beta}_{1}, \bm{\beta}_0)$ by 
\begin{align}\label{eq:R2_beta_hat}
\hat{R}^2_{\tau, \bm{x}}(\bm{\beta}_{1}, \bm{\beta}_0)
& = 
\hat{V}_{\tau\tau}^{-1}(\bm{\beta}_1, \bm{\beta}_0)
\left\{
r_1^{-1}s_{Y(1;\bm{\beta}_1)\mid \bm{x}}^2+r_0^{-1}s_{Y(0;\bm{\beta}_0)\mid \bm{x}}^2
 \right. 
\nonumber
\\
& 
\quad \quad \quad \quad \quad \quad \quad \quad     
\left. 
 - (\bm{s}_{Y(1;\bm{\beta}_1),\bm{x}}-
\bm{s}_{Y(0;\bm{\beta}_0),\bm{x}} ) (\bm{S}_{\bm{x}}^2 )^{-1}
(\bm{s}_{\bm{x},Y(1;\bm{\beta}_1)}-\bm{s}_{\bm{x},Y(0;\bm{\beta}_0)} )
\vphantom{r_1^{-1}s_{Y(1;\bm{\beta}_1)\mid \bm{x}}^2}
\right\}, 
\end{align}
and the asymptotic distribution of $n^{1/2}\{\hat{\tau}(\bm{\beta}_1, \bm{\beta}_0) - \tau\}$ by 
\begin{equation}
\hat{V}_{\tau\tau}^{1/2}(\bm{\beta}_1, \bm{\beta}_0) 
\left[
\left\{1-\hat{R}^2_{\tau, \bm{x}}(\bm{\beta}_{1}, \bm{\beta}_0)\right\}^{1/2}\cdot \varepsilon + \left\{\hat{R}^2_{\tau, \bm{x}}(\bm{\beta}_{1}, \bm{\beta}_0)\right\}^{1/2}\cdot  L_{K,a} 
\right] . \label{eq::samplingdistribution}
\end{equation}
The estimated distribution \eqref{eq::samplingdistribution} provides a basis for constructing confidence intervals for $\tau$. However,  it is not convenient for theoretical analyses. Below we find the probability limit of \eqref{eq::samplingdistribution}.

\begin{theorem}\label{thm:asymp_behavior_var_ci_ana_more}
Under ReM and Conditions \ref{con:fp} and \ref{con:ana_more_bal_criterion},  
the estimated distribution of $\hat{\tau}(\bm{\beta}_1, \bm{\beta}_0)$ in \eqref{eq::samplingdistribution} has the same limit as 
\begin{eqnarray}
&&\left\{
V_{\tau\tau}(1-R^2_{\tau, \bm{w}})
+
S^2_{\tau\setminus \bm{w}}
+ 
(r_1r_0)^{-1}(\bm{\rcoef}-\tilde{\bm{\rcoef}})'
\bm{S}^2_{\bm{w}\setminus \bm{x}}
	(\bm{\rcoef}-\tilde{\bm{\rcoef}})
\right\}^{1/2} \cdot \varepsilon    \nonumber \\
&& + 
\left\{ 
(r_1r_0)^{-1}
(\bm{\rcoef}-\tilde{\bm{\rcoef}})'
\bm{S}^2_{\bm{w}\mid \bm{x}}
(\bm{\rcoef}-\tilde{\bm{\rcoef}})
\right\}^{1/2}\cdot L_{K,a}.\label{eq::estimateddist4}
\end{eqnarray}
\end{theorem}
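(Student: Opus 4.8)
Since $\varepsilon$ and $L_{K,a}$ in \eqref{eq::samplingdistribution} are fixed auxiliary variables not depending on $n$, the estimated distribution \eqref{eq::samplingdistribution} and the target distribution \eqref{eq::estimateddist4} differ only through their coefficient pairs on $(\varepsilon, L_{K,a})$, so it suffices to show that the random coefficients $\hat{V}_{\tau\tau}^{1/2}(\bm{\beta}_1,\bm{\beta}_0)\{1-\hat{R}^2_{\tau,\bm{x}}(\bm{\beta}_1,\bm{\beta}_0)\}^{1/2}$ and $\hat{V}_{\tau\tau}^{1/2}(\bm{\beta}_1,\bm{\beta}_0)\{\hat{R}^2_{\tau,\bm{x}}(\bm{\beta}_1,\bm{\beta}_0)\}^{1/2}$ converge in probability to the corresponding deterministic coefficients in \eqref{eq::estimateddist4}. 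By \eqref{eq:V_beta_hat} and \eqref{eq:R2_beta_hat}, both $\hat{V}_{\tau\tau}(\bm{\beta}_1,\bm{\beta}_0)$ and the product $\hat{V}_{\tau\tau}(\bm{\beta}_1,\bm{\beta}_0)\hat{R}^2_{\tau,\bm{x}}(\bm{\beta}_1,\bm{\beta}_0)$ are explicit rational functions of within-group sample variances and covariances of the adjusted observed outcomes $Y_i-\bm{\beta}_z'\bm{w}_i$ and of $\bm{w}_i,\bm{x}_i$, together with the deterministic matrices $\bm{S}_{\bm{w}}^2$ and $\bm{S}_{\bm{x}}^2$; hence, by the continuous mapping theorem, it is enough to identify the probability limits of these two quantities.

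The first step is to transfer consistency of the sample moments from the CRE to ReM. Under the CRE the finite population central limit theorem \citep{lidingclt2016} gives $M\apprsim\chi^2_K$, so $\Pr(\mathcal{M})=\Pr(M\le a)\to\Pr(\chi^2_K\le a)\in(0,1)$; therefore any statistic $T_n$ with $T_n\to T$ in probability under the CRE satisfies $\Pr(|T_n-T|>\epsilon\mid\mathcal{M})\le\Pr(|T_n-T|>\epsilon)/\Pr(\mathcal{M})\to0$, i.e.\ $T_n\to T$ in probability under ReM as well. Under the CRE and Condition \ref{con:fp}, the within-group sample quantities $s^2_{Y(z;\bm{\beta}_z)}$, $\bm{s}_{Y(z;\bm{\beta}_z),\bm{w}}$, $\bm{s}_{Y(z;\bm{\beta}_z),\bm{x}}$ and $s^2_{Y(z;\bm{\beta}_z)\mid\bm{x}}$ converge in probability to the finite population quantities $S^2_{Y(z;\bm{\beta}_z)}$, $\bm{S}_{Y(z;\bm{\beta}_z),\bm{w}}$, $\bm{S}_{Y(z;\bm{\beta}_z),\bm{x}}$ and $S^2_{Y(z;\bm{\beta}_z)\mid\bm{x}}$ by a Chebyshev-type finite population law of large numbers (the adjusted outcomes are fixed linear combinations, and the projected variance is a continuous function of sample covariances with $\bm{x}$ together with the nonsingular limit of $\bm{S}_{\bm{x}}^2$); the conditioning argument then carries all of these to ReM.

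The second step is the computation and the matching. Because $Y_i(1;\bm{\beta}_1)-Y_i(0;\bm{\beta}_0)=\tau_i(\bm{\beta}_1,\bm{\beta}_0)$ we have $\bm{s}_{Y(1;\bm{\beta}_1),\bm{w}}-\bm{s}_{Y(0;\bm{\beta}_0),\bm{w}}\to\bm{S}_{\tau(\bm{\beta}_1,\bm{\beta}_0),\bm{w}}$ in probability, and likewise with $\bm{x}$ in place of $\bm{w}$; so the probability limit of $\hat{V}_{\tau\tau}(\bm{\beta}_1,\bm{\beta}_0)$ is $r_1^{-1}S^2_{Y(1;\bm{\beta}_1)}+r_0^{-1}S^2_{Y(0;\bm{\beta}_0)}-S^2_{\tau(\bm{\beta}_1,\bm{\beta}_0)\mid\bm{w}}=V_{\tau\tau}(\bm{\beta}_1,\bm{\beta}_0)+S^2_{\tau(\bm{\beta}_1,\bm{\beta}_0)\setminus\bm{w}}$ (using \eqref{eq:V_tau_adj}, with $S^2_{\tau(\bm{\beta}_1,\bm{\beta}_0)\mid\bm{w}}$ and $S^2_{\tau(\bm{\beta}_1,\bm{\beta}_0)\setminus\bm{w}}$ the finite population variances of the linear projection of $\tau_i(\bm{\beta}_1,\bm{\beta}_0)$ on $\bm{w}_i$ and of its residual), while the probability limit of $\hat{V}_{\tau\tau}(\bm{\beta}_1,\bm{\beta}_0)\hat{R}^2_{\tau,\bm{x}}(\bm{\beta}_1,\bm{\beta}_0)$ is $r_1^{-1}S^2_{Y(1;\bm{\beta}_1)\mid\bm{x}}+r_0^{-1}S^2_{Y(0;\bm{\beta}_0)\mid\bm{x}}-S^2_{\tau(\bm{\beta}_1,\bm{\beta}_0)\mid\bm{x}}$, which equals $V_{\tau\tau}(\bm{\beta}_1,\bm{\beta}_0)R^2_{\tau,\bm{x}}(\bm{\beta}_1,\bm{\beta}_0)$ by \eqref{eq:R2_tau_x_beta}. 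Thus the $L_{K,a}$-coefficient squared of \eqref{eq::samplingdistribution} converges to $V_{\tau\tau}(\bm{\beta}_1,\bm{\beta}_0)R^2_{\tau,\bm{x}}(\bm{\beta}_1,\bm{\beta}_0)$ and the $\varepsilon$-coefficient squared to $V_{\tau\tau}(\bm{\beta}_1,\bm{\beta}_0)\{1-R^2_{\tau,\bm{x}}(\bm{\beta}_1,\bm{\beta}_0)\}+S^2_{\tau(\bm{\beta}_1,\bm{\beta}_0)\setminus\bm{w}}$, and since the probability limit of $\hat{V}_{\tau\tau}(\bm{\beta}_1,\bm{\beta}_0)$ is $\ge V_{\tau\tau}(\bm{\beta}_1,\bm{\beta}_0)>0$ the continuous mapping theorem does apply. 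To match with \eqref{eq::estimateddist4}: Condition \ref{con:ana_more_bal_criterion} implies Condition \ref{con:ana_more}, so by Corollary \ref{cor:analysis_more}, comparing \eqref{eq:adj_rem} with \eqref{eq::corollary3}, $V_{\tau\tau}(\bm{\beta}_1,\bm{\beta}_0)R^2_{\tau,\bm{x}}(\bm{\beta}_1,\bm{\beta}_0)=(r_1r_0)^{-1}(\bm{\rcoef}-\tilde{\bm{\rcoef}})'\bm{S}^2_{\bm{w}\mid\bm{x}}(\bm{\rcoef}-\tilde{\bm{\rcoef}})$ and $V_{\tau\tau}(\bm{\beta}_1,\bm{\beta}_0)\{1-R^2_{\tau,\bm{x}}(\bm{\beta}_1,\bm{\beta}_0)\}=V_{\tau\tau}(1-R^2_{\tau,\bm{w}})+(r_1r_0)^{-1}(\bm{\rcoef}-\tilde{\bm{\rcoef}})'\bm{S}^2_{\bm{w}\setminus\bm{x}}(\bm{\rcoef}-\tilde{\bm{\rcoef}})$; and since $\tau_i(\bm{\beta}_1,\bm{\beta}_0)=\tau_i-(\bm{\beta}_1-\bm{\beta}_0)'\bm{w}_i$ differs from $\tau_i$ only by a linear function of $\bm{w}_i$, the two share the same residual after linear projection on $\bm{w}_i$, so $S^2_{\tau(\bm{\beta}_1,\bm{\beta}_0)\setminus\bm{w}}=S^2_{\tau\setminus\bm{w}}$. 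Substituting gives exactly the coefficients in \eqref{eq::estimateddist4}.

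The step I expect to be the main obstacle is the first one: establishing consistency of the within-group second sample moments under the CRE (via the finite population variance bounds implied by Condition \ref{con:fp}(iii)) and, more essentially, the observation that conditioning on $\mathcal{M}$, an event of asymptotically positive probability, preserves convergence in probability so that the same limits hold under ReM. Everything after that is algebraic bookkeeping with the projection identities already set up in Section \ref{sec:opt_rem}.
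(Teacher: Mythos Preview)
Your proposal is correct and follows essentially the same route as the paper: establish consistency of the relevant sample moments under ReM, compute the probability limits of $\hat{V}_{\tau\tau}(\bm{\beta}_1,\bm{\beta}_0)$ and $\hat{V}_{\tau\tau}(\bm{\beta}_1,\bm{\beta}_0)\hat{R}^2_{\tau,\bm{x}}(\bm{\beta}_1,\bm{\beta}_0)$, use $S^2_{\tau(\bm{\beta}_1,\bm{\beta}_0)\setminus\bm{w}}=S^2_{\tau\setminus\bm{w}}$, and then invoke Corollary~\ref{cor:analysis_more} to rewrite the coefficients in the form \eqref{eq::estimateddist4}. The only presentational difference is that the paper packages the consistency step as Lemmas~A1--A2 (citing \citet{asymrerand2106} directly for the ReM case), whereas you supply the explicit conditioning argument via $\Pr(\mathcal{M})\to\Pr(\chi^2_K\le a)>0$, which is a slightly more self-contained way to reach the same conclusion.
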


From Corollary \ref{cor:analysis_more} and Theorem \ref{thm:asymp_behavior_var_ci_ana_more}, \eqref{eq::estimateddist4} differs from the true asymptotic distribution \eqref{eq::corollary3}  in $S^2_{\tau\setminus \bm{w}}$. We can not estimate $S^2_{\tau\setminus \bm{w}}$ consistently using the observed data. Consequently, the probability limit has wider quantile ranges than the true asymptotic distribution, which results in conservative confidence intervals. This kind of conservativeness is a feature of finite population causal inference known ever since \citet{Neyman:1923}'s seminal work. See the Supplementary Material for a rigorous proof of the conservativeness.
We  discuss two special cases of Theorem \ref{thm:asymp_behavior_var_ci_ana_more} below. 

\subsubsection{Special case: regression adjustment under the CRE}

Again, the CRE is ReM with $a=\infty$ and $\bm{x} = \emptyset$.  
Condition \ref{con:ana_more_bal_criterion} holds automatically under the CRE. 
Theorem \ref{thm:asymp_behavior_var_ci_ana_more} immediately implies 
the following result.

\begin{corollary}\label{corr:infer_eff_cre}
Under the CRE and Condition \ref{con:fp},  the estimated distribution of $\hat{\tau}(\bm{\beta}_1, \bm{\beta}_0)$ in \eqref{eq::samplingdistribution} has the same limit as 
\begin{align}\label{eq:reg_cre_est}
\left\{ 
V_{\tau\tau}(1-R^2_{\tau, \bm{w}}) 
+
S^2_{\tau\setminus \bm{w}}
+ 
(r_1r_0)^{-1}(\bm{\gamma}-\tilde{\bm{\gamma}})'
\bm{S}^2_{\bm{w}}
	(\bm{\gamma}-\tilde{\bm{\gamma}})
\right\}^{1/2} \cdot  \varepsilon.
\end{align}
\end{corollary}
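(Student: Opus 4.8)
The plan is to obtain Corollary~\ref{corr:infer_eff_cre} as a direct specialization of Theorem~\ref{thm:asymp_behavior_var_ci_ana_more}. First I would record that the CRE is ReM with $\bm{x}=\emptyset$ and $a=\infty$, so the truncated Gaussian $L_{K,a}$ degenerates: with no design covariates there is no balance constraint, and formally $K=0$ makes the $L_{K,a}$ term vanish. Hence the second bracketed term in \eqref{eq::estimateddist4}, namely $(r_1r_0)^{-1}(\bm{\rcoef}-\tilde{\bm{\rcoef}})'\bm{S}^2_{\bm{w}\mid\bm{x}}(\bm{\rcoef}-\tilde{\bm{\rcoef}})$, drops out entirely, and only the $\varepsilon$ component survives.

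Next I would simplify the $\varepsilon$ coefficient. When $\bm{x}=\emptyset$, the projection of $\bm{w}$ on $\bm{x}$ is zero, so $\bm{S}^2_{\bm{w}\setminus\bm{x}}=\bm{S}^2_{\bm{w}}$ and $R^2_{\tau,\bm{x}}=0$. Moreover $\tilde{\bm{\rcoef}}$ is unchanged—it is still the population projection coefficient $\bm{V}_{\bm{ww}}^{-1}\bm{V}_{\bm{w}\tau}$ from \eqref{eq:beta_tilde}—because that definition never referenced $\bm{x}$. Substituting these into the $\varepsilon$-bracket of \eqref{eq::estimateddist4} gives exactly
\[
V_{\tau\tau}(1-R^2_{\tau,\bm{w}})+S^2_{\tau\setminus\bm{w}}+(r_1r_0)^{-1}(\bm{\gamma}-\tilde{\bm{\gamma}})'\bm{S}^2_{\bm{w}}(\bm{\gamma}-\tilde{\bm{\gamma}}),
\]
which is \eqref{eq:reg_cre_est}. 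I would also note that Condition~\ref{con:ana_more_bal_criterion} is vacuously satisfied under the CRE since there is nothing in the design for the analyzer to fail to know, so the hypotheses of Theorem~\ref{thm:asymp_behavior_var_ci_ana_more} are met with only Condition~\ref{con:fp} assumed.

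The only real subtlety—and thus the step I would be most careful about—is justifying the degeneracy of the $L_{K,a}$ term when $\bm{x}=\emptyset$, i.e.\ arguing that the $K=0$, $a=\infty$ boundary case of the estimated-distribution construction \eqref{eq::samplingdistribution} genuinely reduces to the pure-$\varepsilon$ limit rather than requiring a separate derivation. Since the earlier excerpt already treats the CRE as the $a=\infty$ case of ReM (e.g.\ in Corollary~\ref{corr:reg_cre} and the discussion preceding Corollary~\ref{corr:infer_eff_cre}), and since $\hat R^2_{\tau,\bm{x}}(\bm{\beta}_1,\bm{\beta}_0)\to 0$ when there are no design covariates, this is a matter of citing that convention consistently; once it is in place, everything else is a routine substitution. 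An alternative, fully self-contained route would bypass Theorem~\ref{thm:asymp_behavior_var_ci_ana_more} and instead compute the probability limit of $\hat V_{\tau\tau}(\bm{\beta}_1,\bm{\beta}_0)$ in \eqref{eq:V_beta_hat} directly: by the law of large numbers for finite-population sampling the sample (co)variances $s^2_{Y(z;\bm{\beta}_z)}$, $\bm{s}_{Y(z;\bm{\beta}_z),\bm{w}}$ converge to their finite-population analogues, and a Neyman-type identity expresses the limit of the difference-of-slopes correction in terms of $V_{\tau\tau}$, $R^2_{\tau,\bm{w}}$, $S^2_{\tau\setminus\bm{w}}$, and the quadratic penalty $(r_1r_0)^{-1}(\bm{\gamma}-\tilde{\bm{\gamma}})'\bm{S}^2_{\bm{w}}(\bm{\gamma}-\tilde{\bm{\gamma}})$ for the non-optimal choice of $\bm{\gamma}$—but this merely re-derives a special case of what Theorem~\ref{thm:asymp_behavior_var_ci_ana_more} already gives, so I would present it only as a remark if at all.
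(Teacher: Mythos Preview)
Your proposal is correct and matches the paper's own proof exactly: the paper simply states that Corollary~\ref{corr:infer_eff_cre} follows from Theorem~\ref{thm:asymp_behavior_var_ci_ana_more} with $a=\infty$ and $\bm{x}=\emptyset$. Your elaboration of the substitutions ($\bm{S}^2_{\bm{w}\setminus\bm{x}}=\bm{S}^2_{\bm{w}}$, degeneracy of the $L_{K,a}$ term, and the vacuity of Condition~\ref{con:ana_more_bal_criterion} under the CRE) is accurate and fills in details the paper leaves implicit.
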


\subsubsection{Special case: no covariate adjustment under ReM}
Using Theorem \ref{thm:asymp_behavior_var_ci_ana_more} with $\bm{\beta}_1 = \bm{\beta}_0 = \bm{0}$, 
we can immediately obtain the probability limit of the estimated distribution of $\hat{\tau}$ under ReM. 
\begin{corollary}\label{corr:est_dist_nonadjust}
Under ReM and Conditions \ref{con:fp} and \ref{con:ana_more_bal_criterion}, the estimated distribution of $\hat{\tau}$ in \eqref{eq::samplingdistribution} has the same limit as 
\begin{align}\label{eq:est_unadj_ana_more}
\left\{V_{\tau\tau}(1-R^2_{\tau,\bm{x}}) + S^2_{\tau\setminus \bm{w}} \right\}^{1/2} \cdot \varepsilon + 
\left( V_{\tau\tau} R^2_{\tau,\bm{x}} \right)^{1/2} \cdot L_{K,a}. 
\end{align}
\end{corollary}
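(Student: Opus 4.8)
The plan is to specialize Theorem~\ref{thm:asymp_behavior_var_ci_ana_more} to $\bm{\beta}_1=\bm{\beta}_0=\bm{0}$ and then rewrite the two quadratic forms in $\tilde{\bm{\rcoef}}$ appearing in \eqref{eq::estimateddist4} in terms of the multiple correlations $R^2_{\tau,\bm{x}}$ and $R^2_{\tau,\bm{w}}$. Since $\hat{\tau}\equiv\hat{\tau}(\bm{0},\bm{0})$ corresponds to $\bm{\rcoef}=r_0\bm{\beta}_1+r_1\bm{\beta}_0=\bm{0}$, and since Condition~\ref{con:ana_more_bal_criterion} is in force (which implies Condition~\ref{con:ana_more}, so that $\tilde{\bm{\rcoef}}_{\res}=\tilde{\bm{\rcoef}}$ by Corollary~\ref{cor:analysis_more} and Theorem~\ref{thm:asymp_behavior_var_ci_ana_more} applies unambiguously), evaluating \eqref{eq::estimateddist4} at $\bm{\rcoef}=\bm{0}$ shows that the estimated distribution \eqref{eq::samplingdistribution} of $\hat{\tau}$ has the same limit as $\{V_{\tau\tau}(1-R^2_{\tau,\bm{w}})+S^2_{\tau\setminus\bm{w}}+(r_1r_0)^{-1}\tilde{\bm{\rcoef}}'\bm{S}^2_{\bm{w}\setminus\bm{x}}\tilde{\bm{\rcoef}}\}^{1/2}\cdot\varepsilon+\{(r_1r_0)^{-1}\tilde{\bm{\rcoef}}'\bm{S}^2_{\bm{w}\mid\bm{x}}\tilde{\bm{\rcoef}}\}^{1/2}\cdot L_{K,a}$. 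It then remains only to evaluate the two quadratic forms.

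For the $L_{K,a}$ term I would use that Condition~\ref{con:ana_more} makes $\hat{\bm{\tau}}_{\bm{x}}=\bm{B}_1\hat{\bm{\tau}}_{\bm{w}}$ a linear image of $\hat{\bm{\tau}}_{\bm{w}}$, so the $L^2$-span of $\hat{\bm{\tau}}_{\bm{x}}$ is contained in that of $\hat{\bm{\tau}}_{\bm{w}}$, and by the tower property of linear projections $\proj(\hat{\tau}\mid\hat{\bm{\tau}}_{\bm{x}})=\proj(\proj(\hat{\tau}\mid\hat{\bm{\tau}}_{\bm{w}})\mid\hat{\bm{\tau}}_{\bm{x}})$. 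Combined with \eqref{eq:beta_tilde} and linearity of projections this gives $\proj(\hat{\tau}\mid\hat{\bm{\tau}}_{\bm{x}})=\tau+\tilde{\bm{\rcoef}}'\proj(\hat{\bm{\tau}}_{\bm{w}}\mid\hat{\bm{\tau}}_{\bm{x}})$, whose variance equals $\tilde{\bm{\rcoef}}'\bm{V}_{\bm{wx}}\bm{V}_{\bm{xx}}^{-1}\bm{V}_{\bm{xw}}\tilde{\bm{\rcoef}}=(r_1r_0)^{-1}\tilde{\bm{\rcoef}}'\bm{S}^2_{\bm{w}\mid\bm{x}}\tilde{\bm{\rcoef}}$, using $\bm{V}_{\bm{wx}}\bm{V}_{\bm{xx}}^{-1}\bm{V}_{\bm{xw}}=(r_1r_0)^{-1}\bm{S}_{\bm{w},\bm{x}}(\bm{S}^2_{\bm{x}})^{-1}\bm{S}_{\bm{x},\bm{w}}=(r_1r_0)^{-1}\bm{S}^2_{\bm{w}\mid\bm{x}}$ from \eqref{eq::vvvv}. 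By the definition \eqref{eq:R2_tau_x} of $R^2_{\tau,\bm{x}}$ this same variance equals $V_{\tau\tau}R^2_{\tau,\bm{x}}$, giving $(r_1r_0)^{-1}\tilde{\bm{\rcoef}}'\bm{S}^2_{\bm{w}\mid\bm{x}}\tilde{\bm{\rcoef}}=V_{\tau\tau}R^2_{\tau,\bm{x}}$. The analogous computation for $\hat{\bm{\tau}}_{\bm{w}}$ is even simpler: $\proj(\hat{\tau}\mid\hat{\bm{\tau}}_{\bm{w}})=\tau+\tilde{\bm{\rcoef}}'\hat{\bm{\tau}}_{\bm{w}}$ has variance $(r_1r_0)^{-1}\tilde{\bm{\rcoef}}'\bm{S}^2_{\bm{w}}\tilde{\bm{\rcoef}}$, which equals $V_{\tau\tau}R^2_{\tau,\bm{w}}$ by \eqref{eq:R2_mid_w}; subtracting and using the orthogonal decomposition $\bm{S}^2_{\bm{w}}=\bm{S}^2_{\bm{w}\mid\bm{x}}+\bm{S}^2_{\bm{w}\setminus\bm{x}}$ yields $(r_1r_0)^{-1}\tilde{\bm{\rcoef}}'\bm{S}^2_{\bm{w}\setminus\bm{x}}\tilde{\bm{\rcoef}}=V_{\tau\tau}(R^2_{\tau,\bm{w}}-R^2_{\tau,\bm{x}})$.

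Substituting both evaluations into the reduced limit, the squared coefficient of $\varepsilon$ collapses to $V_{\tau\tau}(1-R^2_{\tau,\bm{w}})+S^2_{\tau\setminus\bm{w}}+V_{\tau\tau}(R^2_{\tau,\bm{w}}-R^2_{\tau,\bm{x}})=V_{\tau\tau}(1-R^2_{\tau,\bm{x}})+S^2_{\tau\setminus\bm{w}}$ and that of $L_{K,a}$ becomes $V_{\tau\tau}R^2_{\tau,\bm{x}}$, which is exactly \eqref{eq:est_unadj_ana_more}. This last step is pure bookkeeping; the only points requiring care are the $(r_1r_0)$ factors relating the $\bm{V}$-blocks and the $\bm{S}^2$-blocks in \eqref{eq::vvvv}, and the remark that, although $\bm{S}^2_{\bm{w}\setminus\bm{x}}$ may be singular, the coefficient $\tilde{\bm{\rcoef}}$ in \eqref{eq:beta_tilde} is always uniquely defined so the quadratic forms are unambiguous. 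I therefore expect no real obstacle; the one mildly substantive observation is that, via the tower property, Condition~\ref{con:ana_more} forces the projection of $\hat{\tau}$ onto $\hat{\bm{\tau}}_{\bm{x}}$ to factor through $\hat{\bm{\tau}}_{\bm{w}}$, which is precisely what makes $\tilde{\bm{\rcoef}}'\bm{S}^2_{\bm{w}\mid\bm{x}}\tilde{\bm{\rcoef}}$ collapse to $V_{\tau\tau}R^2_{\tau,\bm{x}}$.
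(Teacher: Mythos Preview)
Your proof is correct. You specialize the final statement of Theorem~\ref{thm:asymp_behavior_var_ci_ana_more} at $\bm{\rcoef}=\bm{0}$ and then evaluate the two quadratic forms $(r_1r_0)^{-1}\tilde{\bm{\rcoef}}'\bm{S}^2_{\bm{w}\mid\bm{x}}\tilde{\bm{\rcoef}}$ and $(r_1r_0)^{-1}\tilde{\bm{\rcoef}}'\bm{S}^2_{\bm{w}\setminus\bm{x}}\tilde{\bm{\rcoef}}$ via the tower property; the identities $(r_1r_0)^{-1}\tilde{\bm{\rcoef}}'\bm{S}^2_{\bm{w}\mid\bm{x}}\tilde{\bm{\rcoef}}=V_{\tau\tau}R^2_{\tau,\bm{x}}$ and $(r_1r_0)^{-1}\tilde{\bm{\rcoef}}'\bm{S}^2_{\bm{w}}\tilde{\bm{\rcoef}}=V_{\tau\tau}R^2_{\tau,\bm{w}}$ are exactly right.

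The paper's route is shorter, though, because it does not pass through the quadratic-form display \eqref{eq::estimateddist4}. Instead it uses the intermediate representation established in the proof of Theorem~\ref{thm:asymp_behavior_var_ci_ana_more} (equivalently, Lemma~A2 in the appendix), namely that the probability limit of \eqref{eq::samplingdistribution} is
\[
\bigl\{V_{\tau\tau}(\bm{\beta}_1,\bm{\beta}_0)\{1-R^2_{\tau,\bm{x}}(\bm{\beta}_1,\bm{\beta}_0)\}+S^2_{\tau\setminus\bm{w}}\bigr\}^{1/2}\cdot\varepsilon+\bigl\{V_{\tau\tau}(\bm{\beta}_1,\bm{\beta}_0)R^2_{\tau,\bm{x}}(\bm{\beta}_1,\bm{\beta}_0)\bigr\}^{1/2}\cdot L_{K,a}.
\]
Setting $\bm{\beta}_1=\bm{\beta}_0=\bm{0}$ gives \eqref{eq:est_unadj_ana_more} at once, since $V_{\tau\tau}(\bm{0},\bm{0})=V_{\tau\tau}$ and $R^2_{\tau,\bm{x}}(\bm{0},\bm{0})=R^2_{\tau,\bm{x}}$. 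In effect, your computation of the quadratic forms is precisely the work of passing from this intermediate form to the final display \eqref{eq::estimateddist4} and back again at $\bm{\rcoef}=\bm{0}$; the paper avoids that round trip. What your approach buys is that it only invokes the \emph{statement} of Theorem~\ref{thm:asymp_behavior_var_ci_ana_more} rather than a step inside its proof, so it is more self-contained at the cost of a little extra algebra.
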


\subsection{General scenarios with partial knowledge of the design}\label{sec:est_dist_rem_general}

We then consider scenarios without Condition \ref{con:ana_more_bal_criterion}. The analyzer either does not have all the covariate information used in the design or does not know the balance criterion for ReM. 
We can still estimate $V_{\tau\tau}(\bm{\beta}_1, \bm{\beta}_0)$ by $\hat{V}_{\tau\tau}(\bm{\beta}_1, \bm{\beta}_0)$ in \eqref{eq:V_beta_hat}. 
However, we cannot consistently estimate $R^2_{\tau, \bm{x}}(\bm{\beta}_{1}, \bm{\beta}_0)$ due to incomplete information of the covariates used in the design stage. 
We can underestimate $R^2_{\tau, \bm{x}}(\bm{\beta}_{1}, \bm{\beta}_0)$ by zero,
and then estimate the sampling distribution of $\hat{\tau}(\bm{\beta}_1, \bm{\beta}_0)$ by 
\begin{equation}
\label{eq::incompleteinformation}
   \hat{V}^{1/2}_{\tau\tau}(\bm{\beta}_1, \bm{\beta}_0) \cdot \varepsilon . 
\end{equation}

An important fact is that the lengths of quantile ranges of the asymptotic distribution \eqref{eq:adj_rem} are nonincreasing in $R^2_{\tau, \bm{x}}(\bm{\beta}_{1}, \bm{\beta}_0)$. This fact guarantees that the estimated distribution
\eqref{eq::incompleteinformation} provides a conservative variance estimator of $\hat{\tau}(\bm{\beta}_1, \bm{\beta}_0)$ and conservative confidence intervals for $\tau$. See the Supplementary Material for a rigorous proof of the conservativeness.
Moreover, \eqref{eq::incompleteinformation} equals \eqref{eq::samplingdistribution} with $a=\infty$, the estimated distribution pretending that ReM does not happen in the design stage. 
Consequently, the probability limit of \eqref{eq::incompleteinformation} equals \eqref{eq:reg_cre_est}, as the following theorem states.

\begin{theorem}\label{thm:conf_general}
Under ReM and Condition \ref{con:fp}, 
the estimated distribution of $\hat{\tau}(\bm{\beta}_1, \bm{\beta}_0)$ in \eqref{eq::incompleteinformation} has the same limit as \eqref{eq:reg_cre_est}.
\end{theorem}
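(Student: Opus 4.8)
The plan is to show that the estimated distribution \eqref{eq::incompleteinformation} is a special case of the estimated distribution \eqref{eq::samplingdistribution} analyzed in Theorem \ref{thm:asymp_behavior_var_ci_ana_more}, namely the one obtained by formally setting $a=\infty$ (equivalently, pretending the design is a CRE rather than ReM). Since Corollary \ref{corr:infer_eff_cre} already identifies the probability limit of \eqref{eq::samplingdistribution} under the CRE as \eqref{eq:reg_cre_est}, it suffices to establish that (i) \eqref{eq::incompleteinformation} is exactly \eqref{eq::samplingdistribution} with $a = \infty$, and (ii) the probability-limit argument behind Corollary \ref{corr:infer_eff_cre} does not actually use the CRE assumption on the design, only the consistency of the sample moments that feed into $\hat V_{\tau\tau}(\bm\beta_1,\bm\beta_0)$.

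For step (i), I would note that when $a = \infty$ the truncated Gaussian $L_{K,a} = D_1 \mid \bm D'\bm D \le a$ degenerates: the conditioning event has probability one, so $L_{K,\infty} \sim \mathcal N(0,1)$, and it remains independent of $\varepsilon$. Then \eqref{eq::samplingdistribution} becomes $\hat V_{\tau\tau}^{1/2}(\bm\beta_1,\bm\beta_0)[\{1-\hat R^2\}^{1/2}\varepsilon + \{\hat R^2\}^{1/2} L_{K,\infty}]$, which is a mean-zero Gaussian with variance $\hat V_{\tau\tau}(\bm\beta_1,\bm\beta_0)(1-\hat R^2 + \hat R^2) = \hat V_{\tau\tau}(\bm\beta_1,\bm\beta_0)$; that is exactly the law of $\hat V_{\tau\tau}^{1/2}(\bm\beta_1,\bm\beta_0)\cdot\varepsilon$, i.e. \eqref{eq::incompleteinformation}. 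So the two estimated distributions coincide identically (not just in the limit), regardless of the true value of $\hat R^2_{\tau,\bm x}(\bm\beta_1,\bm\beta_0)$.

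For step (ii), the key observation is that $\hat V_{\tau\tau}(\bm\beta_1,\bm\beta_0)$ in \eqref{eq:V_beta_hat} is built from the sample variances and covariances $s^2_{Y(z;\bm\beta_z)}$, $\bm s_{Y(z;\bm\beta_z),\bm w}$ within the treatment and control groups. Under ReM and Condition \ref{con:fp}, these within-group sample moments converge in probability to the corresponding finite-population moments: rerandomization on $\bm x$ changes the distribution of $\hat{\bm\tau}_{\bm x}$ but, by the same arguments used in \citet{asymrerand2106} and the analysis leading to Theorem \ref{thm:asymp_behavior_var_ci_ana_more}, the within-group empirical second moments of the adjusted outcomes are still consistent for their population analogues (their fluctuations are $o_p(1)$ whether or not we condition on $\mathcal M$, since the acceptance probability is bounded away from zero). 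Hence $\hat V_{\tau\tau}(\bm\beta_1,\bm\beta_0)$ has the same probability limit under ReM as under the CRE, and a short computation — identical to the one in the proof of Corollary \ref{corr:infer_eff_cre} — identifies that limit as $V_{\tau\tau}(1-R^2_{\tau,\bm w}) + S^2_{\tau\setminus\bm w} + (r_1r_0)^{-1}(\bm\gamma-\tilde{\bm\gamma})'\bm S^2_{\bm w}(\bm\gamma-\tilde{\bm\gamma})$, with $\bm\gamma = r_0\bm\beta_1 + r_1\bm\beta_0$. Combining with step (i), \eqref{eq::incompleteinformation} converges in distribution to the mean-zero Gaussian with this variance, which is precisely \eqref{eq:reg_cre_est}.

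The main obstacle is the consistency claim in step (ii): one must verify carefully that conditioning on the ReM event $\mathcal M$ does not perturb the probability limits of the within-group sample covariances that appear in $\hat V_{\tau\tau}(\bm\beta_1,\bm\beta_0)$. This is where the real work lies, but it is essentially the same moment-consistency lemma already invoked in proving Theorem \ref{thm:asymp_behavior_var_ci_ana_more} and Corollary \ref{cor:analysis_more}; the point here is just that the particular linear combination \eqref{eq:V_beta_hat} does not involve any quantity (such as $R^2_{\tau,\bm x}$) whose estimator fails to be consistent under incomplete design information. Once that is in hand, the theorem follows immediately by assembling steps (i) and (ii) and quoting Corollary \ref{corr:infer_eff_cre}.
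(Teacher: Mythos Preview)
Your proposal is correct and follows essentially the same route as the paper. The paper proceeds slightly more directly: it cites a moment-consistency lemma showing $\hat V_{\tau\tau}(\bm\beta_1,\bm\beta_0)\to \tilde V_{\tau\tau}(\bm\beta_1,\bm\beta_0)\equiv V_{\tau\tau}(\bm\beta_1,\bm\beta_0)+S^2_{\tau\setminus\bm w}$ under ReM (your step~(ii)), and then applies the algebraic identity $V_{\tau\tau}(\bm\beta_1,\bm\beta_0)=V_{\tau\tau}(1-R^2_{\tau,\bm w})+(r_1r_0)^{-1}(\bm\gamma-\tilde{\bm\gamma})'\bm S^2_{\bm w}(\bm\gamma-\tilde{\bm\gamma})$ to obtain \eqref{eq:reg_cre_est}. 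Your step~(i) is a valid (and illuminating) observation but is not actually needed once step~(ii) is in hand; and your plan to ``quote Corollary~\ref{corr:infer_eff_cre}'' amounts precisely to invoking these two ingredients, so the arguments coincide.
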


From Theorems \ref{thm:adj_rem} and \ref{thm:conf_general}, under ReM, the limit of the estimated distribution of $\hat{\tau}(\bm{\beta}_1, \bm{\beta}_0)$ in \eqref{eq::incompleteinformation} differs from the corresponding true asymptotic distribution in $S^2_{\tau\setminus \bm{w}}$ and $R^2_{\tau,\bm{x}}(\bm{\beta}_1, \bm{\beta}_0)$. Neither $S^2_{\tau\setminus \bm{w}}$ nor $R^2_{\tau,\bm{x}}(\bm{\beta}_1, \bm{\beta}_0)$ have consistent estimators based on the observed data. The difficulty in estimating  $S^2_{\tau\setminus \bm{w}}$ comes from the fact that for each unit we can observe  at most one potential outcome, but the difficulty in estimating $R^2_{\tau,\bm{x}}(\bm{\beta}_1, \bm{\beta}_0)$ comes from the incomplete information of the design.

Compared to \eqref{eq::samplingdistribution}, 
the estimated distribution \eqref{eq::incompleteinformation} has unnecessarily wider quantile ranges due to the lack of information for ReM, including $R^2_{\tau, \bm{x}}(\bm{\beta}_{1}, \bm{\beta}_0), K$ and $a$. 
In  \eqref{eq::incompleteinformation}, we conduct conservative inference and consider the worse-case scenario, which is the CRE with $\bm{x} = \emptyset$ and $a = \infty$. 
In practice, we can also conduct sensitivity analysis and check how the conclusions change as  $R^2_{\tau, \bm{x}}(\bm{\beta}_{1}, \bm{\beta}_0), K$ and $a$ vary. Without additional information, we still use \eqref{eq::incompleteinformation} to construct confidence intervals in the next section. 

\section{
Optimal adjustment based on the estimated distribution
}\label{sec::Coptimalreg}

$\mathcal{S}$-optimality is based on the uncertainty of the sampling distribution. As discussed in Section \ref{sec::estimatedprecision}, we may not have consistent estimators for the sampling distributions, and often report conservative confidence intervals based on the estimated distributions. Because the lengths of confidence intervals provide important measures of uncertainty in frequentist inference, we focus on the optimal choice of $(\bm{\beta}_1, \bm{\beta}_0)$ based on the estimated distributions proposed in Section \ref{sec::estimatedprecision}.

Under ReM, among the estimators in \eqref{eq:reg}, a choice of $(\bm{\beta}_1, \bm{\beta}_0)$ is optimal  in terms of the estimated precision if the estimated distribution based on \eqref{eq::samplingdistribution} or \eqref{eq::incompleteinformation} has the
shortest asymptotic $1-\alpha$  quantile range for any $\alpha \in (0,1).$ The corresponding estimated distribution then must have the smallest variance. Conversely, if the estimated distributions are Gaussian, then the one with the smallest estimated variance is optimal  in terms of the estimated precision.

Apparently, the optimal adjusted estimator  in terms of the estimated precision
depends on the approach to constructing confidence intervals. 
In the ideal case, we want the probability limits of the estimated distributions to be identical to the true sampling distributions. Section \ref{sec::estimatedprecision}, however, shows that this is generally impossible due to treatment effect heterogeneity or the information only known to the designer. 
In some cases, the confidence intervals based on \eqref{eq::samplingdistribution} and \eqref{eq::incompleteinformation} are asymptotically exact given the analyzer's observed information. For example, if 
the residual from the linear projection of individual treatment effect on covariates $\bm{w}$ is constant
across all units,  
or equivalently $S^2_{\tau \setminus \bm{w}}=0$,  
\eqref{eq::samplingdistribution} provides asymptotically exact confidence intervals; 
if further the designer conducts CRE (i.e., ReM with $\bm{x} = \emptyset$ or $a =\infty$), \eqref{eq::incompleteinformation} provides asymptotically exact confidence intervals. In general, these confidence intervals can be conservative, and can be improved \citep{aronow2014, fogarty2018regression, ding2019decomposition}. 
We focus on \eqref{eq::samplingdistribution} and \eqref{eq::incompleteinformation} for their simplicity,
and divide this section to two subsections in parallel with Section \ref{sec::estimatedprecision}.

\subsection{When the analyzer knows all the information in the design stage}\label{sec:inference_ana_more}

From Theorem \ref{thm:asymp_behavior_var_ci_ana_more}, we can obtain the optimal adjusted estimator in terms of the estimated precision.  

\begin{corollary}\label{cor:Copt_ana_know_all}
Under ReM and Conditions \ref{con:fp} and \ref{con:ana_more_bal_criterion}, based on Theorem \ref{thm:asymp_behavior_var_ci_ana_more}, 
the optimal adjusted estimator in terms of the estimated precision
is attainable when $\bm{\rcoef}=\tilde{\bm{\rcoef}}$ or 
$r_0\bm{\beta}_1+r_1\bm{\beta}_0 = r_0 \tilde{\bm{\beta}}_1+r_1 \tilde{\bm{\beta}}_0 ,$  with the estimated distribution having the same limit as 
\begin{eqnarray}
\label{eq::estimated-optimal}
\{
V_{\tau\tau}(1-R^2_{\tau, \bm{w}})
+
S^2_{\tau\setminus \bm{w}}
\}^{1/2} \cdot \varepsilon.
\end{eqnarray}  
\end{corollary}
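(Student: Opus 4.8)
The plan is to start from Theorem~\ref{thm:asymp_behavior_var_ci_ana_more}, which already supplies the probability limit of the estimated distribution in closed form, and then minimize the quantile ranges of that limit over the adjustment coefficient $\bm{\rcoef} = r_0\bm{\beta}_1 + r_1\bm{\beta}_0$. Write the limiting estimated distribution in \eqref{eq::estimateddist4} as $\{A(\bm{\rcoef})\}^{1/2}\cdot\varepsilon + \{B(\bm{\rcoef})\}^{1/2}\cdot L_{K,a}$, where $A(\bm{\rcoef}) = V_{\tau\tau}(1-R^2_{\tau,\bm{w}}) + S^2_{\tau\setminus\bm{w}} + (r_1r_0)^{-1}(\bm{\rcoef}-\tilde{\bm{\rcoef}})'\bm{S}^2_{\bm{w}\setminus\bm{x}}(\bm{\rcoef}-\tilde{\bm{\rcoef}})$ and $B(\bm{\rcoef}) = (r_1r_0)^{-1}(\bm{\rcoef}-\tilde{\bm{\rcoef}})'\bm{S}^2_{\bm{w}\mid\bm{x}}(\bm{\rcoef}-\tilde{\bm{\rcoef}})$. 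The first observation is that $\bm{S}^2_{\bm{w}\setminus\bm{x}}$ and $\bm{S}^2_{\bm{w}\mid\bm{x}}$ are positive semidefinite, so both quadratic forms are nonnegative and vanish at $\bm{\rcoef} = \tilde{\bm{\rcoef}}$; hence $A$ and $B$ are \emph{simultaneously} minimized there, with $A(\tilde{\bm{\rcoef}}) = V_{\tau\tau}(1-R^2_{\tau,\bm{w}}) + S^2_{\tau\setminus\bm{w}}$ and $B(\tilde{\bm{\rcoef}}) = 0$. This is the same simultaneous-minimization structure used in the proof of Theorem~\ref{thm:analysis_more_opt}.

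Next I would show that shrinking $A$ or $B$ can only shorten every symmetric $1-\alpha$ quantile range of $\{A\}^{1/2}\varepsilon + \{B\}^{1/2}L_{K,a}$. Using $\{A\}^{1/2}\varepsilon \stackrel{d}{=} \{A(\tilde{\bm{\rcoef}})\}^{1/2}\varepsilon_0 + \{A - A(\tilde{\bm{\rcoef}})\}^{1/2}\varepsilon_1$ for independent standard Gaussians $\varepsilon_0,\varepsilon_1$ that are also independent of $L_{K,a}$, the limiting distribution equals in law $\{A(\tilde{\bm{\rcoef}})\}^{1/2}\varepsilon_0 + W$, with $W = \{A - A(\tilde{\bm{\rcoef}})\}^{1/2}\varepsilon_1 + \{B\}^{1/2}L_{K,a}$ independent of $\varepsilon_0$ and symmetric about zero. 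Because $\{A(\tilde{\bm{\rcoef}})\}^{1/2}\varepsilon_0$ is symmetric and unimodal, for every $t>0$ the interval $[c-t,c+t]$ captures the most Gaussian mass at $c=0$, so $\Pr(|\{A(\tilde{\bm{\rcoef}})\}^{1/2}\varepsilon_0 + W|\le t) \le \Pr(|\{A(\tilde{\bm{\rcoef}})\}^{1/2}\varepsilon_0|\le t)$; taking $t$ equal to the half-length $t_\alpha$ of the symmetric $1-\alpha$ quantile range of $\{A(\tilde{\bm{\rcoef}})\}^{1/2}\varepsilon_0$ gives $\Pr(|\{A(\tilde{\bm{\rcoef}})\}^{1/2}\varepsilon_0 + W|\le t_\alpha)\le 1-\alpha$, so the symmetric $1-\alpha$ quantile range of $\{A(\tilde{\bm{\rcoef}})\}^{1/2}\varepsilon_0 + W$ has half-length at least $t_\alpha$. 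Equivalently this is the quantile-range monotonicity already invoked in Section~\ref{sec:est_dist_rem_general} (``the lengths of quantile ranges \dots\ are nonincreasing in $R^2_{\tau,\bm{x}}$'') combined with monotonicity in the overall scale, and it rests on the symmetry and unimodality of \eqref{eq:adj_rem} established in \citet{asymrerand2106}; if preferred it may simply be cited from the Supplementary Material.

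Combining the two steps, $\bm{\rcoef} = \tilde{\bm{\rcoef}}$ yields the shortest asymptotic $1-\alpha$ quantile range of the estimated distribution among all estimators in \eqref{eq:reg}, for every $\alpha\in(0,1)$; this is precisely optimality in terms of the estimated precision, and the optimal limit is Gaussian, so it also has the smallest estimated variance. The choice is attainable because, by Proposition~\ref{prop:three_beta_tilde}, $(\bm{\beta}_1,\bm{\beta}_0) = (\tilde{\bm{\beta}}_1,\tilde{\bm{\beta}}_0)$ gives $\bm{\rcoef} = r_0\tilde{\bm{\beta}}_1 + r_1\tilde{\bm{\beta}}_0 = \tilde{\bm{\rcoef}}$, and more generally any $(\bm{\beta}_1,\bm{\beta}_0)$ with $r_0\bm{\beta}_1 + r_1\bm{\beta}_0 = r_0\tilde{\bm{\beta}}_1 + r_1\tilde{\bm{\beta}}_0$ works by the non-uniqueness identity for \eqref{eq:reg}. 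Substituting $B(\tilde{\bm{\rcoef}}) = 0$ into \eqref{eq::estimateddist4} then leaves $\{V_{\tau\tau}(1-R^2_{\tau,\bm{w}}) + S^2_{\tau\setminus\bm{w}}\}^{1/2}\cdot\varepsilon$, which is \eqref{eq::estimated-optimal}.

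I expect the second step to be the main obstacle. The rest is bookkeeping on top of Theorem~\ref{thm:asymp_behavior_var_ci_ana_more}, but one must be careful that the comparison is between quantile ranges for \emph{all} $\alpha$ and not just variances (these coincide here only because the optimal limit is Gaussian), and that the perturbation $W$ is genuinely independent of $\varepsilon_0$ and exactly symmetric about zero, which is why it matters that $\varepsilon$ and $L_{K,a}$ in \eqref{eq::estimateddist4} are independent and each symmetric.
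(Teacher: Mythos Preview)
Your proposal is correct and follows essentially the same route as the paper. The paper observes that in \eqref{eq::estimateddist4} both coefficients of $\varepsilon$ and $L_{K,a}$ attain their minima at $\bm{\rcoef}=\tilde{\bm{\rcoef}}$ and then invokes the monotonicity lemma (Lemma~\ref{lemma:qr_linear_comb_epsilon_L_Ka} in the Supplementary Material, proved via the Anderson-type inequality of Lemma~\ref{lemma:order_sum}) to conclude that the quantile ranges are shortest there; your Gaussian-decomposition argument for the second step is exactly an inline version of that lemma, so there is no substantive difference.
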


Again, from Proposition \ref{prop:three_beta_tilde}, an optimal choice is $(  \bm{\beta}_1, \bm{\beta}_0) = ( \tilde{\bm{\beta}}_1, \tilde{\bm{\beta}}_0 )$ under Condition \ref{con:ana_more_bal_criterion}. Importantly, Corollary \ref{cor:Copt_ana_know_all} does not depend on covariates $\bm{x}$ and the threshold $a$ used in ReM, and thus also holds under the CRE.

\subsection{General scenarios without Condition \ref{con:ana_more_bal_criterion}}\label{sec:Copt_general_rem}
From Theorem \ref{thm:conf_general}, we can obtain the optimal adjusted estimator in terms of the estimated precision. 
 
\begin{corollary}\label{cor:opt_conf_general}
Under ReM and Condition \ref{con:fp},  based on Theorem \ref{thm:conf_general}, the optimal adjusted estimator in terms of the estimated precision
is attainable when $\bm{\rcoef}=\tilde{\bm{\rcoef}}$ or 
$r_0\bm{\beta}_1+r_1\bm{\beta}_0 = r_0 \tilde{\bm{\beta}}_1+r_1 \tilde{\bm{\beta}}_0 ,$  with  the estimated distribution having the same limit as \eqref{eq::estimated-optimal}. 
\end{corollary}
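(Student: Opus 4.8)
The plan is to read the claim off Theorem \ref{thm:conf_general}. By that theorem, under ReM and Condition \ref{con:fp} the estimated distribution \eqref{eq::incompleteinformation} of $\hat{\tau}(\bm{\beta}_1, \bm{\beta}_0)$ has the same weak limit as \eqref{eq:reg_cre_est}, which is a Gaussian centered at $\tau$ with variance
\[
V_{\tau\tau}\left(1-R^2_{\tau, \bm{w}}\right) + S^2_{\tau\setminus \bm{w}} + (r_1r_0)^{-1}\left(\bm{\rcoef}-\tilde{\bm{\rcoef}}\right)'\bm{S}^2_{\bm{w}}\left(\bm{\rcoef}-\tilde{\bm{\rcoef}}\right),
\]
in which only the last term depends on the adjustment, and only through $\bm{\rcoef} = r_0\bm{\beta}_1 + r_1\bm{\beta}_0$. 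First I would observe that, by Condition \ref{con:fp}(ii), the limiting value of $\bm{S}^2_{\bm{w}}$ is nonsingular; being a limit of covariance (hence positive semidefinite) matrices, it is therefore positive definite, so the quadratic form $\left(\bm{\rcoef}-\tilde{\bm{\rcoef}}\right)'\bm{S}^2_{\bm{w}}\left(\bm{\rcoef}-\tilde{\bm{\rcoef}}\right)$ is nonnegative and equals zero if and only if $\bm{\rcoef}=\tilde{\bm{\rcoef}}$.

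Next I would invoke the criterion recorded at the beginning of Section \ref{sec::Coptimalreg}: since every candidate estimated distribution in this family is Gaussian centered at $\tau$, its symmetric $1-\alpha$ quantile range is a fixed multiple of its standard deviation, so the ordering of quantile ranges (simultaneously over all $\alpha\in(0,1)$) agrees with the ordering of variances. Hence the estimated distribution has the shortest asymptotic $1-\alpha$ quantile range for every $\alpha$ exactly when the displayed variance is minimized, i.e.\ when $\bm{\rcoef}=\tilde{\bm{\rcoef}}$; substituting this choice into \eqref{eq:reg_cre_est} kills the quadratic term and gives \eqref{eq::estimated-optimal}.

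Finally I would translate the condition on $\bm{\rcoef}$ back to the coefficients. By Proposition \ref{prop:three_beta_tilde}, $r_0\tilde{\bm{\beta}}_1 + r_1\tilde{\bm{\beta}}_0 = \tilde{\bm{\rcoef}}$, so $\bm{\rcoef}=\tilde{\bm{\rcoef}}$ is equivalent to $r_0\bm{\beta}_1 + r_1\bm{\beta}_0 = r_0\tilde{\bm{\beta}}_1 + r_1\tilde{\bm{\beta}}_0$; and since $\hat{\tau}(\bm{\beta}_1,\bm{\beta}_0)$ depends on $(\bm{\beta}_1,\bm{\beta}_0)$ only through $\bm{\rcoef}$ (as noted below \eqref{eq:reg}), the optimum is ``attainable'' by any such pair, in particular by $(\tilde{\bm{\beta}}_1,\tilde{\bm{\beta}}_0)$. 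There is no genuinely hard step here: the one point requiring care is the positive-definiteness of the limit of $\bm{S}^2_{\bm{w}}$, which is precisely what makes $\tilde{\bm{\rcoef}}$ the unique minimizing value of $\bm{\rcoef}$ rather than one of many; everything else is immediate from Theorem \ref{thm:conf_general} and the Gaussian shape of \eqref{eq::incompleteinformation}. One may also note that this corollary is the $\bm{x}=\emptyset$, $a=\infty$ specialization of Corollary \ref{cor:Copt_ana_know_all}, so the same argument covers both.
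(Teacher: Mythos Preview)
Your proposal is correct and essentially fleshes out the paper's one-line proof (``It follows immediately from Theorem \ref{thm:conf_general}''): you observe that the limit \eqref{eq:reg_cre_est} is Gaussian, that the quadratic term is the only $\bm{\gamma}$-dependent part and is minimized at $\bm{\gamma}=\tilde{\bm{\gamma}}$, and then invoke Proposition \ref{prop:three_beta_tilde} to rewrite the condition in terms of $(\bm{\beta}_1,\bm{\beta}_0)$.

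One small inaccuracy in your closing remark: Corollary \ref{cor:opt_conf_general} is not the $\bm{x}=\emptyset$, $a=\infty$ specialization of Corollary \ref{cor:Copt_ana_know_all}. It still concerns general ReM; the point is that the analyzer, lacking design information, uses the conservative estimate \eqref{eq::incompleteinformation}, whose probability limit happens to coincide with the CRE form \eqref{eq:reg_cre_est}. The two corollaries are parallel statements for the two estimation schemes \eqref{eq::samplingdistribution} and \eqref{eq::incompleteinformation}, not one a special case of the other.
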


The optimal estimators in terms of the estimated precision are identical in Corollaries \ref{cor:Copt_ana_know_all} and \ref{cor:opt_conf_general}, no matter whether the analyzer knows all the information in the design or not. 
The optimal adjustment $\hat{\tau}(\tilde{\bm{\beta}}_1, \tilde{\bm{\beta}}_0)$ can never hurt the estimated precision. In contrast, Section \ref{sec::generalS} shows that  $\hat{\tau}(\tilde{\bm{\beta}}_1, \tilde{\bm{\beta}}_0)$ may hurt the sampling precision in general. This is an important difference between the two notations of optimality in terms of the sampling precision and estimated precision.

Below we give some intuition for Corollary \ref{cor:opt_conf_general}. 
Under general scenarios without Condition \ref{con:ana_more_bal_criterion}, 
the analyzer does not know the information of the design. 
S/he pretends that the design was a CRE, and estimates the sampling distributions of the estimators under the CRE. Luckily, the resulting confidence intervals are still conservative. 
Dropping the term $S^2_{\tau\setminus \bm{w}}$, the estimated distribution converges to the sampling distribution under the CRE. Based on the discussion of $\mathcal{S}$-optimality under the CRE in Section \ref{sec:sopt_cre}, 
the adjusted estimator 
$\hat{\tau}(\tilde{\bm{\beta}}_1, \tilde{\bm{\beta}}_0)$  is also optimal in terms of the estimated precision.

\begin{table}[t]
\centering
\caption{Sampling standard errors (s.e.) and average estimated standard errors multiplied by $n^{1/2}$ under ReM based on $10^5$ simulated treatment assignments.}\label{eq:table:est}
\begin{tabular}{cccccc}
\toprule
Estimator & \multicolumn{2}{c}{$\rho=0.9$}  & & \multicolumn{2}{c}{$\rho=0$}\\
\cline{2-3} \cline{5-6}
&  sampling s.e. &  average estimated s.e.  & &   sampling s.d.  &  average estimated s.e.\\ 
\midrule
$\hat{\tau}(\tilde{\bm{\beta}}_1, \tilde{\bm{\beta}}_0)$
 & 1.47 & 1.86 & & 2.95 & 3.61  \\
$\hat{\tau}$ 
 & 2.10 & 4.69& & 2.07 & 4.71   \\
\bottomrule
\end{tabular}
\end{table} 

\addtocounter{example}{-1}
\begin{example}[continued]
We revisit Example \ref{eg:sampling}, and study the estimated distributions of $\hat{\tau}(\tilde{\bm{\beta}}_1, \tilde{\bm{\beta}}_0)$ and $\hat{\tau}$ under ReM. 
Because the estimated distributions are Gaussian from Theorem \ref{thm:conf_general}, 
it suffices to report the estimated standard errors.  
Table \ref{eq:table:est} shows the sampling standard errors and the average estimated standard errors.
On average, $\hat{\tau}(\tilde{\bm{\beta}}_1, \tilde{\bm{\beta}}_0)$ results in shorter confidence intervals than $\hat{\tau}$ when $\rho=0$ or $\rho=0.9$. Interestingly, when $\rho=0$, $\hat{\tau}(\tilde{\bm{\beta}}_1, \tilde{\bm{\beta}}_0)$ has less sampling precision as Figure \ref{fig:hist}(b) and Table \ref{eq:table:est} show. 
\end{example}

\section{Gains from the analyzer and the designer}\label{sec:gains}

In the design stage, we can use the CRE or ReM. In the analysis stage, we can use the unadjusted estimator $\hat{\tau}$ or the adjusted estimator $\hat{\tau}(\tilde{\bm{\beta}}_1, \tilde{\bm{\beta}}_0)$. 
Based on the results in previous sections, we now study the additional gains of the designer and analyzer  in 
the sampling precision, the estimated precision, and the coverage probability.

\subsection{Sampling precision}

We first study the additional gains in the sampling precision from the analyzer and the designer, respectively. 
We measure the additional gain of the analyzer by comparing the asymptotic distributions of $\hat{\tau}$ and 
$\hat{\tau}(\tilde{\bm{\beta}}_1, \tilde{\bm{\beta}}_0)$ under ReM. 
We measure the additional gain of the designer by comparing the asymptotic distributions of $\hat{\tau}(\tilde{\bm{\beta}}_1, \tilde{\bm{\beta}}_0)$ under the CRE and ReM. 
Similar to Section \ref{sec:opt_rem}, 
we consider different scenarios based
on the relative amount of covariate information for the analyzer and the designer.
Let $v_{K,a} = P(\chi^2_{K+2}\leq a)/P(\chi^2_{K}\leq a)\leq 1$ be the variance of $L_{k,a}$ \citep{morgan2012rerandomization}, and $q_{1 - \alpha/2}(\rho^2)$ be the $(1-\alpha / 2)$th quantile of $(1-\rho^2)^{1/2}\cdot\varepsilon+|\rho| \cdot L_{K,a}$.

\subsubsection{The analyzer has no less covariate information than the designer}\label{sec:gains_samp_ana_more}

First, we measure the additional gain of the analyzer.

\begin{corollary}\label{cor:ana_more_opt_diff}
	Under ReM and Conditions \ref{con:fp} and \ref{con:ana_more}, compare $\hat{\tau}(\tilde{\bm{\beta}}_1, \tilde{\bm{\beta}}_0)$ to $\hat{\tau}$.
	The percentage reduction in the asymptotic variance is 
	$
	\left\{  R^2_{\tau,\bm{w}}-(1-v_{K,a})R^2_{\tau,\bm{x}}  \right\} / \left\{ 1-(1-v_{K,a})R^2_{\tau,\bm{x}} \right\} .
	$
	For any $\alpha\in (0,1),$ the percentage reduction in the length of the asymptotic $1-\alpha$  quantile range is 
	$
	1 - \left(1-R^2_{\tau, \bm{w}} \right)^{1/2} \cdot  q_{1-\alpha/2}(0) / q_{1-\alpha/2}(R^2_{\tau,\bm{x}}) . 
	$
	Both percentage reductions are nonnegative and nondecreasing in $R^2_{\tau,\bm{w}}$. 
\end{corollary}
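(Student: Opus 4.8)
The plan is to read off the two limiting laws from results already in hand, compute their variances and symmetric $1-\alpha$ quantile ranges in closed form, and then form the two ratios. By Corollary \ref{corr:diff_rem}, $n^{1/2}(\hat{\tau}-\tau)\mid\mathcal{M}$ converges to the law of $V_{\tau\tau}^{1/2}\{(1-R^2_{\tau,\bm{x}})^{1/2}\varepsilon+(R^2_{\tau,\bm{x}})^{1/2}L_{K,a}\}$, while by Theorem \ref{thm:analysis_more_opt} (invoking Condition \ref{con:ana_more}) $n^{1/2}\{\hat{\tau}(\tilde{\bm{\beta}}_1,\tilde{\bm{\beta}}_0)-\tau\}\mid\mathcal{M}$ converges to the centered Gaussian law of $\{V_{\tau\tau}(1-R^2_{\tau,\bm{w}})\}^{1/2}\varepsilon$. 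Since $\varepsilon$ and $L_{K,a}$ are independent with variances $1$ and $v_{K,a}$, the asymptotic variances are $V_{\tau\tau}\{1-(1-v_{K,a})R^2_{\tau,\bm{x}}\}$ and $V_{\tau\tau}(1-R^2_{\tau,\bm{w}})$ respectively; dividing the second by the first, cancelling $V_{\tau\tau}$ and subtracting from $1$ gives the stated percentage variance reduction $\{R^2_{\tau,\bm{w}}-(1-v_{K,a})R^2_{\tau,\bm{x}}\}/\{1-(1-v_{K,a})R^2_{\tau,\bm{x}}\}$.

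For the quantile ranges I would use that both limiting distributions are symmetric (and unimodal) about zero, so the shortest $1-\alpha$ quantile range is the symmetric one, with full length twice the $(1-\alpha/2)$-quantile. Scaling the positive constant out, the symmetric $1-\alpha$ range of $\{V_{\tau\tau}(1-R^2_{\tau,\bm{w}})\}^{1/2}\varepsilon$ has half-length $\{V_{\tau\tau}(1-R^2_{\tau,\bm{w}})\}^{1/2}q_{1-\alpha/2}(0)$, because $q_{1-\alpha/2}(0)$ is exactly the $(1-\alpha/2)$-quantile of the standard Gaussian; matching coefficients with $\rho^2=R^2_{\tau,\bm{x}}$ (so $|\rho|=(R^2_{\tau,\bm{x}})^{1/2}$), the range of $V_{\tau\tau}^{1/2}\{(1-R^2_{\tau,\bm{x}})^{1/2}\varepsilon+(R^2_{\tau,\bm{x}})^{1/2}L_{K,a}\}$ has half-length $V_{\tau\tau}^{1/2}q_{1-\alpha/2}(R^2_{\tau,\bm{x}})$. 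Taking the ratio of lengths and subtracting from $1$ yields $1-(1-R^2_{\tau,\bm{w}})^{1/2}q_{1-\alpha/2}(0)/q_{1-\alpha/2}(R^2_{\tau,\bm{x}})$.

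Nonnegativity comes essentially for free from $\mathcal{S}$-optimality: under Condition \ref{con:ana_more}, Theorem \ref{thm:analysis_more_opt} says $\hat{\tau}(\tilde{\bm{\beta}}_1,\tilde{\bm{\beta}}_0)$ is $\mathcal{S}$-optimal under ReM, so by Definition \ref{def:optimal_regression} and the remark following it its asymptotic $1-\alpha$ quantile range and asymptotic variance are no larger than those of any estimator in \eqref{eq:reg}, in particular of $\hat{\tau}=\hat{\tau}(\bm{0},\bm{0})$; hence both percentage reductions are $\ge 0$. (Alternatively one checks directly that $R^2_{\tau,\bm{w}}\ge R^2_{\tau,\bm{x}}\ge (1-v_{K,a})R^2_{\tau,\bm{x}}$ under Condition \ref{con:ana_more}, using the fact noted after Theorem \ref{thm:gen_ana_enough} together with $v_{K,a}\le 1$, and that the denominators are positive.) For monotonicity, I would hold $R^2_{\tau,\bm{x}}$, $K$ and $a$ fixed while varying $R^2_{\tau,\bm{w}}\in[R^2_{\tau,\bm{x}},1]$: the variance reduction equals $(R^2_{\tau,\bm{w}}-c)/(1-c)$ with $c=(1-v_{K,a})R^2_{\tau,\bm{x}}\in[0,1)$ constant, hence linear and increasing in $R^2_{\tau,\bm{w}}$; and the quantile-range reduction equals $1$ minus a positive constant times $(1-R^2_{\tau,\bm{w}})^{1/2}$, which is increasing in $R^2_{\tau,\bm{w}}$ since $q_{1-\alpha/2}(0)$ and $q_{1-\alpha/2}(R^2_{\tau,\bm{x}})$ do not depend on $R^2_{\tau,\bm{w}}$.

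The main obstacle is purely bookkeeping rather than analysis: correctly aligning the coefficients in the limiting law of $\hat{\tau}$ with the arguments of $q_{1-\alpha/2}(\cdot)$ and with $v_{K,a}$, and being explicit that ``nondecreasing in $R^2_{\tau,\bm{w}}$'' means the remaining design parameters are frozen and that Condition \ref{con:ana_more} confines $R^2_{\tau,\bm{w}}$ to $[R^2_{\tau,\bm{x}},1]$. Once those identifications are pinned down, each claim is a one-line algebraic simplification or an appeal to $\mathcal{S}$-optimality.
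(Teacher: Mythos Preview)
Your proposal is correct and follows essentially the same approach as the paper: read off the two limiting laws from Corollary~\ref{corr:diff_rem} and Theorem~\ref{thm:analysis_more_opt}, compute their variances and symmetric quantile ranges, form the ratios, then invoke $\mathcal{S}$-optimality for nonnegativity and verify monotonicity directly. Your added bookkeeping (explicit use of symmetry/unimodality, the alternative nonnegativity check via $R^2_{\tau,\bm{w}}\ge R^2_{\tau,\bm{x}}$, and the range restriction) is sound but not required for the argument.
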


From Corollary \ref{cor:ana_more_opt_diff},  the gain from the analyzer through regression adjustment is nondecreasing in the analyzer's covariate information. Both percentage reductions in Corollary \ref{cor:ana_more_opt_diff} converge to 1 as $R^2_{\tau,\bm{w}}$ converges to $ 1$.

Second, we measure the additional gain of the designer. Section \ref{sec:sopt_cre} demonstrates that $\hat{\tau}(\tilde{\bm{\beta}}_1, \tilde{\bm{\beta}}_0)$ has the same asymptotic distribution under the CRE and ReM. Therefore, under Condition \ref{con:ana_more}, the gain from the designer is zero. Nevertheless, this also implies that using ReM in the design will not hurt the sampling precision of $\hat{\tau}(\tilde{\bm{\beta}}_1, \tilde{\bm{\beta}}_0)$. Moreover, under ReM, we can use the additional covariate information, in the same way as in the CRE, to improve the sampling precision. 

\subsubsection{The analyzer has no more covariate information than the designer}\label{sec:gains_samp_ana_less}

First, we measure the additional gain of the analyzer.
Both the asymptotic distributions of $\hat{\tau}(\tilde{\bm{\beta}}_1, \tilde{\bm{\beta}}_0)$ and $\hat{\tau}$ are linear combinations of $\varepsilon$ and $L_{K,a}$. The coefficients of $\varepsilon$ are identical, but the coefficient of $L_{K,a}$ for $\hat{\tau}(\tilde{\bm{\beta}}_1, \tilde{\bm{\beta}}_0)$ is smaller than that for $\hat{\tau}.$

\begin{corollary}\label{cor:ana_less_reg_diff_comp}
	Under ReM and Conditions \ref{con:fp} and \ref{con:des_more}, compare $\hat{\tau}(\tilde{\bm{\beta}}_1, \tilde{\bm{\beta}}_0)$ to $\hat{\tau}$. 
	The percentage reduction in the asymptotic variance is
	$
	v_{K,a}R^2_{\tau, \bm{w}} / \left\{ 1-(1-v_{K,a})R^2_{\tau,\bm{x}} \right\} . 
	$
	For any $\alpha\in (0,1)$, the percentage reduction in the length of the asymptotic $1-\alpha$ quantile range is
	$
	1- \left(    1-R^2_{\tau, \bm{w}}  \right )  ^{1/2} \cdot 
	q_{1-\alpha/2}(\rho^2_{\tau, \bm{x}\setminus \bm{w}}) / q_{1-\alpha/2}(R^2_{\tau, \bm{x}}) . 
	$
	Both percentage reductions are nonnegative and nondecreasing in $R^2_{\tau,\bm{w}}$. 
\end{corollary}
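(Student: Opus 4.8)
The plan is to write down the two relevant asymptotic distributions from results already established, read off their variances and symmetric quantiles, and then reduce both the nonnegativity and the monotonicity claims to a single peakedness fact about linear combinations $c_1\varepsilon + c_2 L_{K,a}$.

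First I would specialize Corollary~\ref{corr:diff_rem} to obtain the limiting law of $n^{1/2}(\hat\tau-\tau)$ under ReM, namely $V_{\tau\tau}^{1/2}\{(1-R^2_{\tau,\bm{x}})^{1/2}\varepsilon+(R^2_{\tau,\bm{x}})^{1/2}L_{K,a}\}$, and then invoke Theorem~\ref{thm:design_more_opt} (with Proposition~\ref{prop:three_beta_tilde}), which under Conditions~\ref{con:fp} and \ref{con:des_more} gives the limiting law of $n^{1/2}\{\hat\tau(\tilde{\bm{\beta}}_1,\tilde{\bm{\beta}}_0)-\tau\}$ as $V_{\tau\tau}^{1/2}\{(1-R^2_{\tau,\bm{x}})^{1/2}\varepsilon+(R^2_{\tau,\bm{x}}-R^2_{\tau,\bm{w}})^{1/2}L_{K,a}\}$, which is the second line of \eqref{eq:design_more_opt} after multiplying out. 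Condition~\ref{con:des_more} forces $\hat{\bm{\tau}}_{\bm{w}}=\bm{B}_2\hat{\bm{\tau}}_{\bm{x}}$, hence $R^2_{\tau,\bm{w}}\le R^2_{\tau,\bm{x}}$ and $\rho^2_{\tau,\bm{x}\setminus\bm{w}}\in[0,1]$, so both laws are well defined; they share the coefficient on $\varepsilon$ and the second has the smaller (nonnegative) coefficient on $L_{K,a}$.

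Next I would compute. Using $\Var(\varepsilon)=1$, $\Var(L_{K,a})=v_{K,a}$, and independence of $\varepsilon$ and $L_{K,a}$, the two asymptotic variances are $V_{\tau\tau}\{1-(1-v_{K,a})R^2_{\tau,\bm{x}}\}$ and $V_{\tau\tau}\{1-(1-v_{K,a})R^2_{\tau,\bm{x}}-v_{K,a}R^2_{\tau,\bm{w}}\}$; dividing the difference by the first yields the stated percentage reduction $v_{K,a}R^2_{\tau,\bm{w}}/\{1-(1-v_{K,a})R^2_{\tau,\bm{x}}\}$, which is nonnegative since $v_{K,a}\in[0,1]$ and $1-(1-v_{K,a})R^2_{\tau,\bm{x}}\ge 1-R^2_{\tau,\bm{x}}>0$, and nondecreasing in $R^2_{\tau,\bm{w}}$ because its denominator does not involve $R^2_{\tau,\bm{w}}$. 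For the quantile range, both limits are symmetric and unimodal about $0$ \citep{asymrerand2106}, so the shortest $1-\alpha$ quantile range is the symmetric one, of half-length equal to the $(1-\alpha/2)$th quantile; by definition of $q_{1-\alpha/2}(\cdot)$ these half-lengths are $V_{\tau\tau}^{1/2}q_{1-\alpha/2}(R^2_{\tau,\bm{x}})$ for $\hat\tau$ and $\{V_{\tau\tau}(1-R^2_{\tau,\bm{w}})\}^{1/2}q_{1-\alpha/2}(\rho^2_{\tau,\bm{x}\setminus\bm{w}})$ for $\hat\tau(\tilde{\bm{\beta}}_1,\tilde{\bm{\beta}}_0)$, which gives the stated reduction $1-(1-R^2_{\tau,\bm{w}})^{1/2}q_{1-\alpha/2}(\rho^2_{\tau,\bm{x}\setminus\bm{w}})/q_{1-\alpha/2}(R^2_{\tau,\bm{x}})$.

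It then remains to sign and monotonize the quantile-range reduction, and this is where the only real work lies. I would observe that $\{V_{\tau\tau}(1-R^2_{\tau,\bm{w}})\}^{1/2}q_{1-\alpha/2}(\rho^2_{\tau,\bm{x}\setminus\bm{w}})$ is precisely the $(1-\alpha/2)$th quantile of $V_{\tau\tau}^{1/2}\{(1-R^2_{\tau,\bm{x}})^{1/2}\varepsilon+(R^2_{\tau,\bm{x}}-R^2_{\tau,\bm{w}})^{1/2}L_{K,a}\}$, so both claims follow from the lemma that, for fixed $c_1\ge0$, the $(1-\alpha/2)$th quantile of $c_1\varepsilon+c_2L_{K,a}$ is nondecreasing in $c_2\ge0$: nonnegativity is the instance $c_2=(R^2_{\tau,\bm{x}}-R^2_{\tau,\bm{w}})^{1/2}\le(R^2_{\tau,\bm{x}})^{1/2}$ with $c_1=(1-R^2_{\tau,\bm{x}})^{1/2}$, and the monotonicity in $R^2_{\tau,\bm{w}}$ holds because raising $R^2_{\tau,\bm{w}}$ lowers $(R^2_{\tau,\bm{x}}-R^2_{\tau,\bm{w}})^{1/2}$, hence lowers the numerator half-length while the denominator $q_{1-\alpha/2}(R^2_{\tau,\bm{x}})$ is unchanged. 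The lemma itself I expect to be the main obstacle, but it is standard: $L_{K,a}=D_1\mid\bm{D}'\bm{D}\le a$ is symmetric and unimodal, so $c_2L_{K,a}$ is more peaked about $0$ (in Birnbaum's sense) for smaller $c_2$, peakedness about $0$ is preserved under convolution with the independent symmetric $c_1\varepsilon$, and a distribution that is more peaked about $0$ has a shorter symmetric quantile range; this is the same fact invoked in Section~\ref{sec:est_dist_rem_general} and proved in \citet{asymrerand2106}.
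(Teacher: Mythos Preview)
Your proposal is correct and follows essentially the same route as the paper: you invoke Corollary~\ref{corr:diff_rem} and Theorem~\ref{thm:design_more_opt} for the two asymptotic laws, compute variances and symmetric quantiles directly, and reduce the nonnegativity and monotonicity of the quantile-range reduction to the peakedness/monotonicity lemma for $c_1\varepsilon+c_2L_{K,a}$ (the paper's Lemma~\ref{lemma:qr_linear_comb_epsilon_L_Ka}). The only cosmetic difference is that the paper cites $\mathcal{S}$-optimality of $\hat\tau(\tilde{\bm\beta}_1,\tilde{\bm\beta}_0)$ for nonnegativity rather than re-deriving it from the lemma, but since that optimality is itself proved via the same lemma, the arguments are equivalent.
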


From Corollary \ref{cor:ana_less_reg_diff_comp}, the improvement from regression adjustment is nondecreasing in the analyzer's covariate information. However, this improvment is small when the designer uses a small threshold $a$ for ReM. Both percentage reductions in Corollary \ref{cor:ana_less_reg_diff_comp} converge to 0 as $a$ converges to $ 0$. Intuitively, when the designer uses ReM with a small threshold, s/he has used more covariate information thoroughly in the design, and thus the analyzer has only a small additional gain through regression adjustment. 

Second, we measure the additional gain of the designer.

\begin{corollary}\label{cor:ana_less_CRE_ReM_comp}
	Under Conditions \ref{con:fp} and \ref{con:des_more}, compare $\hat{\tau}(\tilde{\bm{\beta}}_1, \tilde{\bm{\beta}}_0)$ under ReM to
	that under the CRE. The percentage reduction in the asymptotic variance  is
	$
	(1-v_{K,a})\rho^2_{\tau, \bm{x}\setminus \bm{w}}.
	$
	For any $\alpha \in (0,1)$, the percentage reduction in the length of the asymptotic $1-\alpha$ quantile range is
	$
	1-   q_{1-\alpha/2}(\rho^2_{\tau, \bm{x}\setminus \bm{w}}) / q_{1-\alpha/2}(0) .
	$
	Both percentage reductions are nonnegative and nondecreasing in $R^2_{\tau, \bm{x}}$.
\end{corollary}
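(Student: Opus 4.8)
The plan is to obtain the two asymptotic distributions being compared directly from results already proved, and then carry out elementary algebra on their variances and symmetric quantiles. Under ReM and Conditions~\ref{con:fp} and \ref{con:des_more}, Theorem~\ref{thm:design_more_opt} gives
\begin{align*}
n^{1/2}\{\hat{\tau}(\tilde{\bm{\beta}}_1, \tilde{\bm{\beta}}_0)-\tau\}\mid\mathcal{M} \ \apprsim\ \{V_{\tau\tau}(1-R^2_{\tau,\bm{w}})\}^{1/2}\left\{(1-\rho^2_{\tau,\bm{x}\setminus\bm{w}})^{1/2}\varepsilon + (\rho^2_{\tau,\bm{x}\setminus\bm{w}})^{1/2}L_{K,a}\right\},
\end{align*}
while under the CRE the same estimator is the $\mathcal{S}$-optimal (Lin) estimator with $n^{1/2}\{\hat{\tau}(\tilde{\bm{\beta}}_1, \tilde{\bm{\beta}}_0)-\tau\}\apprsim\{V_{\tau\tau}(1-R^2_{\tau,\bm{w}})\}^{1/2}\varepsilon$, by Theorem~\ref{thm:analysis_more_opt} applied with an empty design (Section~\ref{sec:sopt_cre}); equivalently this is the $a=\infty$ specialization of the ReM limit, in which $L_{K,a}$ is standard Gaussian and $\rho^2_{\tau,\bm{x}\setminus\bm{w}}$ plays no role. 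The key structural observation is that the two limits share the common scale factor $\{V_{\tau\tau}(1-R^2_{\tau,\bm{w}})\}^{1/2}$, so both percentage reductions are scale-free and depend only on $\rho^2_{\tau,\bm{x}\setminus\bm{w}}$, $K$ and $a$.

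For the variance, I would use $\varepsilon\perp L_{K,a}$ together with $\Var(\varepsilon)=1$ and $\Var(L_{K,a})=v_{K,a}$: the ReM asymptotic variance equals $V_{\tau\tau}(1-R^2_{\tau,\bm{w}})\{1-(1-v_{K,a})\rho^2_{\tau,\bm{x}\setminus\bm{w}}\}$ and the CRE one equals $V_{\tau\tau}(1-R^2_{\tau,\bm{w}})$, so the percentage reduction is exactly $(1-v_{K,a})\rho^2_{\tau,\bm{x}\setminus\bm{w}}$. For the quantile range, both limits are symmetric and unimodal about zero \citep{asymrerand2106}, so the symmetric $1-\alpha$ range has half-length equal to the $(1-\alpha/2)$th quantile, which by the definition of $q_{1-\alpha/2}(\cdot)$ equals $\{V_{\tau\tau}(1-R^2_{\tau,\bm{w}})\}^{1/2}q_{1-\alpha/2}(\rho^2_{\tau,\bm{x}\setminus\bm{w}})$ under ReM and $\{V_{\tau\tau}(1-R^2_{\tau,\bm{w}})\}^{1/2}q_{1-\alpha/2}(0)$ under the CRE. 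Dividing the lengths and subtracting from one gives $1-q_{1-\alpha/2}(\rho^2_{\tau,\bm{x}\setminus\bm{w}})/q_{1-\alpha/2}(0)$.

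It then remains to establish nonnegativity and monotonicity in $R^2_{\tau,\bm{x}}$. Since $v_{K,a}\le 1$ and $\rho^2_{\tau,\bm{x}\setminus\bm{w}}\in[0,1]$ by \eqref{eq:rho2_x_minus_w}, the variance reduction is nonnegative; and because $R^2_{\tau,\bm{w}}$ involves only the analyzer's covariates and is held fixed in this comparison, $\rho^2_{\tau,\bm{x}\setminus\bm{w}}=(R^2_{\tau,\bm{x}}-R^2_{\tau,\bm{w}})/(1-R^2_{\tau,\bm{w}})$ is affine and increasing in $R^2_{\tau,\bm{x}}$, so the variance reduction is nondecreasing in $R^2_{\tau,\bm{x}}$. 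For the quantile-range reduction I would invoke the fact, already noted before Theorem~\ref{thm:conf_general} and due to \citet{asymrerand2106}, that the length of the $1-\alpha$ quantile range of $(1-\rho^2)^{1/2}\varepsilon+|\rho|L_{K,a}$ is nonincreasing in $\rho^2\in[0,1]$, i.e.\ $q_{1-\alpha/2}(\rho^2)$ is nonincreasing in $\rho^2$ (putting more weight on the truncated, less dispersed $L_{K,a}$ shrinks the range); composing with the monotonicity of $\rho^2_{\tau,\bm{x}\setminus\bm{w}}$ in $R^2_{\tau,\bm{x}}$ shows $q_{1-\alpha/2}(\rho^2_{\tau,\bm{x}\setminus\bm{w}})/q_{1-\alpha/2}(0)\le 1$ is nonincreasing in $R^2_{\tau,\bm{x}}$, which yields both claims.

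The only step that is not pure bookkeeping is the peakedness fact that $q_{1-\alpha/2}(\rho^2)$ decreases in $\rho^2$; I would cite the corresponding stochastic-dominance/peakedness result for the truncated-Gaussian family from \citet{asymrerand2106} rather than reprove it, since the rest of the argument is a direct consequence of Theorems~\ref{thm:design_more_opt} and \ref{thm:analysis_more_opt} plus $v_{K,a}\le1$ and $\rho^2_{\tau,\bm{x}\setminus\bm{w}}\in[0,1]$. A minor point to verify carefully is that the CRE limit is exactly the common-scale specialization of the ReM limit, so that $\{V_{\tau\tau}(1-R^2_{\tau,\bm{w}})\}^{1/2}$ cancels cleanly in both ratios; this is immediate once one notes that under an empty design $R^2_{\tau,\bm{x}}=0$, hence $\rho^2_{\tau,\bm{x}\setminus\bm{w}}=0$ and $q_{1-\alpha/2}(0)$ is simply the standard Gaussian $(1-\alpha/2)$th quantile.
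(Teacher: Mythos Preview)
Your proposal is correct and follows essentially the same approach as the paper: you pull the ReM limit from Theorem~\ref{thm:design_more_opt} and the CRE limit from Section~\ref{sec:sopt_cre}, cancel the common scale $\{V_{\tau\tau}(1-R^2_{\tau,\bm{w}})\}^{1/2}$, compute the variance reduction via $\Var(L_{K,a})=v_{K,a}$, and obtain the quantile-range reduction and its monotonicity by invoking the nonincreasing property of $q_{1-\alpha/2}(\rho^2)$ (the paper packages this as Lemma~A4, citing \citet{asymrerand2106}) composed with the affine map $R^2_{\tau,\bm{x}}\mapsto\rho^2_{\tau,\bm{x}\setminus\bm{w}}$.
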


From Corollary \ref{cor:ana_less_CRE_ReM_comp}, the gain from the designer through ReM is nondecreasing in the designer's covariate information. The gain from the designer is substantial when $R^2_{\tau, \bm{x}}$ is large and the threshold for ReM is small.
Both percentage reductions in Corollary \ref{cor:ana_less_CRE_ReM_comp} converge to 1 as $R^2_{\tau, \bm{x}}\rightarrow 1$ and $a\rightarrow 0$.

\begin{rmk}\label{rmk:gain_same_covariate}
Consider the special case where the designer and the analyzer have the same covariates ($\bm{x} = \bm{w}$). The additional gain from the analyzer is small given that the designer uses ReM with a small threshold $a$, and so is the additional gain from the designer given that the analyzer uses the $\mathcal{S}$-optimal $\hat{\tau}(\tilde{\bm{\beta}}_1, \tilde{\bm{\beta}}_0)$. 
\end{rmk}

\subsubsection{General scenarios}\label{sec:addition_general}

The complexity discussed in Section \ref{sec::generalS} makes it difficult to evaluate the additional gains from the analyzer and the designer. Given any adjusted estimator, the designer can always use ReM to reduce the asymptotic variance and the lengths of asymptotic quantile ranges \citep{asymrerand2106}. For the analyzer, in general, the performance of regression adjustment under ReM depends on the covariates used in the design, and thus the analyzer does not know the optimal adjusted estimator among \eqref{eq:reg}. For instance, without the covariate information used in the design, the analyzer is not sure whether $\hat{\tau}(\tilde{\bm{\beta}}_1, \tilde{\bm{\beta}}_0)$ has smaller asymptotic variance than $\hat{\tau}$. Example \ref{eg:sampling} shows two cases where $\hat{\tau}(\tilde{\bm{\beta}}_1, \tilde{\bm{\beta}}_0)$ increases and decreases the sampling precision, respectively.

\subsection{Estimated precision}\label{sec:gain_est_precision}

We then study the additional gains in the asymptotic estimated precision from the analyzer and the designer, respectively. 
We measure the additional gain of the analyzer by comparing the estimated distributions of $\hat{\tau}(\tilde{\bm{\beta}}_1, \tilde{\bm{\beta}}_0)$ and  $\hat{\tau}$ under ReM.
We measure the additional gain of the designer by comparing the estimated distributions of $\hat{\tau}(\tilde{\bm{\beta}}_1, \tilde{\bm{\beta}}_0)$ under the CRE and ReM.
Similar to Section \ref{sec::Coptimalreg}, we consider two scenarios based on whether the analyzer has full knowledge of the design or not.
Let 
$
\kappa=1+V^{-1}_{\tau\tau}S^2_{\tau\setminus \bm{w}}\geq 1,
$
which reduces to 1 when $S^2_{\tau \setminus \bm{w} } = 0$, i.e., the ``adjusted'' individual treatment effect $\tau_i(\tilde{\bm{\beta}}_1, \tilde{\bm{\beta}}_0) =  Y_i(1; \tilde{\bm{\beta}}_1) - Y_i(0; \tilde{\bm{\beta}}_0)  $ is constant for all units.

\subsubsection{When the analyzer knows all the information in the design stage}\label{sec:gain_est_analyzer_know_all}


First, we measure the additional gain of the analyzer.   

\begin{corollary}\label{cor:infer_ana_more_role_ana}
	Under ReM and Conditions \ref{con:fp} and \ref{con:ana_more_bal_criterion}, compare the probability limit of the estimated distribution of $\hat{\tau}(\tilde{\bm{\beta}}_1, \tilde{\bm{\beta}}_0)$ to that of $\hat{\tau}$ based on \eqref{eq::samplingdistribution}. 
	The percentage reduction in the variance is
	$
	\left\{  R^2_{\tau,\bm{w}} - (1-v_{K,a})R^2_{\tau, \bm{x}}  \right\} / \left\{   \kappa - (1-v_{K,a})R^2_{\tau, \bm{x}} \right\}.
	$
	For $\alpha \in (0,1)$, the percentage reduction in the length of the $1-\alpha$ quantile range is 
	$
	1 - \left( 1-R^2_{\tau, \bm{w}}/\kappa \right)^{1/2}   \cdot 
	q_{1-\alpha/2}(0) / q_{1-\alpha/2}(R^2_{\tau,\bm{x}}/{\kappa}) .
	$
	Both percentage reductions are nonnegative and nondecreasing in $R^2_{\tau,\bm{w}}$. 
\end{corollary}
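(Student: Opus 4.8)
The plan is to reduce the comparison to the two explicit limiting distributions already recorded in the paper and then finish with elementary algebra. First I would invoke Corollary~\ref{cor:Copt_ana_know_all} (Theorem~\ref{thm:asymp_behavior_var_ci_ana_more} at $\bm{\rcoef}=\tilde{\bm{\rcoef}}$) and Corollary~\ref{corr:est_dist_nonadjust} to obtain that, under Conditions~\ref{con:fp} and \ref{con:ana_more_bal_criterion}, the estimated distribution of $\hat{\tau}(\tilde{\bm{\beta}}_1,\tilde{\bm{\beta}}_0)$ in \eqref{eq::samplingdistribution} has the same limit as $\{V_{\tau\tau}(1-R^2_{\tau,\bm{w}})+S^2_{\tau\setminus\bm{w}}\}^{1/2}\cdot\varepsilon$, while that of $\hat{\tau}$ has the same limit as $\{V_{\tau\tau}(1-R^2_{\tau,\bm{x}})+S^2_{\tau\setminus\bm{w}}\}^{1/2}\cdot\varepsilon+(V_{\tau\tau}R^2_{\tau,\bm{x}})^{1/2}\cdot L_{K,a}$. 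Substituting $S^2_{\tau\setminus\bm{w}}=(\kappa-1)V_{\tau\tau}$, the first limit becomes $(V_{\tau\tau}\kappa)^{1/2}(1-R^2_{\tau,\bm{w}}/\kappa)^{1/2}\cdot\varepsilon$ and the second becomes $(V_{\tau\tau}\kappa)^{1/2}\{(1-\rho^2)^{1/2}\varepsilon+|\rho|L_{K,a}\}$ with $\rho^2=R^2_{\tau,\bm{x}}/\kappa\in[0,1]$, so the second limit is $(V_{\tau\tau}\kappa)^{1/2}$ times the canonical random variable whose $(1-\alpha/2)$th quantile is, by definition, $q_{1-\alpha/2}(\rho^2)$.

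For the variances I would use $\Var(\varepsilon)=1$, $\Var(L_{K,a})=v_{K,a}$, and the independence of $\varepsilon$ and $L_{K,a}$: the adjusted limit has variance $V_{\tau\tau}(\kappa-R^2_{\tau,\bm{w}})$ and the unadjusted limit has variance $V_{\tau\tau}\kappa\{1-(1-v_{K,a})\rho^2\}=V_{\tau\tau}\{\kappa-(1-v_{K,a})R^2_{\tau,\bm{x}}\}$, so one minus their ratio is exactly the first claimed expression. For the quantile ranges I would first recall that limits of this form are symmetric and unimodal about zero (as used after Definition~\ref{def:optimal_regression}), so the shortest $1-\alpha$ quantile range is symmetric with half-length equal to the $(1-\alpha/2)$th quantile: this is $(V_{\tau\tau}\kappa)^{1/2}(1-R^2_{\tau,\bm{w}}/\kappa)^{1/2}\,q_{1-\alpha/2}(0)$ for $\hat{\tau}(\tilde{\bm{\beta}}_1,\tilde{\bm{\beta}}_0)$, since $q_{1-\alpha/2}(0)$ is the standard normal quantile, and $(V_{\tau\tau}\kappa)^{1/2}q_{1-\alpha/2}(R^2_{\tau,\bm{x}}/\kappa)$ for $\hat{\tau}$; one minus their ratio is the second claimed expression.

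For the sign and monotonicity, note that Condition~\ref{con:ana_more_bal_criterion} implies Condition~\ref{con:ana_more}, hence $\hat{\bm{\tau}}_{\bm{x}}$ is a linear function of $\hat{\bm{\tau}}_{\bm{w}}$ and $R^2_{\tau,\bm{w}}\ge R^2_{\tau,\bm{x}}$; since $\kappa\ge 1\ge R^2_{\tau,\bm{x}}\ge 0$ and $v_{K,a}\in[0,1]$, all denominators are positive. The numerator of the variance reduction satisfies $R^2_{\tau,\bm{w}}-(1-v_{K,a})R^2_{\tau,\bm{x}}\ge R^2_{\tau,\bm{w}}-R^2_{\tau,\bm{x}}\ge 0$, and with $\kappa$, $R^2_{\tau,\bm{x}}$, $v_{K,a}$ held fixed this reduction is affine and increasing in $R^2_{\tau,\bm{w}}$. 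For the quantile-range reduction, $(1-R^2_{\tau,\bm{w}}/\kappa)^{1/2}$ is nonincreasing in $R^2_{\tau,\bm{w}}$, which gives monotonicity, and nonnegativity reduces, using $R^2_{\tau,\bm{w}}\ge R^2_{\tau,\bm{x}}$, to the inequality $q_{1-\alpha/2}(\rho^2)\ge(1-\rho^2)^{1/2}q_{1-\alpha/2}(0)$ for $\rho^2\in[0,1]$.

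That inequality is the step I expect to be the main obstacle; everything else is bookkeeping with the formulas above. It says that adding to the Gaussian $(1-\rho^2)^{1/2}\varepsilon$ an independent, bounded random variable that is symmetric about zero does not lower its $(1-\alpha/2)$th quantile. I would prove it by showing that the distribution function of the sum lies weakly below the $\mathcal{N}(0,1-\rho^2)$ distribution function $\Phi_0$ on $[0,\infty)$: for $t\ge 0$, the symmetry of $L_{K,a}$ gives $P\{(1-\rho^2)^{1/2}\varepsilon+|\rho|L_{K,a}\le t\}=\tfrac12\mathbb{E}\{\Phi_0(t-|\rho|L_{K,a})+\Phi_0(t+|\rho|L_{K,a})\}$, and since the $\mathcal{N}(0,1-\rho^2)$ density is symmetric and nonincreasing in the absolute value of its argument, the map $y\mapsto\Phi_0(t-y)+\Phi_0(t+y)$ is nonincreasing on $[0,\infty)$, so the right-hand side is at most $\Phi_0(t)$; comparing quantiles at level $1-\alpha/2\ge 1/2$ then yields the inequality. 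Alternatively one could invoke the concentration properties of such truncated-Gaussian mixtures from \citet{asymrerand2106} or \citet{li2018rerandomization}. Combining the two ratios with this inequality and the monotonicity observations completes the proof.
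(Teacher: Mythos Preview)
Your derivation of the two limiting estimated distributions and the resulting variance and quantile-range ratios is correct and matches the paper's proof: both start from Corollary~\ref{cor:Copt_ana_know_all} and Corollary~\ref{corr:est_dist_nonadjust}, rewrite everything using $\kappa=1+V_{\tau\tau}^{-1}S^2_{\tau\setminus\bm{w}}$, and read off the two formulas. The monotonicity argument is also the same in spirit.

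The one place where you take a genuinely different route is nonnegativity. The paper dispatches this in a single line by invoking Corollary~\ref{cor:Copt_ana_know_all}: since $\hat{\tau}(\tilde{\bm{\beta}}_1,\tilde{\bm{\beta}}_0)$ is optimal in terms of the estimated precision among all estimators in \eqref{eq:reg}, and $\hat{\tau}$ is the special case $(\bm{\beta}_1,\bm{\beta}_0)=(\bm{0},\bm{0})$, both reductions are automatically nonnegative. Behind this sits Lemma~\ref{lemma:qr_linear_comb_epsilon_L_Ka}, which, applied with $b_1=(1-R^2_{\tau,\bm{w}}/\kappa)^{1/2}\le c_1=(1-R^2_{\tau,\bm{x}}/\kappa)^{1/2}$ and $b_2=0\le c_2=(R^2_{\tau,\bm{x}}/\kappa)^{1/2}$, gives the quantile comparison directly. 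You instead prove the specific inequality $q_{1-\alpha/2}(\rho^2)\ge(1-\rho^2)^{1/2}q_{1-\alpha/2}(0)$ from scratch via a CDF-averaging argument. Your argument is correct and self-contained, but it is more work than needed here: the optimality result you already cited for the limiting distribution also delivers the sign of the reductions for free, and the general lemma on linear combinations of $\varepsilon$ and $L_{K,a}$ subsumes the inequality you isolated as ``the main obstacle.''
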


From Corollary \ref{cor:infer_ana_more_role_ana}, 
the gain from regression adjustment is nondecreasing in the analyzer's covariate information. 
Both percentage reductions in Corollary \ref{cor:infer_ana_more_role_ana} converge to 1 as $R^2_{\tau,\bm{w}}\rightarrow 1$ and $\kappa\rightarrow 1$. 

Second, we measure the additional gain of the designer. From Corollary \ref{cor:Copt_ana_know_all} and the comment after it, the probability limits of
the estimated distributions of $\hat{\tau}(\tilde{\bm{\beta}}_1, \tilde{\bm{\beta}}_0)$ are identical under both designs. 
Therefore, the gain from the designer is zero.

\begin{rmk}\label{rmk:gain_est_same_cov}
	Consider the special case where the analyzer has the same covariate information as the designer and knows the balance criterion in the design.
As discussed above, the designer has no gain.	
	 Based on Corollaries \ref{corr:est_dist_nonadjust} and  \ref{cor:Copt_ana_know_all}, with a small threshold $a$, the estimated distributions of $\hat{\tau}$ and 
	 $\hat{\tau}(\tilde{\bm{\beta}}_1, \tilde{\bm{\beta}}_0)$ 
	 have approximately the same probability limit. Therefore, the analyzer has small additional gain.
\end{rmk}

\subsubsection{General scenarios without Condition \ref{con:ana_more_bal_criterion}}\label{sec:add_gain_est_ana_not_more}

First, 
we measure the additional gain of the analyzer. 

\begin{corollary}\label{cor:infence_inf_reg_diff}
	Under ReM and Condition \ref{con:fp}, compare the probability limit of the estimated distribution of $\hat{\tau}(\tilde{\bm{\beta}}_1, \tilde{\bm{\beta}}_0)$ to that of $\hat{\tau}$ based on \eqref{eq::incompleteinformation}. 
	The percentage reduction in the variance is $R^2_{\tau, \bm{w}}/\kappa.$  For any $\alpha\in(0,1)$, the percentage reduction in the length of the  $1-\alpha$ quantile range is $1-\left( 1-R^2_{\tau, \bm{w}}/\kappa \right)^{1/2} . $ Both percentage reductions are nonnegative and nondecreasing in $R^2_{\tau, \bm{w}}$. 
\end{corollary}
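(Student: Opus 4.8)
The starting point is Theorem~\ref{thm:conf_general}: for any fixed $(\bm{\beta}_1,\bm{\beta}_0)$, the estimated distribution \eqref{eq::incompleteinformation} of $\hat{\tau}(\bm{\beta}_1,\bm{\beta}_0)$ has the same limit as \eqref{eq:reg_cre_est}, i.e. a scaled standard Gaussian with scale $\{V_{\tau\tau}(1-R^2_{\tau,\bm{w}}) + S^2_{\tau\setminus\bm{w}} + (r_1 r_0)^{-1}(\bm{\rcoef}-\tilde{\bm{\rcoef}})'\bm{S}^2_{\bm{w}}(\bm{\rcoef}-\tilde{\bm{\rcoef}})\}^{1/2}$. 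The plan is to evaluate this limiting scale at the two estimators being compared, collapse both expressions onto the common factor $V_{\tau\tau}\kappa$, and then read off the percentage reductions.

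First I would specialize to $\hat{\tau}(\tilde{\bm{\beta}}_1,\tilde{\bm{\beta}}_0)$. Its adjustment vector is $\bm{\rcoef} = r_0\tilde{\bm{\beta}}_1 + r_1\tilde{\bm{\beta}}_0 = \tilde{\bm{\rcoef}}$ by Proposition~\ref{prop:three_beta_tilde}, so the quadratic term in \eqref{eq:reg_cre_est} vanishes and the limiting scale is $\{V_{\tau\tau}(1-R^2_{\tau,\bm{w}}) + S^2_{\tau\setminus\bm{w}}\}^{1/2}$. Next I would specialize to $\hat{\tau} = \hat{\tau}(\bm{0},\bm{0})$, for which $\bm{\rcoef} = \bm{0}$. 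Using $(r_1 r_0)^{-1}\bm{S}^2_{\bm{w}} = \bm{V}_{\bm{ww}}$ from \eqref{eq::vvvv}, the definition $\tilde{\bm{\rcoef}} = \bm{V}_{\bm{ww}}^{-1}\bm{V}_{\bm{w}\tau}$ from \eqref{eq:beta_tilde}, the symmetry of $\bm{V}_{\bm{ww}}$, and the definition of $R^2_{\tau,\bm{w}}$ in \eqref{eq:R2_mid_w}, the quadratic term becomes $\tilde{\bm{\rcoef}}'\bm{V}_{\bm{ww}}\tilde{\bm{\rcoef}} = \bm{V}_{\tau\bm{w}}\bm{V}_{\bm{ww}}^{-1}\bm{V}_{\bm{w}\tau} = V_{\tau\tau}R^2_{\tau,\bm{w}}$, so the limiting scale for $\hat{\tau}$ is $\{V_{\tau\tau}(1-R^2_{\tau,\bm{w}}) + S^2_{\tau\setminus\bm{w}} + V_{\tau\tau}R^2_{\tau,\bm{w}}\}^{1/2} = \{V_{\tau\tau} + S^2_{\tau\setminus\bm{w}}\}^{1/2} = (V_{\tau\tau}\kappa)^{1/2}$, where the last equality uses $\kappa = 1 + V_{\tau\tau}^{-1}S^2_{\tau\setminus\bm{w}}$.

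With $V_{\tau\tau}\kappa = V_{\tau\tau} + S^2_{\tau\setminus\bm{w}}$, I would then rewrite the scale for $\hat{\tau}(\tilde{\bm{\beta}}_1,\tilde{\bm{\beta}}_0)$ as $(V_{\tau\tau}\kappa)^{1/2}(1 - R^2_{\tau,\bm{w}}/\kappa)^{1/2}$. Since both limiting estimated distributions are scalar multiples of the same $\varepsilon \sim \mathcal{N}(0,1)$, their variances are $V_{\tau\tau}\kappa(1-R^2_{\tau,\bm{w}}/\kappa)$ and $V_{\tau\tau}\kappa$, giving a percentage variance reduction of $R^2_{\tau,\bm{w}}/\kappa$; and any symmetric $1-\alpha$ quantile range of a scaled standard Gaussian is proportional to the scale, giving a percentage length reduction of $1 - (1 - R^2_{\tau,\bm{w}}/\kappa)^{1/2}$. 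Finally, because $R^2_{\tau,\bm{w}} \in [0,1]$ and $\kappa \geq 1$ force $R^2_{\tau,\bm{w}}/\kappa \in [0,1]$, both $t \mapsto t$ and $t \mapsto 1-(1-t)^{1/2}$ are nonnegative and nondecreasing on $[0,1]$, and $R^2_{\tau,\bm{w}} \mapsto R^2_{\tau,\bm{w}}/\kappa$ is nondecreasing for fixed $\kappa$, so both reductions are nonnegative and nondecreasing in $R^2_{\tau,\bm{w}}$. There is no genuine obstacle here: the only care needed is the algebraic identity $\tilde{\bm{\rcoef}}'\bm{V}_{\bm{ww}}\tilde{\bm{\rcoef}} = V_{\tau\tau}R^2_{\tau,\bm{w}}$ and the bookkeeping that collapses both scales onto the common factor $V_{\tau\tau}\kappa$; everything else is an immediate consequence of Theorem~\ref{thm:conf_general} and the Gaussianity of the limiting estimated distributions.
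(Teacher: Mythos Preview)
Your proposal is correct and follows essentially the same approach as the paper: both arguments read off the two limiting Gaussian scales from Theorem~\ref{thm:conf_general} (the paper citing Corollary~\ref{cor:opt_conf_general} for the adjusted case, you plugging $\bm{\rcoef}=\tilde{\bm{\rcoef}}$ and $\bm{\rcoef}=\bm{0}$ directly into \eqref{eq:reg_cre_est}), then form the ratio and simplify via $\kappa$. Your explicit verification that the quadratic term at $\bm{\rcoef}=\bm{0}$ equals $V_{\tau\tau}R^2_{\tau,\bm{w}}$ is exactly the content of Lemma~\ref{lemma:V_tau_beta_decomp} specialized to $\bm{\rcoef}=\bm{0}$, so the two proofs differ only in citation granularity.
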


Both percentage reductions in Corollary \ref{cor:infence_inf_reg_diff} converge to 1 as $R^2_{\tau, \bm{w}}\rightarrow 1$ and $\kappa\rightarrow 1$, and they are larger than or equal to those in Corollary \ref{cor:infer_ana_more_role_ana}.

Second, we measure the additional gain of the designer. From Corollary \ref{corr:infer_eff_cre} and Theorem \ref{thm:conf_general}, the estimated distributions of any adjusted estimator in \eqref{eq:reg} have the same probability limit under both designs. Therefore, 
the gain from the designer  
is zero. Nevertheless, using ReM will not hurt the estimated precision of the treatment effect estimators, and we can use covariates in the analysis in the same way as in the CRE to improve the estimated precision.

\subsection{Coverage probabilities}

From Sections \ref{sec::Coptimalreg} and \ref{sec:gain_est_precision},  (a) $\hat{\tau}(\tilde{\bm{\beta}}_1, \tilde{\bm{\beta}}_0)$ is optimal in terms of the estimated precision no matter whether the analyzer knows all the information of the design or not, and (b) the designer provides no gain in the estimated precision of the $\hat{\tau}(\tilde{\bm{\beta}}_1, \tilde{\bm{\beta}}_0)$. 
From (a), under ReM, the analyzer can never increase the asymptotic lengths of the confidence intervals by using $\hat{\tau}(\tilde{\bm{\beta}}_1, \tilde{\bm{\beta}}_0)$ instead of $\hat{\tau}$. Therefore, we do not measure the additional gain of the analyzer in coverage probabilities. From (b), 
the asymptotic lengths of the confidence intervals based on $\hat{\tau}(\tilde{\bm{\beta}}_1, \tilde{\bm{\beta}}_0)$ are the same under the CRE and ReM.   However, we will show shortly that  the designer can help to improve the coverage probabilities of the confidence intervals based on $\hat{\tau}(\tilde{\bm{\beta}}_1, \tilde{\bm{\beta}}_0)$.

\subsubsection{When the analyzer knows all the information in the design stage}
From Sections 
\ref{sec:gains_samp_ana_more} 
and \ref{sec:gain_est_analyzer_know_all}, 
$\hat{\tau}(\tilde{\bm{\beta}}_1, \tilde{\bm{\beta}}_0)$ has the same sampling precision and estimated precision under ReM and the CRE.
Therefore,  the coverage probabilities of the associated confidence intervals  are asymptotically the same under the CRE and ReM.  
This implies that the designer provides no gain for the coverage probabilities of the confidence intervals based on $\hat{\tau}(\tilde{\bm{\beta}}_1, \tilde{\bm{\beta}}_0)$.
We formally state the results as follows. 

\begin{corollary}\label{cor:coverage_ana_more}
	Under Conditions \ref{con:fp} and \ref{con:ana_more_bal_criterion}, compare the confidence intervals based on $\hat{\tau}(\tilde{\bm{\beta}}_1, \tilde{\bm{\beta}}_0)$ and \eqref{eq::samplingdistribution} under the CRE and ReM. 
	Their lengths are asymptotically the same after being scaled by $n^{1/2}$, and they have the same asymptotic coverage probability. 
\end{corollary}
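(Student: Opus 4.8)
The plan is to reduce both claims---equality of $n^{1/2}$-scaled lengths and equality of asymptotic coverage---to the behavior of a single scalar, namely the $(1-\alpha/2)$-quantile $\hat q^{(D)}$ of the estimated distribution \eqref{eq::samplingdistribution} evaluated at $\bm{\beta}_z=\tilde{\bm{\beta}}_z$ under design $D\in\{\mathrm{CRE},\mathrm{ReM}\}$. Fix $\alpha\in(0,1)$. Since \eqref{eq::samplingdistribution} is a scale mixture of the symmetric unimodal laws of $\varepsilon$ and $L_{K,a}$, it is itself symmetric and unimodal about $0$, so its shortest $1-\alpha$ quantile range is $[-\hat q^{(D)},\hat q^{(D)}]$, and the associated confidence interval for $\tau$ is $[\hat{\tau}(\tilde{\bm{\beta}}_1,\tilde{\bm{\beta}}_0)-n^{-1/2}\hat q^{(D)},\ \hat{\tau}(\tilde{\bm{\beta}}_1,\tilde{\bm{\beta}}_0)+n^{-1/2}\hat q^{(D)}]$. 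Its $n^{1/2}$-scaled length is $2\hat q^{(D)}$ and its coverage probability is $P_D(|n^{1/2}\{\hat{\tau}(\tilde{\bm{\beta}}_1,\tilde{\bm{\beta}}_0)-\tau\}|\le\hat q^{(D)})$. Hence it suffices to prove (i) $\hat q^{(\mathrm{CRE})}$ and $\hat q^{(\mathrm{ReM})}$ share a common probability limit, and (ii) the two coverage probabilities share a common limit.

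For (i), I would invoke Corollary \ref{cor:Copt_ana_know_all} together with the remark following it that the statement also holds under the CRE: under either design the estimated distribution \eqref{eq::samplingdistribution} with $\bm{\beta}_z=\tilde{\bm{\beta}}_z$ has the common probability limit $\{V_{\tau\tau}(1-R^2_{\tau,\bm{w}})+S^2_{\tau\setminus\bm{w}}\}^{1/2}\cdot\varepsilon$, a centered Gaussian whose variance involves neither $\bm{x}$ nor the threshold $a$. In particular the random scale and the random mixing weight in \eqref{eq::samplingdistribution} converge in probability, with the weight on $L_{K,a}$ tending to zero; since the Gaussian limit is continuous and strictly increasing at its $(1-\alpha/2)$-quantile, convergence of quantiles at continuity points gives $\hat q^{(D)}\overset{p}{\to}q_{1-\alpha/2}(0)\cdot\{V_{\tau\tau}(1-R^2_{\tau,\bm{w}})+S^2_{\tau\setminus\bm{w}}\}^{1/2}$ for both $D$. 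Thus the scaled lengths $2\hat q^{(D)}$ agree asymptotically, which is the length claim.

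For (ii), I would combine (i) with the sampling law of $\hat{\tau}(\tilde{\bm{\beta}}_1,\tilde{\bm{\beta}}_0)$. Under ReM, Theorem \ref{thm:analysis_more_opt} gives $n^{1/2}\{\hat{\tau}(\tilde{\bm{\beta}}_1,\tilde{\bm{\beta}}_0)-\tau\}\mid\mathcal{M}\apprsim\{V_{\tau\tau}(1-R^2_{\tau,\bm{w}})\}^{1/2}\cdot\varepsilon$, and under the CRE---the case $\bm{x}=\emptyset$, $a=\infty$ discussed in Section \ref{sec:sopt_cre}---one obtains the same limiting Gaussian law. Because this limit is continuous and the deterministic limit of $\hat q^{(D)}$ from (i) is one of its continuity points, a Slutsky-type argument yields that under either design the coverage probability converges to $$P\!\left(\left|\{V_{\tau\tau}(1-R^2_{\tau,\bm{w}})\}^{1/2}\,\varepsilon\right|\ \le\ q_{1-\alpha/2}(0)\cdot\{V_{\tau\tau}(1-R^2_{\tau,\bm{w}})+S^2_{\tau\setminus\bm{w}}\}^{1/2}\right).$$ The finite-population quantities $V_{\tau\tau}$, $R^2_{\tau,\bm{w}}$ and $S^2_{\tau\setminus\bm{w}}$ entering this expression are intrinsic to the population and the covariates, so they take the same limiting values under the CRE and under ReM; the two coverage limits therefore coincide, completing the argument.

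The work here is bookkeeping rather than conceptual, and that bookkeeping is the main obstacle: one must make the ``convergence of random quantiles'' step precise inside the finite-population embedding of Condition \ref{con:fp}, where $V_{\tau\tau}$, $R^2_{\tau,\bm{w}}$, $R^2_{\tau,\bm{x}}(\tilde{\bm{\beta}}_1,\tilde{\bm{\beta}}_0)$ and $S^2_{\tau\setminus\bm{w}}$ are themselves $n$-dependent sequences converging along the embedding. In particular one should check via \eqref{eq:R2_beta_hat} and Corollary \ref{cor:analysis_more} that $\hat R^2_{\tau,\bm{x}}(\tilde{\bm{\beta}}_1,\tilde{\bm{\beta}}_0)\overset{p}{\to}0$, so that the $L_{K,a}$ component of \eqref{eq::samplingdistribution} is asymptotically negligible, and one should match the $\apprsim$-limit of the estimator with the probability limit of the estimated quantile through a common subsequence (Skorokhod representation) argument. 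No step requires an idea beyond Slutsky's theorem and the results already established.
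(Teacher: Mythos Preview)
Your proposal is correct and follows essentially the same route as the paper: both use Corollary \ref{cor:Copt_ana_know_all} (and the remark that it also holds under the CRE) to match the probability limits of the estimated distributions for the length claim, and both combine Theorem \ref{thm:analysis_more_opt} with Section \ref{sec:sopt_cre} and a Slutsky argument for the coverage claim. The paper's execution is slightly more streamlined: instead of tracking a random quantile $\hat q^{(D)}$, it rewrites the coverage event as $\big|\hat V_{\tau\tau}^{-1/2}(\tilde{\bm\beta}_1,\tilde{\bm\beta}_0)\cdot n^{1/2}\{\hat\tau(\tilde{\bm\beta}_1,\tilde{\bm\beta}_0)-\tau\}\big|\le q_{1-\alpha/2}(0)$ (exploiting that the $L_{K,a}$-weight in \eqref{eq::samplingdistribution} vanishes at $(\tilde{\bm\beta}_1,\tilde{\bm\beta}_0)$), then invokes Lemma \ref{lemma:inf_rem} and Slutsky directly---this sidesteps the quantile-convergence and Skorokhod bookkeeping you flag in your last paragraph.
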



\subsubsection{General scenarios without Condition \ref{con:ana_more_bal_criterion}}

From Sections 
\ref{sec:addition_general}
and 
\ref{sec:add_gain_est_ana_not_more}, 
$\hat{\tau}(\bm{\beta}_1, \bm{\beta}_0)$ in \eqref{eq:reg} with any $\bm{\beta}_1$ and $\bm{\beta}_0$
has better sampling precision under ReM than under the CRE, 
but 
it has the same estimated precision under ReM and the CRE. 
Therefore, 
the confidence intervals based on $\hat{\tau}(\bm{\beta}_1, \bm{\beta}_0)$ under ReM have higher coverage probabilities than that under the CRE. 
We give a formal statement below. 


\begin{corollary}\label{cor:gen_ci_larger_coverage_rem}
	Under Condition \ref{con:fp}, compare the confidence intervals based on $\hat{\tau}(\bm{\beta}_1, \bm{\beta}_0)$ and \eqref{eq::incompleteinformation} under the CRE and ReM. 
	Their lengths are asymptotically the same after being scaled by $n^{1/2}$. However, the asymptotic coverage probability under ReM is larger than or equal to that under the CRE. 
\end{corollary}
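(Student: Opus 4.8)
The plan is to compare, under a fixed $(\bm{\beta}_1, \bm{\beta}_0)$, the true sampling distribution of $n^{1/2}\{\hat{\tau}(\bm{\beta}_1, \bm{\beta}_0) - \tau\}$ against the estimated distribution \eqref{eq::incompleteinformation} that the analyzer actually uses to form the confidence interval, and to do this separately under the CRE and under ReM. First I would invoke Theorem \ref{thm:conf_general}: the estimated distribution \eqref{eq::incompleteinformation} has the same probability limit, namely \eqref{eq:reg_cre_est}, whether the design is the CRE or ReM. Since \eqref{eq:reg_cre_est} is a Gaussian $\{V_{\tau\tau}(1-R^2_{\tau,\bm{w}}) + S^2_{\tau\setminus\bm{w}} + (r_1r_0)^{-1}(\bm{\gamma}-\tilde{\bm{\gamma}})'\bm{S}^2_{\bm{w}}(\bm{\gamma}-\tilde{\bm{\gamma}})\}^{1/2}\cdot\varepsilon$, the resulting confidence interval has the same asymptotic half-length, call it $\sigma_{\mathrm{CI}}\, z_{1-\alpha/2}$, under both designs — this establishes the length claim. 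The coverage probability under each design is then the limiting probability that the true centered-and-scaled estimator falls within $\pm\sigma_{\mathrm{CI}} z_{1-\alpha/2}$.

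Next I would write out the two true sampling distributions. Under the CRE, Corollary \ref{corr:reg_cre} gives $n^{1/2}\{\hat{\tau}(\bm{\beta}_1,\bm{\beta}_0)-\tau\} \apprsim V_{\tau\tau}^{1/2}(\bm{\beta}_1,\bm{\beta}_0)\cdot\varepsilon$, a centered Gaussian with some variance $\sigma^2_{\mathrm{CRE}} \equiv V_{\tau\tau}(\bm{\beta}_1,\bm{\beta}_0)$. Under ReM, Theorem \ref{thm:adj_rem} gives $n^{1/2}\{\hat{\tau}(\bm{\beta}_1,\bm{\beta}_0)-\tau\} \mid \mathcal{M} \apprsim V_{\tau\tau}^{1/2}(\bm{\beta}_1,\bm{\beta}_0)[\{1-R^2_{\tau,\bm{x}}(\bm{\beta}_1,\bm{\beta}_0)\}^{1/2}\varepsilon + \{R^2_{\tau,\bm{x}}(\bm{\beta}_1,\bm{\beta}_0)\}^{1/2}L_{K,a}]$. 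Both limiting laws are symmetric and unimodal around zero (the ReM one by the symmetry/unimodality result cited after Theorem \ref{thm:adj_rem} and in \citet{asymrerand2106}). The coverage under each design is $P(\text{limiting law} \in [-c, c])$ for the common cutoff $c = \sigma_{\mathrm{CI}} z_{1-\alpha/2}$.

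The crux is then a stochastic-dominance comparison: the ReM limiting law is "less dispersed" than the CRE limiting law in the sense that its symmetric quantile ranges are all shorter. Concretely, $\sigma_{\mathrm{CRE}}^{1/2}\varepsilon \overset{d}{=} V_{\tau\tau}^{1/2}[\{1-R^2\}^{1/2}\varepsilon + \{R^2\}^{1/2}\varepsilon']$ with an independent copy $\varepsilon'$, whereas under ReM the second independent summand is $\{R^2\}^{1/2}L_{K,a}$; since $L_{K,a}$ is a standard Gaussian truncated to a ball and hence has $\mathrm{Var}(L_{K,a}) = v_{K,a} \le 1$, and more strongly its distribution is a symmetric unimodal "contraction" of $\mathcal{N}(0,1)$, replacing $\varepsilon'$ by $L_{K,a}$ can only shrink symmetric quantile ranges. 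I would cite the monotonicity fact already stated in the excerpt — "the lengths of quantile ranges of the asymptotic distribution \eqref{eq:adj_rem} are nonincreasing in $R^2_{\tau,\bm{x}}(\bm{\beta}_1,\bm{\beta}_0)$" (Section \ref{sec:est_dist_rem_general}), applied between $R^2 = R^2_{\tau,\bm{x}}(\bm{\beta}_1,\bm{\beta}_0)$ (ReM) and $R^2 = 0$ (CRE) — to conclude that for every symmetric interval $[-c,c]$ the ReM law assigns at least as much probability as the CRE law. Applying this at $c = \sigma_{\mathrm{CI}} z_{1-\alpha/2}$ gives the coverage inequality.

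The main obstacle is making the "ReM law is less dispersed for every symmetric interval, not just in variance" step rigorous, i.e., that $P(|a\varepsilon + bL_{K,a}| \le c) \ge P(|a\varepsilon + b\varepsilon'| \le c)$ for all $c>0$ when $a^2+b^2$ is held fixed. This is precisely the peakedness/quantile-monotonicity property that the excerpt attributes to \citet{asymrerand2106} and restates in Section \ref{sec:est_dist_rem_general}; assuming that result, the rest is bookkeeping. I would handle it by a conditioning argument: write $bL_{K,a} = b(\text{std Gaussian}) \mid \{\text{ball}\}$, condition on the squeezing, and use that convolving a fixed symmetric unimodal density with a "more peaked" symmetric density preserves the ordering of $P(\cdot \in [-c,c])$ — but since this is already available upstream, the proof reduces to citing it and combining Theorem \ref{thm:conf_general} (equal CI lengths) with Theorem \ref{thm:adj_rem} and Corollary \ref{corr:reg_cre} (the two true laws).
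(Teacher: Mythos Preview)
Your proposal is correct and follows essentially the same route as the paper: equal confidence-interval lengths come from Theorem~\ref{thm:conf_general} (same probability limit of \eqref{eq::incompleteinformation} under the CRE and ReM), the two true sampling laws come from Corollary~\ref{corr:reg_cre} and Theorem~\ref{thm:adj_rem}, and the coverage inequality is exactly the quantile-monotonicity in $R^2$ that you identify (the paper states and uses it as Lemma~A4, equivalent to the Section~\ref{sec:est_dist_rem_general} fact you cite). The only cosmetic addition in the paper's version is an explicit Slutsky step replacing $\hat{V}_{\tau\tau}(\bm{\beta}_1,\bm{\beta}_0)$ by its probability limit before comparing the standardized laws, which your argument uses implicitly when you pass to the deterministic cutoff $c=\sigma_{\mathrm{CI}}\,z_{1-\alpha/2}$.
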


In Corollary \ref{cor:gen_ci_larger_coverage_rem}, the confidence intervals under both ReM and the CRE are asymptotically valid and of the same length, 
but the one under ReM has higher coverage probabilities and is more conservative.
In particular,  for any $\alpha \in (0,1)$ and any $(\bm{\beta}_1, \bm{\beta}_0)$, 
as $R^2_{\tau, \bm{x}}(\bm{\beta}_1, \bm{\beta}_0) \rightarrow 1$, $S^2_{\tau \setminus \bm{w}}  \rightarrow 0$ and $a \rightarrow 0$, 
the asymptotic coverage probabilities of the $1-\alpha$ confidence intervals are $1$ and $1-\alpha$ under ReM and the CRE, respectively. 
Corollary \ref{cor:gen_ci_larger_coverage_rem} holds for any adjusted estimator and thus holds for 
$\hat{\tau}(\tilde{\bm{\beta}}_1, \tilde{\bm{\beta}}_0)$. Therefore, under general scenarios without Condition \ref{con:ana_more_bal_criterion}, the designer can provide a substantial gain in coverage probabilities of confidence intervals. This gives another justification for using ReM.

\section{Unification and practical suggestions}
\label{sec::unification}

\subsection{Unification}\label{sec:uni}

In total, there are four combinations in the design and analysis of experiments. Figure \ref{fig:gain} summarizes the sampling distributions and the probability limits of the estimated distributions for all combinations. 

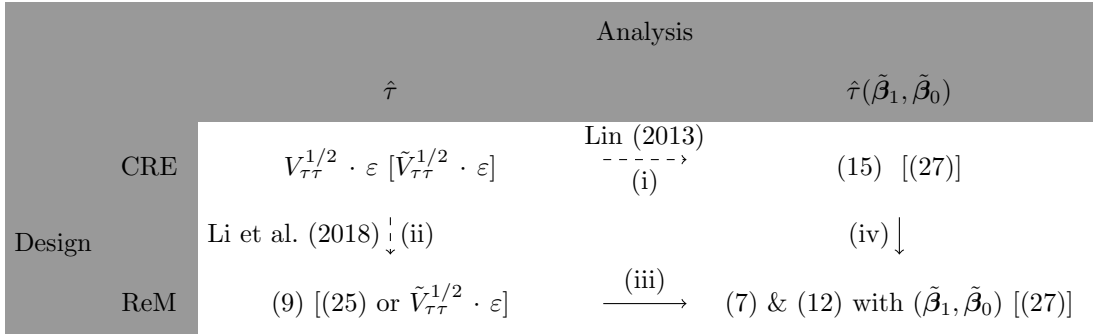
\begin{figure}[t]
	\centering
	\begin{tikzpicture}
	\small
	\matrix (mat) [table]
	{
		& & & Analysis & \\
		& & $\hat{\tau}$ & & $\hat{\tau}(\tilde{\bm{\beta}}_1, \tilde{\bm{\beta}}_0)$  \\
		& CRE & $V_{\tau\tau}^{1/2} \cdot \varepsilon$ [$\tilde{V}_{\tau\tau}^{1/2} \cdot \varepsilon$]  & & \eqref{eq:opt_ana_more}  \  [\eqref{eq::estimated-optimal}]\\
		Design & & &&
		\\
		& ReM & \eqref{eq:diff_rerand} [\eqref{eq:est_unadj_ana_more} or $\tilde{V}_{\tau\tau}^{1/2} \cdot \varepsilon$]  & &   \eqref{eq:adj_rem} \& \eqref{eq:adj_rem_beta} 
		with 
 $(\tilde{\bm{\beta}}_1, \tilde{\bm{\beta}}_0)$
		[\eqref{eq::estimated-optimal}]
		\\
	};
	\begin{scope}[shorten >=7pt,shorten <= 7pt]
	\draw[dashed, ->]  (mat-3-3) edge node[below] {\(\text{(i)}\)} node[above] {\(\text{\citet{lin2013}} \)} (mat-3-5);
	\draw[dashed, ->]  (mat-3-3) edge node[right] {\(\text{(ii)}\)} node[left] {\( \text{\citet{asymrerand2106}} \)}  (mat-5-3);
	\draw[->]  (mat-5-3) edge node[above] {\(\text{(iii)}\)} (mat-5-5);
	\draw[->]  (mat-3-5) edge node[left] {\(\text{(iv)}\)} (mat-5-5);
	\end{scope}
	\end{tikzpicture}
	\caption{
		Design and analysis strategies. 
		The formulas without square brackets correspond to asymptotic distributions, 
		and those with square brackets correspond to probability limits of the estimated distributions with $\tilde{V}_{\tau\tau} = V_{\tau\tau}+S^2_{\tau \setminus \bm{w}}$.  
		The probability limits of the estimated distributions of $\hat{\tau}$ under ReM have two forms, depending on whether Condition \ref{con:ana_more_bal_criterion} holds or not. 
	}\label{fig:gain}
\end{figure}

\citet{Neyman:1923} started the literature by discussing the property of $\hat{\tau}$ under the CRE. \citet{lin2013} showed that $\hat{\tau}(\tilde{\bm{\beta}}_1, \tilde{\bm{\beta}}_0)$ improves $\hat{\tau}$ in terms of the sampling precision and estimated precision under the CRE. Arrow (i) in Figure \ref{fig:gain} illustrates this improvement. \citet{asymrerand2106} showed that ReM improves the CRE in terms of the sampling precision and the estimated precision of $\hat{\tau}$. Arrow (ii) in Figure \ref{fig:gain} illustrates this improvement. Interestingly, $\hat{\tau}(\tilde{\bm{\beta}}_1, \tilde{\bm{\beta}}_0)$ under the CRE and $\hat{\tau}$ under ReM have almost identical asymptotic sampling distributions and estimated distributions, if we use the same sets of covariates and $a\approx 0$ in ReM.

However, both \citet{lin2013} and \citet{asymrerand2106} compared sub-optimal strategies. We evaluated the additional gain from the analyzer given that the designer uses ReM.  Arrow (iii) in Figure \ref{fig:gain} illustrates this improvement. We also evaluated the additional gain from the designer given that the analyzer uses $\hat{\tau}(\tilde{\bm{\beta}}_1, \tilde{\bm{\beta}}_0)$. 
Arrow (iv) in Figure \ref{fig:gain} illustrates this improvement. 
Table \ref{tab:summary} summarizes the results under all scenarios. 
We have the following conclusions. 
\begin{enumerate}[(i)]
	\setlength\itemsep{0em}
	\item
	Compare the analyzer and the designer based on the sampling precision. 
	From the first two rows of Table \ref{tab:summary}, when one has more covariate information than the other, the one with more covariate information provides a substantial additional gain, while the other provides negligible additional gain.

	\item
	Compare the analyzer and the designer based on the estimated precision. 
	From the 6th and 8th
	columns of Table \ref{tab:summary},
	the additional gain from the analyzer can be substantial, 
	while the additional gain from the designer is negligible in general. 
	
	\item
	Consider the special case where the  analyzer has the same covariate information as the designer and knows the balance criterion in the design. 
	From the fourth row of Table \ref{tab:summary},
	the additional gain from either the analyzer or the designer are negligible. 
	
	\item
	From the last row of Table \ref{tab:summary},
	the analyzer may hurt the sampling precision through regression adjustment, but can provide a substantial gain in the estimated precision. 
	The designer can improve sampling precision of any adjusted estimator, and does not hurt the estimated precision. 
	Therefore, although the designer cannot shorten the confidence intervals, s/he can increase the coverage probabilities. 
\end{enumerate}

\begin{table}
	\centering
	\caption{Additional gains.
		In Column 1, A\ $\geq$\ D, A\ $\leq$\ D and A\ $=$\ D represent that the analyzer has 
		no less (i.e., Condition \ref{con:ana_more}), no more (i.e., Condition \ref{con:des_more}) and the same (i.e., both Conditions \ref{con:ana_more} and \ref{con:des_more}) covariate information compared to the designer. 
		Column 2 shows whether the analyzer knows the balance criterion in the design (i.e., Condition \ref{con:ana_more_bal_criterion}).
		Columns 3 and 4 show the optimal coefficients. 
		{\large \smiley} denotes a substantial gain, {\large \neutranie} denotes no gain or negligible gain, and {\large \frownie}  denotes a negative gain. 
	}\label{tab:summary}
	\resizebox{\textwidth}{!}{
		\begin{tabular}{cccccccc}
			\toprule
			Covariates & Balance  & 
			\multicolumn{2}{c}{Optimal adjustment}
			& \multicolumn{2}{c}{Additional gain from analyzer} & \multicolumn{2}{c}{Additional gain from  designer}\\
			information & criterion & Sampling & Estimated & Sampling & Estimated & Sampling & Estimated\\
			\midrule
			A $\geq$ D & (Un-)known & $(\tilde{\bm{\beta}}_1, \tilde{\bm{\beta}}_0)$ & $(\tilde{\bm{\beta}}_1, \tilde{\bm{\beta}}_0)$ & {\ssize \smiley \neutranie } & {\ssize \smiley \neutranie } & {\ssize \neutranie} & {\ssize \neutranie}
			\\
			A $\leq$ D & Unknown & $(\tilde{\bm{\beta}}_1, \tilde{\bm{\beta}}_0)$ & $(\tilde{\bm{\beta}}_1, \tilde{\bm{\beta}}_0)$ & {\ssize \neutranie} & {\ssize \smiley \neutranie}  & {\ssize \smiley \neutranie}  & {\ssize  \neutranie}
			\\
			A $=$ D & Unknown & $(\tilde{\bm{\beta}}_1, \tilde{\bm{\beta}}_0)$ & $(\tilde{\bm{\beta}}_1, \tilde{\bm{\beta}}_0)$ & {\ssize  \neutranie} & {\ssize  \smiley \neutranie}  & {\ssize \neutranie} & {\ssize \neutranie}
			\\
			A $=$ D & Known & $(\tilde{\bm{\beta}}_1, \tilde{\bm{\beta}}_0)$ & $(\tilde{\bm{\beta}}_1, \tilde{\bm{\beta}}_0)$ & {\ssize \neutranie} & {\ssize  \neutranie} & {\ssize \neutranie} & {\ssize \neutranie}
			\\
			General & Unknown & $\bm{\rcoef} \approx \tilde{\bm{\rcoef}}_{\res}$ & $(\tilde{\bm{\beta}}_1, \tilde{\bm{\beta}}_0)$ &  {\ssize \smiley \neutranie \frownie} & {\ssize \smiley \neutranie}  & {\ssize \smiley \neutranie}  & {\ssize \neutranie}\\
			\bottomrule
	\end{tabular}}%
\end{table}

\subsection{ReM, Lin's estimator, and the Huber--White variance estimator}

Based on the summary in Section \ref{sec:uni}, we recommend using ReM in the design and  $\hat{\tau}(\tilde{\bm{\beta}}_1, \tilde{\bm{\beta}}_0)$ in the analysis, which has better estimated precision and coverage property. 
However, some practical issues remain.

First, we need to estimate the population OLS coefficients $\tilde{\bm{\beta}}_1$ and $\tilde{\bm{\beta}}_0$. Under both the CRE and ReM, we can use their sample analogues as consistent estimators, with $\hat{\bm{\beta}}_1$ and $ \hat{\bm{\beta}}_0$ being the coefficients of $\bm{w}$ in the OLS fit of $Y$ on $\bm{w}$ under the treatment and control, respectively. The corresponding adjusted estimator $\hat{\tau}(\hat{\bm{\beta}}_1, \hat{\bm{\beta}}_0)$ numerically equals the coefficient of $Z$ in the OLS fit of $Y$ on $Z$, $\bm{w}$ and $Z\times \bm{w}$, i.e., it is \citet{lin2013}'s estimator. Replacing $\tilde{\bm{\beta}}_1$ and $\tilde{\bm{\beta}}_0$ by their sample analogues does not change the asymptotic distribution. Informally speaking, $\hat{\tau}(\tilde{\bm{\beta}}_1, \tilde{\bm{\beta}}_0)$ and $\hat{\tau}(\hat{\bm{\beta}}_1, \hat{\bm{\beta}}_0)$ have the same asymptotic behavior and optimality. The following corollary is a formal statement.

\begin{proposition}
\label{prop::equivalent}
Under ReM and Condition \ref{con:fp}, $\hat{\tau}(\tilde{\bm{\beta}}_1, \tilde{\bm{\beta}}_0)$ and $\hat{\tau}(\hat{\bm{\beta}}_1, \hat{\bm{\beta}}_0)$ have the same asymptotic distributions and the same probability limits of the estimated distributions. Thus, among \eqref{eq:reg},
$\hat{\tau}(\hat{\bm{\beta}}_1, \hat{\bm{\beta}}_0)$ is $\mathcal{S}$-optimal under Condition \ref{con:ana_more} or \ref{con:des_more}, 
and is
always optimal in terms of the estimated precision based on \eqref{eq::samplingdistribution} and \eqref{eq::incompleteinformation}. 
\end{proposition}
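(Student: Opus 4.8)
The plan is to show that replacing the population coefficients $(\tilde{\bm{\beta}}_1, \tilde{\bm{\beta}}_0)$ by their sample analogues $(\hat{\bm{\beta}}_1, \hat{\bm{\beta}}_0)$ changes both the statistic and its estimated distribution only by $o_p(1)$ terms, and then to invoke Slutsky's theorem together with the results already established for $\hat{\tau}(\tilde{\bm{\beta}}_1, \tilde{\bm{\beta}}_0)$. The decomposition \eqref{eq:reg} is purely algebraic, hence valid for random coefficients: writing $\hat{\bm{\rcoef}} \equiv r_0\hat{\bm{\beta}}_1 + r_1\hat{\bm{\beta}}_0$, we have $\hat{\tau}(\hat{\bm{\beta}}_1, \hat{\bm{\beta}}_0) = \hat{\tau} - \hat{\bm{\rcoef}}'\hat{\bm{\tau}}_{\bm{w}}$ and $\hat{\tau}(\tilde{\bm{\beta}}_1, \tilde{\bm{\beta}}_0) = \hat{\tau} - \tilde{\bm{\rcoef}}'\hat{\bm{\tau}}_{\bm{w}}$.

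First I would record two facts that hold under the CRE: (i) the within-arm sample covariances converge in probability to the corresponding finite population covariances (part of the finite population machinery of \citet{lidingclt2016} under Condition \ref{con:fp}), so by the continuous mapping theorem and the nonsingularity of $\bm{S}_{\bm{w}}^2$ in Condition \ref{con:fp}, $\hat{\bm{\beta}}_z - \tilde{\bm{\beta}}_z = o_p(1)$ for $z=0,1$, whence $\hat{\bm{\rcoef}} - \tilde{\bm{\rcoef}} = o_p(1)$ by Proposition \ref{prop:three_beta_tilde}; and (ii) $n^{1/2}\hat{\bm{\tau}}_{\bm{w}} = O_p(1)$ by the finite population central limit theorem. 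Since $P(\mathcal{M}) \to P(\chi^2_K \le a) > 0$, conditioning on $\mathcal{M}$ preserves both: for any $X_n = o_p(1)$ under the CRE, $P(|X_n| > \epsilon \mid \mathcal{M}) \le P(|X_n| > \epsilon)/P(\mathcal{M}) \to 0$, and the analogous bound gives tightness under $\mathcal{M}$; hence (i) and (ii) hold under ReM as well. Then
\begin{align*}
n^{1/2}\left\{\hat{\tau}(\hat{\bm{\beta}}_1, \hat{\bm{\beta}}_0) - \tau\right\}
&= n^{1/2}\left\{\hat{\tau}(\tilde{\bm{\beta}}_1, \tilde{\bm{\beta}}_0) - \tau\right\}
- \left(\hat{\bm{\rcoef}} - \tilde{\bm{\rcoef}}\right)' n^{1/2}\hat{\bm{\tau}}_{\bm{w}},
\end{align*}
and the last term is $o_p(1)\cdot O_p(1) = o_p(1)$ under ReM. Slutsky's theorem then gives that $n^{1/2}\{\hat{\tau}(\hat{\bm{\beta}}_1, \hat{\bm{\beta}}_0) - \tau\}\mid \mathcal{M}$ has the same limiting law as $n^{1/2}\{\hat{\tau}(\tilde{\bm{\beta}}_1, \tilde{\bm{\beta}}_0) - \tau\}\mid \mathcal{M}$ described by Theorem \ref{thm:adj_rem}; taking $a=\infty$ covers the CRE.

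For the estimated distributions, $\hat{V}_{\tau\tau}(\bm{\beta}_1, \bm{\beta}_0)$ in \eqref{eq:V_beta_hat} and $\hat{R}^2_{\tau,\bm{x}}(\bm{\beta}_1,\bm{\beta}_0)$ in \eqref{eq:R2_beta_hat} are continuous functions of $(\bm{\beta}_1, \bm{\beta}_0)$ and the within-arm sample covariances, with the denominator of $\hat{R}^2_{\tau,\bm{x}}$ converging to a strictly positive limit by Condition \ref{con:fp}. Combining the consistency of the sample covariances, the consistency $\hat{\bm{\beta}}_z - \tilde{\bm{\beta}}_z = o_p(1)$, and the continuous mapping theorem shows that $\hat{V}_{\tau\tau}(\hat{\bm{\beta}}_1,\hat{\bm{\beta}}_0)$ and $\hat{R}^2_{\tau,\bm{x}}(\hat{\bm{\beta}}_1,\hat{\bm{\beta}}_0)$ have the same probability limits as their counterparts at $(\tilde{\bm{\beta}}_1, \tilde{\bm{\beta}}_0)$; hence the estimated distributions \eqref{eq::samplingdistribution} and \eqref{eq::incompleteinformation} evaluated at $(\hat{\bm{\beta}}_1, \hat{\bm{\beta}}_0)$ have the probability limits identified in Theorems \ref{thm:asymp_behavior_var_ci_ana_more} and \ref{thm:conf_general} for $(\tilde{\bm{\beta}}_1, \tilde{\bm{\beta}}_0)$.

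Finally, the optimality claims follow formally: $\mathcal{S}$-optimality depends only on the asymptotic sampling distribution and optimality in terms of the estimated precision depends only on the probability limit of the estimated distribution, and by the two previous paragraphs these coincide for $\hat{\tau}(\hat{\bm{\beta}}_1,\hat{\bm{\beta}}_0)$ and $\hat{\tau}(\tilde{\bm{\beta}}_1,\tilde{\bm{\beta}}_0)$; the latter is $\mathcal{S}$-optimal under Condition \ref{con:ana_more} or \ref{con:des_more} by Theorems \ref{thm:analysis_more_opt} and \ref{thm:design_more_opt}, and optimal in terms of the estimated precision by Corollaries \ref{cor:Copt_ana_know_all} and \ref{cor:opt_conf_general}. (Strictly, $\hat{\tau}(\hat{\bm{\beta}}_1,\hat{\bm{\beta}}_0)$ is not literally a member of the fixed-coefficient family \eqref{eq:reg}, so ``optimal'' is meant in the sense of attaining the optimal limiting quantile range over that family.) I expect the only genuine obstacle to be the bookkeeping needed to transfer the CRE-level consistency and tightness to ReM, which I handle through the bound $P(\,\cdot \mid \mathcal{M}) \le P(\,\cdot\,)/P(\mathcal{M})$ using that $P(\mathcal{M})$ is bounded away from zero; the consistency $\hat{\bm{\beta}}_z - \tilde{\bm{\beta}}_z = o_p(1)$ itself is the standard consistency of within-arm least squares, which I would cite from the Supplementary Material.
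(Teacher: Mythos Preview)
Your proposal is correct and follows essentially the same route as the paper: decompose $\hat{\tau}(\hat{\bm{\beta}}_1,\hat{\bm{\beta}}_0)-\hat{\tau}(\tilde{\bm{\beta}}_1,\tilde{\bm{\beta}}_0)=(\tilde{\bm{\rcoef}}-\hat{\bm{\rcoef}})'\hat{\bm{\tau}}_{\bm{w}}$, use $\hat{\bm{\rcoef}}-\tilde{\bm{\rcoef}}=o_P(1)$ and $\hat{\bm{\tau}}_{\bm{w}}=O_P(n^{-1/2})$ to get $o_P(n^{-1/2})$, then invoke continuity for the estimated-distribution quantities. The only cosmetic difference is that the paper derives the two auxiliary facts directly under ReM via its lemmas (sample-covariance consistency under ReM from \citet{asymrerand2106}, and $\hat{\bm{\tau}}_{\bm{w}}=O_P(n^{-1/2})$ by applying Corollary \ref{corr:diff_rem} to pseudo outcomes), whereas you obtain them under the CRE and transfer to ReM through the bound $P(\cdot\mid\mathcal{M})\le P(\cdot)/P(\mathcal{M})$; both routes are valid and yield the same conclusion.
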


Second, the estimated distributions \eqref{eq::samplingdistribution} and \eqref{eq::incompleteinformation} are identical with or without Condition \ref{con:ana_more_bal_criterion} because $\hat{R}^2_{\tau, \bm{x}}(\hat{\bm{\beta}}_{1}, \hat{\bm{\beta}}_0)$ based on \eqref{eq:R2_beta_hat} equals zero under Condition \ref{con:ana_more_bal_criterion}. 
Moreover,
the variance estimator $\hat{V}_{\tau \tau}(  \hat{\bm{\beta}}_1, \hat{\bm{\beta}}_0 )$ based on \eqref{eq:V_beta_hat} is asymptotically equivalent to the Huber--White variance estimator $\hat{V}_{\textup{HW}}$ of the coefficient of $Z$ from the OLS fit of $Y$ on $Z$, $\bm{w}$ and $Z\times \bm{w}$.

\begin{theorem}
\label{thm::HWequivalent}
Under ReM and Condition \ref{con:fp}, $\hat{V}_{\tau \tau}(  \hat{\bm{\beta}}_1, \hat{\bm{\beta}}_0 ) - \hat{V}_{\textup{HW}} \rightarrow 0$ in probability. 
\end{theorem}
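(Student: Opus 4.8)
The plan is to remove ReM from the problem, then simplify $\hat{V}_{\tau\tau}(\hat{\bm{\beta}}_1,\hat{\bm{\beta}}_0)$ algebraically, and finally match it with $\hat{V}_{\textup{HW}}$ via a Frisch--Waugh--Lovell (FWL) representation. Here both quantities are understood on the common scale of an estimator of the asymptotic variance of $n^{1/2}$ times the corresponding point estimator. For the first step, note that $\hat{V}_{\tau\tau}(\hat{\bm{\beta}}_1,\hat{\bm{\beta}}_0)$ and $\hat{V}_{\textup{HW}}$ are deterministic functions of $(\bm{Z},\text{observed data})$, and ReM only replaces the law of $\bm{Z}$ by its conditional law given $\mathcal{M}$. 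Hence, for $\epsilon>0$ and $A_n=\{|\hat{V}_{\tau\tau}(\hat{\bm{\beta}}_1,\hat{\bm{\beta}}_0)-\hat{V}_{\textup{HW}}|>\epsilon\}$, we have $P_{\textup{ReM}}(A_n)=P_{\textup{CRE}}(A_n\mid\mathcal{M})\le P_{\textup{CRE}}(A_n)/P_{\textup{CRE}}(\mathcal{M})$. Since $M$ converges in distribution to $\chi^2_K$ under the CRE (finite population CLT for $\hat{\bm{\tau}}_{\bm{x}}$ plus the continuous mapping theorem), $P_{\textup{CRE}}(\mathcal{M})\to P(\chi^2_K\le a)>0$, so it suffices to prove $\hat{V}_{\tau\tau}(\hat{\bm{\beta}}_1,\hat{\bm{\beta}}_0)-\hat{V}_{\textup{HW}}\to 0$ in probability under the CRE.

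Next I would simplify $\hat{V}_{\tau\tau}(\hat{\bm{\beta}}_1,\hat{\bm{\beta}}_0)$. Let $\hat{e}_i$ be the residual of unit $i$ from the OLS fit of $Y$ on $(1,\bm{w})$ carried out within its own treatment arm. Because that fit has an intercept, the normal equations force the within-arm sample mean of $\hat{e}_i$ to be zero and the within-arm sample covariance between $\hat{e}_i$ and $\bm{w}_i$ to be zero; since $Y_i-\hat{\bm{\beta}}_z'\bm{w}_i$ equals $\hat{e}_i$ plus a within-arm constant, the same is true with $\hat{e}_i$ replaced by $Y_i-\hat{\bm{\beta}}_z'\bm{w}_i$. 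Therefore $\bm{s}_{Y(1;\hat{\bm{\beta}}_1),\bm{w}}-\bm{s}_{Y(0;\hat{\bm{\beta}}_0),\bm{w}}=\bm{0}$, the cross term in \eqref{eq:V_beta_hat} drops out, and $\hat{V}_{\tau\tau}(\hat{\bm{\beta}}_1,\hat{\bm{\beta}}_0)=r_1^{-1}s^2_{Y(1;\hat{\bm{\beta}}_1)}+r_0^{-1}s^2_{Y(0;\hat{\bm{\beta}}_0)}$, which equals $r_1^{-1}(n_1-1)^{-1}\sum_{Z_i=1}\hat{e}_i^2+r_0^{-1}(n_0-1)^{-1}\sum_{Z_i=0}\hat{e}_i^2$, the Neyman-type variance estimator formed from the adjusted residuals.

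Then I would match this with $\hat{V}_{\textup{HW}}$. The OLS fit of $Y$ on $(1,Z,\bm{w},Z\bm{w})$ is numerically the pair of within-arm fits above, so its residuals are exactly the $\hat{e}_i$ and its coefficient of $Z$ is $\hat{\tau}(\hat{\bm{\beta}}_1,\hat{\bm{\beta}}_0)$. By FWL, the robust variance of this coefficient is $(\sum_i\tilde{Z}_i^2)^{-2}\sum_i\tilde{Z}_i^2\hat{e}_i^2$ (the precise finite-sample variant of the sandwich is immaterial, since the variants share a limit), where $\tilde{Z}_i$ is the residual from regressing $Z$ on $(1,\bm{w},Z\bm{w})$. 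Under the CRE and Condition \ref{con:fp}, the covariate imbalance is $O_P(n^{-1/2})$, the within-arm covariate means are $O_P(n^{-1/2})$, and $\max_i\|\bm{w}_i\|^2/n\to0$; from these facts I would show $\max_i|\tilde{Z}_i-(Z_i-r_1)|\to0$ in probability. Since $\sum_i\hat{e}_i^2=O_P(n)$ (using $\hat{\bm{\beta}}_z\to\tilde{\bm{\beta}}_z$ and Condition \ref{con:fp}), this gives $\sum_i\tilde{Z}_i^2=nr_1r_0+o_P(n)$ and $\sum_i\tilde{Z}_i^2\hat{e}_i^2=r_0^2\sum_{Z_i=1}\hat{e}_i^2+r_1^2\sum_{Z_i=0}\hat{e}_i^2+o_P(n)$, whence $n$ times the robust variance of the $Z$-coefficient equals $r_1^{-1}n_1^{-1}\sum_{Z_i=1}\hat{e}_i^2+r_0^{-1}n_0^{-1}\sum_{Z_i=0}\hat{e}_i^2+o_P(1)$, which agrees with $\hat{V}_{\tau\tau}(\hat{\bm{\beta}}_1,\hat{\bm{\beta}}_0)$ up to the negligible difference between $n_z^{-1}$ and $(n_z-1)^{-1}$. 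Together with the reduction to the CRE, this finishes the argument.

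The main obstacle is the last step: establishing the uniform approximation $\max_i|\tilde{Z}_i-(Z_i-r_1)|=o_P(1)$ for the partialled-out treatment indicator, and verifying that the estimated residuals $\hat{e}_i$ — which carry the sampling error of $\hat{\bm{\beta}}_z$ — do not perturb the weighted sum $\sum_i\tilde{Z}_i^2\hat{e}_i^2$; this is precisely where Condition \ref{con:fp}(iii) and the consistency $\hat{\bm{\beta}}_z\to\tilde{\bm{\beta}}_z$ enter. A shortcut for this step is to invoke the known asymptotic equivalence, in the randomization-inference literature following \citet{lin2013}, between the Eicker--Huber--White variance estimator for the fully interacted regression and the Neyman variance estimator based on the adjusted potential outcomes, which is exactly the expression obtained in the second step.
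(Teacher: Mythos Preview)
Your proposal is correct and takes a genuinely different route from the paper's proof. Both arguments share the second step---simplifying $\hat{V}_{\tau\tau}(\hat{\bm{\beta}}_1,\hat{\bm{\beta}}_0)$ to $r_1^{-1}s^2_{Y(1;\hat{\bm{\beta}}_1)}+r_0^{-1}s^2_{Y(0;\hat{\bm{\beta}}_0)}$ via the OLS orthogonality $\bm{s}_{Y(z;\hat{\bm{\beta}}_z),\bm{w}}=\bm{0}$---but they diverge on how they treat $\hat{V}_{\textup{HW}}$ and ReM.

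The paper works directly under ReM throughout (its preliminary lemmas on consistency of sample moments are already stated under ReM) and attacks $\hat{V}_{\textup{HW}}$ by a block-matrix expansion: write $\bm{G}=\bm{\Lambda}+\bm{\Delta}$ with $\bm{\Lambda}$ block-diagonal and $\bm{\Delta}=O_P(n^{-1/2})$, expand $\bm{G}^{-1}-\bm{\Lambda}^{-1}$ via a Woodbury-type identity, and control the error using crude bounds on the blocks of $\bm{H}$ (in particular $\bm{H}_{12}=o_P(n^{1/2})$ and $\bm{H}_{22}=o_P(n)$, obtained from $\max_i\|\bm{w}_i\|=o(n^{1/2})$ and $\sum_i\hat{e}_i^2=O_P(n)$). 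One then reads off $[\bm{\Lambda}^{-1}\bm{H}\bm{\Lambda}^{-1}]_{(2,2)}=r_1^{-1}\hat{\sigma}^2_{e,1}+r_0^{-1}\hat{\sigma}^2_{e,0}$ directly. Your approach instead (i) reduces ReM to the CRE by the change-of-measure bound and (ii) uses the exact Frisch--Waugh--Lovell identity $\hat{V}_{\textup{HW}}=n(\sum_i\tilde{Z}_i^2)^{-2}\sum_i\tilde{Z}_i^2\hat{e}_i^2$ (which is indeed exact, since $X(X'X)^{-1}e_j=(\tilde{x}_j'\tilde{x}_j)^{-1}\tilde{x}_j$), then controls the scalar partialled-out regressor $\tilde{Z}_i$ uniformly. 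Both routes consume the same raw ingredients---$\bar{\bm{w}}_z=O_P(n^{-1/2})$, Condition~\ref{con:fp}(iii), and $\sum_i\hat{e}_i^2=O_P(n)$---but package them differently. Your FWL route is lighter and more transparent for a single coefficient; the paper's block decomposition is more systematic and would extend more readily to other entries of the sandwich. One small caveat on your final ``shortcut'': invoking \citet{lin2013} directly would import his higher-order moment conditions, whereas the point of this theorem (as the paper notes) is that Condition~\ref{con:fp} alone suffices; your main argument already achieves this, so the shortcut is best left as a remark rather than a substitute.
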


Theorem \ref{thm::HWequivalent} extends \citet{lin2013}'s result for the CRE to ReM. It requires only Condition \ref{con:fp}, but \citet{lin2013} requires higher order moment conditions. 
We finally construct the Wald-type confidence intervals based on a Gaussian approximation. 
The statistical inference based on $\hat{\tau}(\hat{\bm{\beta}}_1, \hat{\bm{\beta}}_0)$, including variance estimation and confidence interval construction, is always the same no matter whether the design is a CRE or ReM and no matter whether the analyzer knows all the information of the design or not. 

From the above, using ReM and $\hat{\tau}(\hat{\bm{\beta}}_1, \hat{\bm{\beta}}_0)$ enjoys the  
optimal estimated precision
and improves the coverage property, 
and the associated statistical inference can be conveniently implemented through the OLS fit and Huber--White variance estimate.

\section{Illustration}\label{sec::illustrations}

\subsection{A simulation study}\label{sec::simulation}
 
We conduct a simulation study to investigate the performance of the asymptotic approximation and the coverage properties of the confidence intervals in finite samples.  We generate the data in the same way as in Example \ref{eg:sampling} with $\rho=0$ and vary the sample size $n$ from 100 to 1000. 
For each simulated dataset, we generate ReM based on covariate $x$, where the the threshold $a$ is the $0.001$th quantile of the $\chi^2_1$ random variable. 
Figure \ref{fig:simu_vary_n}(a)--(b) show the histograms of $\hat{\tau}$ and $\hat{\tau}(\hat{\bm{\beta}}_1, \hat{\bm{\beta}}_0)$ based on covariate $w$.
From Figure \ref{fig:simu_vary_n}(a)--(b), the asymptotic approximation works fairly well. 
We then construct $95\%$ confidence intervals for the average treatment effect, using the estimated distribution \eqref{eq::incompleteinformation} with 
either $\hat{\tau}$ or $\hat{\tau}(\hat{\bm{\beta}}_1, \hat{\bm{\beta}}_0)$. 
From Figure \ref{fig:simu_vary_n}(c)--(d), 
the confidence intervals based on estimator adjusted for $w$ are shorter than that based on $\hat{\tau}$, 
and 
both confidence intervals are conservative with coverage probabilities larger than the nominal level, due to the analyzer's incomplete information of the design. 
We further consider $\hat{\tau}(\hat{\bm{\beta}}_1, \hat{\bm{\beta}}_0)$ based on $(x,w)$, assuming that the analyzer has access to the covariate $x$ in the design. 
The corresponding confidence interval is even shorter and  becomes asymptotically exact, due to the additive treatment effects in the data generating process. 
From Figure \ref{fig:simu_vary_n}(d),  the coverage probabilities are close to the nominal level as the sample size increases. 
Even when the sample size is small, the confidence interval works fairly well with coverage probability at least $94\%$.

\begin{figure}[t]
	\centering
	\begin{subfigure}{.5\textwidth}
		\centering
		\includegraphics[width=0.7\linewidth]{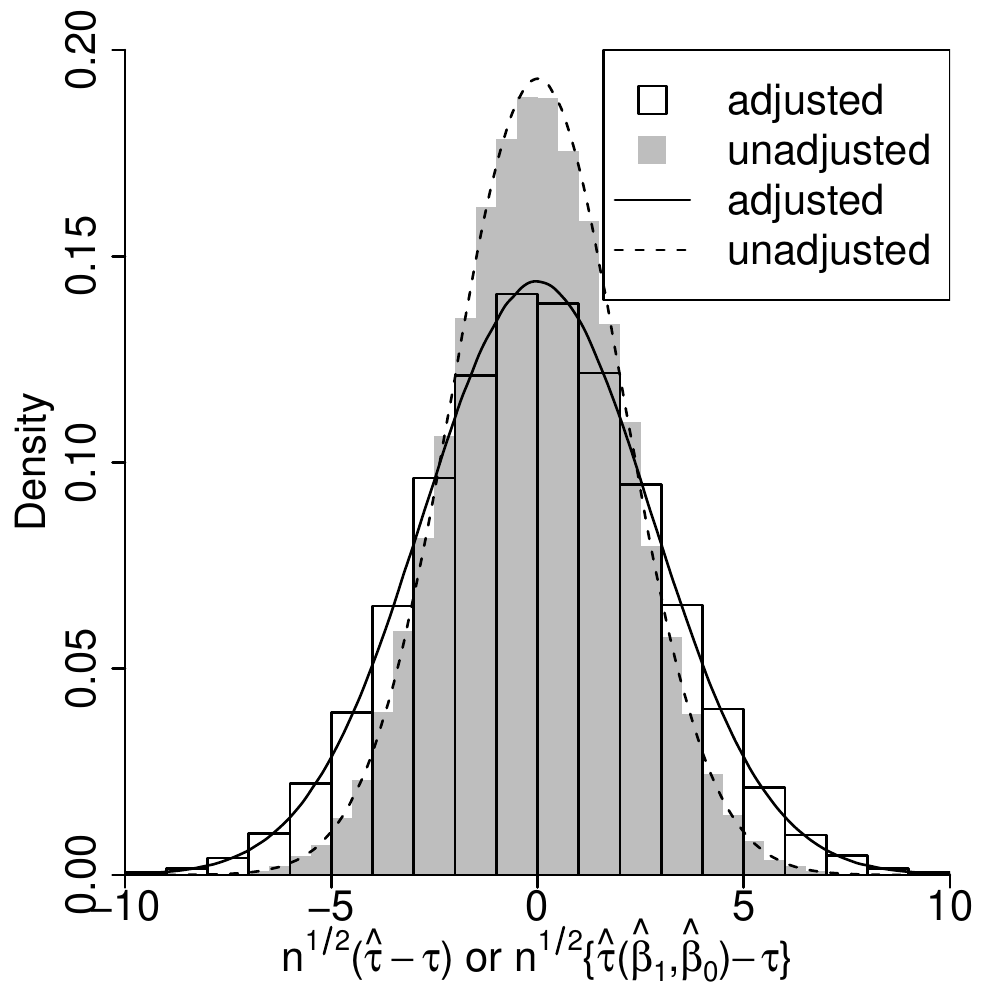}
		\caption{\centering $n = 100$}
	\end{subfigure}%
	\begin{subfigure}{.5\textwidth}
		\centering
				\includegraphics[width=0.7\linewidth]{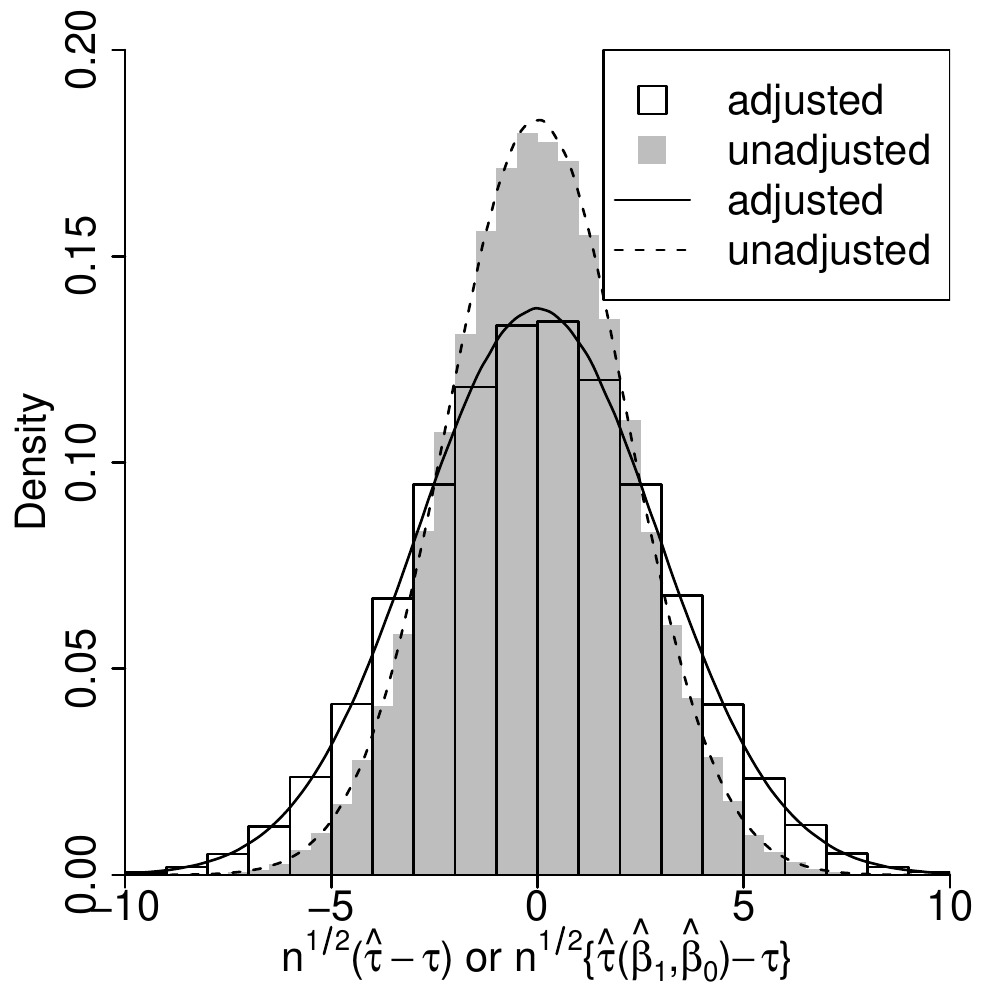}
		\caption{$n = 300$} 
	\end{subfigure}
	\begin{subfigure}{.5\textwidth}
		\centering
		\includegraphics[width=0.7\linewidth]{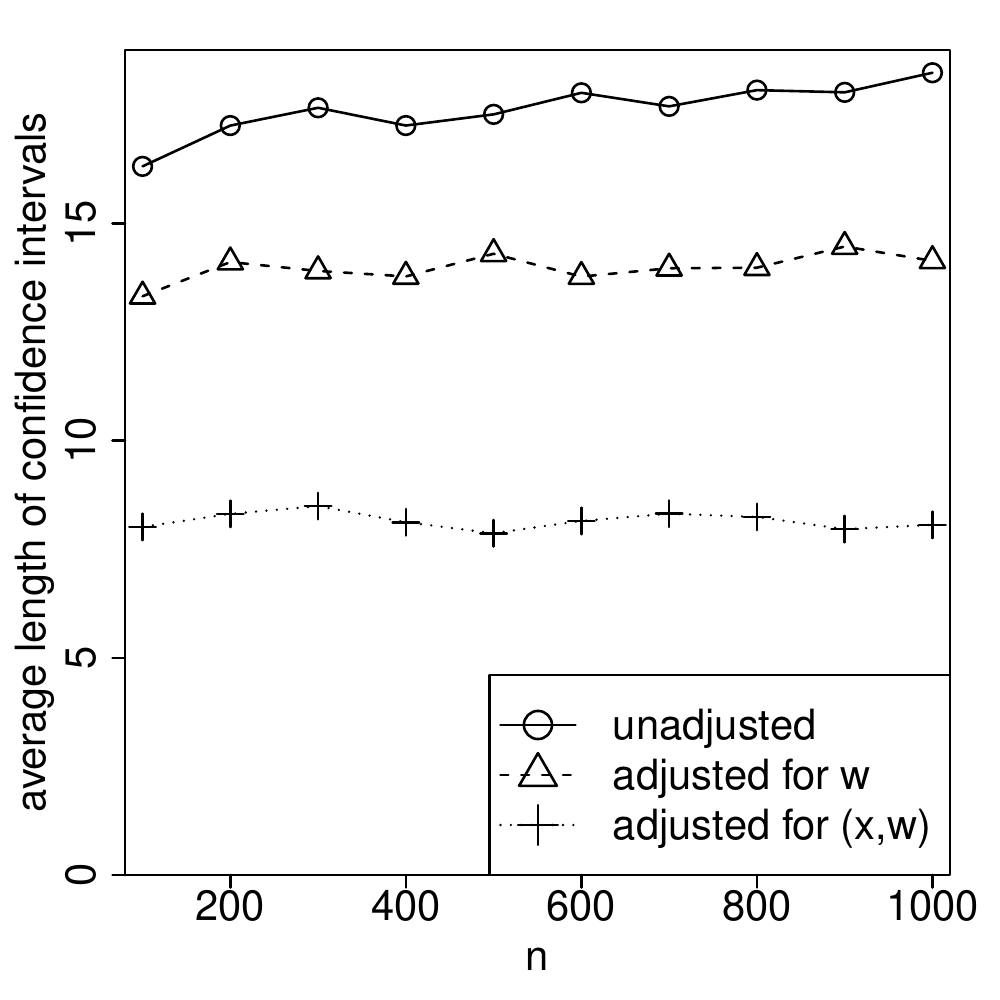}
		\caption{\centering lengths}
	\end{subfigure}%
	\begin{subfigure}{.5\textwidth}
		\centering
				\includegraphics[width=0.7\linewidth]{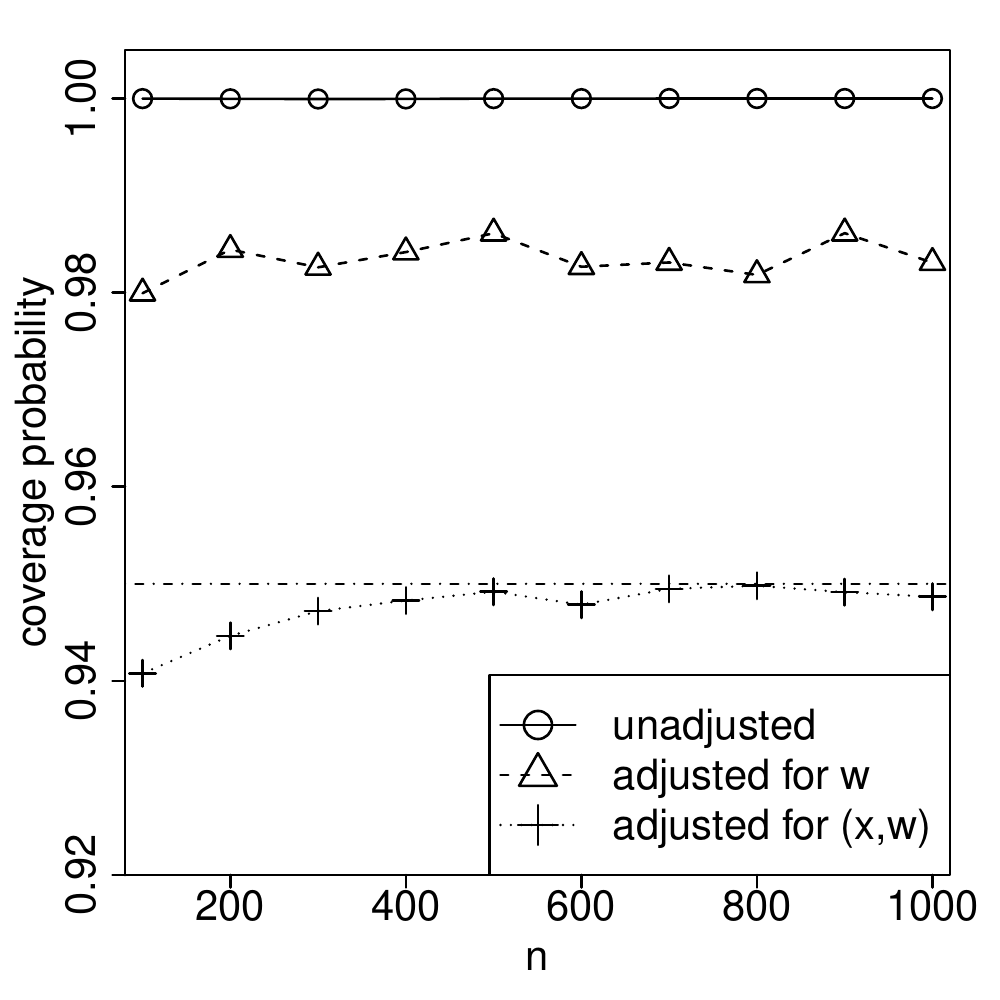}
		\caption{coverage probabilities} 
	\end{subfigure}
	\caption{
	(a) and (b)	show the 
	histograms of  
		$n^{1/2}\{\hat{\tau}(\hat{\bm{\beta}}_1, \hat{\bm{\beta}}_0)-\tau\}$ and 
	 $n^{1/2}(\hat{\tau}-\tau)$ under ReM based on $10^5$ simulated treatment assignments.
	 (c) and (d) show the average lengths and coverage probabilities of $95\%$ confidence intervals constructed from \eqref{eq::incompleteinformation} for three estimators: 
	 $\hat{\tau}$, adjusted estimator based on $w$, and adjusted estimator based on $x$ and $w$.  }\label{fig:simu_vary_n}
\end{figure}

\subsection{The ``Opportunity Knocks'' experiment}
The ``Opportunity Knocks'' experiment \citep{angrist2014opportunity} aims at evaluating the impact of a financial incentive demonstration program on college students' academic performance. The experiment includes first- and second-year students who apply for the financial aid at a large Canadian commuter university. These students were randomly assigned to treatment and control groups. Students in the treated group have peer advisors and receive cash reward for attaining certain grades.

We use this dataset to illustrate rerandomization and regression adjustment. We consider the second-year students, 
and choose the outcome to be the average grade for the semester right after the experiment. 
We exclude students with missing outcomes or covariates, resulting in a treatment group of size 199 and a control group of size 369. 
We evaluate the repeated sampling properties of the adjusted estimators under rerandomization,  which depend on all the potential outcomes. 
However, half of the potential outcomes are missing from the observed data.  
To make the simulation more realistic, we impute all the missing potential outcomes based on a simple model fitting. 
We first fit a linear model of the observed outcome on the treatment indicator and covariates within each stratum classified by sex and high school GPA. We then impute the missing potential outcomes using the fitted linear model. 

We conduct ReM with two covariates  
and choose threshold $a$ to be the $0.005$th quantile of $\chi^2_2$.
For the covariates in the design and the analysis, 
we consider the following two cases: 
\begin{enumerate}
	[(i)]
	\item
	the covariates in the design are sex and high school grade, and
	the  covariates in the analysis are 
	whether mother/father is a college graduate, whether correctly answer the first/second question in a survey, whether mother tongue is English, and  GPA in the previous year;
	\item the covariates in the design and analysis are the same as in case (i), except that we switch the high school grade to the analysis stage and switch the GPA in the previous year to the design stage.
\end{enumerate}

We first consider the sampling precision. 
Table \ref{tab:OK_samp_est} shows the coefficients of $\varepsilon$ in the asymptotic distributions. We omit the coefficients of $L_{K,a}$ because the $\varepsilon$ components are the dominating terms in the asymptotic distributions. 
Figure \ref{fig:OKsampling} shows the histograms of $\hat{\tau}(\hat{\bm{\beta}}_1, \hat{\bm{\beta}}_0)$ and $\hat{\tau}$ under ReM.
In Table \ref{tab:OK_samp_est},  compared to the second column, the reduction in coefficients in the first column shows the gain from the designer alone, and the reduction in the last column shows the gain from the analyzer alone. The magnitude of the reduction suggests the relative amount of the covariate information of the designer and analyzer.
In case (i), 
the first row of Table \ref{tab:OK_samp_est} shows that $\hat{\tau}(\hat{\bm{\beta}}_1, \hat{\bm{\beta}}_0)$ under the CRE is more precise than $\hat{\tau}$ under ReM. This holds because
$R^2_{\tau,\bm{w}} = 0.65 \geq R^2_{\tau,\bm{x}} = 0.31$. 
Figure \ref{fig:OKsampling}(a) shows that 
$\hat{\tau}(\hat{\bm{\beta}}_1, \hat{\bm{\beta}}_0)$ outperforms $\hat{\tau}$, coherent with Theorem \ref{thm:gen_ana_enough}. 
In case (ii), 
the second row of Table \ref{tab:OK_samp_est} shows that $\hat{\tau}(\hat{\bm{\beta}}_1, \hat{\bm{\beta}}_0)$ under the CRE is less precise than $\hat{\tau}$ under ReM. This is because 
$R^2_{\tau,\bm{w}} = 0.32 < R^2_{\tau,\bm{x}} = 0.65$. 
Figure \ref{fig:OKsampling}(b) shows that 
$\hat{\tau}$ outperforms $\hat{\tau}(\hat{\bm{\beta}}_1, \hat{\bm{\beta}}_0)$ under ReM.

We then consider the estimated precision. Because the estimated distributions in \eqref{eq::incompleteinformation} are Gaussian, it suffices to compare the estimated standard errors. Table \ref{tab:OK_samp_est} shows the average estimated standard errors and the coverage probabilities of the $95\%$ confidence intervals. 
In both cases, $\hat{\tau}(\hat{\bm{\beta}}_1, \hat{\bm{\beta}}_0)$ has almost the same estimated precision under ReM and the CRE. So does $\hat{\tau}$. However, the $95\%$ confidence intervals under ReM have higher coverage probability, which is coherent with Corollary \ref{cor:gen_ci_larger_coverage_rem}.
Moreover, $\hat{\tau}(\hat{\bm{\beta}}_1, \hat{\bm{\beta}}_0)$ always has higher estimated precision than $\hat{\tau}$, which is coherent with the 
optimality results in Corollary \ref{cor:opt_conf_general}.

\begin{table}[th]
	\centering
	\caption{
		Sampling precision, estimated precision and coverage probabilities for $\hat{\tau}$ and $\hat{\tau}(\hat{\bm{\beta}}_1, \hat{\bm{\beta}}_0)$ based on $10^5$ simulated treatment assignments. 
		The first two rows show 
		the coefficients of $\varepsilon$ in the asymptotic distributions of  $\hat{\tau}$ and $\hat{\tau}(\hat{\bm{\beta}}_1, \hat{\bm{\beta}}_0)$ in \eqref{eq:adj_rem}. The last two rows show the average estimated standard errors multiplied by $n^{1/2}$ with the coverage probabilities of the $95\%$ confidence intervals  in the parentheses. 
	}\label{tab:OK_samp_est}
	\begin{tabular}{ccccccccc}
		\toprule
		& Estimator & \multicolumn{3}{c}{ $\hat{\tau}$}  & & \multicolumn{3}{c}{ $\hat{\tau}(\hat{\bm{\beta}}_1, \hat{\bm{\beta}}_0)$}\\
		\hline
		& Design & \multicolumn{1}{c}{ReM}  & & \multicolumn{1}{c}{CRE} & & \multicolumn{1}{c}{ReM}  & & \multicolumn{1}{c}{CRE}
		\\
		\midrule 
		Sampling & case (i) & $13.88$ &  &  $16.73$ & & $9.83$ & & $9.86$\\
		& case (ii) & $9.90$  & & $16.73$ & & $11.62$ & & $13.80$\\
		\midrule
		Estimated & case (i) & $18.56$ ($98.5\%$) &  &  $18.56$ ($96.0\%$) & & $12.75$ ($97.1\%$) & & $12.75$ ($97.0\%$)\\
		& case (ii) & $18.57$ ($99.9\%$) & & $18.56$ ($96.0\%$) & & $15.95$ ($98.3\%$) & & $15.94$ ($96.1\%$)\\
		\bottomrule
	\end{tabular}
\end{table}


\begin{figure}[th]
	\centering
	\begin{subfigure}{.5\textwidth}
		\centering
		\includegraphics[width=0.7\linewidth]{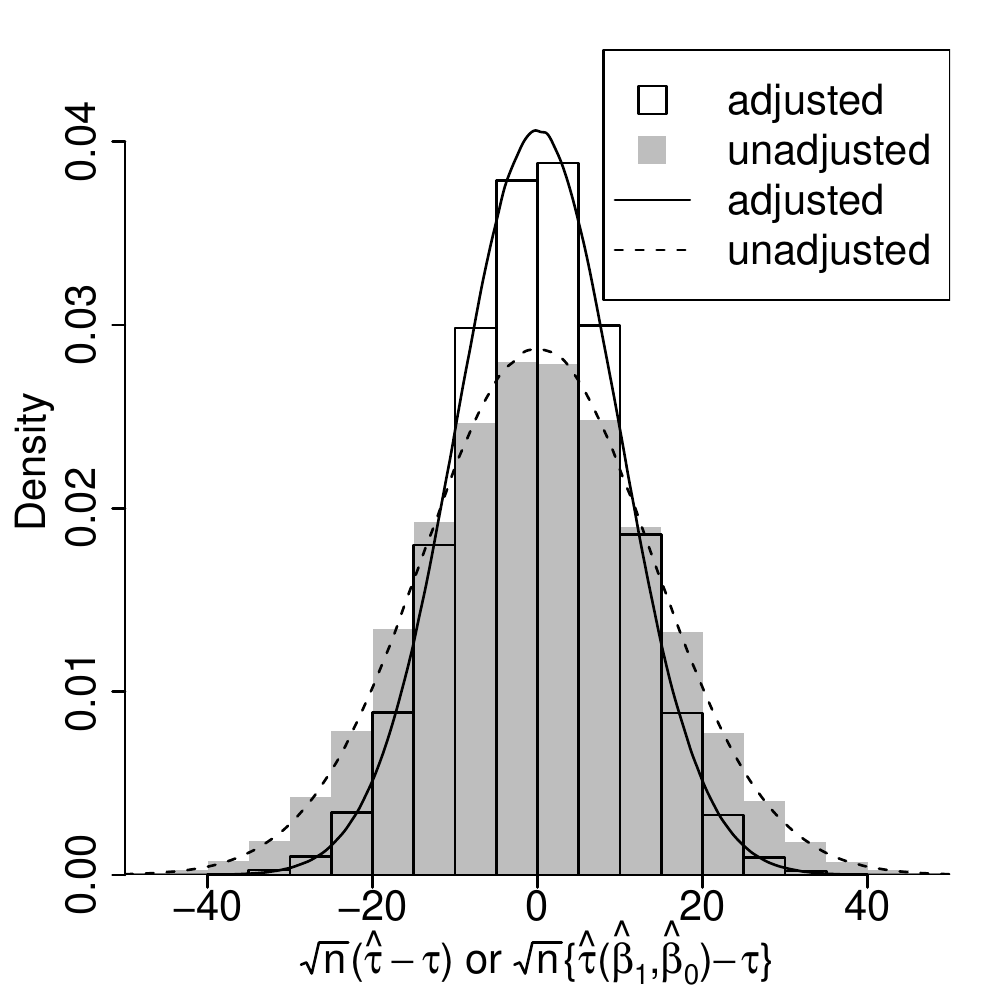}
		\caption{\centering case (i)}
	\end{subfigure}%
	\begin{subfigure}{.5\textwidth}
		\centering
		\includegraphics[width=0.7\linewidth]{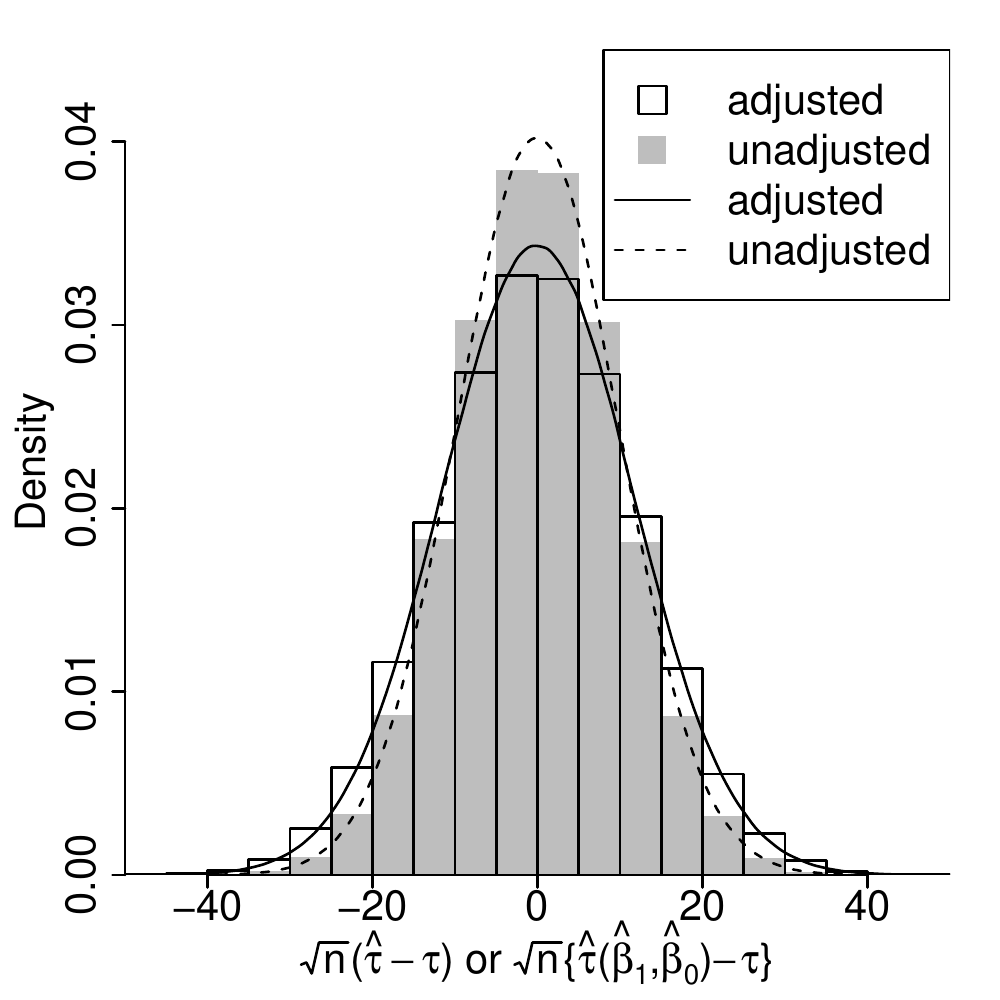}
		\caption{\centering case (ii)} 
	\end{subfigure}
	\caption{Histograms of $\hat{\tau}$ and $\hat{\tau}(\hat{\bm{\beta}}_1, \hat{\bm{\beta}}_0)$
		under ReM based on $10^5$ simulated treatment assignments.
	}
	\label{fig:OKsampling}
\end{figure}

\section{Discussion}\label{sec::discussion}

In sum, regression adjustment can improve the estimated precision but may hurt the sampling precision, and ReM can improve the sampling precision and  never hurts the estimated precision. 
The resulting adjusted estimator is optimal 
in terms of the estimated precision among all linearly adjusted estimators in \eqref{eq:reg}, 
has lower sampling variability under ReM than it would have had under the CRE, 
and the corresponding confidence intervals have higher coverage probabilities than that under the CRE.
Therefore, in practice, we recommend using ReM in the design and using \citet{lin2013}'s estimator in the analysis followed by the Huber--White robust standard error. Importantly, the analyzer should communicate with the designer, asking for detailed covariate information and assignment mechanism in the design stage.

For the analysis, we focused on inferring the average treatment effect using regression adjustment. 
It is interesting to extend the discussion to covariate adjustment in more complicated settings, such as high dimensional covariates \citep{bloniarz2015lasso, wager2016high,lei2018regression}, 
logistic regression for binary outcomes \citep{zhang2008improving, freedman2008randomization, moore2009covariate, moore2011robust}, 
and adjustment using machine learning methods \citep{bloniarz2015lasso, wager2016high, wu2018loop}. 
It is also important to consider covariate adjustment for general nonlinear estimands \citep{zhang2008improving, jiang2019robust, tian2019moving} and general designs \citep{middleton2018unified}, 
such as blocking \citep{miratrix2013adjusting,bugni2018inference}, matched pairs \citep{fogarty2018regression}, and factorial designs \citep{lu2016covariate}.

For the design, we focused on rerandomization using the Mahalanobis distance. It is conceptually straightforward to extend the results to rerandomization with tiers of covariates \citep{morgan2015rerandomization, asymrerand2106}.  Recently, \citet{zhou2018sequential} discussed sequential rerandomization, and \citet{li2018rerandomization} discussed rerandomization in $2^K$ factorial experiments with tiers of both covariates and factorial effects.  It is important to discuss regression adjustment after these rerandomizations.

The relationship between blocking and post-stratification for discrete covariates is analogous to the relationship between rerandomization and regression adjustment for general covariates. 
When the number of blocks is small compared to the sample size, our results suggest conducting post-stratification, or equivalently an OLS fit of the outcome on treatment, block indicator and their interactions, followed by the Huber--White robust standard error. 
	When the number of blocks is large, 
\citet{miratrix2013adjusting} showed that post-stratification can be worse than blocking, 
which sheds light on the possible advantage of rerandomization over regression adjustment with a large number of covariates. In this case, although deriving the asymptotic properties of rerandomization is challenging, it is still straightforward to conduct Fisher randomization tests.

\section*{Acknowledgments}
We thank the Associate Editor and two reviewers for constructive comments. Peng Ding gratefully acknowledges financial support from the National Science Foundation (DMS grant \# 1713152).

\bibliographystyle{plainnat}
\bibliography{causal}

\newpage
\setcounter{equation}{0}
\setcounter{section}{0}
\setcounter{figure}{0}
\setcounter{example}{0}
\setcounter{proposition}{0}
\setcounter{corollary}{0}
\setcounter{theorem}{0}
\setcounter{table}{0}
\setcounter{condition}{0}

\renewcommand {\theproposition} {A\arabic{proposition}}
\renewcommand {\theexample} {A\arabic{example}}
\renewcommand {\thefigure} {A\arabic{figure}}
\renewcommand {\thetable} {A\arabic{table}}
\renewcommand {\theequation} {A\arabic{equation}}
\renewcommand {\thelemma} {A\arabic{lemma}}
\renewcommand {\thesection} {A\arabic{section}}
\renewcommand {\thetheorem} {A\arabic{theorem}}
\renewcommand {\thecorollary} {A\arabic{corollary}}
\renewcommand {\thecondition} {A\arabic{condition}}

\setcounter{page}{1}

\begin{center}
\bf \Large 
Supplementary Material
\end{center}


\bigskip

Appendix \ref{appendix::sampling} proves   the results related to the sampling distributions.
 
Appendix \ref{appendix::soptimal} proves   the results related to $\mathcal{S}$-optimality.

Appendix \ref{appendix::estimationsampling} proves   the results related to the confidence intervals and the optimal adjusted estimators in terms of the estimated precision.

Appendix \ref{sec:gains_proof} proves   the results related to the gains from the analyzer and the designer.

Appendix \ref{sec::hwappendix} proves the asymptotic equivalence of $\hat{\tau}(\tilde{\bm{\beta}}_1, \tilde{\bm{\beta}}_0)$ and $\hat{\tau}(\hat{\bm{\beta}}_1, \hat{\bm{\beta}}_0)$ as well as the asymptotic equivalence of $\hat{V}_{\tau\tau}(\hat{\bm{\beta}}_1, \hat{\bm{\beta}}_0)$ and the Huber--White variance estimator.



\section{Sampling distributions of adjusted estimators}
\label{appendix::sampling}

\begin{proof}[{\bf Proof of Proposition \ref{prop:three_beta_tilde}}]
By definition,  
\begin{align*}
r_0\tilde{\bm{\beta}}_1+r_1 \tilde{\bm{\beta}}_0 & = 
r_0 \left( \bm{S}_{\bm{w}}^2 \right)^{-1}
\bm{S}_{\bm{w}, Y(1)} + r_1 \left(\bm{S}_{\bm{w}}^2 \right)^{-1}
\bm{S}_{\bm{w}, Y(0)}
\\
& = \left\{ \left(r_1r_0\right)^{-1} \bm{S}^2_{\bm{w}} \right\}^{-1}
\left\{  
r_1^{-1}\bm{S}_{\bm{w}, Y(1)} + r_0^{-1}\bm{S}_{\bm{w}, Y(0)}
\right\} 
 = \bm{V}_{\bm{w} \bm{w}}^{-1}\bm{V}_{\bm{w}\tau} = \tilde{\bm{\rcoef}}.
\end{align*}
\end{proof}

\begin{proof}[{\bf Proof of Theorem \ref{thm:adj_rem}}]
The regression adjustment coefficients $\bm{\beta}_1$ and $\bm{\beta}_0$ can depend on sample size $n$ implicitly and have finite limits as $n\rightarrow \infty$. 
Recall that $Y_i(z;\bm{\beta}_z) = Y_i(z) - \bm{\beta}_z' \bm{w}_i$ is the ``adjusted'' potential outcome  under treatment $z$, and  $\tau_i(\bm{\beta}_1, \bm{\beta}_0) = \tau_i - (\bm{\beta}_1- \bm{\beta}_0)'\bm{w}_i$ is the ``adjusted'' individual treatment effect. 
Under Condition \ref{con:fp}(ii),  
the finite population variances and covariances 
$
S^2_{Y(z;\bm{\beta}_z)} = S^2_{Y(z)} + \bm{\beta}_z' \bm{S}^2_{\bm{w}} \bm{\beta}_z - 2\bm{\beta}_z'  \bm{S}_{\bm{w}, Y(z)}, 
$
$S^2_{\tau(\bm{\beta}_1, \bm{\beta}_0)} = S^2_{\tau} + (\bm{\beta}_1- \bm{\beta}_0)' \bm{S}^2_{\bm{w}} (\bm{\beta}_1- \bm{\beta}_0) - 2 (\bm{\beta}_1- \bm{\beta}_0)' \bm{S}_{\bm{w},\tau},$
$\bm{S}_{Y(z;\bm{\beta}_z), \bm{x}} = \bm{S}_{Y(z),\bm{x}} - \bm{\beta}_z'\bm{S}_{\bm{w}, \bm{x}},$
and 
$
\bm{S}^2_{\bm{x}}
$
have finite limiting values. Under Condition  \ref{con:fp}(iii), the maximum squared distances satisfy that as $n\rightarrow \infty$, 
$\max_{1\leq i\leq n}\|\bm{x}_i\|_2^2/n \rightarrow 0$, and 
\begin{align*}
n^{-1}\max_{1\leq i\leq n} \left|Y_i(z;\bm{\beta}_z) - \bar{Y}(z;\bm{\beta}_z)\right|^2 
& = n^{-1}\max_{1\leq i\leq n} \left|Y_i(z) - \bar{Y}(z) - \bm{\beta}_z'\bm{w}_i\right|^2 \\
& \leq 
n^{-1}   (  1+\bm{\beta}_z'\bm{\beta}_z ) \max_{1\leq i\leq n} \left( \left|Y_i(z) - \bar{Y}(z)\right|^2 + \left\|\bm{w}_i\right\|_2^2 \right) \rightarrow 0, 
\end{align*}
where the inequality follows from the Cauchy--Schwarz inequality. 
Using \citet[][Theorem 1]{asymrerand2106}, we can show that, under ReM, $n^{1/2}\left\{\hat{\tau}(\bm{\beta}_1, \bm{\beta}_0)-\tau \right\}$ has the asymptotic distribution \eqref{eq:adj_rem}. 
\end{proof}

\begin{proof}[{\bf Proof of Corollary \ref{corr:reg_cre}}]
Corollary \ref{corr:reg_cre} follows from Theorem \ref{thm:adj_rem} with $a=\infty$.
\end{proof}

\begin{proof}[{\bf Proof of Corollary \ref{corr:diff_rem}}]
Corollary \ref{corr:diff_rem} follows from Theorem \ref{thm:adj_rem} with $( \bm{\beta}_1, \bm{\beta}_0 ) = (\bm{0},\bm{0})$.
\end{proof}

\begin{proof}[\bf Proof of Proposition \ref{lemma:relation_three}]
By definition, Cov$(\hat{\bm{\tau}}_{\bm{w}}, \hat{\tau}  - \tilde{\bm{\rcoef}}'\hat{\bm{\tau}}_{\bm{w}} ) =0$ under the CRE. We have 
\begin{align}\label{eq:beta_proj_beta_res}
\hat{\tau}  - \tilde{\bm{\rcoef}}'\hat{\bm{\tau}}_{\bm{w}} & = 
\proj(\hat{\tau} \mid \hat{\bm{\tau}}_{\bm{x}}) + 
\res(\hat{\tau} \mid \hat{\bm{\tau}}_{\bm{x}}) - 
\tilde{\bm{\rcoef}}'
\left\{
\proj( \hat{\bm{\tau}}_{\bm{w}} \mid \hat{\bm{\tau}}_{\bm{x}}) + 
\res( \hat{\bm{\tau}}_{\bm{w}} \mid \hat{\bm{\tau}}_{\bm{x}})
\right\} \nonumber \\
& =  
\left\{\proj(\hat{\tau} \mid \hat{\bm{\tau}}_{\bm{x}}) - \tilde{\bm{\rcoef}}_{\proj}'\proj( \hat{\bm{\tau}}_{\bm{w}} \mid \hat{\bm{\tau}}_{\bm{x}})\right\} + 
\left\{
\res(\hat{\tau} \mid \hat{\bm{\tau}}_{\bm{x}})- \tilde{\bm{\rcoef}}_{\res}'\res( \hat{\bm{\tau}}_{\bm{w}} \mid \hat{\bm{\tau}}_{\bm{x}})
\right\} \nonumber\\
& \quad \   
- (\tilde{\bm{\rcoef}} - \tilde{\bm{\rcoef}}_{\proj} )'\proj( \hat{\bm{\tau}}_{\bm{w}} \mid \hat{\bm{\tau}}_{\bm{x}}) - 
(\tilde{\bm{\rcoef}} - \tilde{\bm{\rcoef}}_{\res})'\res( \hat{\bm{\tau}}_{\bm{w}} \mid \hat{\bm{\tau}}_{\bm{x}}). 
\end{align}
Thus, the covariances between $\hat{\bm{\tau}}_{\bm{w}}$ and the four terms in \eqref{eq:beta_proj_beta_res} sum to 0. Below we consider the four covariances separately. 

First, by definition, 
$\proj(\hat{\tau} \mid \hat{\bm{\tau}}_{\bm{x}}) - \tilde{\bm{\rcoef}}_{\proj}'\proj( \hat{\bm{\tau}}_{\bm{w}} \mid \hat{\bm{\tau}}_{\bm{x}})$ is uncorrelated with $\proj( \hat{\bm{\tau}}_{\bm{w}} \mid \hat{\bm{\tau}}_{\bm{x}})$. 
Moreover, 
because $\proj(\hat{\tau} \mid \hat{\bm{\tau}}_{\bm{x}}) - \tilde{\bm{\rcoef}}_{\proj}'\proj( \hat{\bm{\tau}}_{\bm{w}} \mid \hat{\bm{\tau}}_{\bm{x}})$ is a linear function of $\hat{\bm{\tau}}_{\bm{x}}$, it must also be uncorrelated with $\res( \hat{\bm{\tau}}_{\bm{w}} \mid \hat{\bm{\tau}}_{\bm{x}})$. 
Thus, $\proj(\hat{\tau} \mid \hat{\bm{\tau}}_{\bm{x}}) - \tilde{\bm{\rcoef}}_{\proj}'\proj( \hat{\bm{\tau}}_{\bm{w}} \mid \hat{\bm{\tau}}_{\bm{x}})$ is uncorrelated with 
$\proj( \hat{\bm{\tau}}_{\bm{w}} \mid \hat{\bm{\tau}}_{\bm{x}}) + \res( \hat{\bm{\tau}}_{\bm{w}} \mid \hat{\bm{\tau}}_{\bm{x}}) = \hat{\bm{\tau}}_{\bm{w}}$.

Second, 
by definition, 
$\res(\hat{\tau}\mid \hat{\bm{\tau}}_{\bm{x}})- \tilde{\bm{\rcoef}}_{\res}'\res( \hat{\bm{\tau}}_{\bm{w}} \mid \hat{\bm{\tau}}_{\bm{x}})$ is uncorrelated with $\res( \hat{\bm{\tau}}_{\bm{w}} \mid \hat{\bm{\tau}}_{\bm{x}})$. 
Moreover, because 
$\res(\hat{\tau}\mid \hat{\bm{\tau}}_{\bm{x}})- \tilde{\bm{\rcoef}}_{\res}'\res( \hat{\bm{\tau}}_{\bm{w}} \mid \hat{\bm{\tau}}_{\bm{x}})$ is uncorrelated with $\hat{\bm{\tau}}_{\bm{x}}$, 
it must also be uncorrelated with $\proj(\hat{\bm{\tau}}_{\bm{w}} \mid \hat{\bm{\tau}}_{\bm{x}})$. 
Thus, $\res(\hat{\tau}\mid \hat{\bm{\tau}}_{\bm{x}})- \tilde{\bm{\rcoef}}_{\res}'\res( \hat{\bm{\tau}}_{\bm{w}} \mid \hat{\bm{\tau}}_{\bm{x}})$ is uncorrelated with 
$
\res( \hat{\bm{\tau}}_{\bm{w}} \mid \hat{\bm{\tau}}_{\bm{x}}) + 
\proj( \hat{\bm{\tau}}_{\bm{w}} \mid \hat{\bm{\tau}}_{\bm{x}}) = \hat{\bm{\tau}}_{\bm{w}}. 
$

Third, 
because $\proj( \hat{\bm{\tau}}_{\bm{w}} \mid \hat{\bm{\tau}}_{\bm{x}})$ is uncorrelated with $\res( \hat{\bm{\tau}}_{\bm{w}} \mid \hat{\bm{\tau}}_{\bm{x}})$, 
we can simplify 
the covariance between  $(\tilde{\bm{\rcoef}} - \tilde{\bm{\rcoef}}_{\proj} )'\proj( \hat{\bm{\tau}}_{\bm{w}} \mid \hat{\bm{\tau}}_{\bm{x}})$ and $\hat{\bm{\tau}}_{\bm{w}} = \proj( \hat{\bm{\tau}}_{\bm{w}} \mid \hat{\bm{\tau}}_{\bm{x}}) + \res( \hat{\bm{\tau}}_{\bm{w}} \mid \hat{\bm{\tau}}_{\bm{x}})$ as 
$ (\tilde{\bm{\rcoef}} - \tilde{\bm{\rcoef}}_{\proj} )' \Cov\{
 \proj( \hat{\bm{\tau}}_{\bm{w}} \mid \hat{\bm{\tau}}_{\bm{x}}) 
\}$. 
The covariance of $\proj( \hat{\bm{\tau}}_{\bm{w}} \mid \hat{\bm{\tau}}_{\bm{x}})$ has the following equivalent forms: 
\begin{align}\label{eq:cov_W_proj_X}
\Cov\left\{
\proj
(
\hat{\bm{\tau}}_{\bm{w}}
\mid \hat{\bm{\tau}}_{\bm{x}}
)
\right\} & = n^{-1}\bm{V}_{\bm{wx}}\bm{V}_{\bm{xx}}^{-1}\bm{V}_{\bm{xw}} = (n r_1r_0)^{-1} \bm{S}^2_{\bm{w}\mid \bm{x}}. 
\end{align}

Fourth, because $\res( \hat{\bm{\tau}}_{\bm{w}} \mid \hat{\bm{\tau}}_{\bm{x}})$ is uncorrelated with $\proj( \hat{\bm{\tau}}_{\bm{w}} \mid \hat{\bm{\tau}}_{\bm{x}})$, we can simplify the covariance between $(\tilde{\bm{\rcoef}} - \tilde{\bm{\rcoef}}_{\res})'\res( \hat{\bm{\tau}}_{\bm{w}} \mid \hat{\bm{\tau}}_{\bm{x}})$ and $\hat{\bm{\tau}}_{\bm{w}} = \proj( \hat{\bm{\tau}}_{\bm{w}} \mid \hat{\bm{\tau}}_{\bm{x}}) + \res( \hat{\bm{\tau}}_{\bm{w}} \mid \hat{\bm{\tau}}_{\bm{x}})$ as 
$(\tilde{\bm{\rcoef}} - \tilde{\bm{\rcoef}}_{\res})' \Cov\{
\res( \hat{\bm{\tau}}_{\bm{w}} \mid \hat{\bm{\tau}}_{\bm{x}})
\}.$ 
The covariance of $\res( \hat{\bm{\tau}}_{\bm{w}} \mid \hat{\bm{\tau}}_{\bm{x}})$ has the following equivalent forms: 
\begin{align}\label{eq:cov_W_res_X}
\Cov\left\{
\res
(
\hat{\bm{\tau}}_{\bm{w}}
\mid \hat{\bm{\tau}}_{\bm{x}}
)
\right\} & = \Cov(\hat{\bm{\tau}}_{\bm{w}}) - \Cov\left\{
\proj
(
\hat{\bm{\tau}}_{\bm{w}}
\mid \hat{\bm{\tau}}_{\bm{x}}
)
\right\}  = (n r_1r_0)^{-1} \bm{S}^2_{\bm{w}\setminus \bm{x}}.
\end{align}

From the above, the zero covariance between $\hat{\bm{\tau}}_{\bm{w}}$ and \eqref{eq:beta_proj_beta_res} implies that 
\begin{align*}
\bm{0} 
& = -(nr_1r_0)^{-1} \left\{(\tilde{\bm{\rcoef}} - \tilde{\bm{\rcoef}}_{\proj} )' \bm{S}^2_{\bm{w}\mid \bm{x}} + 
(\tilde{\bm{\rcoef}} - \tilde{\bm{\rcoef}}_{\res} )' \bm{S}^2_{\bm{w}\setminus \bm{x}}\right\}. 
\end{align*}
Therefore, Proposition \ref{lemma:relation_three} holds. 
\end{proof}

\begin{proof}[{\bf Proof of Theorem \ref{thm:reg_rem_beta}}]
First, by the definitions of $V_{\tau\tau}(\bm{\beta}_1, \bm{\beta}_0)$  in \eqref{eq:V_tau_adj} and $R^2_{\tau,\bm{x}}(\bm{\beta}_{1}, \bm{\beta}_0)$ in \eqref{eq:R2_tau_x_beta}, the squared coefficients of $L_{K,a}$ and $\varepsilon$ in \eqref{eq:adj_rem} have the following equivalent forms:
\begin{align}\label{eq:var_proj_proof_rem}
V_{\tau\tau}(\bm{\beta}_1, \bm{\beta}_0) R^2_{\tau,\bm{x}}(\bm{\beta}_{1}, \bm{\beta}_0) & =
n\Var\left\{ \proj\left(\hat{\tau}(\bm{\beta}_1,\bm{\beta}_0)\mid \hat{\bm{\tau}}_{\bm{x}} \right) \right\}, \\
\label{eq:var_res_proof_rem}
V_{\tau\tau}(\bm{\beta}_1, \bm{\beta}_0)\left\{1- R^2_{\tau,\bm{x}}(\bm{\beta}_{1}, \bm{\beta}_0) \right\} & = n\Var\left\{
\res\left(\hat{\tau}(\bm{\beta}_1,\bm{\beta}_0)\mid \hat{\bm{\tau}}_{\bm{x}}\right) 
\right\}.
\end{align}

Second, 
because $\hat{\tau}(\bm{\beta}_1,\bm{\beta}_0) = \hat{\tau} - \bm{\rcoef}'\hat{\bm{\tau}}_{\bm{w}}$ by \eqref{eq:reg}, the linear projection of $\hat{\tau}(\bm{\beta}_1,\bm{\beta}_0)$ on $\hat{\bm{\tau}}_{\bm{x}}$ under the CRE and the corresponding residual have the following equivalent forms: 
\begin{align*}
\proj\left(\hat{\tau}(\bm{\beta}_1,\bm{\beta}_0)\mid \hat{\bm{\tau}}_{\bm{x}} \right)
& = 
\proj
\left(
\hat{\tau} - \bm{\rcoef}'\hat{\bm{\tau}}_{\bm{w}}
\mid \hat{\bm{\tau}}_{\bm{x}}
\right) = 
\proj\left(
\hat{\tau} \mid \hat{\bm{\tau}}_{\bm{x}}
\right)
- 
\bm{\rcoef}'
\proj
\left(
\hat{\bm{\tau}}_{\bm{w}}
\mid \hat{\bm{\tau}}_{\bm{x}}
\right),
\\
\res\left(\hat{\tau}(\bm{\beta}_1,\bm{\beta}_0)\mid \hat{\bm{\tau}}_{\bm{x}}\right)
& = \res
\left(
\hat{\tau} - \bm{\rcoef}'\hat{\bm{\tau}}_{\bm{w}}
\mid \hat{\bm{\tau}}_{\bm{x}}
\right)
= 
\res\left(
\hat{\tau} \mid \hat{\bm{\tau}}_{\bm{x}}
\right)
- 
\bm{\rcoef}'
\res
\left(
\hat{\bm{\tau}}_{\bm{w}}
\mid \hat{\bm{\tau}}_{\bm{x}}
\right). 
\end{align*}
Using the definitions of $\tilde{\bm{\rcoef}}_\proj$ in \eqref{eq:beta_proj} and $\tilde{\bm{\rcoef}}_\res$ in \eqref{eq:beta_res}, we can express the above quantities as 
\begin{align}
\proj\left(\hat{\tau}(\bm{\beta}_1,\bm{\beta}_0)\mid \hat{\bm{\tau}}_{\bm{x}} \right)
& = \proj\left(
\hat{\tau} \mid \hat{\bm{\tau}}_{\bm{x}}
\right)
- 
\tilde{\bm{\rcoef}}_\proj'
\proj
\left(
\hat{\bm{\tau}}_{\bm{w}}
\mid \hat{\bm{\tau}}_{\bm{x}}
\right) - 
\left(
\bm{\rcoef}
-\tilde{\bm{\rcoef}}_\proj
\right)' 
\proj
\left(
\hat{\bm{\tau}}_{\bm{w}}
\mid \hat{\bm{\tau}}_{\bm{x}}
\right)
\nonumber
\\
& =
\tau + 
\res\left\{
\proj(
\hat{\tau} \mid \hat{\bm{\tau}}_{\bm{x}}
) \mid 
\proj(
\hat{\bm{\tau}}_{\bm{w}}
\mid \hat{\bm{\tau}}_{\bm{x}}
)
\right\} - 
(
\bm{\rcoef}
-\tilde{\bm{\rcoef}}_\proj
)' 
\proj
(
\hat{\bm{\tau}}_{\bm{w}}
\mid \hat{\bm{\tau}}_{\bm{x}}
), 
\label{eq:tau_beta_proj}
\\
\res\left(\hat{\tau}(\bm{\beta}_1,\bm{\beta}_0)\mid \hat{\bm{\tau}}_{\bm{x}}\right)
& = 
\res\left(
\hat{\tau} \mid \hat{\bm{\tau}}_{\bm{x}}
\right)
- 
\tilde{\bm{\rcoef}}_\res'
\res
\left(
\hat{\bm{\tau}}_{\bm{w}}
\mid \hat{\bm{\tau}}_{\bm{x}}
\right)
- 
\left(\bm{\rcoef} - \tilde{\bm{\rcoef}}_\res\right)'
\res
\left(
\hat{\bm{\tau}}_{\bm{w}}
\mid \hat{\bm{\tau}}_{\bm{x}}
\right)
\nonumber
\\
& = 
\res
\left\{
\res\left(
\hat{\tau} \mid \hat{\bm{\tau}}_{\bm{x}}
\right) \mid 
\res
\left(
\hat{\bm{\tau}}_{\bm{w}}
\mid \hat{\bm{\tau}}_{\bm{x}}
\right)
\right\} - 
\left(\bm{\rcoef} - \tilde{\bm{\rcoef}}_\res\right)'
\res
\left(
\hat{\bm{\tau}}_{\bm{w}}
\mid \hat{\bm{\tau}}_{\bm{x}}
\right). 
\label{eq:tau_beta_res}
\end{align}

Third, 
because the two terms in \eqref{eq:tau_beta_proj} excluding the constant term $\tau$ are uncorrelated, the variance of 
the linear projection of $\hat{\tau}(\bm{\beta}_1,\bm{\beta}_0)$ on $\hat{\bm{\tau}}_{\bm{x}}$ is the summation of the variances of these two terms in \eqref{eq:tau_beta_proj}. 
Using \eqref{eq:cov_W_proj_X} and the definitions of $R^2_{\tau,\bm{x}}$ and $R^2_{\proj}$ in \eqref{eq:R2_tau_x} and \eqref{eq:beta_proj}, we have 
\begin{eqnarray}\label{eq:var_tau_beta_proj}
& & \Var\left\{
\proj\left(\hat{\tau}(\bm{\beta}_1,\bm{\beta}_0)\mid \hat{\bm{\tau}}_{\bm{x}} \right)
\right\}
\nonumber
 \\
& = & 
\Var\left\{
\proj\left(
\hat{\tau} \mid \hat{\bm{\tau}}_{\bm{x}}
\right)
\right\}\cdot 
\left(
1-R^2_{\proj}
\right) + 
\left(
\bm{\rcoef}
-\tilde{\bm{\rcoef}}_\proj
\right)' 
\Cov\left\{
\proj
\left(
\hat{\bm{\tau}}_{\bm{w}}
\mid \hat{\bm{\tau}}_{\bm{x}}
\right)\right\}
\left(
\bm{\rcoef}
-\tilde{\bm{\rcoef}}_\proj
\right)
\nonumber
\\
& = & 
n^{-1}V_{\tau\tau} \cdot R^2_{\tau,\bm{x}}\left(
1-R^2_{\proj}
\right) + 
(n r_1r_0)^{-1} 
\left(
\bm{\rcoef}
-\tilde{\bm{\rcoef}}_\proj
\right)' 
\bm{S}^2_{\bm{w}\mid \bm{x}}
\left(
\bm{\rcoef}
-\tilde{\bm{\rcoef}}_\proj
\right).
\end{eqnarray}
Similarly, because the two terms in \eqref{eq:tau_beta_res} are uncorrelated, the variance of the residual of 
the linear projection of $\hat{\tau}(\bm{\beta}_1,\bm{\beta}_0)$ on $\hat{\bm{\tau}}_{\bm{x}}$ is the summation of the variances of these two terms in \eqref{eq:tau_beta_res}. 
Using \eqref{eq:cov_W_res_X} and the definitions of $R^2_{\tau,\bm{w}}$ in \eqref{eq:R2_mid_w} and $R^2_{\res}$ in \eqref{eq:beta_res}, we have 
\begin{eqnarray}\label{eq:var_tau_beta_res}
& & \Var\left\{
\res\left(\hat{\tau}(\bm{\beta}_1,\bm{\beta}_0)\mid \hat{\bm{\tau}}_{\bm{x}}\right)
\right\} 
\nonumber
\\
& = & 
\Var\left\{
\res\left(
\hat{\tau} \mid \hat{\bm{\tau}}_{\bm{x}}
\right) 
\right\}\cdot
\left(
1-R^2_{\res}
\right) + 
\left(\bm{\rcoef} - \tilde{\bm{\rcoef}}_\res\right)'
\Cov\left\{\res
\left(
\hat{\bm{\tau}}_{\bm{w}}
\mid \hat{\bm{\tau}}_{\bm{x}}
\right)\right\}
\left(\bm{\rcoef} - \tilde{\bm{\rcoef}}_\res\right)
\nonumber
\\
& = &  
n^{-1}V_{\tau\tau} \cdot \left(
1 - R^2_{\tau,\bm{x}}
\right)\left(
1-R^2_{\res}
\right) + 
(n r_1r_0)^{-1} 
\left(\bm{\rcoef} - \tilde{\bm{\rcoef}}_\res\right)'
\bm{S}^2_{\bm{w}\setminus \bm{x}}
\left(\bm{\rcoef} - \tilde{\bm{\rcoef}}_\res\right).
\end{eqnarray}

Fourth, 
using \eqref{eq:var_proj_proof_rem}, \eqref{eq:var_res_proof_rem}, \eqref{eq:var_tau_beta_proj} and \eqref{eq:var_tau_beta_res}, we have   
\begin{align*}
V_{\tau\tau}(\bm{\beta}_1, \bm{\beta}_0) R^2_{\tau,\bm{x}}(\bm{\beta}_{1}, \bm{\beta}_0) & =
V_{\tau\tau} R^2_{\tau,\bm{x}}\left(
1-R^2_{\proj}
\right) + 
(r_1r_0)^{-1}
\left(
\bm{\rcoef}
-\tilde{\bm{\rcoef}}_\proj
\right)' 
 \bm{S}^2_{\bm{w}\mid \bm{x}}
\left(
\bm{\rcoef}
-\tilde{\bm{\rcoef}}_\proj
\right), \\
V_{\tau\tau}(\bm{\beta}_1, \bm{\beta}_0)\left\{1- R^2_{\tau,\bm{x}}(\bm{\beta}_{1}, \bm{\beta}_0) \right\} & = 
V_{\tau\tau}\left(
1 - R^2_{\tau,\bm{x}}
\right)\left(
1-R^2_{\res}
\right) + 
(r_1r_0)^{-1}\left(\bm{\rcoef} - \tilde{\bm{\rcoef}}_\res\right)'
\bm{S}^2_{\bm{w}\setminus \bm{x}}
\left(\bm{\rcoef} - \tilde{\bm{\rcoef}}_\res\right).
\end{align*}
These coupled with Theorem \ref{thm:adj_rem} imply
Theorem \ref{thm:reg_rem_beta}. 
\end{proof}

\begin{proof}[{\bf Proof of Corollary \ref{cor:analysis_more}}]
First, we prove that $\tilde{\bm{\rcoef}}_{\proj}=\tilde{\bm{\rcoef}}_{\res}=\tilde{\bm{\rcoef}}$ and $R^2_{\proj}=1$.
Under Condition \ref{con:ana_more}, $\hat{\bm{\tau}}_{\bm{w}}$ can linearly represent $\hat{\bm{\tau}}_{\bm{x}}$ as 
$\hat{\bm{\tau}}_{\bm{x}} = \bm{B}_1\hat{\bm{\tau}}_{\bm{w}}$. Thus using the linearity of the projection operator, we have 
\begin{eqnarray}\label{eq:proj_beta_tilde}
\proj(\hat{\tau} \mid \hat{\bm{\tau}}_{\bm{x}}) - \tau -  \tilde{\bm{\rcoef}}'\proj(\hat{\bm{\tau}}_{\bm{w}} \mid \hat{\bm{\tau}}_{\bm{x}}) = 
\proj(\hat{\tau}  - \tau - \tilde{\bm{\rcoef}}'\hat{\bm{\tau}}_{\bm{w}} \mid \hat{\bm{\tau}}_{\bm{x}})
= 
\proj\{
\res(\hat{\tau} \mid \hat{\bm{\tau}}_{\bm{w}}) \mid \hat{\bm{\tau}}_{\bm{x}}
\}.
\end{eqnarray}
By definition, $\res(\hat{\tau} \mid \hat{\bm{\tau}}_{\bm{w}})$ is uncorrelated with $\hat{\bm{\tau}}_{\bm{w}}$, and then it is also uncorrelated with $\hat{\bm{\tau}}_{\bm{x}} = \bm{B}_1\hat{\bm{\tau}}_{\bm{w}}$. Therefore, 
\eqref{eq:proj_beta_tilde} equals zero, 
implying that 
(i) $\proj(\hat{\tau} \mid \hat{\bm{\tau}}_{\bm{x}}) = \tau +  \tilde{\bm{\rcoef}}'\proj(\hat{\bm{\tau}}_{\bm{w}} \mid \hat{\bm{\tau}}_{\bm{x}})$ and $\tilde{\bm{\rcoef}}$ equals the linear projection coefficient of $\proj(\hat{\tau} \mid \hat{\bm{\tau}}_{\bm{x}})$ on $\proj(\hat{\bm{\tau}}_{\bm{w}} \mid \hat{\bm{\tau}}_{\bm{x}})$,  i.e., $\tilde{\bm{\rcoef}}=\tilde{\bm{\rcoef}}_\proj$; 
(ii) the squared multiple correlation between $\proj(\hat{\tau} \mid \hat{\bm{\tau}}_{\bm{x}})$ and $\proj(\hat{\bm{\tau}}_{\bm{w}} \mid \hat{\bm{\tau}}_{\bm{x}})$ equals 1, i.e., $R^2_{\proj}=1$.  
Moreover, (i) and Proposition \ref{lemma:relation_three} imply that $\tilde{\bm{\rcoef}}= \tilde{\bm{\rcoef}}_{\proj}=\tilde{\bm{\rcoef}}_{\res}$. 

Second, we prove that $R^2_{\res} =  (R^2_{\tau,\bm{w}} - R^2_{\tau,\bm{x}})/(1-R^2_{\tau,\bm{x}}).$ 
Because $\tilde{\bm{\rcoef}}_{\res}=\tilde{\bm{\rcoef}},$ 
the residual from the linear projection of $\res(\hat{\tau} \mid \hat{\bm{\tau}}_{\bm{x}})$ on $\res(\hat{\bm{\tau}}_{\bm{w}} \mid \hat{\bm{\tau}}_{\bm{x}})$
reduces to  
\begin{align}\label{eq:res_res_ana_more}
\res\left(\hat{\tau} \mid \hat{\bm{\tau}}_{\bm{x}} \right) - \tilde{\bm{\rcoef}}_{\res}'\res\left(\hat{\bm{\tau}}_{\bm{w}} \mid \hat{\bm{\tau}}_{\bm{x}}\right) & = 
\res\left(
\hat{\tau}- \tilde{\bm{\rcoef}}'\hat{\bm{\tau}}_{\bm{w}} \mid \hat{\bm{\tau}}_{\bm{x}}
\right) = 
\res\left\{
\res\left(\hat{\tau}\mid \hat{\bm{\tau}}_{\bm{w}} \right)
\mid \hat{\bm{\tau}}_{\bm{x}}
\right\}. 
\end{align}
Because, under Condition \ref{con:ana_more}, $\hat{\bm{\tau}}_{\bm{x}} = \bm{B}_1 \hat{\bm{\tau}}_{\bm{w}}$ is uncorrelated with $\res(\hat{\tau}\mid \hat{\bm{\tau}}_{\bm{w}}),$ 
\eqref{eq:res_res_ana_more} reduces to $\res(\hat{\tau}\mid \hat{\bm{\tau}}_{\bm{w}} )$. 
Thus, the squared multiple correlation between 
$\res(\hat{\tau} \mid \hat{\bm{\tau}}_{\bm{x}} )$  and $\res(\hat{\bm{\tau}}_{\bm{w}} \mid \hat{\bm{\tau}}_{\bm{x}})$ reduces to 
\begin{align*}
R^2_{\res} & =  
1 - \frac{\Var\left\{
\res(\hat{\tau}\mid \hat{\bm{\tau}}_{\bm{w}} )
\}
\right\}}{\Var\left\{\res(\hat{\tau} \mid \hat{\bm{\tau}}_{\bm{x}}) \right\}}
 = 1 - \frac{1-R^2_{\tau,\bm{w}}}{1-R^2_{\tau,\bm{x}}} = 
\frac{R^2_{\tau,\bm{w}}-R^2_{\tau,\bm{x}}}{1-R^2_{\tau,\bm{x}}}.
\end{align*}

Corollary \ref{cor:analysis_more} then follows immediately from Theorem \ref{thm:reg_rem_beta}.
\end{proof}

\begin{proof}[{\bf Proof of Corollary \ref{cor:design_more}}]
First, we prove that $\tilde{\bm{\beta}}_{\proj}=\tilde{\bm{\beta}}$, $R^2_{\proj}= R^2_{\tau, \bm{w}}/R^2_{\tau, \bm{x}}$, $\bm{S}^2_{\bm{w}\mid \bm{x}}=\bm{S}^2_{\bm{w}}$, and $\bm{S}^2_{\bm{w}\setminus \bm{x}}=\bm{0}$. 
Under Condition \ref{con:des_more}, 
$\bm{S}^2_{\bm{w}\mid \bm{x}}=\bm{S}^2_{\bm{w}}$,  $\bm{S}^2_{\bm{w}\setminus \bm{x}}=\bm{0}$, 
and 
$\hat{\bm{\tau}}_{\bm{w}} = \bm{B}_2 \hat{\bm{\tau}}_{\bm{x}}$. 
Then using Proposition \ref{lemma:relation_three}, we have $\tilde{\bm{\rcoef}}_{\proj}=\tilde{\bm{\rcoef}}.$ 
Moreover, 
$\proj(\hat{\bm{\tau}}_{\bm{w}} \mid \hat{\bm{\tau}}_{\bm{x}}) = \hat{\bm{\tau}}_{\bm{w}}$, and thus the linear projection of 
$\proj(\hat{\tau} \mid \hat{\bm{\tau}}_{\bm{x}})$ on $\proj(\hat{\bm{\tau}}_{\bm{w}} \mid \hat{\bm{\tau}}_{\bm{x}})$ under the CRE reduces to  
\begin{align*}
\proj\left\{
\proj(\hat{\tau} \mid \hat{\bm{\tau}}_{\bm{x}}) \mid \proj(\hat{\bm{\tau}}_{\bm{w}} \mid \hat{\bm{\tau}}_{\bm{x}})
\right\}
= \tau + \tilde{\bm{\rcoef}}_{\proj}'
\proj(\hat{\bm{\tau}}_{\bm{w}} \mid \hat{\bm{\tau}}_{\bm{x}}) 
= 
\tau + \tilde{\bm{\rcoef}}'
\hat{\bm{\tau}}_{\bm{w}} = \proj(\hat{\tau} \mid \hat{\bm{\tau}}_{\bm{w}} ).
\end{align*}
Consequently, 
the squared multiple correlation between $\proj(\hat{\tau} \mid \hat{\bm{\tau}}_{\bm{x}})$ and  $\proj(\hat{\bm{\tau}}_{\bm{w}} \mid \hat{\bm{\tau}}_{\bm{x}})$ equals 
\begin{align*}
R^2_{\proj} & = 
\frac{
\Var\left\{
\proj(\hat{\tau} \mid \hat{\bm{\tau}}_{\bm{w}} )
\right\}
}{
\Var\left\{
\proj\left(\hat{{\tau}} \mid \hat{\bm{\tau}}_{\bm{x}}\right)
\right\}
}
= 
\frac{\Var(\hat{\tau})R^2_{\tau, \bm{w}}}{\Var(\hat{\tau})R^2_{\tau, \bm{x}}}
= \frac{
R^2_{\tau, \bm{w}}
}{R^2_{\tau, \bm{x}}}.
\end{align*}

Second, we prove that $R^2_{\res} = 0.$
Under Condition \ref{con:des_more}, 
$
\res(\hat{\bm{\tau}}_{\bm{w}} \mid \hat{\bm{\tau}}_{\bm{x}}) = \bm{0},
$
and thus the squared multiple correlation between $\res(\hat{\tau} \mid \hat{\bm{\tau}}_{\bm{x}})$ and 
$\res(\hat{\bm{\tau}}_{\bm{w}} \mid \hat{\bm{\tau}}_{\bm{x}})$ reduces to zero.

Corollary \ref{cor:design_more} then follows immediately from Theorem \ref{thm:reg_rem_beta}.
\end{proof}

\section{$\mathcal{S}$-optimality}
\label{appendix::soptimal}

\subsection{Lemmas}

\begin{lemma}\label{lemma:L_ka_unimodal}
Let $\varepsilon \sim \mathcal{N}(0,1)$, and 
$L_{K,a} \sim D_1 \mid \bm{D}'\bm{D}\leq a$, where $\bm{D} = (D_1,\ldots, D_K)\sim \mathcal{N}(\bm{0}, \bm{I}_K)$. Both $\varepsilon$ and $L_{K,a}$ are symmetric and unimodal around zero. 
\end{lemma}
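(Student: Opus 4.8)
The plan is to establish the two distributional facts separately, since $\varepsilon$ is standard and the work is entirely about $L_{K,a}$. For $\varepsilon \sim \mathcal{N}(0,1)$, symmetry and unimodality around zero are immediate from the explicit density $\phi(t) = (2\pi)^{-1/2} e^{-t^2/2}$, which is even and strictly decreasing in $|t|$. So the real content is showing that $L_{K,a}$, defined as the first coordinate $D_1$ of $\bm{D} \sim \mathcal{N}(\bm{0},\bm{I}_K)$ conditioned on the event $\{\bm{D}'\bm{D} \le a\}$, is symmetric and unimodal around zero.

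For symmetry, I would use the fact that the map $(D_1, D_2, \ldots, D_K) \mapsto (-D_1, D_2, \ldots, D_K)$ preserves both the distribution of $\bm{D}$ (by symmetry of each independent standard normal coordinate) and the conditioning event $\{\bm{D}'\bm{D} \le a\}$ (since $(-D_1)^2 = D_1^2$). Hence the conditional law of $D_1$ given the event is invariant under $D_1 \mapsto -D_1$, i.e., $L_{K,a}$ is symmetric around zero. For unimodality, I would write the density of $L_{K,a}$ explicitly. Conditioning the joint density $\prod_{j=1}^K \phi(d_j)$ on $\sum_j d_j^2 \le a$ and integrating out $d_2, \ldots, d_K$, the density of $L_{K,a}$ at $t$ is proportional to
\begin{align*}
f(t) \propto \phi(t) \cdot P\!\left( \textstyle\sum_{j=2}^K D_j^2 \le a - t^2 \right) = \phi(t) \cdot P\!\left( \chi^2_{K-1} \le a - t^2 \right)
\end{align*}
for $|t| \le \sqrt{a}$ and $f(t) = 0$ otherwise (with the convention that for $K=1$ the probability factor is $\mathbbm{1}\{t^2 \le a\}$). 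Both factors $\phi(t)$ and $P(\chi^2_{K-1} \le a - t^2)$ are even and nonincreasing in $|t|$ on $[0,\sqrt a]$: the first because $\phi$ is, and the second because $a - t^2$ is decreasing in $|t|$ and the $\chi^2_{K-1}$ CDF is nondecreasing. A product of two even, nonincreasing-in-$|t|$ functions is even and nonincreasing in $|t|$, which is exactly unimodality around zero.

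The main obstacle — really a minor bookkeeping point rather than a genuine difficulty — is handling the edge cases cleanly: the $K=1$ case where there is no remaining coordinate to integrate out, and ensuring the density is well-defined and the normalizing constant $P(\chi^2_K \le a)$ is positive (which holds since $a>0$). I would also want to make sure the notion of ``unimodal'' being used is the one in force in the paper (density nonincreasing in $|t|$ away from the mode), which the product argument delivers directly. Everything reduces to the two displayed observations, so the proof should be short; I would present the symmetry argument via the coordinate-flip map and the unimodality argument via the factored density, citing standard properties of $\phi$ and the $\chi^2$ CDF.
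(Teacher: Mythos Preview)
Your argument is correct. The symmetry step via the coordinate-flip $(D_1,\ldots,D_K)\mapsto(-D_1,D_2,\ldots,D_K)$ is the right invariance argument, and your factored density
\[
f(t)\;\propto\;\phi(t)\cdot P\!\left(\chi^2_{K-1}\le a-t^2\right)\,\mathbbm{1}\{t^2\le a\}
\]
is exactly the marginal obtained by integrating out the remaining coordinates; both factors are even and nonincreasing in $|t|$, so the product is unimodal at zero. The edge case $K=1$ is handled by your convention, and the normalizing constant $P(\chi^2_K\le a)>0$ since $a>0$.

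The paper's own proof is different only in that it does not argue anything directly: it simply cites Proposition~2 of \citet{asymrerand2106} and stops. So you are supplying a self-contained elementary derivation where the paper defers to the literature. Your approach has the advantage of being fully transparent and requiring no external reference; the paper's approach is shorter on the page and avoids duplicating a result already established elsewhere in the same line of work. Substantively the underlying computation is the same one that would appear in the cited proposition, so there is no genuine methodological divergence---just a choice between citation and reproof.
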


\begin{proof}[\bf Proof of Lemma \ref{lemma:L_ka_unimodal}]
It follows from \citet[][Proposition 2]{asymrerand2106}.
\end{proof}

\begin{lemma}\label{lemma:order_sum}
Let 
$\zeta_0,\zeta_1$ and $\zeta_2$ be three mutually independent random variables. 
If 
\begin{itemize}
\item[(1)] $\zeta_0$ is symmetric and unimodal around zero; 
\item[(2)] $\zeta_1$ and $\zeta_2$ are symmetric around 0;
\item[(3)] $P(|\zeta_1| \leq c) \geq P(|\zeta_2 | \leq c)$ for any $c \geq 0$; 
\end{itemize}
then $P( | \zeta_0+\zeta_1 | \leq c) \geq P( | \zeta_0+\zeta_2 | \leq c)$ for any $c \geq 0$.
\end{lemma}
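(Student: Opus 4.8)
The plan is to reduce the statement to a one-dimensional convolution comparison and then exploit unimodality of $\zeta_0$. First I would condition on $\zeta_0 = t$ and write, using independence,
\[
P(|\zeta_0 + \zeta_j| \le c) = \int P(|\zeta_j + t| \le c)\, dF_0(t) = \int P(t - c \le \zeta_j \le t + c)\, dF_0(t), \qquad j = 1,2,
\]
where $F_0$ is the distribution of $\zeta_0$. So it suffices to show that the function $g_j(t) \equiv P(t - c \le \zeta_j \le t + c)$ satisfies $\int g_1 \, dF_0 \ge \int g_2 \, dF_0$. Both $g_1$ and $g_2$ are even functions of $t$ (since $\zeta_1, \zeta_2$ are symmetric), and by assumption (3), $g_1(0) = P(|\zeta_1| \le c) \ge P(|\zeta_2| \le c) = g_2(0)$. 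The key structural fact I would establish is that for a symmetric random variable $\zeta$, the map $t \mapsto P(t - c \le \zeta \le t + c)$ is itself symmetric and \emph{unimodal} (nonincreasing in $|t|$) — this follows because the interval $[t-c, t+c]$ of fixed width $2c$ captures the most mass when centered at the symmetry point $0$; more carefully, one writes $g_j(t) = (H_j * \mathbbm{1}_{[-c,c]})(t)$ where $H_j$ is the (symmetric) density or distribution of $\zeta_j$, and a convolution of a symmetric unimodal-type object with a symmetric interval indicator is symmetric and unimodal — but $H_j$ need not be unimodal, so I would instead argue directly via symmetry and a rearrangement/coupling argument.

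The cleanest route is a coupling argument. Since $\zeta_1, \zeta_2$ are symmetric about $0$, condition (3) says the distribution of $|\zeta_2|$ stochastically dominates that of $|\zeta_1|$. Hence there is a coupling $(\zeta_1', \zeta_2')$ on a common space with $\zeta_j' \stackrel{d}{=} \zeta_j$ and $|\zeta_1'| \le |\zeta_2'|$ pointwise, with $\zeta_1', \zeta_2'$ each symmetric; moreover I can arrange $\mathrm{sign}(\zeta_1')$ and $\mathrm{sign}(\zeta_2')$ to be independent fair coin flips, independent of the magnitudes and of $\zeta_0$. Then, because $\zeta_0$ is symmetric and \emph{unimodal} about $0$, for each fixed value $m_1 \le m_2$ of the magnitudes and each fixed sign pattern, $P(|\zeta_0 + s_1 m_1| \le c \mid \text{signs, magnitudes}) \ge P(|\zeta_0 + s_2 m_2| \le c \mid \cdots)$: this is exactly the statement that shifting a symmetric unimodal density by a larger amount cannot increase the probability of landing in a symmetric interval $[-c,c]$. (This sub-fact is the classical "Anderson-type" inequality in one dimension and can be proved by the interval-translation argument above applied to $\zeta_0$'s density.) Integrating over the coupling and using $|\zeta_1'| \le |\zeta_2'|$ then yields the claim.

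The main obstacle is the sub-fact that for a symmetric \emph{unimodal} $\zeta_0$, the quantity $P(|\zeta_0 + a| \le c)$ is nonincreasing in $|a|$ for fixed $c \ge 0$. This must be handled with a little care because $\zeta_0$ may have an atom at $0$ or be only unimodal (not continuous); I would write $\zeta_0$'s law as a mixture of symmetric uniform distributions on intervals $[-u, u]$ (the Khintchine/Shepp representation of symmetric unimodal laws), reducing to checking the monotonicity when $\zeta_0 \sim \mathrm{Unif}[-u,u]$, which is an elementary computation comparing the overlap of $[-u,u]$ with $[-a-c, -a+c]$. Everything else — the conditioning, the stochastic-dominance coupling, and the final integration — is routine. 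Note that condition (1) is used only for $\zeta_0$ and conditions (2)--(3) only for $\zeta_1, \zeta_2$, matching the asymmetry in the hypotheses; in the intended application, $\zeta_0$ is the $\varepsilon$-component and $\zeta_1, \zeta_2$ are scaled copies of $L_{K,a}$ (plus possibly $\varepsilon$-pieces), all symmetric by Lemma A1.
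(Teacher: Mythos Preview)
Your argument is correct. The coupling via stochastic dominance of the magnitudes, together with the one-dimensional Anderson-type monotonicity $a\mapsto P(|\zeta_0+a|\le c)$ proved through the Khintchine representation of symmetric unimodal laws, is a valid self-contained route to the lemma. The only places to tidy up are notational: the abandoned first paragraph about $g_j = H_j * \mathbbm{1}_{[-c,c]}$ is imprecise (you rightly drop it), and when you invoke the coupling you should state explicitly that the pair $(\zeta_1',\zeta_2')$ is constructed independently of $\zeta_0$, which suffices since the two probabilities in the conclusion depend only on the marginals of $(\zeta_0,\zeta_1)$ and $(\zeta_0,\zeta_2)$.

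By contrast, the paper does not give an argument at all: it simply cites \citet[Theorem~7.5]{dharmadhikari1988}. So there is no methodological comparison to make beyond noting that you have supplied a direct proof of the special case the authors need, whereas they defer to a general external result on unimodal distributions. Your proof is more elementary and self-contained; the paper's approach is shorter but relies on the reader having access to that reference.
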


\begin{proof}[\bf Proof of Lemma \ref{lemma:order_sum}]
It follows from \citet[][Theorem 7.5]{dharmadhikari1988}.
\end{proof}

\begin{lemma}\label{lemma:qr_linear_comb_epsilon_L_Ka}
Let $\varepsilon \sim \mathcal{N}(0,1)$, 
$L_{K,a} \sim D_1 \mid \bm{D}'\bm{D}\leq a$, where $\bm{D} = (D_1,\ldots, D_K)\sim \mathcal{N}(\bm{0}, \bm{I}_K)$, and $\varepsilon$ and $L_{K,a}$ be mutually independent. 
For any nonnegative constants $b_1\leq c_1$, $b_2\leq c_2$, and any $\alpha \in (0,1)$, 
the $1-\alpha$ quantile range of $b_1\varepsilon + b_2 L_{K,a}$ is narrower than or equal to that of $c_1\varepsilon + c_2 L_{K,a}$. 
\end{lemma}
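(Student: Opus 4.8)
The plan is to reduce the statement to a pointwise comparison of the distribution functions of the absolute values and then obtain that comparison by applying Lemma~\ref{lemma:order_sum} twice. Since $\varepsilon$ and $L_{K,a}$ are each symmetric around zero, so are $b_1\varepsilon + b_2 L_{K,a}$ and $c_1\varepsilon + c_2 L_{K,a}$; hence their $1-\alpha$ quantile ranges are the symmetric intervals $[-q_b, q_b]$ and $[-q_c, q_c]$ with $P(|b_1\varepsilon + b_2 L_{K,a}|\le q_b) = P(|c_1\varepsilon + c_2 L_{K,a}|\le q_c) = 1-\alpha$, where $q_b = \inf\{c\ge 0 : P(|b_1\varepsilon + b_2 L_{K,a}|\le c)\ge 1-\alpha\}$ and similarly for $q_c$. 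It therefore suffices to show
\[
P\bigl(|b_1\varepsilon + b_2 L_{K,a}| \le c\bigr) \ \ge\ P\bigl(|c_1\varepsilon + c_2 L_{K,a}| \le c\bigr) \qquad \text{for all } c\ge 0,
\]
because this monotonicity of the distribution functions of the absolute values immediately forces $q_b \le q_c$, and hence the length $2q_b$ of the first quantile range is no larger than the length $2q_c$ of the second.

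The displayed inequality will follow from a two-step interpolation. For the first step I would take $\zeta_0 = b_2 L_{K,a}$, $\zeta_1 = b_1\varepsilon$, $\zeta_2 = c_1\varepsilon$ in Lemma~\ref{lemma:order_sum}: these are mutually independent; $\zeta_0$ is symmetric and unimodal around zero by Lemma~\ref{lemma:L_ka_unimodal} together with the elementary fact that scaling a symmetric unimodal variable by a nonnegative constant preserves symmetry and unimodality (the degenerate case $b_2=0$, a point mass at zero, being trivially of this type); $\zeta_1,\zeta_2$ are symmetric around zero; and the domination hypothesis $P(|b_1\varepsilon|\le c)\ge P(|c_1\varepsilon|\le c)$ holds for every $c\ge 0$ because $0\le b_1\le c_1$ gives $c/b_1\ge c/c_1$ (with the convention $c/0=+\infty$). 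Lemma~\ref{lemma:order_sum} then yields $P(|b_1\varepsilon + b_2 L_{K,a}|\le c)\ge P(|c_1\varepsilon + b_2 L_{K,a}|\le c)$. For the second step I would take $\zeta_0 = c_1\varepsilon$ (symmetric and unimodal around zero as a scaled Gaussian, again trivially so if $c_1=0$), $\zeta_1 = b_2 L_{K,a}$, $\zeta_2 = c_2 L_{K,a}$, verifying the domination hypothesis via $P(|b_2 L_{K,a}|\le c) = P(|L_{K,a}|\le c/b_2) \ge P(|L_{K,a}|\le c/c_2) = P(|c_2 L_{K,a}|\le c)$ since $0\le b_2\le c_2$; Lemma~\ref{lemma:order_sum} then gives $P(|c_1\varepsilon + b_2 L_{K,a}|\le c)\ge P(|c_1\varepsilon + c_2 L_{K,a}|\le c)$. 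Chaining the two inequalities produces the displayed bound and completes the argument.

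I do not expect a genuine obstacle: the entire content lies in Lemmas~\ref{lemma:L_ka_unimodal} and~\ref{lemma:order_sum}, and this lemma is exactly the right two-step combination of them. The only points requiring a little care are the routine bookkeeping for the degenerate cases in which one of $b_1,b_2,c_1,c_2$ vanishes (handled by the point-mass remarks above), and making explicit that the half-length $q$ of the symmetric $1-\alpha$ quantile range is a monotone functional of the distribution function of the absolute value, which is the elementary generalized-inverse inequality sketched in the first paragraph.
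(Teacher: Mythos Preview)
Your proposal is correct and follows essentially the same approach as the paper: reduce to the pointwise inequality $P(|b_1\varepsilon + b_2 L_{K,a}|\le c)\ge P(|c_1\varepsilon + c_2 L_{K,a}|\le c)$ and obtain it by two applications of Lemma~\ref{lemma:order_sum}, using Lemma~\ref{lemma:L_ka_unimodal} to supply the required symmetric unimodal summand at each step. The only cosmetic difference is the interpolation order---the paper passes through the intermediate $b_1\varepsilon + c_2 L_{K,a}$ whereas you pass through $c_1\varepsilon + b_2 L_{K,a}$---and you make the quantile-range reduction and the degenerate cases more explicit than the paper does.
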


\begin{proof}[\bf Proof of Lemma \ref{lemma:qr_linear_comb_epsilon_L_Ka}]
From Lemma \ref{lemma:L_ka_unimodal}, $b_1\varepsilon$ is symmetric and unimodal. Because $b_2 \leq c_2$,
$P(| b_2L_{K,a} | \leq c) \geq P( |c_2L_{K,a}| \leq c)$ for any $c\geq 0$. Then from Lemma \ref{lemma:order_sum}, 
$
P( | b_1\varepsilon + b_2 L_{K,a} | \leq c) \geq P( | b_1\varepsilon + c_2L_{K,a} | \leq c)$ for any $c\geq 0.$

From Lemma \ref{lemma:L_ka_unimodal}, $c_2L_{K,a}$ is symmetric and unimodal. Because $b_1 \leq c_1$, $
P( | b_1\varepsilon | \leq c) \geq P( | c_1\varepsilon | \leq c)$ for any $c\geq 0$. Then from Lemma \ref{lemma:order_sum}, 
$P( | b_1\varepsilon + c_2L_{K,a} | \leq c) \geq P( | c_1 \varepsilon + c_2L_{K,a} | \leq c)$ for any $c\geq 0.$

From the above two results, for any $c\geq 0$,
\begin{align*}
P( | b_1\varepsilon + b_2 L_{K,a} | \leq c) \geq 
P( | b_1\varepsilon + c_2L_{K,a} | \leq c) \geq 
P( | c_1 \varepsilon + c_2L_{K,a} | \leq c),
\end{align*}
which implies Lemma \ref{lemma:qr_linear_comb_epsilon_L_Ka}.
\end{proof}

\begin{lemma}\label{lemma:quantile_in_rho}
	For any $\alpha\geq 1/2$, the $\alpha$th quantile of $(1-\rho^2)^{1/2}\cdot\varepsilon+ | \rho | \cdot L_{K,a}$ is nonincreasing in $\rho^2$.  
\end{lemma}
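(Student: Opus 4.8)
The plan is to recast the statement about quantiles as a statement about \emph{peakedness}, and then to prove a monotone peakedness inequality by exploiting the Gaussian representation of $L_{K,a}$. Write $t=\rho^2\in[0,1]$ and $Q(t)=(1-t)^{1/2}\varepsilon+t^{1/2}L_{K,a}$. By Lemma \ref{lemma:L_ka_unimodal}, $Q(t)$ is symmetric around $0$, so for $c\ge 0$ we have $P(Q(t)\le c)=\{1+P(|Q(t)|\le c)\}/2$. Hence the $\alpha$th quantile $q_\alpha(t)$, which is nonnegative for $\alpha\ge 1/2$, satisfies $P(|Q(t)|\le q_\alpha(t))=2\alpha-1$, and since $c\mapsto P(|Q(t)|\le c)$ is nondecreasing, $q_\alpha(t)=\inf\{c\ge 0: P(|Q(t)|\le c)\ge 2\alpha-1\}$. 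It therefore suffices to prove that
\[
g_c(t):=P\big(|Q(t)|\le c\big)\ \text{is nondecreasing in }t\ \text{for every fixed }c\ge 0,
\]
because then the defining set for $q_\alpha(t_2)$ contains that for $q_\alpha(t_1)$ whenever $t_1\le t_2$, giving $q_\alpha(t_2)\le q_\alpha(t_1)$.

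To compare two levels $t_1<t_2$ I would split the Gaussian part. Since $(1-t_1)^{1/2}\varepsilon\overset{d}{=}(1-t_2)^{1/2}\varepsilon_1+(t_2-t_1)^{1/2}\varepsilon_2$ with $\varepsilon_1,\varepsilon_2$ independent standard normals, we get $Q(t_1)\overset{d}{=}(1-t_2)^{1/2}\varepsilon_1+\{(t_2-t_1)^{1/2}\varepsilon_2+t_1^{1/2}L_{K,a}\}$ and $Q(t_2)\overset{d}{=}(1-t_2)^{1/2}\varepsilon_1+t_2^{1/2}L_{K,a}$, which share the common symmetric unimodal summand $(1-t_2)^{1/2}\varepsilon_1$. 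By Lemma \ref{lemma:order_sum}, $g_c(t_2)\ge g_c(t_1)$ follows once I show $t_2^{1/2}L_{K,a}$ is more peaked than $(t_2-t_1)^{1/2}\varepsilon_2+t_1^{1/2}L_{K,a}$. Rescaling by $t_2^{-1/2}$ and setting $\lambda=t_1/t_2$, this is the single inequality
\begin{equation*}
P\big(|L_{K,a}|\le c\big)\ \ge\ P\big(|\lambda^{1/2}L_{K,a}+(1-\lambda)^{1/2}\varepsilon|\le c\big)\qquad(\forall c\ge 0),\tag{$\star$}
\end{equation*}
i.e.\ convolving the peaked variable $L_{K,a}$ with independent Gaussian noise only spreads it out. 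The variance of the right-hand side minus that of the left equals $(1-\lambda)(1-v_{K,a})\ge 0$, consistent with $(\star)$, but $(\star)$ is strictly stronger than a variance comparison.

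To prove $(\star)$ I would use $L_{K,a}\overset{d}{=}D_1\mid \bm{D}'\bm{D}\le a$ with $\bm{D}\sim\mathcal N(\bm0,\bm I_K)$, independent of $\varepsilon$. Conditioning the right side of $(\star)$ on $L_{K,a}=x$ yields a $\mathcal N(\lambda^{1/2}x,1-\lambda)$ variable, so $(\star)$ becomes $E[\Delta(L_{K,a})]\ge 0$, where $\Delta(x)=\mathbf 1(|x|\le c)-\Psi(\lambda^{1/2}x,1-\lambda)$ and $\Psi(m,\sigma^2)=\Phi\{(c-m)/\sigma\}-\Phi\{(-c-m)/\sigma\}$. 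The same expectation under a standard normal vanishes \emph{exactly}, $E[\Delta(\varepsilon)]=P(|\varepsilon|\le c)-P(|\lambda^{1/2}\varepsilon'+(1-\lambda)^{1/2}\varepsilon''|\le c)=0$, because a convex combination of independent standard normals is again standard normal. Marginalizing $D_2,\dots,D_K$ shows the density of $L_{K,a}$ is $\varphi(x)$ times the normalized weight $h(x)=P(\chi^2_{K-1}\le a-x^2)\,\mathbf 1(x^2\le a)$, which is even and nonincreasing in $|x|$; so $(\star)$ asserts that reweighting $\varphi$ by the decreasing factor $h$ cannot decrease $E[\Delta]$.

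The main obstacle is the final sign. The function $\Delta$ is even and single-crossing—positive on $[0,c)$ and negative on $(c,\infty)$—but it is \emph{not} monotone in $|x|$ (it increases on $[0,c)$), so neither the peakedness of $L_{K,a}$ over $\varepsilon$ nor any stochastic/variance domination is by itself enough, and the precise decreasing weight $h$ must be used. I would finish by differentiating $g_c(t)$ directly: writing the conditional density $p(q\mid \ell)$ of $Q(t)$ given $L_{K,a}=\ell$ as a Gaussian and using the identity $\partial_t p=-\tfrac{\ell}{2t^{1/2}}\partial_q p-\tfrac12\partial_{qq}p$, the derivative $g_c'(t)$ reduces to boundary terms at $q=\pm c$ integrated against the law of $L_{K,a}$. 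One then integrates by parts in the radial variable so that the derivative falls on $h$, whose sign $h'\le 0$ combines with the evenness of the boundary kernels to force $g_c'(t)\ge 0$. The tight bookkeeping of these boundary terms—showing that the indefinite-sign contribution is dominated by the manifestly nonnegative one precisely because $h$ is nonincreasing—is the crux, with the cancellation degenerating to equality exactly when $v_{K,a}=1$.
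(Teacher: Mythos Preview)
The paper itself does not prove this lemma; it merely cites Lemma~A3 of \citet{asymrerand2106}. So any correct self-contained argument already does more than the paper.

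Your reduction is valid: the passage from quantiles to peakedness, the Gaussian splitting $(1-t_1)^{1/2}\varepsilon\overset{d}{=}(1-t_2)^{1/2}\varepsilon_1+(t_2-t_1)^{1/2}\varepsilon_2$, and the appeal to Lemma~\ref{lemma:order_sum} legitimately reduce the full monotonicity of $g_c$ to the endpoint inequality~$(\star)$. You then correctly record the two decisive facts: $\int_0^\infty \Delta(x)\varphi(x)\,dx=0$ (from $E[\Delta(\varepsilon)]=0$ and evenness) and the single sign change of $\Delta$ at $c$. At this point you already have everything you need, and the subsequent program of differentiating $g_c(t)$ via the heat-type identity and integrating by parts in the radial variable is an unnecessary detour that you do not carry to a conclusion. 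The non-monotonicity of $\Delta$ on $[0,c)$ that you worry about is a red herring; only the single crossing matters. The one-line finish: on $[0,c)$ one has $\Delta\varphi\ge 0$ and $h(x)\ge h(c)$, and on $(c,\infty)$ one has $\Delta\varphi\le 0$ and $h(x)\le h(c)$, so pointwise $\Delta(x)\varphi(x)h(x)\ge h(c)\,\Delta(x)\varphi(x)$ on $[0,\infty)$. Integrating yields
\[
\int_0^\infty \Delta(x)\varphi(x)h(x)\,dx \ \ge\ h(c)\int_0^\infty \Delta(x)\varphi(x)\,dx \ =\ 0,
\]
which is exactly $E[\Delta(L_{K,a})]\ge 0$, i.e.\ $(\star)$, and the lemma follows.
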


\begin{proof}[\bf Proof of Lemma \ref{lemma:quantile_in_rho}]
It follows from \citet[][Lemma A3]{asymrerand2106}. 
\end{proof}

\subsection{Proofs}

\begin{proof}[{\bf Proof of Theorem \ref{thm:analysis_more_opt}}]
In the asymptotic distribution of $\hat{\tau}(\bm{\beta}_1, \bm{\beta}_0)$ in Corollary \ref{cor:analysis_more}, both coefficients of $\varepsilon$ and $L_{K,a}$ attain their minimum values at $r_1\bm{\beta}_1 + r_1\bm{\beta}_0\equiv \bm{\gamma}  = \tilde{\bm{\gamma}} .$ From Lemma \ref{lemma:qr_linear_comb_epsilon_L_Ka} and Corollary \ref{cor:analysis_more},
the $\mathcal{S}$-optimal adjusted estimator is attainable when $\bm{\rcoef}=\tilde{\bm{\rcoef}}$ or 
$r_0\bm{\beta}_1+r_1\bm{\beta}_0 = r_0 \tilde{\bm{\beta}}_1+r_1 \tilde{\bm{\beta}}_0 ,$ 
with the asymptotic distribution \eqref{eq:opt_ana_more}. 
From Proposition \ref{prop:three_beta_tilde}, $\hat{\tau}(\tilde{\bm{\beta}}_1, \tilde{\bm{\beta}}_0)$ is $\mathcal{S}$-optimal.
\end{proof}

\begin{proof}[{\bf Proof of Theorem \ref{thm:design_more_opt}}]
From Corollary \ref{cor:design_more}, 
in the asymptotic distribution of $\hat{\tau}(\bm{\beta}_1, \bm{\beta}_0)$ under ReM, 
the coefficient of $\varepsilon$ does not depend on $\bm{\rcoef}$, and the coefficient of $L_{K,a}$ attains its minimum when $\bm{\rcoef} = \tilde{\bm{\rcoef}}$. 
From Lemma \ref{lemma:qr_linear_comb_epsilon_L_Ka} and Corollary \ref{cor:design_more}, 
the $\mathcal{S}$-optimal adjusted estimator is 
attainable when $\bm{\rcoef}=\tilde{\bm{\rcoef}}$ or 
$r_0\bm{\beta}_1+r_1\bm{\beta}_0 = r_0 \tilde{\bm{\beta}}_1+r_1 \tilde{\bm{\beta}}_0 ,$  with asymptotic distribution \eqref{eq:design_more_opt}. 
Moreover, from Proposition \ref{prop:three_beta_tilde}, 
$\hat{\tau}(\tilde{\bm{\beta}}_1, \tilde{\bm{\beta}}_0)$ is $\mathcal{S}$-optimal. 
\end{proof}

\begin{proof}[{\bf Proof of Theorem \ref{thm:gen_ana_enough}}]
From Corollary \ref{corr:diff_rem}, the squared coefficient of $\varepsilon$ in the asymptotic distribution of $\hat{\tau}$ is 
$
V_{\tau\tau}(1-R^2_{\tau,\bm{x}}). 
$
From Theorem \ref{thm:adj_rem}, the squared coefficient of $\varepsilon$ in the asymptotic distribution of $\hat{\tau}(\tilde{\bm{\beta}}_1, \tilde{\bm{\beta}}_0)$ is 
$
V_{\tau\tau}(\tilde{\bm{\beta}}_1, \tilde{\bm{\beta}}_0) \{
1-R^2_{\tau, \bm{x}}(\tilde{\bm{\beta}}_1, \tilde{\bm{\beta}}_0)
\}.
$
Because 
$$
V_{\tau\tau}(\tilde{\bm{\beta}}_1, \tilde{\bm{\beta}}_0)
= n\Var\{
\hat{\tau}(\tilde{\bm{\beta}}_1, \tilde{\bm{\beta}}_0) 
\} 
= 
n\Var(
\hat{\tau} - \tilde{\bm{\rcoef}} \hat{\bm{\tau}}_{\bm{w}}
)
= 
n\Var\left\{\res(\hat{\tau} \mid \hat{\bm{\tau}}_{\bm{w}})\right\}
=
V_{\tau\tau}\left( 1-R^2_{\tau, \bm{w}} \right),
$$
the squared coefficient of $\varepsilon$ in the asymptotic distribution of $\hat{\tau}(\tilde{\bm{\beta}}_1, \tilde{\bm{\beta}}_0)$ under ReM reduces to 
$V_{\tau\tau}
\left(1-R^2_{\tau,\bm{w}}\right)\{1 - R^2_{\tau,\bm{x}}(\tilde{\bm{\beta}}_1, \tilde{\bm{\beta}}_0)\}.
$
Therefore, under ReM, the squared coefficient of $\varepsilon$ in the asymptotic distribution of $\hat{\tau}(\tilde{\bm{\beta}}_1, \tilde{\bm{\beta}}_0)$  is smaller than or equal to that of $\hat{\tau}$ if and only if 
\begin{eqnarray*}
& & V_{\tau\tau}
\left(1-R^2_{\tau,\bm{w}}\right)\left\{1 - R^2_{\tau,\bm{x}}(\tilde{\bm{\beta}}_1, \tilde{\bm{\beta}}_0)\right\}
\leq 
V_{\tau\tau}\left(1 - R^2_{\tau,\bm{x}}\right)\\
& \Longleftrightarrow & 
\left(1-R^2_{\tau,\bm{w}}\right)
- \left(1-R^2_{\tau,\bm{w}}\right)R^2_{\tau,\bm{x}}(\tilde{\bm{\beta}}_1, \tilde{\bm{\beta}}_0)
\leq 
\left(1 - R^2_{\tau,\bm{x}}\right)\\
& \Longleftrightarrow & 
R^2_{\tau,\bm{w}} 
+ \left(1-R^2_{\tau,\bm{w}}\right)R^2_{\tau,\bm{x}}(\tilde{\bm{\beta}}_1, \tilde{\bm{\beta}}_0)
\geq R^2_{\tau,\bm{x}}.
\end{eqnarray*}
\end{proof}

\subsection{The adjusted estimator with the smallest asymptotic variance}
From Theorem \ref{thm:reg_rem_beta}, the asymptotic variance of $\hat{\tau}(\bm{\beta}_1, \bm{\beta}_0)$ under ReM is 
\begin{align}\label{eq:var_reg_rem}
&\left\{V_{\tau\tau}\left( 1-R^2_{\tau, \bm{x}} \right)
\left( 1-R^2_{\res} \right)
+ (r_1r_0)^{-1}
\left( \bm{\rcoef}-\tilde{\bm{\rcoef}}_{\res} \right)'
\bm{S}^2_{\bm{w}\setminus \bm{x}}
\left( \bm{\rcoef}-\tilde{\bm{\rcoef}}_{\res} \right)\right\}
\nonumber\\
& \quad \ + 
\left\{
V_{\tau\tau}R^2_{\tau, \bm{x}}
\left( 1 - R^2_{\proj} \right) + (r_1r_0)^{-1}
\left( \bm{\rcoef}-\tilde{\bm{\rcoef}}_{\proj} \right)'
\bm{S}^2_{\bm{w}\mid \bm{x}}
\left( \bm{\rcoef}-\tilde{\bm{\rcoef}}_{\proj} \right)
\right\} v_{K,a}
\nonumber\\
 =  &
V_{\tau\tau} \left( 1-R^2_{\tau, \bm{x}} \right)
\left( 1-R^2_{\res} \right) + 
V_{\tau\tau}R^2_{\tau, \bm{x}}
\left( 1 - R^2_{\proj} \right) v_{K,a} 
\nonumber\\
 & \quad \  + 
(r_1r_0)^{-1}
\left( \bm{\rcoef}-\tilde{\bm{\rcoef}}_{\res} \right)'
\bm{S}^2_{\bm{w}\setminus \bm{x}}
\left( \bm{\rcoef}-\tilde{\bm{\rcoef}}_{\res} \right) + 
(r_1r_0)^{-1}
\left( \bm{\rcoef}-\tilde{\bm{\rcoef}}_{\proj} \right)'
\bm{S}^2_{\bm{w}\mid \bm{x}}
\left( \bm{\rcoef}-\tilde{\bm{\rcoef}}_{\proj} \right) v_{K,a} . 
\end{align}
It is a quadratic form of $\bm{\rcoef}.$ 
The derivative of \eqref{eq:var_reg_rem} with respect to $\bm{\rcoef}$ is   
\begin{eqnarray*}
&&2 (r_1r_0)^{-1}\bm{S}^2_{\bm{w}\setminus \bm{x}}
\left( \bm{\rcoef}-\tilde{\bm{\rcoef}}_{\res} \right) + 
2v_{K,a} (r_1r_0)^{-1} 
\bm{S}^2_{\bm{w}\mid \bm{x}}
\left( \bm{\rcoef}-\tilde{\bm{\rcoef}}_{\proj} \right)\\
& = & 2 (r_1r_0)^{-1}\left\{
\left( 
\bm{S}^2_{\bm{w}\setminus \bm{x}} + v_{K,a}\bm{S}^2_{\bm{w}\mid \bm{x}}
\right) 
\bm{\rcoef} - 
\left(
\bm{S}^2_{\bm{w}\setminus \bm{x}}\tilde{\bm{\rcoef}}_{\res} +
v_{K,a}\bm{S}^2_{\bm{w}\mid \bm{x}}\tilde{\bm{\rcoef}}_{\proj}
\right)
\right\}. 
\end{eqnarray*}
Therefore, under ReM, $\hat{\tau}(\bm{\beta}_1, \bm{\beta}_0)$ with the smallest asymptotic variance is attainable when 
\begin{align*}
r_0 \bm{\beta}_{1} + r_1 \bm{\beta}_{0}
\equiv 
\bm{\rcoef}
=
\left(
\bm{S}^2_{\bm{w}\setminus \bm{x}}  + v_{K,a}\bm{S}^2_{\bm{w}\mid \bm{x}}
\right)^{-1}
\left(
\bm{S}^2_{\bm{w}\setminus \bm{x}} 
\tilde{\bm{\rcoef}}_{\res} + 
v_{K,a}\bm{S}^2_{\bm{w}\mid \bm{x}}\tilde{\bm{\rcoef}}_{\proj}
\right).
\end{align*}
When $a \approx 0$, the above coefficient is close to $\tilde{\bm{\rcoef}}_{\res}.$

 \subsection{Technical details for Remark \ref{rmk:reg_worse}}
We first give equivalent forms for the squared coefficients of $\varepsilon$ in the asymptotic distributions of $\hat{\tau}$ and $\hat{\tau}(\tilde{\bm{\beta}}_1, \tilde{\bm{\beta}}_0)$ under ReM. 
		From Corollary \ref{corr:diff_rem} and the definition of $R^2_{\tau, \bm{x}}$ in \eqref{eq:R2_tau_x}, the squared coefficient of $\varepsilon$ in the asymptotic distribution of $\hat{\tau}$ under ReM has the following equivalent forms:
		\begin{align}\label{eq:var_diff_reg_worse}
		V_{\tau\tau} (1-R^2_{\tau, \bm{x}}) & = n \cdot \Var(\hat{\tau}) \cdot (1-R^2_{\tau, \bm{x}}) = n \cdot \Var\left\{\res\left(\hat{\tau} \mid \hat{\bm{\tau}}_{\bm{x}} \right)\right\}. 
		\end{align}
		From Theorem \ref{thm:adj_rem} and the definition of $R^2_{\tau, \bm{x}}(\bm{\beta}_{1}, \bm{\beta}_0)$ in \eqref{eq:R2_tau_x_beta}, the squared coefficient of $\varepsilon$ in the asymptotic distribution of $\hat{\tau}(\tilde{\bm{\beta}}_1, \tilde{\bm{\beta}}_0)$ under ReM has the following equivalent forms:
		\begin{align}\label{eq:var_adj_reg_worse}
		& \quad \ V_{\tau\tau}(\tilde{\bm{\beta}}_1, \tilde{\bm{\beta}}_0) \left\{1-R^2_{\tau, \bm{x}}(\tilde{\bm{\beta}}_{1}, \tilde{\bm{\beta}}_0)\right\}
		\nonumber
		\\
		& = 
		n \cdot \Var\left\{\hat{\tau}(\tilde{\bm{\beta}}_1, \tilde{\bm{\beta}}_0)\right\} \cdot  \left\{1-R^2_{\tau, \bm{x}}(\tilde{\bm{\beta}}_{1}, \tilde{\bm{\beta}}_0)\right\}
		= n \cdot \Var\left\{
		\res\left(
		\hat{\tau}(\tilde{\bm{\beta}}_1, \tilde{\bm{\beta}}_0) \mid \hat{\bm{\tau}}_{\bm{x}} 
		\right)
		\right\}
		\nonumber
		\\
		& = 
		n \cdot \Var\left\{
		\res\left(
		\hat{\tau}  - \tilde{\bm{\rcoef}}' \hat{\bm{\tau}}_{\bm{w}} \mid \hat{\bm{\tau}}_{\bm{x}} 
		\right)
		\right\}
		\nonumber
		\\
		& = 
		n \cdot \Var\left\{
		\res\left(
		\hat{\tau} \mid \hat{\bm{\tau}}_{\bm{x}} 
		\right)
		-
		\tilde{\bm{\rcoef}}' 
		\res\left(
		\hat{\bm{\tau}}_{\bm{w}} \mid \hat{\bm{\tau}}_{\bm{x}} 
		\right)
		\right\}. 
		\end{align}
		
We then study the covariance between $\res\left(
		\hat{\tau} \mid \hat{\bm{\tau}}_{\bm{x}} 
		\right)$ and $\res\left(
		\hat{\bm{\tau}}_{\bm{w}} \mid \hat{\bm{\tau}}_{\bm{x}} 
		\right)$ under the CRE. 
		For $z=0,1$, let $Y^{\perp}_i(z)$ be the residual from the linear projection of $Y_i(z)$ on $\bm{x}_i$, and $\bm{w}^{\perp}_i$ be the residual from the linear projection of $\bm{w}_i$ on $\bm{x}_i$. 
		We have 
		\begin{align*}
		\res\left(
		\hat{\tau} \mid \hat{\bm{\tau}}_{\bm{x}} 
		\right) & = n_1^{-1} \sum_{i=1}^{n} Z_i Y^{\perp}_i(1) - n_0^{-1} \sum_{i=1}^{n} (1-Z_i) Y^{\perp}_i(0), \\
		\res\left(
		\hat{\bm{\tau}}_{\bm{w}} \mid \hat{\bm{\tau}}_{\bm{x}} 
		\right) & = n_1^{-1} \sum_{i=1}^{n} Z_i \bm{w}_i^{\perp} - n_0^{-1} \sum_{i=1}^{n} (1-Z_i) \bm{w}_i^{\perp},
		\end{align*}
		and thus 
		\begin{align}\label{eq:cov_res_x}
		\Cov\left\{
		\res\left(
		\hat{\tau} \mid \hat{\bm{\tau}}_{\bm{x}} 
		\right), 
		\res\left(
		\hat{\bm{\tau}}_{\bm{w}} \mid \hat{\bm{\tau}}_{\bm{x}} 
		\right)
		\right\}
		& = 
		n_1^{-1} \bm{S}_{Y(1), \bm{w} \mid \bm{x} }  + n_0^{-1} \bm{S}_{Y(0), \bm{w} \mid \bm{x} } ,
		\end{align}
		where $\bm{S}_{Y(z), \bm{w} \mid \bm{x} }$ is the finite population covariance between $Y^{\perp}(z)$ and $\bm{w}^{\perp}$, or, equivalently, the finite population partial covariance between $Y(z)$ and $\bm{w}$ given $\bm{x}$, for $z=0,1$. 
		
We finally prove Remark \ref{rmk:reg_worse}. 
		When $\bm{S}_{Y(z), \bm{w} \mid \bm{x} } = \bm{0}$ for $z=0,1$, from \eqref{eq:cov_res_x}, 
		$\res\left(
		\hat{\tau} \mid \hat{\bm{\tau}}_{\bm{x}} 
		\right)$
		is uncorrelated with 
		$
		\res\left(
		\hat{\bm{\tau}}_{\bm{w}} \mid \hat{\bm{\tau}}_{\bm{x}} 
		\right).$ 
		This further implies that 
		\begin{align*}
		\Var\left\{
		\res\left(
		\hat{\tau} \mid \hat{\bm{\tau}}_{\bm{x}} 
		\right)
		-
		\tilde{\bm{\rcoef}}' 
		\res\left(
		\hat{\bm{\tau}}_{\bm{w}} \mid \hat{\bm{\tau}}_{\bm{x}} 
		\right)
		\right\}
		& = 
		\Var\left\{
		\res\left(
		\hat{\tau} \mid \hat{\bm{\tau}}_{\bm{x}} 
		\right)
		\right\}
		+ 
		\Var\left\{
		\tilde{\bm{\rcoef}}' 
		\res\left(
		\hat{\bm{\tau}}_{\bm{w}} \mid \hat{\bm{\tau}}_{\bm{x}} 
		\right)
		\right\}
		\ge 
		\Var\left\{
		\res\left(
		\hat{\tau} \mid \hat{\bm{\tau}}_{\bm{x}} 
		\right)
		\right\}. 
		\end{align*}
		From \eqref{eq:var_diff_reg_worse} and \eqref{eq:var_adj_reg_worse}, 
		the coefficient of $\varepsilon$ 
		for $\hat{\tau}(\tilde{\bm{\beta}}_1, \tilde{\bm{\beta}}_0)$ is larger than or equal to that for $\hat{\tau}$. 

\section{Estimated distributions and the associated optimality
}\label{app:est_rem}
\label{appendix::estimationsampling}

\subsection{Lemmas}
For treatment arm $z$ $(z=0,1)$, 
let $s^2_{Y(z)}$ be the sample variance of observed outcome, 
$\bm{s}^2_{\bm{w},z}$ be the sample covariance of covariates $\bm{w}$, and $\bm{s}_{Y(z),\bm{w}}$ be the sample covariance between observed outcome and covariates $\bm{w}$.

\begin{lemma}\label{lemma:samp_var_consistent}
Under ReM and Condition \ref{con:fp}, for $z=0,1$, 
\begin{align}\label{eq:samp_var_consistent1}
& s^2_{Y(z)} - S^2_{Y(z)} = o_P(1), \quad 
\bm{s}^2_{\bm{w},z} - \bm{S}^2_{\bm{w}} = o_P(1), \quad 
\bm{s}_{Y(z),\bm{w}} - \bm{S}_{Y(z),\bm{w}} = o_P(1), 
\nonumber
\\ 
& \bm{s}^2_{\bm{x},z} - \bm{S}^2_{\bm{x}} = o_P(1), \quad 
\bm{s}_{Y(z),\bm{x}} - \bm{S}_{Y(z),\bm{x}} = o_P(1), \quad 
\bm{s}_{\bm{w},\bm{x}} - \bm{S}_{\bm{w},\bm{x}} = o_P(1);
\end{align}
and for any $\bm{\beta}_1$ and $\bm{\beta}_0$ that can depend implicitly on sample size $n$ but have finite limits, 
\begin{align}\label{eq:samp_var_consistent2}
s^2_{Y(z;\bm{\beta}_z)} - S^2_{Y(z;\bm{\beta}_z)} = o_P(1), \quad & s_{Y(z;\bm{\beta}_z)\mid \bm{x}}^2  - S_{Y(z;\bm{\beta}_z)\mid \bm{x}}^2 = o_P(1), 
\nonumber
\\
\bm{s}_{Y(z;\bm{\beta}_z),\bm{x}} - \bm{S}_{Y(z;\bm{\beta}_z),\bm{x}} = o_P(1), \quad &
\bm{s}_{Y(z;\bm{\beta}_z),\bm{w}} - \bm{S}_{Y(z;\bm{\beta}_z),\bm{w}} = o_P(1).
\end{align}
\end{lemma}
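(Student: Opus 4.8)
The plan is to prove Lemma \ref{lemma:samp_var_consistent} by reducing everything to a single general fact: under ReM and Condition \ref{con:fp}, the sample moments computed within each treatment arm converge in probability to the corresponding finite population moments. The key observation is that although ReM restricts the set of admissible allocations, the conditioning event $\mathcal{M}$ has probability bounded away from zero asymptotically (it converges to $P(\chi^2_K \le a) > 0$), so any quantity that is $o_P(1)$ under the CRE is automatically $o_P(1)$ under ReM. Hence I would first establish the CRE versions of \eqref{eq:samp_var_consistent1}, then transfer them to ReM, and finally derive \eqref{eq:samp_var_consistent2} algebraically from \eqref{eq:samp_var_consistent1}.

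\textbf{Step 1: Consistency of within-arm sample moments under the CRE.} Fix the treatment arm $z$. The sample variance $s^2_{Y(z)}$, the sample covariance matrix $\bm{s}^2_{\bm{w},z}$, and the cross-covariance $\bm{s}_{Y(z),\bm{w}}$ are, up to negligible $n_z/(n_z-1)$ factors, averages of the form $n_z^{-1}\sum_{i: Z_i = z} g(Y_i(z), \bm{w}_i)$ minus products of such averages, where $g$ ranges over the relevant quadratic monomials. Under the CRE, the set $\{i : Z_i = z\}$ is a simple random sample of size $n_z$ from $\{1,\dots,n\}$, so each such average is unbiased for the corresponding finite population mean, and its variance is $O(n_z^{-1})$ times a finite population variance of $g$; Condition \ref{con:fp}(ii)--(iii) guarantees that these finite population variances stay bounded (the moment-ratio conditions in (iii) control the contribution of the quadratic terms, via, e.g., $\max_i \|\bm{w}_i\|_2^4/n \le (\max_i \|\bm{w}_i\|_2^2/n)\cdot \max_i\|\bm{w}_i\|_2^2 \to 0$ combined with (ii)). By Chebyshev's inequality each average is consistent, and the continuous mapping theorem handles the subtraction of products. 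This gives the CRE analogue of \eqref{eq:samp_var_consistent1}; the $\bm{x}$-versions and the mixed $\bm{s}_{\bm{w},\bm{x}}$ follow identically.

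\textbf{Step 2: Transfer to ReM.} Let $T_n$ denote any of the differences in \eqref{eq:samp_var_consistent1}. We know $\mathbb{P}_{\text{CRE}}(|T_n| > \epsilon) \to 0$. Since $\mathcal{M}$ is $\sigma(\bm{Z})$-measurable and, under ReM, $\mathbb{P}_{\text{ReM}}(\cdot) = \mathbb{P}_{\text{CRE}}(\cdot \mid \mathcal{M})$, we have $\mathbb{P}_{\text{ReM}}(|T_n| > \epsilon) = \mathbb{P}_{\text{CRE}}(|T_n| > \epsilon \mid \mathcal{M}) \le \mathbb{P}_{\text{CRE}}(|T_n| > \epsilon)/\mathbb{P}_{\text{CRE}}(\mathcal{M})$. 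By \citet{morgan2012rerandomization} (or the development leading to Corollary \ref{corr:diff_rem}), $\mathbb{P}_{\text{CRE}}(\mathcal{M}) \to P(\chi^2_K \le a) > 0$, so the denominator is eventually bounded below by a positive constant, and the right-hand side tends to zero. This proves \eqref{eq:samp_var_consistent1} under ReM.

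\textbf{Step 3: Deduce \eqref{eq:samp_var_consistent2}.} The ``adjusted'' observed outcome $Y_i - \bm{\beta}_z'\bm{w}_i$ is a fixed linear combination of $Y_i(z)$ and $\bm{w}_i$ on the set $\{i : Z_i = z\}$, with coefficient vector $\bm{\beta}_z$ having a finite limit. Therefore $s^2_{Y(z;\bm{\beta}_z)}$, $\bm{s}_{Y(z;\bm{\beta}_z),\bm{x}}$, $\bm{s}_{Y(z;\bm{\beta}_z),\bm{w}}$, and the projection variance $s^2_{Y(z;\bm{\beta}_z)\mid\bm{x}}$ are all fixed continuous (in fact polynomial, plus one matrix inversion using the limit of $\bm{S}_{\bm{x}}^2$ which is nonsingular) functions of the sample moments in \eqref{eq:samp_var_consistent1} and of $\bm{\beta}_z$; the analogous identities hold for the finite population quantities as functions of the finite population moments and $\bm{\beta}_z$ (these are spelled out in the proof of Theorem \ref{thm:adj_rem}). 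Since the finite population moments themselves converge (Condition \ref{con:fp}(ii)) and $\bm{\beta}_z$ converges, applying the continuous mapping theorem to the joint convergence established in Steps 1--2 yields \eqref{eq:samp_var_consistent2}. A small care point: because $\bm{\beta}_z$ may vary with $n$, I would phrase the continuous-mapping argument uniformly over $\bm{\beta}_z$ in a compact neighborhood of its limit, which is legitimate since all the maps involved are locally Lipschitz there.

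The main obstacle is Step 1 — specifically, verifying that the variances of the second-moment sample averages remain bounded under the stated regularity conditions. The first moments are routine, but the quadratic terms (e.g. $n_z^{-1}\sum_{i:Z_i=z} w_{ij}w_{ik}$) require controlling fourth-order finite population quantities, and one must check that Condition \ref{con:fp}(ii)--(iii) — which only directly postulates limits of variances/covariances and a Lindeberg-type bound on squared norms — actually suffices. The resolution is the elementary bound $\max_i \|\bm{w}_i\|_2^4/n = (\max_i\|\bm{w}_i\|_2^2) \cdot (\max_i\|\bm{w}_i\|_2^2/n)$, where the first factor is $O(n)$ trivially and multiplied by the $o(1)$ second factor would be $o(n)$ — more carefully, one uses that $\mathrm{Var}$ of a simple-random-sample average of $g_i$ is at most $n_z^{-1} \cdot n^{-1}\sum_i g_i^2$, and $n^{-1}\sum_i (w_{ij}w_{ik})^2 \le n^{-1}\sum_i \|\bm{w}_i\|_2^4 \le (\max_i \|\bm{w}_i\|_2^2) \cdot (n^{-1}\sum_i \|\bm{w}_i\|_2^2)$, which is bounded because $n^{-1}\sum_i\|\bm{w}_i\|_2^2 \to \mathrm{tr}(\lim \bm{S}_{\bm{w}}^2)$ is finite while we still need $\max_i\|\bm{w}_i\|_2^2 = o(n)$ from (iii) to kill the extra factor after dividing by $n_z = O(n)$. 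This bookkeeping is the only genuinely technical part; everything else is conditioning and continuous mapping.
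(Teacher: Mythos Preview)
Your proposal is correct and matches the paper's approach. For \eqref{eq:samp_var_consistent1} the paper simply invokes Lemma~A16 of \citet{asymrerand2106} (treating $\bm{w}$ as a vector of pseudo-outcomes unaffected by treatment), which under the hood is exactly your Step~1 (simple-random-sample moment consistency) plus your Step~2 (the conditioning bound $\mathbb{P}_{\text{ReM}}(\cdot)\le \mathbb{P}_{\text{CRE}}(\cdot)/\mathbb{P}_{\text{CRE}}(\mathcal{M})$ with $\mathbb{P}_{\text{CRE}}(\mathcal{M})\to P(\chi^2_K\le a)>0$); your write-up is just a self-contained version of that external lemma. For \eqref{eq:samp_var_consistent2} the paper does precisely your Step~3, writing out the algebraic identities $s^2_{Y(z;\bm{\beta}_z)} = s^2_{Y(z)} + \bm{\beta}_z'\bm{s}^2_{\bm{w},z}\bm{\beta}_z - 2\bm{\beta}_z'\bm{s}_{\bm{w},Y(z)}$ etc.\ and applying \eqref{eq:samp_var_consistent1}.
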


\begin{proof}[\bf Proof of Lemma \ref{lemma:samp_var_consistent}]
First, 
we can view covariates $\bm{w}$ as ``outcomes'' unaffected by the treatment. 
Thus, 
\eqref{eq:samp_var_consistent1}
follows immediately from \citet[][Lemma A16]{asymrerand2106}.

Second, 
the observed sample variances have the following equivalent forms:
\begin{align*}
s^2_{Y(z;\bm{\beta}_z)}  = s^2_{Y(z)}  + \bm{\beta}_z'\bm{s}^2_{\bm{w},z} \bm{\beta}_z - 2\bm{\beta}_z' \bm{s}_{\bm{w}, Y(z)}, \quad & 
\bm{s}_{Y(z;\bm{\beta}_z),\bm{x}}  = \bm{s}_{Y(z),\bm{x}} - \bm{\beta}_z'\bm{s}_{\bm{w},\bm{x}},
\\
\bm{s}_{Y(z;\bm{\beta}_z),\bm{w}}  = \bm{s}_{Y(z),\bm{w}} - \bm{\beta}_z'\bm{s}_{\bm{w},z}^2, \quad & 
s_{Y(z;\bm{\beta}_z)\mid \bm{x}}^2 = 
s_{Y(z;\bm{\beta}_z), \bm{x}} 
\left( \bm{s}_{\bm{x},z}^2 \right)^{-1}
s_{\bm{x}, Y(z;\bm{\beta}_z)}.
\end{align*}
From 
\eqref{eq:samp_var_consistent1},
$$
s^2_{Y(z;\bm{\beta}_z)} - S^2_{Y(z;\bm{\beta}_z)} = o_P(1), \quad 
\bm{s}_{Y(z;\bm{\beta}_z),\bm{x}} - \bm{S}_{Y(z;\bm{\beta}_z),\bm{x}} = o_P(1), \quad 
\bm{s}_{Y(z;\bm{\beta}_z),\bm{w}} - \bm{S}_{Y(z;\bm{\beta}_z),\bm{w}} = o_P(1).
$$
These further imply that 
$s_{Y(z;\bm{\beta}_z)\mid \bm{x}}^2 - S_{Y(z;\bm{\beta}_z)\mid \bm{x}}^2 = o_P(1)$. 
Thus,  
\eqref{eq:samp_var_consistent2} holds. 

\end{proof}

\begin{lemma}\label{lemma:inf_rem}
Under ReM and Condition \ref{con:fp}, 
$$
\hat{V}_{\tau\tau}(\bm{\beta}_1, \bm{\beta}_0) - \tilde{V}_{\tau\tau}(\bm{\beta}_1, \bm{\beta}_0) = o_P(1),\quad
\hat{R}^2_{\tau, \bm{x}}(\bm{\beta}_{1}, \bm{\beta}_0) 
- 
\tilde{R}^2_{\tau, \bm{x}}(\bm{\beta}_{1}, \bm{\beta}_0) = o_P(1),
$$
where 
\begin{align*}
	\tilde{V}_{\tau\tau}(\bm{\beta}_1, \bm{\beta}_0)\equiv V_{\tau\tau}(\bm{\beta}_1, \bm{\beta}_0)+S^2_{\tau \setminus \bm{w}},\quad 
	\tilde{R}^2_{\tau, \bm{x}}(\bm{\beta}_{1}, \bm{\beta}_0)  \equiv   
	\tilde{V}_{\tau\tau}^{-1}(\bm{\beta}_1, \bm{\beta}_0)
	V_{\tau\tau}(\bm{\beta}_1, \bm{\beta}_0)
	R^2_{\tau, \bm{x}}(\bm{\beta}_1, \bm{\beta}_0).
\end{align*} 
\end{lemma}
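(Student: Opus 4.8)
The plan is to replace every sample quantity appearing in $\hat V_{\tau\tau}(\bm\beta_1,\bm\beta_0)$ and $\hat R^2_{\tau,\bm x}(\bm\beta_1,\bm\beta_0)$ by its finite-population counterpart, incurring only $o_P(1)$ error, and then to recognize the resulting deterministic expressions through two elementary algebraic identities. Note first that in \eqref{eq:V_beta_hat} and \eqref{eq:R2_beta_hat} the only random ingredients are the sample variances and covariances of the ``adjusted'' observed outcomes $Y_i-\bm\beta_z'\bm w_i$, whereas $r_z$, $(\bm S^2_{\bm w})^{-1}$ and $(\bm S^2_{\bm x})^{-1}$ are nonrandom and, by Condition \ref{con:fp}, converge to finite nonsingular limits. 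Lemma \ref{lemma:samp_var_consistent} gives $s^2_{Y(z;\bm\beta_z)}-S^2_{Y(z;\bm\beta_z)}=o_P(1)$, $\bm s_{Y(z;\bm\beta_z),\bm w}-\bm S_{Y(z;\bm\beta_z),\bm w}=o_P(1)$, $\bm s_{Y(z;\bm\beta_z),\bm x}-\bm S_{Y(z;\bm\beta_z),\bm x}=o_P(1)$ and $s^2_{Y(z;\bm\beta_z)\mid\bm x}-S^2_{Y(z;\bm\beta_z)\mid\bm x}=o_P(1)$; since the limiting deterministic factors are bounded, all sums and products of these terms with them remain $o_P(1)$. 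Hence $\hat V_{\tau\tau}(\bm\beta_1,\bm\beta_0)$ differs by $o_P(1)$ from
\[
V^{*}_{\tau\tau}(\bm\beta_1,\bm\beta_0)\equiv r_1^{-1}S^2_{Y(1;\bm\beta_1)}+r_0^{-1}S^2_{Y(0;\bm\beta_0)}-\big(\bm S_{Y(1;\bm\beta_1),\bm w}-\bm S_{Y(0;\bm\beta_0),\bm w}\big)(\bm S^2_{\bm w})^{-1}\big(\bm S_{\bm w,Y(1;\bm\beta_1)}-\bm S_{\bm w,Y(0;\bm\beta_0)}\big),
\]
while $\hat V_{\tau\tau}(\bm\beta_1,\bm\beta_0)\,\hat R^2_{\tau,\bm x}(\bm\beta_1,\bm\beta_0)$ — the bracketed numerator in \eqref{eq:R2_beta_hat} — likewise differs by $o_P(1)$ from its deterministic finite-population analogue, spelled out below.

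Next I would identify $V^{*}_{\tau\tau}$. From $\bm S_{Y(z;\bm\beta_z),\bm w}=\bm S_{Y(z),\bm w}-\bm\beta_z'\bm S^2_{\bm w}$ (the $\bm w$-analogue of the identity used in the proof of Theorem \ref{thm:adj_rem}) and the linearity of finite-population covariances, the difference telescopes: $\bm S_{Y(1;\bm\beta_1),\bm w}-\bm S_{Y(0;\bm\beta_0),\bm w}=\bm S_{\tau,\bm w}-(\bm\beta_1-\bm\beta_0)'\bm S^2_{\bm w}=\bm S_{\tau(\bm\beta_1,\bm\beta_0),\bm w}$, so the quadratic form equals $S^2_{\tau(\bm\beta_1,\bm\beta_0)\mid\bm w}$. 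The key observation is then that subtracting the linear function $(\bm\beta_1-\bm\beta_0)'\bm w_i$ from $\tau_i$ leaves the residual of the linear projection of $\tau_i$ on $\bm w_i$ unchanged, whence $S^2_{\tau(\bm\beta_1,\bm\beta_0)\setminus\bm w}=S^2_{\tau\setminus\bm w}$, i.e.\ $S^2_{\tau(\bm\beta_1,\bm\beta_0)\mid\bm w}=S^2_{\tau(\bm\beta_1,\bm\beta_0)}-S^2_{\tau\setminus\bm w}$. Substituting and recalling \eqref{eq:V_tau_adj} gives $V^{*}_{\tau\tau}(\bm\beta_1,\bm\beta_0)=V_{\tau\tau}(\bm\beta_1,\bm\beta_0)+S^2_{\tau\setminus\bm w}=\tilde V_{\tau\tau}(\bm\beta_1,\bm\beta_0)$; this is an identity for each finite population, hence unaffected by $\bm\beta_1,\bm\beta_0$ depending on $n$. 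This proves $\hat V_{\tau\tau}(\bm\beta_1,\bm\beta_0)-\tilde V_{\tau\tau}(\bm\beta_1,\bm\beta_0)=o_P(1)$.

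For the second claim I would repeat the telescoping with $\bm S_{Y(z;\bm\beta_z),\bm x}=\bm S_{Y(z),\bm x}-\bm\beta_z'\bm S_{\bm w,\bm x}$, obtaining $\bm S_{Y(1;\bm\beta_1),\bm x}-\bm S_{Y(0;\bm\beta_0),\bm x}=\bm S_{\tau(\bm\beta_1,\bm\beta_0),\bm x}$, so that the deterministic limit of $\hat V_{\tau\tau}\hat R^2_{\tau,\bm x}$ is $r_1^{-1}S^2_{Y(1;\bm\beta_1)\mid\bm x}+r_0^{-1}S^2_{Y(0;\bm\beta_0)\mid\bm x}-S^2_{\tau(\bm\beta_1,\bm\beta_0)\mid\bm x}$, which by \eqref{eq:R2_tau_x_beta} is exactly $V_{\tau\tau}(\bm\beta_1,\bm\beta_0)R^2_{\tau,\bm x}(\bm\beta_1,\bm\beta_0)$. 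Dividing by $\hat V_{\tau\tau}(\bm\beta_1,\bm\beta_0)=\tilde V_{\tau\tau}(\bm\beta_1,\bm\beta_0)+o_P(1)$ — legitimate provided $\tilde V_{\tau\tau}(\bm\beta_1,\bm\beta_0)$ has a positive limit, which holds under the standard nondegeneracy and is in any case needed for $\hat R^2_{\tau,\bm x}(\bm\beta_1,\bm\beta_0)$ in \eqref{eq:R2_beta_hat} to be well defined — and applying the continuous mapping theorem yields $\hat R^2_{\tau,\bm x}(\bm\beta_1,\bm\beta_0)=\tilde V^{-1}_{\tau\tau}(\bm\beta_1,\bm\beta_0)V_{\tau\tau}(\bm\beta_1,\bm\beta_0)R^2_{\tau,\bm x}(\bm\beta_1,\bm\beta_0)+o_P(1)=\tilde R^2_{\tau,\bm x}(\bm\beta_1,\bm\beta_0)+o_P(1)$.

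Most of this is bookkeeping; the substantive content is the projection-invariance identity $S^2_{\tau(\bm\beta_1,\bm\beta_0)\setminus\bm w}=S^2_{\tau\setminus\bm w}$, which is precisely what makes the non-estimable bias term $S^2_{\tau\setminus\bm w}$ independent of the adjustment coefficients and thus responsible for the clean appearance of $\tilde V_{\tau\tau}$ in the limit. The only points requiring care are that the $o_P(1)$ errors survive the multiplications and the inversion — guaranteed by the nonsingular finite limits in Condition \ref{con:fp}(ii) together with positivity of $\lim\tilde V_{\tau\tau}$ — and that the algebraic identities, being valid for every fixed finite population, are not disturbed by letting $\bm\beta_1,\bm\beta_0$ vary with $n$.
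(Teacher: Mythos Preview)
Your proposal is correct and follows essentially the same route as the paper's proof: both replace the sample quantities in \eqref{eq:V_beta_hat} and \eqref{eq:R2_beta_hat} by their finite-population counterparts via Lemma~\ref{lemma:samp_var_consistent}, identify the resulting quadratic form as $S^2_{\tau(\bm\beta_1,\bm\beta_0)\mid\bm w}$, invoke the projection-invariance identity $S^2_{\tau(\bm\beta_1,\bm\beta_0)\setminus\bm w}=S^2_{\tau\setminus\bm w}$, and then recognize the numerator of $\hat R^2_{\tau,\bm x}$ as $V_{\tau\tau}(\bm\beta_1,\bm\beta_0)R^2_{\tau,\bm x}(\bm\beta_1,\bm\beta_0)$. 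Your write-up is slightly more explicit about the telescoping step and about the positivity of $\tilde V_{\tau\tau}$ needed for the division, but the argument is the same.
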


\begin{proof}[\bf Proof of Lemma \ref{lemma:inf_rem}]
From \eqref{eq:V_beta_hat}, \eqref{eq:R2_beta_hat} and Lemma \ref{lemma:samp_var_consistent}, 
\begin{align*}
\hat{V}_{\tau\tau}(\bm{\beta}_1, \bm{\beta}_0)  & =  
r_1^{-1}S^2_{Y(1;\bm{\beta}_1)} + r_0^{-1}S^2_{Y(0;\bm{\beta}_0)} - 
\bm{S}^2_{\tau(\bm{\beta}_1, \bm{\beta}_0) \mid \bm{w}}+ o_P(1) \\
&= V_{\tau\tau}(\bm{\beta}_1, \bm{\beta}_0) + \bm{S}^2_{\tau(\bm{\beta}_1, \bm{\beta}_0) \setminus \bm{w}} + o_P(1)\\
& = V_{\tau\tau}(\bm{\beta}_1, \bm{\beta}_0) + \bm{S}^2_{\tau \setminus \bm{w}} + o_P(1)
 = \tilde{V}_{\tau\tau}(\bm{\beta}_1, \bm{\beta}_0) + o_P(1), \\
\hat{R}^2_{\tau, \bm{x}}(\bm{\beta}_{1}, \bm{\beta}_0)
& = 
\tilde{V}_{\tau\tau}^{-1}(\bm{\beta}_1, \bm{\beta}_0)^{-1}
\left(
r_1^{-1}S_{Y(1;\bm{\beta}_1)\mid \bm{x}}^2+r_0^{-1}S_{Y(0;\bm{\beta}_0)\mid \bm{x}}^2
 - 
\bm{S}^2_{\tau(\bm{\beta}_1, \bm{\beta}_0) \mid \bm{x}}\right) + o_P(1)\\
& = 
\tilde{V}_{\tau\tau}^{-1}(\bm{\beta}_1, \bm{\beta}_0)
V_{\tau\tau}(\bm{\beta}_1, \bm{\beta}_0) \cdot
V_{\tau\tau}^{-1}(\bm{\beta}_1, \bm{\beta}_0)\left(
r_1^{-1}S_{Y(1;\bm{\beta}_1)\mid \bm{x}}^2+r_0^{-1}S_{Y(0;\bm{\beta}_0)\mid \bm{x}}^2
 - 
\bm{S}^2_{\tau(\bm{\beta}_1, \bm{\beta}_0) \mid \bm{x}}\right)+ o_P(1)\\
& = \tilde{V}_{\tau\tau}^{-1}(\bm{\beta}_1, \bm{\beta}_0)
V_{\tau\tau}(\bm{\beta}_1, \bm{\beta}_0)
R^2_{\tau, \bm{x}}(\bm{\beta}_{1}, \bm{\beta}_0)+ o_P(1) 
= 
\tilde{R}^2_{\tau, \bm{x}}(\bm{\beta}_{1}, \bm{\beta}_0) + o_P(1).
\end{align*}
\end{proof}

\begin{lemma}\label{lemma:V_tau_beta_decomp}
$
V_{\tau\tau}(\bm{\beta}_1, \bm{\beta}_0) = 
V_{\tau\tau}(1-R^2_{\tau, \bm{w}})
+ 
(r_1r_0)^{-1}(\bm{\rcoef}-\tilde{\bm{\rcoef}})'
\bm{S}^2_{\bm{w}}
	(\bm{\rcoef}-\tilde{\bm{\rcoef}}).
$
\end{lemma}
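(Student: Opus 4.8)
The plan is to work entirely within the CRE, since $V_{\tau\tau}(\bm{\beta}_1,\bm{\beta}_0)$ is by definition (see \eqref{eq:V_tau_adj} and Section \ref{sec::CRE-neyman}) equal to $n\Var\{\hat{\tau}(\bm{\beta}_1,\bm{\beta}_0)\}$ under the CRE, and then to exploit the identity $\hat{\tau}(\bm{\beta}_1,\bm{\beta}_0)=\hat{\tau}-\bm{\rcoef}'\hat{\bm{\tau}}_{\bm{w}}$ from \eqref{eq:reg}, where $\bm{\rcoef}=r_0\bm{\beta}_1+r_1\bm{\beta}_0$. First I would rewrite
$$
\hat{\tau}-\bm{\rcoef}'\hat{\bm{\tau}}_{\bm{w}}
=\bigl(\hat{\tau}-\tau-\tilde{\bm{\rcoef}}'\hat{\bm{\tau}}_{\bm{w}}\bigr)
+\tau+(\tilde{\bm{\rcoef}}-\bm{\rcoef})'\hat{\bm{\tau}}_{\bm{w}}
=\res(\hat{\tau}\mid\hat{\bm{\tau}}_{\bm{w}})+\tau+(\tilde{\bm{\rcoef}}-\bm{\rcoef})'\hat{\bm{\tau}}_{\bm{w}},
$$
using the definitions of $\tilde{\bm{\rcoef}}$ and $\res(\hat{\tau}\mid\hat{\bm{\tau}}_{\bm{w}})$ in \eqref{eq:beta_tilde} and the surrounding text.

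Next I would take variances. By the defining property of the linear projection, $\res(\hat{\tau}\mid\hat{\bm{\tau}}_{\bm{w}})$ is uncorrelated with $\hat{\bm{\tau}}_{\bm{w}}$ under the CRE, hence uncorrelated with the linear term $(\tilde{\bm{\rcoef}}-\bm{\rcoef})'\hat{\bm{\tau}}_{\bm{w}}$; the constant $\tau$ contributes nothing. Therefore
$$
\Var\{\hat{\tau}(\bm{\beta}_1,\bm{\beta}_0)\}
=\Var\{\res(\hat{\tau}\mid\hat{\bm{\tau}}_{\bm{w}})\}
+(\tilde{\bm{\rcoef}}-\bm{\rcoef})'\Cov(\hat{\bm{\tau}}_{\bm{w}})(\tilde{\bm{\rcoef}}-\bm{\rcoef}).
$$
Then I would substitute the two known pieces: from \eqref{eq:R2_mid_w}, $\Var\{\res(\hat{\tau}\mid\hat{\bm{\tau}}_{\bm{w}})\}=\Var(\hat{\tau})\,(1-R^2_{\tau,\bm{w}})=n^{-1}V_{\tau\tau}(1-R^2_{\tau,\bm{w}})$; and from the covariance matrix $\bm{V}$ in \eqref{eq::vvvv}, $\Cov(\hat{\bm{\tau}}_{\bm{w}})=n^{-1}\bm{V}_{\bm{ww}}=(nr_1r_0)^{-1}\bm{S}^2_{\bm{w}}$. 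Multiplying through by $n$ and noting $(\tilde{\bm{\rcoef}}-\bm{\rcoef})'\bm{S}^2_{\bm{w}}(\tilde{\bm{\rcoef}}-\bm{\rcoef})=(\bm{\rcoef}-\tilde{\bm{\rcoef}})'\bm{S}^2_{\bm{w}}(\bm{\rcoef}-\tilde{\bm{\rcoef}})$ gives exactly the claimed identity.

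This is essentially a bookkeeping argument, so there is no serious obstacle; the only point requiring a little care is tracking the $n$-factors consistently (all quantities implicitly depend on $n$, and $\hat{\tau}(\bm{\beta}_1,\bm{\beta}_0)$ is $O_P(n^{-1/2})$), together with checking that the orthogonality of $\res(\hat{\tau}\mid\hat{\bm{\tau}}_{\bm{w}})$ and $\hat{\bm{\tau}}_{\bm{w}}$ is exact rather than merely asymptotic — which it is, being a finite-population least-squares projection under the CRE. If one prefers, the identity $V_{\tau\tau}(\bm{\beta}_1,\bm{\beta}_0)=r_1^{-1}S^2_{Y(1;\bm{\beta}_1)}+r_0^{-1}S^2_{Y(0;\bm{\beta}_0)}-S^2_{\tau(\bm{\beta}_1,\bm{\beta}_0)}$ from \eqref{eq:V_tau_adj} can be expanded directly using the quadratic expansions of $S^2_{Y(z;\bm{\beta}_z)}$, $S^2_{\tau(\bm{\beta}_1,\bm{\beta}_0)}$ recorded in the proof of Theorem \ref{thm:adj_rem}, and the result matched term by term with $\bm{V}_{\bm{w}\tau}=\bm{V}_{\bm{ww}}\tilde{\bm{\rcoef}}$; I would keep the projection argument as the primary proof since it is shorter and conceptually transparent.
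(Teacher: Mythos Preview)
Your proof is correct and matches the paper's own ``direct proof'' almost line for line: both rewrite $\hat{\tau}(\bm{\beta}_1,\bm{\beta}_0)-\tau=\res(\hat{\tau}\mid\hat{\bm{\tau}}_{\bm{w}})-(\bm{\rcoef}-\tilde{\bm{\rcoef}})'\hat{\bm{\tau}}_{\bm{w}}$, use the exact orthogonality of the residual with $\hat{\bm{\tau}}_{\bm{w}}$ under the CRE, and substitute $n\Var\{\res(\hat{\tau}\mid\hat{\bm{\tau}}_{\bm{w}})\}=V_{\tau\tau}(1-R^2_{\tau,\bm{w}})$ and $n\Cov(\hat{\bm{\tau}}_{\bm{w}})=(r_1r_0)^{-1}\bm{S}^2_{\bm{w}}$. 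The paper also notes, as an aside, that the identity can be read off from Theorem~\ref{thm:adj_rem} and Corollary~\ref{cor:analysis_more} by taking $a=\infty$ and $\bm{x}=\emptyset$.
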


\begin{proof}[{\bf Proof of Lemma \ref{lemma:V_tau_beta_decomp}}]
It follows from Theorem \ref{thm:adj_rem} and Corollary \ref{cor:analysis_more} with $a=\infty$ and $\bm{x} = \emptyset$. 
We give a more direct proof below:
\begin{align*}
V_{\tau\tau}(\bm{\beta}_1, \bm{\beta}_0) & = 
n\Var\left(
\hat{\tau} - \tau - \bm{\rcoef}'\hat{\bm{\tau}}_{\bm{w}}
\right)
= 
n\Var\left\{
\hat{\tau} - \tau - \tilde{\bm{\rcoef}}'\hat{\bm{\tau}}_{\bm{w}} - 
\left(
\bm{\rcoef} -  \tilde{\bm{\rcoef}}
\right)'\hat{\bm{\tau}}_{\bm{w}}
\right\}\\
& = 
n\Var\left\{
\res(\hat{\tau} \mid \hat{\bm{\tau}}_{\bm{w}}) - 
\left(
\bm{\rcoef} -  \tilde{\bm{\rcoef}}
\right)'\hat{\bm{\tau}}_{\bm{w}}
\right\}
= n\Var\left\{
\res(\hat{\tau} \mid \hat{\bm{\tau}}_{\bm{w}}) 
\right\}
+ 
n\left(
\bm{\rcoef} -  \tilde{\bm{\rcoef}}
\right)'\Cov\left(
\hat{\bm{\tau}}_{\bm{w}}
\right)\left(
\bm{\rcoef} -  \tilde{\bm{\rcoef}}
\right)\\
& = V_{\tau\tau}(1-R^2_{\tau, \bm{w}})
+ 
(r_1r_0)^{-1}(\bm{\rcoef}-\tilde{\bm{\rcoef}})'
\bm{S}^2_{\bm{w}}
	(\bm{\rcoef}-\tilde{\bm{\rcoef}}).
\end{align*}
\end{proof}

\subsection{Proofs}


\begin{proof}[{\bf Proof of Theorem \ref{thm:asymp_behavior_var_ci_ana_more}}]
From Lemma \ref{lemma:inf_rem}, under ReM, the probability limit of the estimated distribution of $\hat{\tau}(\bm{\beta}_1, \bm{\beta}_0)$ is 
$$
\tilde{V}_{\tau\tau}^{1/2}(\bm{\beta}_1, \bm{\beta}_0)
\left[
\left\{1-\tilde{R}^2_{\tau, \bm{x}}(\bm{\beta}_{1}, \bm{\beta}_0) \right\}^{1/2}\cdot \varepsilon + \left\{\tilde{R}^2_{\tau, \bm{x}}(\bm{\beta}_{1}, \bm{\beta}_0) \right\}^{1/2} \cdot  L_{K,a}
\right].
$$
By definition, it has the following equivalent forms:
\begin{align*}
&  
\left\{\tilde{V}_{\tau\tau}(\bm{\beta}_1, \bm{\beta}_0)-\tilde{V}_{\tau\tau}(\bm{\beta}_1, \bm{\beta}_0)\tilde{R}^2_{\tau, \bm{x}}(\bm{\beta}_{1}, \bm{\beta}_0) \right\}^{1/2}\cdot \varepsilon + 
\left\{\tilde{V}_{\tau\tau}(\bm{\beta}_1, \bm{\beta}_0)\tilde{R}^2_{\tau, \bm{x}}(\bm{\beta}_{1}, \bm{\beta}_0)\right\}^{1/2} \cdot  L_{K,a}
\\
 \sim &
\left\{V_{\tau\tau}(\bm{\beta}_1, \bm{\beta}_0) + S^2_{\tau \setminus \bm{w}}-{V}_{\tau\tau}(\bm{\beta}_1, \bm{\beta}_0){R}^2_{\tau, \bm{x}}(\bm{\beta}_{1}, \bm{\beta}_0) \right\}^{1/2} \cdot \varepsilon + \left\{{V}_{\tau\tau}(\bm{\beta}_1, \bm{\beta}_0){R}^2_{\tau, \bm{x}}(\bm{\beta}_{1}, \bm{\beta}_0)\right\}^{1/2} \cdot  L_{K,a}
\\
  \sim  &
\left\{V_{\tau\tau}(\bm{\beta}_1, \bm{\beta}_0)\left\{
1 - {R}^2_{\tau, \bm{x}}(\bm{\beta}_{1}, \bm{\beta}_0)
\right\} + S^2_{\tau \setminus \bm{w}} \right\}^{1/2} \cdot \varepsilon + \left\{{V}_{\tau\tau}(\bm{\beta}_1, \bm{\beta}_0){R}^2_{\tau, \bm{x}}(\bm{\beta}_{1}, \bm{\beta}_0) \right\}^{1/2} \cdot  L_{K,a}. 
\end{align*}
Because Condition \ref{con:ana_more_bal_criterion} implies Condition \ref{con:ana_more},
from Corollary \ref{cor:analysis_more} and its proof, we can further write the probability limit of the estimated distribution of $\hat{\tau}(\bm{\beta}_1, \bm{\beta}_0)$ as
$$
  \left\{
V_{\tau\tau}(1-R^2_{\tau, \bm{w}})
+
S^2_{\tau\setminus \bm{w}}
+ 
(r_1r_0)^{-1}(\bm{\rcoef}-\tilde{\bm{\rcoef}})'
\bm{S}^2_{\bm{w}\setminus \bm{x}}
	(\bm{\rcoef}-\tilde{\bm{\rcoef}})
\right\}^{1/2} \cdot \varepsilon   
 + 
\left\{ 
(r_1r_0)^{-1}
(\bm{\rcoef}-\tilde{\bm{\rcoef}})'
\bm{S}^2_{\bm{w}\mid \bm{x}}
(\bm{\rcoef}-\tilde{\bm{\rcoef}})
\right\}^{1/2} \cdot L_{K,a}.
$$
\end{proof}

\begin{proof}[{\bf Proof of Corollary \ref{corr:infer_eff_cre}}]
It follows  from Theorem \ref{thm:asymp_behavior_var_ci_ana_more} with $a=\infty$ and $\bm{x} = \emptyset$. 
\end{proof}

\begin{proof}[{\bf Proof of Corollary \ref{corr:est_dist_nonadjust}}]
With $\bm{\beta}_1 = \bm{\beta}_0 = \bm{0}$,
Theorem \ref{thm:asymp_behavior_var_ci_ana_more} implies that the the probability limit of the estimated distribution of $\hat{\tau}$ is 
\begin{align*}
&\ \tilde{V}_{\tau\tau}^{1/2}
\left(
1-\tilde{R}^2_{\tau, \bm{x}}
\right)^{1/2}
\cdot \varepsilon + 
\tilde{V}_{\tau\tau}^{1/2} \left(\tilde{R}^2_{\tau, \bm{x}} \right)^{1/2} \cdot  L_{K,a}\\ 
  \sim & \ 
\left( \tilde{V}_{\tau\tau}
-\tilde{V}_{\tau\tau}\tilde{R}^2_{\tau, \bm{x}}
\right)^{1/2} \cdot \varepsilon + 
\left( \tilde{V}_{\tau\tau}\tilde{R}^2_{\tau, \bm{x}} \right)^{1/2} \cdot  L_{K,a}\\
  \sim &\ 
\left( V_{\tau\tau} + S^2_{\tau\setminus \bm{w}}
-V_{\tau\tau} R^2_{\tau, \bm{x}}
\right)^{1/2} \cdot \varepsilon + 
\left( V_{\tau\tau} R^2_{\tau, \bm{x}} \right)^{1/2} \cdot  L_{K,a}\\
  \sim & \ 
\left\{V_{\tau\tau}(1-R^2_{\tau,\bm{x}}) + S^2_{\tau\setminus \bm{w}} \right\}^{1/2} \cdot \varepsilon + 
\left(V_{\tau\tau} R^2_{\tau,\bm{x}} \right)^{1/2} \cdot L_{K,a}. 
\end{align*}
\end{proof}

\begin{proof}[{\bf Proof of Theorem \ref{thm:conf_general}}]
From  Lemma \ref{lemma:inf_rem}, the probability limit of the estimated distribution of $\hat{\tau}(\bm{\beta}_1, \bm{\beta}_0)$ in \eqref{eq::incompleteinformation} is $ \tilde{V}_{\tau\tau}^{1/2}(\bm{\beta}_1, \bm{\beta}_0) \cdot \varepsilon$. 
From Lemma \ref{lemma:V_tau_beta_decomp}, this probability limit has the following equivalent forms:
\begin{align*}
\tilde{V}_{\tau\tau}^{1/2}(\bm{\beta}_1, \bm{\beta}_0) \cdot \varepsilon & 
\sim \left\{  V_{\tau\tau}(\bm{\beta}_1, \bm{\beta}_0) +  S^2_{\tau\setminus \bm{w}} \right\}^{1/2}\cdot \varepsilon \\
& \sim 
\left\{ V_{\tau\tau}(1-R^2_{\tau, \bm{w}}) +  S^2_{\tau\setminus \bm{w}}
+ 
(r_1r_0)^{-1}(\bm{\rcoef}-\tilde{\bm{\rcoef}})'
\bm{S}^2_{\bm{w}}
	(\bm{\rcoef}-\tilde{\bm{\rcoef}})  \right\}^{1/2} \cdot \varepsilon.
\end{align*}
\end{proof}

\begin{proof}[{\bf Proof of Corollary \ref{cor:Copt_ana_know_all}}]
From Theorem \ref{thm:asymp_behavior_var_ci_ana_more}, in the probability limit of the estimated distribution of $\hat{\tau}(\bm{\beta}_1, \bm{\beta}_0)$ in \eqref{eq::samplingdistribution}, both coefficients of $\varepsilon$ and $L_{K,a}$ attain their minimum values at $r_0\bm{\beta}_1+r_1\bm{\beta}_0 \equiv \bm{\rcoef} = \tilde{\bm{\rcoef}}$. 
Lemma \ref{lemma:qr_linear_comb_epsilon_L_Ka} then implies that the optimal
adjusted estimator among \eqref{eq:reg} in terms of the estimated precision is attainable when $r_0\bm{\beta}_1+r_1\bm{\beta}_0 \equiv \bm{\rcoef} = \tilde{\bm{\rcoef}}$. The corresponding probability limit of the estimated distribution is 
$ 
\{
V_{\tau\tau}(1-R^2_{\tau, \bm{w}})
+
S^2_{\tau\setminus \bm{w}}
\}^{1/2} \cdot \varepsilon.
$
\end{proof}

\begin{proof}[{\bf Proof of Corollary \ref{cor:opt_conf_general}}]
It follows immediately from Theorem \ref{thm:conf_general}. 
\end{proof}

\subsection{Additional comments on the asymptotic conservativeness under ReM}

First, we consider the scenario under Condition \ref{con:ana_more_bal_criterion}. From Lemma \ref{lemma:qr_linear_comb_epsilon_L_Ka}, Corollary \ref{cor:analysis_more} and Theorem \ref{thm:asymp_behavior_var_ci_ana_more}, the probability limit of the estimated distribution of $\hat{\tau}(\bm{\beta}_1, \bm{\beta}_0)$ has larger variance and wider quantile ranges than the asymptotic distribution of $\hat{\tau}(\bm{\beta}_1, \bm{\beta}_0)$. Therefore, both the variance estimator and confidence intervals are asymptotically conservative. 

Second, we consider general scenario without Condition \ref{con:ana_more_bal_criterion}.
Theorem \ref{thm:conf_general} implies that the probability limit of the estimated distribution of $\hat{\tau}(\bm{\beta}_1, \bm{\beta}_0)$ has larger variance and wider quantile ranges than $V^{1/2}_{\tau\tau}(\bm{\beta}_1, \bm{\beta}_0) \cdot \varepsilon$.
Lemma \ref{lemma:quantile_in_rho} implies that $V^{1/2}_{\tau\tau}(\bm{\beta}_1, \bm{\beta}_0) \cdot \varepsilon$ has larger variance and wider quantile ranges than
the asymptotic distribution of $\hat{\tau}(\bm{\beta}_1, \bm{\beta}_0)$ in \eqref{eq:adj_rem}. 
Therefore, both the variance estimator and the Wald-type confidence intervals are asymptotically conservative.

\section{Gains from the analyzer and the designer}\label{sec:gains_proof}

\begin{proof}[{\bf Proof of Corollary \ref{cor:ana_more_opt_diff}}]
	First, we compare the asymptotic variances. 
	From Corollary \ref{corr:diff_rem}, 
	the asymptotic variance of $\hat{\tau}$ is 
	$V_{\tau\tau}\{
	1 - (1-v_{K,a})R^2_{\tau, \bm{x}}
	\}$. From Theorem \ref{thm:analysis_more_opt}, the asymptotic variance of $\hat{\tau}(\tilde{\bm{\beta}}_1, \tilde{\bm{\beta}}_0)$ is
	$
	V_{\tau\tau}(1-R^2_{\tau,\bm{w}}).
	$
	Compared to $\hat{\tau}$, the percentage reduction in the asymptotic variance of $\hat{\tau}(\tilde{\bm{\beta}}_1, \tilde{\bm{\beta}}_0)$ is 
	\begin{align*}
	1 - 
	\frac{1-R^2_{\tau,\bm{w}}
	}{
		1 - (1-v_{K,a})R^2_{\tau, \bm{x}}
	}
	= 
	\frac{
		R^2_{\tau,\bm{w}} - (1-v_{K,a})R^2_{\tau, \bm{x}}
	}{
		1 - (1-v_{K,a})R^2_{\tau, \bm{x}}
	}.
	\end{align*}

	Second, we compare the asymptotic quantile ranges. From Corollary \ref{corr:diff_rem}, the length of the asymptotic $1-\alpha$ quantile range of 
	$\hat{\tau}$ is $2V_{\tau\tau}^{1/2} \cdot q_{1-\alpha/2}(R^2_{\tau,\bm{x}})$. From Theorem \ref{thm:analysis_more_opt}, the length of the asymptotic $1-\alpha$ quantile range of $\hat{\tau}(\tilde{\bm{\beta}}_1, \tilde{\bm{\beta}}_0)$ is $2V_{\tau\tau}^{1/2}(1-R^2_{\tau,\bm{w}})^{1/2} \cdot q_{1-\alpha/2}(0)$. Compared to $\hat{\tau}$, the percentage reduction in the length of the asymptotic $1-\alpha$ quantile range of $\hat{\tau}(\tilde{\bm{\beta}}_1, \tilde{\bm{\beta}}_0)$ is 
	\begin{align*}
	1 - \frac{
		2V_{\tau\tau}^{1/2}(1-R^2_{\tau,\bm{w}})^{1/2} \cdot q_{1-\alpha/2}(0)
	}{
		2V_{\tau\tau}^{1/2} \cdot q_{1-\alpha/2}(R^2_{\tau,\bm{x}})
	}
	= 1 - \left(1-R^2_{\tau,\bm{w}} \right)^{1/2}
	\cdot 
	\frac{q_{1-\alpha/2}(0)}{q_{1-\alpha/2}(R^2_{\tau,\bm{x}})}.
	\end{align*}
	
	Third, because $\hat{\tau}(\tilde{\bm{\beta}}_1, \tilde{\bm{\beta}}_0)$ is $\mathcal{S}$-optimal, both percentage reductions in the variance and the $1-\alpha$ quantile range are nonnegative.  
	It is easy to verify that they are both nondecreasing in $R^2_{\tau,\bm{w}}$. 
\end{proof}

\begin{proof}[{\bf Proof of Corollary \ref{cor:ana_less_reg_diff_comp}}]
	First, we compare the asymptotic variances. 
	From Corollary \ref{corr:diff_rem},  
	the asymptotic variance of $\hat{\tau}$ is $V_{\tau\tau}\{
	1 - \left(1-v_{K,a}\right)R^2_{\tau, \bm{x}}
	\}.$
	From Theorem \ref{thm:design_more_opt}, the asymptotic variance of $\hat{\tau}(\tilde{\bm{\beta}}_1, \tilde{\bm{\beta}}_0)$ is $V_{\tau\tau}
	\{
	1-(1-v_{K,a})R^2_{\tau, \bm{x}} - v_{K,a}R^2_{\tau, \bm{w}}
	\}$.
	Compared to $\hat{\tau}$, the percentage reduction in the asymptotic variance of $\hat{\tau}(\tilde{\bm{\beta}}_1, \tilde{\bm{\beta}}_0)$ is 
	\begin{align*}
	1 - \frac{
		V_{\tau\tau}
		\{
		1-(1-v_{K,a})R^2_{\tau, \bm{x}} - v_{K,a}R^2_{\tau, \bm{w}}
		\}
	}{
		V_{\tau\tau}\{
		1 - \left(1-v_{K,a}\right)R^2_{\tau, \bm{x}}
		\}
	}
	= 
	\frac{v_{K,a}R^2_{\tau, \bm{w}}}{
		1 - \left(1-v_{K,a}\right)R^2_{\tau, \bm{x}}
	}.
	\end{align*}

	Second, we compare the asymptotic quantile ranges. 
	From Corollary \ref{corr:diff_rem}, 
	the length of the asymptotic $1-\alpha$  quantile range of $\hat{\tau}$ is $2 V_{\tau\tau}^{1/2} \cdot q_{1-\alpha/2}(R^2_{\tau,\bm{x}}).$ 
	From Theorem \ref{thm:design_more_opt}, 
	the length of the asymptotic $1-\alpha$  quantile range of $\hat{\tau}(\tilde{\bm{\beta}}_1, \tilde{\bm{\beta}}_0)$ is $2 V_{\tau\tau}^{1/2} (1-R^2_{\tau, \bm{w}})^{1/2} \cdot q_{1-\alpha/2}(\rho^2_{\tau, \bm{x}\setminus \bm{w}})$. 
	Compared to $\hat{\tau}$, 
	the percentage reduction in the asymptotic $1-\alpha$ quantile range of $\hat{\tau}(\tilde{\bm{\beta}}_1, \tilde{\bm{\beta}}_0)$ is 
	\begin{align*}
	1 - \left( 1-R^2_{\tau, \bm{w}} \right)^{1/2} \cdot 
		q_{1-\alpha/2}(\rho^2_{\tau, \bm{x}\setminus \bm{w}})
/
		q_{1-\alpha/2}(R^2_{\tau,\bm{x}}).
	\end{align*}
	
	Third, because $\hat{\tau}(\tilde{\bm{\beta}}_1, \tilde{\bm{\beta}}_0)$ is $\mathcal{S}$-optimal, both percentage reductions in variance and $1-\alpha$ quantile range are nonnegative.  
	It is easy to verify that the percentage reduction in the variance is nondecreasing in $R^2_{\tau,\bm{w}}$. 
	For the quantile range, from Lemma \ref{lemma:qr_linear_comb_epsilon_L_Ka}, 
	$(1-R^2_{\tau, \bm{w}})^{1/2} \cdot q_{1-\alpha/2}(\rho^2_{\tau, \bm{x}\setminus \bm{w}})$, the $(1-\alpha/2)$th quantile of $(1-R^2_{\tau, \bm{x}})^{1/2} \cdot  \varepsilon + 
	(R^2_{\tau, \bm{x}}-R^2_{\tau, \bm{w}})^{1/2} \cdot  L_{K,a}$, is nonincreasing  in $R^2_{\tau,\bm{w}}$. 
	Hence the percentage reduction in the $1-\alpha$ quantile range is nondecreasing in $R^2_{\tau,\bm{w}}$.
\end{proof}

\begin{proof}[{\bf Proof of Corollary \ref{cor:ana_less_CRE_ReM_comp}}]
	First, we compare the asymptotic variances. 
	From Section \ref{sec:sopt_cre}, 
	the asymptotic variance of $\hat{\tau}(\tilde{\bm{\beta}}_1, \tilde{\bm{\beta}}_0)$ under the CRE is 
	$
	V_{\tau\tau}(1-R^2_{\tau, \bm{w}}).
	$
	From Theorem \ref{thm:design_more_opt}, the asymptotic variance of $\hat{\tau}(\tilde{\bm{\beta}}_1, \tilde{\bm{\beta}}_0)$ under ReM is 
	$V_{\tau\tau}(1-R^2_{\tau, \bm{w}})\{
	1 - (1-v_{K,a})\rho^2_{\tau, \bm{x}\setminus \bm{w}}
	\}.$
	Compared to the CRE, the percentage reduction in the asymptotic variance under ReM is 
	\begin{align*}
	1 - \frac{
		V_{\tau\tau}(1-R^2_{\tau, \bm{w}})\{
		1 - (1-v_{K,a})\rho^2_{\tau, \bm{x}\setminus \bm{w}}
		\}
	}{
		V_{\tau\tau}(1-R^2_{\tau, \bm{w}})
	}
	= (1-v_{K,a})\rho^2_{\tau, \bm{x}\setminus \bm{w}}.
	\end{align*}
	
	Second, we compare the asymptotic quantile ranges. 
	From Section \ref{sec:sopt_cre}, 
	the length of the asymptotic $1-\alpha$ quantile range of $\hat{\tau}(\tilde{\bm{\beta}}_1, \tilde{\bm{\beta}}_0)$ under the CRE is 
	$2 V_{\tau\tau}^{1/2}(1 - R^2_{\tau,  \bm{w}})^{1/2} \cdot q_{1-\alpha/2}(0)$.
	From Theorem \ref{thm:design_more_opt}, 
	the length of the asymptotic $1-\alpha$ quantile range of $\hat{\tau}(\tilde{\bm{\beta}}_1, \tilde{\bm{\beta}}_0)$ under ReM is 
	$2 V_{\tau\tau}^{1/2}(1-R^2_{\tau, \bm{w}})^{1/2} \cdot q_{1-\alpha/2}(\rho^2_{\tau, \bm{x}\setminus \bm{w}})$. 
	Compared to the CRE,  
	the percentage reduction in the length of the asymptotic $1-\alpha$ quantile range under ReM is 
	$ 
	1-
	q_{1-\alpha/2}(\rho^2_{\tau, \bm{x}\setminus \bm{w}})/
	q_{1-\alpha/2}(0).
	$ 
	
	Third, 
	from Lemma \ref{lemma:quantile_in_rho}, 
	both percentage reductions in variance and $1-\alpha$ quantile range are nonnegative and nondecreasing in $\rho^2_{\tau, \bm{x}\setminus \bm{w}} = (R^2_{\tau, \bm{x}}-R^2_{\tau, \bm{w}})/(1 - R^2_{\tau, \bm{w}})$. 
	Consequently, both percentage reductions are nondecreasing in $R^2_{\tau, \bm{x}}$. 
\end{proof}

\begin{proof}[{\bf Proof of Corollary \ref{cor:infer_ana_more_role_ana}}]
	Recall that $\kappa=1+V^{-1}_{\tau\tau}S^2_{\tau\setminus \bm{w}}\geq 1$. 
	From Corollary \ref{corr:est_dist_nonadjust}, under ReM and Conditions \ref{con:fp} and \ref{con:ana_more_bal_criterion}, 
	the probability limit of the estimated distribution of $\hat{\tau}$ is 
	\begin{eqnarray*}
		& & \left\{V_{\tau\tau}(1-R^2_{\tau,\bm{x}}) + S^2_{\tau\setminus \bm{w}} \right\}^{1/2} \cdot \varepsilon + \left\{V_{\tau\tau}R^2_{\tau,\bm{x}} \right\}^{1/2} \cdot L_{K,a}\\
		& \sim &
		V_{\tau\tau}^{1/2}
		\left\{
		\left(\kappa-R^2_{\tau,\bm{x}}\right)^{1/2} \cdot \varepsilon + \left( R^2_{\tau,\bm{x}} \right)^{1/2} \cdot L_{K,a}
		\right\}
		\\
		& \sim &
		\kappa^{1/2} V_{\tau\tau}^{1/2}
		\left\{
		\left( 1-R^2_{\tau,\bm{x}}/\kappa \right)^{1/2} \cdot \varepsilon + \left( R^2_{\tau,\bm{x}}/\kappa\right)^{1/2} \cdot L_{K,a}
		\right\}.
	\end{eqnarray*}
	From Corollary \ref{cor:Copt_ana_know_all}, under ReM and Conditions \ref{con:fp} and \ref{con:ana_more_bal_criterion}, 
	the probability limit of the estimated distribution of $\hat{\tau}(\tilde{\bm{\beta}}_1, \tilde{\bm{\beta}}_0)$ is 
	$$
	\left\{
	V_{\tau\tau}(1-R^2_{\tau, \bm{w}})
	+
	S^2_{\tau\setminus \bm{w}}
	\right\}^{1/2} \cdot \varepsilon \sim 
	V_{\tau\tau}^{1/2}
	\left(
	\kappa - R^2_{\tau, \bm{w}}
	\right)^{1/2} \cdot \varepsilon
	\sim 
	\kappa^{1/2} V_{\tau\tau}^{1/2}
	\left(
	1 - R^2_{\tau, \bm{w}}/\kappa
	\right)^{1/2} \cdot \varepsilon.
	$$
	
	First, we compare the variances. 
	The variance of the probability limit of the estimated distributions of $\hat{\tau}$ is
	$
	\kappa V_{\tau\tau} 
	\{
	1 - (1-v_{K,a})R^2_{\tau,\bm{x}}/\kappa
	\}. 
	$
	The variance of the probability limit of the estimated distributions of  $\hat{\tau}(\tilde{\bm{\beta}}_1, \tilde{\bm{\beta}}_0)$ is 
	$\kappa V_{\tau\tau}(1 - R^2_{\tau, \bm{w}}/\kappa)$.
	Compared to $\hat{\tau}$, 
	the percentage reduction in variance of the probability limit of the estimated distribution of $\hat{\tau}(\tilde{\bm{\beta}}_1, \tilde{\bm{\beta}}_0)$ is 
	\begin{align*}
	1 - \frac{
		1 - R^2_{\tau, \bm{w}}/\kappa
	}{
		1  - (1-v_{K,a})R^2_{\tau,\bm{x}}/\kappa
	}
	= \frac{
		R^2_{\tau, \bm{w}} - (1-v_{K,a})R^2_{\tau,\bm{x}}
	}{
		\kappa  - (1-v_{K,a})R^2_{\tau,\bm{x}}
	}.
	\end{align*}

	Second, we compare the quantile ranges. 
	The length of $1-\alpha$ quantile range of the probability limit of the estimated distribution of  $\hat{\tau}$ is 
	$2\kappa^{1/2} V_{\tau\tau}^{1/2}\cdot q_{1-\alpha/2}(R^2_{\tau,\bm{x}}/\kappa)$. 
	The length of $1-\alpha$ quantile range of the probability limit of the estimated distribution of  $\hat{\tau}(\tilde{\bm{\beta}}_1, \tilde{\bm{\beta}}_0)$ is 
	$2\kappa^{1/2} V_{\tau\tau}^{1/2}
	(
	1 - R^2_{\tau, \bm{w}}/\kappa
	)^{1/2}\cdot q_{1-\alpha/2}(0)$. 
	Compared to $\hat{\tau}$, 
	the percentage reduction in $1-\alpha$ quantile range of the probability limit of the estimated distribution of $\hat{\tau}(\tilde{\bm{\beta}}_1, \tilde{\bm{\beta}}_0)$ is 
	\begin{align*}
	1 - \frac{
		q_{1-\alpha/2}(0) \left( 1 - R^2_{\tau, \bm{w}}/\kappa \right)^{1/2}
	}{
		q_{1-\alpha/2}(R^2_{\tau,\bm{x}}/\kappa)
	}
	= 
	1 - \left( 1 - R^2_{\tau, \bm{w}}/\kappa \right)^{1/2} \cdot \frac{q_{1-\alpha/2}(0)}{q_{1-\alpha/2}(R^2_{\tau,\bm{x}}/\kappa)}.
	\end{align*}

	Third, the 	optimality of $\hat{\tau}(\tilde{\bm{\beta}}_1, \tilde{\bm{\beta}}_0)$ 
in terms of the estimated precision 
	implies that both percentage reductions are nonnegative. It is easy to verify that they are both nondecreasing in $R^2_{\tau, \bm{w}}$.
\end{proof}

\begin{proof}[{\bf Proof of Corollary \ref{cor:infence_inf_reg_diff}}]
	From
	Theorem \ref{thm:conf_general}, 
	the probability limit of the estimated distribution of $\hat{\tau}$ is
	$
	\tilde{V}_{\tau\tau}^{1/2} \cdot\varepsilon \sim 
 (V_{\tau\tau} + S^2_{\tau\setminus \bm{w}} )^{1/2} \cdot \varepsilon. 
	$
	From Corollary \ref{cor:opt_conf_general}, 
	the probability limit of the estimated distribution of $\hat{\tau}(\tilde{\bm{\beta}}_1, \tilde{\bm{\beta}}_0)$ is 
	$
	\{V_{\tau\tau}(1-R^2_{\tau, \bm{w}})
	+
	S^2_{\tau\setminus \bm{w}} \}^{1/2} \cdot \varepsilon.
	$
	Compared to $\hat{\tau}$, the percentage reduction in variance 
	of the probability limit of the estimated distribution of $\hat{\tau}(\tilde{\bm{\beta}}_1, \tilde{\bm{\beta}}_0)$ is 
	\begin{align*}
	1 - \frac{V_{\tau\tau}(1-R^2_{\tau, \bm{w}})
		+
		S^2_{\tau\setminus \bm{w}}}{V_{\tau\tau} + S^2_{\tau\setminus \bm{w}}}
	= 1 - \frac{
		\kappa - R^2_{\tau,\bm{w}}
	}{\kappa} =  \frac{   R^2_{\tau,\bm{w}} }{ \kappa } ,
	\end{align*}
	and the percentage reduction in length of the $1-\alpha$  quantile range of the probability limit of the estimated distribution of $\hat{\tau}(\tilde{\bm{\beta}}_1, \tilde{\bm{\beta}}_0)$ is 
	\begin{align*}
	1 - \frac{
		2q_{1-\alpha/2}(0)\left\{V_{\tau\tau}(1-R^2_{\tau, \bm{w}})
		+
		S^2_{\tau\setminus \bm{w}}\right\}^{1/2}
	}{2q_{1-\alpha/2}(0)
		\left( V_{\tau\tau} + S^2_{\tau\setminus \bm{w}} \right)^{1/2}
	}
	= 
	1 - \frac{
		\left( \kappa - R^2_{\tau,\bm{w}} \right)^{1/2}
	}{\kappa^{1/2}}
	= 1 - \left(1 - R^2_{\tau,\bm{w}}/\kappa \right)^{1/2}.
	\end{align*} 
	It is easy to show that both percentage reductions are nonnegative and nondecreasing in $R^2_{\tau,\bm{w}}$. 
\end{proof}

In the following two proofs, we recall that $q_{1-\alpha/2}(0)$ is the $(1-\alpha/2)$th quantile of a standard Gaussian distribution, and use the fact that under either the CRE or ReM,
		the $1-\alpha$ confidence interval covers the average treatment effect if and only if
		\begin{align}\label{eq:std_est_dist_optimal}
		q_{1-\alpha/2}(0) \leq
		\hat{V}^{-1/2}_{\tau\tau}( \tilde{\bm{\beta}}_1, \tilde{\bm{\beta}}_0)  \times  n^{1/2}
		\left\{\hat{\tau}(\tilde{\bm{\beta}}_1, \tilde{\bm{\beta}}_0) - \tau \right\}
		\leq q_{1-\alpha/2}(0). 
		\end{align}
Therefore, the limit of the probability that \eqref{eq:std_est_dist_optimal} holds is the asymptotic coverage probability of the confidence interval.

\begin{proof}[{\bf Proof of Corollary \ref{cor:coverage_ana_more}}]
		From 
		Corollary \ref{cor:Copt_ana_know_all} and the comment after it,
	 the probability limits of the estimated distributions of $\hat{\tau}(\tilde{\bm{\beta}}_1, \tilde{\bm{\beta}}_0)$ are the same under  the CRE and ReM, and so are the lengths of confidence intervals after being scaled by $n^{1/2}$. 

		From Lemma \ref{lemma:inf_rem}, 
		$\hat{V}_{\tau\tau}( \tilde{\bm{\beta}}_1, \tilde{\bm{\beta}}_0)$ in \eqref{eq:std_est_dist_optimal} has the same probability under the CRE and ReM.  
		From Theorem \ref{thm:analysis_more_opt} and Section \ref{sec:sopt_cre}, 
		$n^{1/2}
		\{\hat{\tau}(\tilde{\bm{\beta}}_1, \tilde{\bm{\beta}}_0) - \tau \}$ in \eqref{eq:std_est_dist_optimal}
		converges weakly to the same distribution under the CRE and ReM. 
		From Slutsky's theorem, 
		the quantity in the middle of \eqref{eq:std_est_dist_optimal} converges weakly to the same distribution under the CRE and ReM.
		Therefore, for any $\alpha\in (0,1)$, the limit of the probability that \eqref{eq:std_est_dist_optimal} holds is the same under the CRE and ReM, and so is
the asymptotic coverage probability of the $1-\alpha$ confidence interval.
\end{proof}

\begin{proof}[{\bf Proof of Corollary \ref{cor:gen_ci_larger_coverage_rem}}]
	From Theorem \ref{thm:conf_general} and Corollary \ref{corr:infer_eff_cre}, the probability limits of the estimated distributions of $\hat{\tau}(\bm{\beta}_1, \bm{\beta}_0)$ are the same under  the CRE and ReM, and so are the lengths of confidence intervals after being scaled by $n^{1/2}$. 

	Using Lemma \ref{lemma:inf_rem}, Theorem \ref{thm:adj_rem} and Slutsky's theorem, we have that under ReM, the quantity in the middle of \eqref{eq:std_est_dist_optimal} is asymptotically equal to
	\begin{eqnarray}\label{eq:general_scaled_rem}
	\tilde{V}^{-1/2}_{\tau\tau}(\bm{\beta}_1, \bm{\beta}_0)
	V_{\tau\tau}^{1/2}(\bm{\beta}_1, \bm{\beta}_0)
	\left[
	\left\{1-R^2_{\tau, \bm{x}}(\bm{\beta}_{1}, \bm{\beta}_0)\right\}^{1/2}\cdot \varepsilon + \left\{R^2_{\tau, \bm{x}}(\bm{\beta}_{1}, \bm{\beta}_0)\right\}^{1/2} \cdot  L_{K,a}\right]. 
	\end{eqnarray}
	Using Lemma  \ref{lemma:inf_rem}, Corollary \ref{corr:reg_cre} and Slutsky's theorem, we have that under the CRE, the quantity in the middle of \eqref{eq:std_est_dist_optimal} is asymptotically equal to
	\begin{eqnarray}\label{eq:general_scaled_cre}
	\tilde{V}^{-1/2}_{\tau\tau}(\bm{\beta}_1, \bm{\beta}_0) V_{\tau\tau}^{1/2}(\bm{\beta}_1, \bm{\beta}_0) \cdot \varepsilon. 
	\end{eqnarray}
	From Lemma \ref{lemma:quantile_in_rho}, the distribution \eqref{eq:general_scaled_rem} has shorter quantile ranges than \eqref{eq:general_scaled_cre}. 
	Therefore, for any $\alpha\in (0,1)$, the limit of the probability that \eqref{eq:std_est_dist_optimal} holds under ReM is larger than or equal to that under the CRE. 
\end{proof}

\section{$\hat{\tau}(\hat{\bm{\beta}}_1, \hat{\bm{\beta}}_0)$ and variance estimators under ReM}
\label{sec::hwappendix}

We need additional notation.
Let $\bm{U}_i = (1, Z_i, \bm{w}_i', Z_i \bm{w}_i')' \in \mathbb{R}^{2J +2}$. 
Let $\bar{\bm{w}}_1$ and $\bar{\bm{w}}_0$ be the averages of covariates, and $\bar{Y}_1$ and $\bar{Y}_0$ be the averages of observed outcomes in treatment and control groups. 
We can verify that in the OLS fit of $Y$ on $\bm{U}$, the coefficient of $Z$ is $\hat{\tau}(\hat{\bm{\beta}}_1, \hat{\bm{\beta}}_0)$, and the residual for unit $i$ is $\hat{e}_i =  Y_i - \hat{\bm{\beta}}_1' \bm{w}_i - (\bar{Y}_1 -  \hat{\bm{\beta}}_1' \bar{\bm{w}}_1 )$ for treated units with $Z_i = 1$ and $\hat{e}_i =Y_i - \hat{\bm{\beta}}_0' \bm{w}_i - (\bar{Y}_0 -  \hat{\bm{\beta}}_0' \bar{\bm{w}}_0 )$ for control units with $Z_i = 0.$
For $z=0,1$, let 
$
\hat{\sigma}^2_{e,z} =n_z^{-1} \sum_{i: Z_i=z} \hat{e}_i^2
$
be the average of squared residuals, and 
$\bm{m}^2_{\bm{w},z} = n_z^{-1} \sum_{i:Z_i=z}\bm{w}_i\bm{w}_i'$ be the second sample moment of $\bm{w}$. 
Define
\begin{align}
\bm{G} &  =
n^{-1}\sum_{i=1}^n \bm{U}_i \bm{U}_i'
 = 
\begin{pmatrix}
\bm{G}_{11} & \bm{G}_{12}\\
\bm{G}_{21} & \bm{G}_{22}
\end{pmatrix}   \label{eq:G}
\\
& = 
n^{-1}
\sum_{i=1}^n 
\begin{pmatrix}
1 & Z_i & \bm{w}_i' & Z_i \bm{w}_i'\\
Z_i & Z_i & Z_i \bm{w}_i' & Z_i \bm{w}_i'\\
\bm{w}_i & Z_i \bm{w}_i & \bm{w}_i \bm{w}_i' & Z_i \bm{w}_i \bm{w}_i'\\
Z_i \bm{w}_i & Z_i \bm{w}_i & Z_i \bm{w}_i\bm{w}_i' & Z_i \bm{w}_i\bm{w}_i'
\end{pmatrix}
=
\left(
\begin{array}{cc; {2pt/2pt}cc}
1 & r_1 & \bar{\bm{w}}' & r_1 \bar{\bm{w}}_1'\\
r_1 & r_1 &  r_1 \bar{\bm{w}}_1' &  r_1 \bar{\bm{w}}_1'\\ \hdashline[2pt/2pt]
\bar{\bm{w}} &  r_1 \bar{\bm{w}}_1 & \bm{S}^2_{\bm{w}} & r_1 \bm{m}^2_{\bm{w},1}\\
r_1 \bar{\bm{w}}_1 & r_1 \bar{\bm{w}}_1 & r_1 \bm{m}^2_{\bm{w},1} & r_1 \bm{m}^2_{\bm{w},1}
\end{array}
\right)  \nonumber \\
\bm{H} & = 
n^{-1}\sum_{i=1}^n \hat{e}_i^2 \bm{U}_i \bm{U}_i'
\equiv
\begin{pmatrix}
\bm{H}_{11} & \bm{H}_{12}\\
\bm{H}_{21} & \bm{H}_{22}
\end{pmatrix}  \label{eq:H}  \\
&=n^{-1}
\sum_{i=1}^{n}
\begin{pmatrix}
 \hat{e}_i^2 & Z_i  \hat{e}_i^2 &  \hat{e}_i^2 \bm{w}_i' & Z_i  \hat{e}_i^2 \bm{w}_i'\\
Z_i  \hat{e}_i^2 & Z_i  \hat{e}_i^2 & Z_i  \hat{e}_i^2 \bm{w}_i' & Z_i  \hat{e}_i^2 \bm{w}_i'\\
 \hat{e}_i^2 \bm{w}_i & Z_i  \hat{e}_i^2 \bm{w}_i &  \hat{e}_i^2 \bm{w}_i \bm{w}_i' & Z_i  \hat{e}_i^2 \bm{w}_i \bm{w}_i'\\
Z_i  \hat{e}_i^2 \bm{w}_i & Z_i \hat{e}_i^2 \bm{w}_i & Z_i \hat{e}_i^2 \bm{w}_i\bm{w}_i' & Z_i  \hat{e}_i^2 \bm{w}_i\bm{w}_i'
\end{pmatrix}
\nonumber
\\
& = 
\left(
\begin{array}{cc; {2pt/2pt}cc}
\hat{\sigma}^2_{e} & r_1 \hat{\sigma}^2_{e,1} & n^{-1}\sum_{i=1}^{n} \hat{e}_i^2 \bm{w}_i' & n^{-1}\sum_{i=1}^{n} Z_i  \hat{e}_i^2 \bm{w}_i'\\
 r_1 \hat{\sigma}^2_{e,1} &  r_1 \hat{\sigma}^2_{e,1} & n^{-1}\sum_{i=1}^{n} Z_i  \hat{e}_i^2 \bm{w}_i' & n^{-1}\sum_{i=1}^{n} Z_i  \hat{e}_i^2 \bm{w}_i'\\ \hdashline[2pt/2pt]
n^{-1}\sum_{i=1}^{n} \hat{e}_i^2 \bm{w}_i & n^{-1}\sum_{i=1}^{n}Z_i  \hat{e}_i^2 \bm{w}_i &  n^{-1}\sum_{i=1}^{n} \hat{e}_i^2 \bm{w}_i \bm{w}_i' & n^{-1}\sum_{i=1}^{n} Z_i  \hat{e}_i^2 \bm{w}_i \bm{w}_i'\\
n^{-1}\sum_{i=1}^{n} Z_i  \hat{e}_i^2 \bm{w}_i & n^{-1}\sum_{i=1}^{n} Z_i \hat{e}_i^2 \bm{w}_i & n^{-1}\sum_{i=1}^{n} Z_i \hat{e}_i^2 \bm{w}_i\bm{w}_i' & n^{-1}\sum_{i=1}^{n} Z_i  \hat{e}_i^2 \bm{w}_i\bm{w}_i'
\end{array}
\right) .
\nonumber
\end{align}

The Huber--White variance estimator for $n^{1/2} \{  \hat{\tau}(\hat{\bm{\beta}}_1, \hat{\bm{\beta}}_0) - \tau  \}$ is 
$
\hat{V}_{\text{HW}}  = \left[\bm{G}^{-1} \bm{H} \bm{G}^{-1}\right]_{(2,2)},
$
the $(2,2)$th element of $\bm{G}^{-1} \bm{H} \bm{G}^{-1}$. 

\subsection{Lemmas}

\begin{lemma}\label{lemma:tau_W}
Under ReM and Condition \ref{con:fp}, 
$
\hat{\bm{\tau}}_{\bm{w}} = r_0^{-1} \bar{\bm{w}}_1 = - r_1^{-1} \bar{\bm{w}}_0 = O_P(n^{-1/2}). 
$
\end{lemma}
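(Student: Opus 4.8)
The plan is to split the statement into a deterministic part—the two exact identities $\hat{\bm{\tau}}_{\bm{w}} = r_0^{-1}\bar{\bm{w}}_1 = -r_1^{-1}\bar{\bm{w}}_0$, valid for every realization of $\bm{Z}$—and a stochastic part, the order $\hat{\bm{\tau}}_{\bm{w}} = O_P(n^{-1/2})$ under ReM.

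For the identities, I would use only the centering $n^{-1}\sum_{i=1}^{n}\bm{w}_i = \bm{0}$, which gives $n_1\bar{\bm{w}}_1 + n_0\bar{\bm{w}}_0 = \bm{0}$, i.e. $\bar{\bm{w}}_0 = -(r_1/r_0)\bar{\bm{w}}_1$. Substituting into $\hat{\bm{\tau}}_{\bm{w}} = \bar{\bm{w}}_1 - \bar{\bm{w}}_0 = (1 + r_1/r_0)\bar{\bm{w}}_1$ yields $\hat{\bm{\tau}}_{\bm{w}} = r_0^{-1}\bar{\bm{w}}_1$, and symmetrically $\hat{\bm{\tau}}_{\bm{w}} = -r_1^{-1}\bar{\bm{w}}_0$. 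Condition \ref{con:fp}(i) guarantees $r_0,r_1$ are bounded away from $0$, so these identities also show that the three quantities $\hat{\bm{\tau}}_{\bm{w}}$, $\bar{\bm{w}}_1$, $\bar{\bm{w}}_0$ are of the same stochastic order; it therefore suffices to control $\hat{\bm{\tau}}_{\bm{w}}$.

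For the rate, under the CRE I would invoke the finite population central limit theorem quoted in Section \ref{sec::CRE-neyman} (treating $\bm{w}$ as an ``outcome'' unaffected by treatment, exactly as in the proof of Lemma \ref{lemma:samp_var_consistent}): $n^{1/2}\hat{\bm{\tau}}_{\bm{w}} \apprsim \mathcal{N}(\bm{0}, \bm{V}_{\bm{ww}})$, where $\bm{V}_{\bm{ww}} = (r_1r_0)^{-1}\bm{S}^2_{\bm{w}}$ has a finite limit under Condition \ref{con:fp}(i)--(ii); hence $n^{1/2}\hat{\bm{\tau}}_{\bm{w}}$ is tight and $\hat{\bm{\tau}}_{\bm{w}} = O_P(n^{-1/2})$ under the CRE. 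To pass to ReM I would use that the acceptance event $\mathcal{M} = \{M \le a\}$ has CRE-probability converging to $P(\chi^2_K \le a) > 0$, so $P(\mathcal{M}) \ge c > 0$ for all large $n$; then for any constant $C$, $P(\, n^{1/2}\|\hat{\bm{\tau}}_{\bm{w}}\|_2 > C \mid \mathcal{M}) \le P(\, n^{1/2}\|\hat{\bm{\tau}}_{\bm{w}}\|_2 > C)/P(\mathcal{M})$, which is made arbitrarily small uniformly in $n$ by taking $C$ large, giving $\hat{\bm{\tau}}_{\bm{w}} = O_P(n^{-1/2})$ under ReM as well. (Alternatively, one may simply cite the corresponding lemma of \citet{asymrerand2106}, where covariate means under ReM are shown to be $O_P(n^{-1/2})$.)

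The algebraic part is entirely routine; the only point requiring care is the transfer from the CRE to ReM, namely that an $O_P$ bound survives conditioning on an event whose probability stays bounded away from zero, which follows from the convergence $P(\mathcal{M}) \to P(\chi^2_K \le a)$ established in the rerandomization literature. I do not anticipate any genuine obstacle here.
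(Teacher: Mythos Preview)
Your proof is correct. The deterministic identities are handled exactly as one would expect and the paper omits them as trivial. For the stochastic part, the paper takes a slightly more direct route than your primary argument: rather than establishing $\hat{\bm{\tau}}_{\bm{w}}=O_P(n^{-1/2})$ first under the CRE and then transferring to ReM via the bound $P(\cdot\mid\mathcal{M})\le P(\cdot)/P(\mathcal{M})$, the paper sets pseudo potential outcomes $\tilde{Y}(1)=\tilde{Y}(0)=w_j$ for each coordinate $j$, checks that Condition~\ref{con:fp} still holds for these pseudo outcomes, and applies Corollary~\ref{corr:diff_rem} directly under ReM to conclude $\hat{\tau}_{w_j}=O_P(n^{-1/2})$. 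Your alternative (citing \citet{asymrerand2106}) is essentially the paper's route. Your CRE-plus-conditioning argument is a touch more elementary in that it does not rely on the full ReM limiting distribution, only on $P(\mathcal{M})\to P(\chi^2_K\le a)>0$; the paper's version is shorter because it reuses a corollary already in hand. Either works without difficulty.
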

\begin{proof}[\bf Proof of Lemma \ref{lemma:tau_W}]
For $1\leq j\leq J$, define pseudo potential outcomes $(\tilde{Y}(1), \tilde{Y}(0)) = (W_j, W_j)$. We can verify that Condition \ref{con:fp} also holds if we replace the original potential outcomes by the pseudo ones. Corollary \ref{corr:diff_rem} implies that $\hat{\tau}_{W_j}= O_P(n^{-1/2})$ and thus $\hat{\bm{\tau}}_{\bm{w}} = O_P(n^{-1/2})$. 
\end{proof}

\begin{lemma}\label{lemma:var_adjust_outcome_est}
	Under ReM and Condition \ref{con:fp}, for $z=0,1$, we have $\hat{\bm{\beta}}_z - \tilde{\bm{\beta}}_z = o_P(1)$, and
	\begin{align*}
	s^2_{Y(z;\hat{\bm{\beta}}_z)} - S^2_{Y(z;\tilde{\bm{\beta}}_z)} = o_P(1), \quad & s_{Y(z;\hat{\bm{\beta}}_z)\mid \bm{x}}^2 - S_{Y(z;\tilde{\bm{\beta}}_z)\mid \bm{x}}^2 = o_P(1)\\
	\bm{s}_{Y(z; \hat{\bm{\beta}}_z),\bm{x}} - \bm{S}_{Y(z; \tilde{\bm{\beta}}_z),\bm{x}} = o_P(1), \quad &
	\bm{s}_{Y(z; \hat{\bm{\beta}}_z),\bm{w}} - \bm{S}_{Y(z;\tilde{\bm{\beta}}_z),\bm{w}} = o_P(1).
	\end{align*}
\end{lemma}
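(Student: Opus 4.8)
The plan is to first establish the consistency of $\hat{\bm{\beta}}_z$ for $\tilde{\bm{\beta}}_z$ and then propagate this through the algebraic identities relating the sample moments of the adjusted outcomes to those of the raw outcomes and the covariates. Recall that $\hat{\bm{\beta}}_z = (\bm{s}^2_{\bm{w},z})^{-1}\bm{s}_{\bm{w},Y(z)}$ is the within-arm OLS slope and $\tilde{\bm{\beta}}_z = (\bm{S}_{\bm{w}}^2)^{-1}\bm{S}_{\bm{w},Y(z)}$ by \eqref{eq:beta_tilde_z}. By Lemma~\ref{lemma:samp_var_consistent}, $\bm{s}^2_{\bm{w},z} - \bm{S}_{\bm{w}}^2 = o_P(1)$ and $\bm{s}_{\bm{w},Y(z)} - \bm{S}_{\bm{w},Y(z)} = o_P(1)$; since the limit of $\bm{S}_{\bm{w}}^2$ is nonsingular by Condition~\ref{con:fp}(ii), $\bm{s}^2_{\bm{w},z}$ is invertible with probability tending to one and the continuous mapping theorem gives $(\bm{s}^2_{\bm{w},z})^{-1} - (\bm{S}_{\bm{w}}^2)^{-1} = o_P(1)$. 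Combining these with Slutsky's theorem yields $\hat{\bm{\beta}}_z - \tilde{\bm{\beta}}_z = o_P(1)$.

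Next, exactly as in the proof of Lemma~\ref{lemma:samp_var_consistent}, the sample second moments involving the adjusted outcome $Y_i(z;\hat{\bm{\beta}}_z) = Y_i - \hat{\bm{\beta}}_z'\bm{w}_i$ admit the exact algebraic expansions
\begin{align*}
s^2_{Y(z;\hat{\bm{\beta}}_z)} &= s^2_{Y(z)} + \hat{\bm{\beta}}_z'\bm{s}^2_{\bm{w},z}\hat{\bm{\beta}}_z - 2\hat{\bm{\beta}}_z'\bm{s}_{\bm{w},Y(z)}, \quad
\bm{s}_{Y(z;\hat{\bm{\beta}}_z),\bm{x}} = \bm{s}_{Y(z),\bm{x}} - \hat{\bm{\beta}}_z'\bm{s}_{\bm{w},\bm{x}}, \\
\bm{s}_{Y(z;\hat{\bm{\beta}}_z),\bm{w}} &= \bm{s}_{Y(z),\bm{w}} - \hat{\bm{\beta}}_z'\bm{s}^2_{\bm{w},z}, \quad
s^2_{Y(z;\hat{\bm{\beta}}_z)\mid\bm{x}} = \bm{s}_{Y(z;\hat{\bm{\beta}}_z),\bm{x}}(\bm{s}^2_{\bm{x},z})^{-1}\bm{s}_{\bm{x},Y(z;\hat{\bm{\beta}}_z)},
\end{align*}
which hold for any, possibly random, $\hat{\bm{\beta}}_z$. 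Each left-hand side is a continuous function of $\hat{\bm{\beta}}_z$ together with the sample moments $s^2_{Y(z)}$, $\bm{s}^2_{\bm{w},z}$, $\bm{s}_{\bm{w},Y(z)}$, $\bm{s}_{Y(z),\bm{x}}$, $\bm{s}_{\bm{w},\bm{x}}$, and $(\bm{s}^2_{\bm{x},z})^{-1}$, the last being well-defined with probability tending to one by the same nonsingularity argument applied to $\bm{S}_{\bm{x}}^2$. By Lemma~\ref{lemma:samp_var_consistent} all of these sample moments converge in probability to their finite-population counterparts, and by the first step $\hat{\bm{\beta}}_z \to \tilde{\bm{\beta}}_z$ in probability; hence, applying the continuous mapping theorem and Slutsky's theorem, each quantity on the left converges in probability to the same expression with $\hat{\bm{\beta}}_z$ and the $s$-moments replaced by $\tilde{\bm{\beta}}_z$ and the corresponding $S$-moments, i.e., to $S^2_{Y(z;\tilde{\bm{\beta}}_z)}$, $\bm{S}_{Y(z;\tilde{\bm{\beta}}_z),\bm{x}}$, $\bm{S}_{Y(z;\tilde{\bm{\beta}}_z),\bm{w}}$, and $S^2_{Y(z;\tilde{\bm{\beta}}_z)\mid\bm{x}}$ respectively. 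This delivers all four claimed convergences.

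The only point requiring care — rather than being genuinely hard — is the justification of the continuous mapping step in the presence of the randomness of $\hat{\bm{\beta}}_z$: one must note that the maps involved (matrix inversion, quadratic and bilinear forms) are continuous at the relevant \emph{deterministic} limit point, which is exactly what the nonsingularity of the limits of $\bm{S}_{\bm{w}}^2$ and $\bm{S}_{\bm{x}}^2$ in Condition~\ref{con:fp}(ii) guarantees, and that the events on which $\bm{s}^2_{\bm{w},z}$ or $\bm{s}^2_{\bm{x},z}$ are singular have probability $o(1)$ so that the probability limits are unaffected. Everything else is routine bookkeeping with the convergences already recorded in Lemma~\ref{lemma:samp_var_consistent}.
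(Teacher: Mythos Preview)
Your proof is correct and follows exactly the route the paper intends: the paper's own proof is the single sentence ``The results follow directly from Lemma~\ref{lemma:samp_var_consistent},'' and what you have written is precisely the detailed unpacking of that sentence --- consistency of $\hat{\bm{\beta}}_z$ via the sample-moment convergences in Lemma~\ref{lemma:samp_var_consistent} together with the continuous mapping theorem, followed by the same algebraic expansions for the adjusted-outcome moments and another application of continuous mapping/Slutsky.
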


\begin{proof}[\bf Proof of Lemma \ref{lemma:var_adjust_outcome_est}]
The results follow directly from 
Lemma \ref{lemma:samp_var_consistent}.
\end{proof}

\begin{lemma}\label{lemma:matrix}
For any two matrices $\bm{A}$ and $\bm{B}$, if  both $\bm{A}$ and $\bm{A}+\bm{B}$ are nonsingular, then 
\begin{align*}
 \left(\bm{A} + \bm{B}\right)^{-1} - \bm{A}^{-1} = 
 \bm{A}^{-1} \bm{B}\left(\bm{A} + \bm{B}\right)^{-1}\bm{B}\bm{A}^{-1} - 
\bm{A}^{-1} \bm{B} \bm{A}^{-1}.
\end{align*}
\end{lemma}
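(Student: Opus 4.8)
The plan is to verify the identity by a direct algebraic manipulation, exploiting the resolvent-type decomposition. The statement is purely a matter of linear algebra, so no probabilistic or finite-population machinery is needed. First I would write $\bm{C} = \bm{A} + \bm{B}$, so that $\bm{C}$ is assumed nonsingular, and the goal becomes showing $\bm{C}^{-1} - \bm{A}^{-1} = \bm{A}^{-1}\bm{B}\bm{C}^{-1}\bm{B}\bm{A}^{-1} - \bm{A}^{-1}\bm{B}\bm{A}^{-1}$. The natural starting point is the standard ``second resolvent''-style identity $\bm{C}^{-1} - \bm{A}^{-1} = -\bm{A}^{-1}(\bm{C} - \bm{A})\bm{C}^{-1} = -\bm{A}^{-1}\bm{B}\bm{C}^{-1}$, which follows immediately from left-multiplying $\bm{C}^{-1} - \bm{A}^{-1}$ by $\bm{A}$ and right-multiplying by $\bm{C}$. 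Symmetrically, one also has $\bm{C}^{-1} - \bm{A}^{-1} = -\bm{C}^{-1}\bm{B}\bm{A}^{-1}$.

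The key step is then to iterate this once more. Starting from $\bm{C}^{-1} - \bm{A}^{-1} = -\bm{A}^{-1}\bm{B}\bm{C}^{-1}$, I would substitute $\bm{C}^{-1} = \bm{A}^{-1} + (\bm{C}^{-1} - \bm{A}^{-1})$ into the right-hand side, giving
\begin{align*}
\bm{C}^{-1} - \bm{A}^{-1}
&= -\bm{A}^{-1}\bm{B}\left\{\bm{A}^{-1} + (\bm{C}^{-1} - \bm{A}^{-1})\right\} \\
&= -\bm{A}^{-1}\bm{B}\bm{A}^{-1} - \bm{A}^{-1}\bm{B}(\bm{C}^{-1} - \bm{A}^{-1}).
\end{align*}
Now I would apply the symmetric form $\bm{C}^{-1} - \bm{A}^{-1} = -\bm{C}^{-1}\bm{B}\bm{A}^{-1}$ to the trailing factor, which turns $-\bm{A}^{-1}\bm{B}(\bm{C}^{-1} - \bm{A}^{-1})$ into $+\bm{A}^{-1}\bm{B}\bm{C}^{-1}\bm{B}\bm{A}^{-1}$. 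Collecting terms yields exactly
$$
\bm{C}^{-1} - \bm{A}^{-1} = \bm{A}^{-1}\bm{B}\bm{C}^{-1}\bm{B}\bm{A}^{-1} - \bm{A}^{-1}\bm{B}\bm{A}^{-1},
$$
which is the claimed identity after re-expanding $\bm{C} = \bm{A} + \bm{B}$.

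There is essentially no obstacle here; the only thing to be careful about is consistency of the order of multiplication (the expression is not symmetric in general, so one must track left versus right factors), and to note explicitly that both resolvent identities used are legitimate because $\bm{A}$ and $\bm{A} + \bm{B}$ are invertible by hypothesis — no assumption on $\bm{B}$ itself is required. An alternative one-line verification, which I might include as a sanity check, is to simply multiply the claimed right-hand side on the left by $\bm{A}$ and on the right by $(\bm{A}+\bm{B})$ and confirm both sides reduce to the same expression; this avoids iterating resolvent identities but requires expanding a product of four terms. I would present the iterative derivation as the main argument since it is cleaner and more transparent.
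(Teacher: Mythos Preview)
Your proof is correct. Both resolvent identities you invoke are valid (they follow from $\bm{C}\bm{C}^{-1}=\bm{C}^{-1}\bm{C}=\bm{I}$ with $\bm{C}=\bm{A}+\bm{B}$), and the iteration step is carried out with the correct ordering of factors; no assumption on $\bm{B}$ is needed.

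Your route differs slightly from the paper's. The paper does not use the resolvent identity $\bm{C}^{-1}-\bm{A}^{-1}=-\bm{A}^{-1}\bm{B}\bm{C}^{-1}$ at all; instead it first establishes the symmetric relation $\bm{A}(\bm{A}+\bm{B})^{-1}\bm{A}-\bm{A}=\bm{B}(\bm{A}+\bm{B})^{-1}\bm{B}-\bm{B}$ by expanding $\bm{A}=\bm{A}(\bm{A}+\bm{B})^{-1}(\bm{A}+\bm{B})$ and $\bm{B}=(\bm{A}+\bm{B})(\bm{A}+\bm{B})^{-1}\bm{B}$, and then conjugates both sides by $\bm{A}^{-1}$. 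Your approach has the advantage of being the textbook second-order resolvent expansion, so a reader will recognize it immediately; the paper's approach is a touch more ad hoc but has the minor virtue of isolating the symmetric intermediate identity, which makes the appearance of $\bm{B}(\bm{A}+\bm{B})^{-1}\bm{B}$ on the right-hand side feel less accidental. Either is perfectly acceptable for a lemma of this kind.
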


\begin{proof}[\bf Proof of Lemma \ref{lemma:matrix}]
Lemma \ref{lemma:matrix} is known, but we give a direct proof for completeness. 
From 
\begin{align*}
\bm{0} & = \bm{A} - \bm{A} =  \bm{A} \left(\bm{A}+\bm{B}\right)^{-1} \left(\bm{A}+\bm{B}\right) - \bm{A} = \bm{A} \left(\bm{A}+\bm{B}\right)^{-1} \bm{A} +  \bm{A} \left(\bm{A}+\bm{B}\right)^{-1} \bm{B} - \bm{A},
\\
\bm{0} & = \bm{B} - \bm{B} =   \left(\bm{A}+\bm{B}\right) \left(\bm{A}+\bm{B}\right)^{-1} \bm{B}  - \bm{B} = \bm{A} \left(\bm{A}+\bm{B}\right)^{-1} \bm{B} + \bm{B} \left(\bm{A}+\bm{B}\right)^{-1} \bm{B}  - \bm{B},
\end{align*}
we have 
$
\bm{A} \left(\bm{A}+\bm{B}\right)^{-1} \bm{A}  - \bm{A} =  \bm{B} \left(\bm{A}+\bm{B}\right)^{-1} \bm{B}  - \bm{B},
$
which further implies 
\begin{align*}
\left(\bm{A}+\bm{B}\right)^{-1} - \bm{A}^{-1} & = 
\bm{A}^{-1} \left\{
\bm{A} \left(\bm{A}+\bm{B}\right)^{-1} \bm{A}  - \bm{A}
\right\} \bm{A}^{-1} = 
\bm{A}^{-1} \left\{
\bm{B} \left(\bm{A}+\bm{B}\right)^{-1} \bm{B}  - \bm{B}
\right\} \bm{A}^{-1}\\
& = \bm{A}^{-1} \bm{B} \left(\bm{A}+\bm{B}\right)^{-1} \bm{B}  \bm{A}^{-1}
-\bm{A}^{-1} \bm{B} \bm{A}^{-1}. 
\end{align*}
\end{proof}

\begin{lemma}\label{lemma:G22}
Under ReM and Condition \ref{con:fp}, $\bm{m}_{\bm{w},z}^2 = \bm{S}_{\bm{w}}^2 + o_P(1)$ for $z=0,1$. 
Both $\bm{G}_{11}$ and $\bm{G}_{22}$ in \eqref{eq:G} converge in probability to nonsingular matrices. 
\end{lemma}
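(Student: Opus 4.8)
The plan is to deduce the moment convergence $\bm{m}^2_{\bm{w},z}\to\bm{S}^2_{\bm{w}}$ from the consistency results already established in Lemma~\ref{lemma:samp_var_consistent} together with Lemma~\ref{lemma:tau_W}, and then to read off the nonsingularity of the limits of $\bm{G}_{11}$ and $\bm{G}_{22}$ by inspecting the block forms displayed in \eqref{eq:G}.

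For the first assertion I would use the identity ``second moment $=$ variance $+$ outer product of the mean'': for $z=0,1$,
\begin{align*}
\bm{m}^2_{\bm{w},z} = n_z^{-1}\sum_{i:Z_i=z}\bm{w}_i\bm{w}_i' = \frac{n_z-1}{n_z}\bm{s}^2_{\bm{w},z} + \bar{\bm{w}}_z\bar{\bm{w}}_z'.
\end{align*}
By \eqref{eq:samp_var_consistent1} in Lemma~\ref{lemma:samp_var_consistent}, $\bm{s}^2_{\bm{w},z}-\bm{S}^2_{\bm{w}}=o_P(1)$; by Condition~\ref{con:fp}(i), $(n_z-1)/n_z\to1$; and by Lemma~\ref{lemma:tau_W}, $\bar{\bm{w}}_z=O_P(n^{-1/2})$, so $\bar{\bm{w}}_z\bar{\bm{w}}_z'=O_P(n^{-1})=o_P(1)$. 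Combining these gives $\bm{m}^2_{\bm{w},z}=\bm{S}^2_{\bm{w}}+o_P(1)$. For $\bm{G}_{11}$, note from \eqref{eq:G} that it is the fixed (non-random) matrix with rows $(1,\,r_1)$ and $(r_1,\,r_1)$, with determinant $r_1(1-r_1)=r_1r_0$; by Condition~\ref{con:fp}(i) this has a positive limit, so $\bm{G}_{11}$ converges to a nonsingular matrix.

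For $\bm{G}_{22}$, I would substitute the limits $n^{-1}\sum_i\bm{w}_i\bm{w}_i'=\tfrac{n-1}{n}\bm{S}^2_{\bm{w}}\to\bm{\Sigma}_{\bm{w}}$ and $r_1\bm{m}^2_{\bm{w},1}=n^{-1}\sum_{i:Z_i=1}\bm{w}_i\bm{w}_i'=r_1\bm{S}^2_{\bm{w}}+o_P(1)\to r_1^{\infty}\bm{\Sigma}_{\bm{w}}$, where $\bm{\Sigma}_{\bm{w}}=\lim_n\bm{S}^2_{\bm{w}}$ is nonsingular by Condition~\ref{con:fp}(ii) and $r_1^{\infty}=\lim_n r_1\in(0,1)$ by Condition~\ref{con:fp}(i), to obtain that $\bm{G}_{22}$ converges in probability to
\begin{align*}
\begin{pmatrix}
\bm{\Sigma}_{\bm{w}} & r_1^{\infty}\bm{\Sigma}_{\bm{w}}\\
r_1^{\infty}\bm{\Sigma}_{\bm{w}} & r_1^{\infty}\bm{\Sigma}_{\bm{w}}
\end{pmatrix}.
\end{align*}
To see that this limit is nonsingular, I would take the Schur complement of its leading block $\bm{\Sigma}_{\bm{w}}$, namely $r_1^{\infty}\bm{\Sigma}_{\bm{w}}-(r_1^{\infty})^2\bm{\Sigma}_{\bm{w}}\bm{\Sigma}_{\bm{w}}^{-1}\bm{\Sigma}_{\bm{w}}=r_1^{\infty}(1-r_1^{\infty})\bm{\Sigma}_{\bm{w}}$, which is nonsingular because $r_1^{\infty}(1-r_1^{\infty})=r_1^{\infty}r_0^{\infty}>0$; hence the limiting $\bm{G}_{22}$ is nonsingular.

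The whole argument is essentially bookkeeping once Lemmas~\ref{lemma:samp_var_consistent} and~\ref{lemma:tau_W} are in hand, so I do not anticipate a genuine obstacle; the only step needing a moment of thought is the nonsingularity of the limiting $\bm{G}_{22}$, which---because its three nontrivial blocks are scalar multiples of the single nonsingular matrix $\bm{\Sigma}_{\bm{w}}$---collapses via the Schur complement to the positivity of $r_1^{\infty}r_0^{\infty}$ guaranteed by Condition~\ref{con:fp}(i).
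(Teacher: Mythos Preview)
Your proposal is correct and follows essentially the same route as the paper: the decomposition $\bm{m}^2_{\bm{w},z}=\tfrac{n_z-1}{n_z}\bm{s}^2_{\bm{w},z}+\bar{\bm{w}}_z\bar{\bm{w}}_z'$ combined with Lemmas~\ref{lemma:samp_var_consistent} and~\ref{lemma:tau_W}, the determinant $r_1r_0$ for $\bm{G}_{11}$, and the identification of the limit of $\bm{G}_{22}$ as the block matrix with entries that are scalar multiples of $\bm{\Sigma}_{\bm{w}}$. The only cosmetic difference is in the final step: the paper verifies nonsingularity of the limiting $\bm{G}_{22}$ by recognizing it as the Kronecker product $\bm{G}_{11}\otimes\bm{S}^2_{\bm{w}}$ and inverting each factor, whereas you use the Schur complement; both arguments are equally short and yield the same conclusion.
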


\begin{proof}[\bf Proof of Lemma \ref{lemma:G22}]
First, we consider $\bm{m}_{\bm{w},z}^2$. 
By definition, 
\begin{align*}
\bm{m}_{\bm{w},1}^2  = 
n_1^{-1} \sum_{i:Z_i=1} \bm{w}_i \bm{w}_i' = 
n_1^{-1} \sum_{i:Z_i=1} 
\left( \bm{w}_i - \bar{\bm{w}}_1 \right) \left( \bm{w}_i - \bar{\bm{w}}_1 \right)' + 
\bar{\bm{w}}_1 \bar{\bm{w}}_1' 
 = n_1^{-1} (n_1-1) \bm{s}^2_{\bm{w},1} + \bar{\bm{w}}_1 \bar{\bm{w}}_1'.
\end{align*}
From Lemmas \ref{lemma:samp_var_consistent} and \ref{lemma:tau_W}, 
$\bm{s}^2_{\bm{w},1} = \bm{S}^2_{\bm{w}} + o_P(1)$ and $\bar{\bm{w}}_1 \bar{\bm{w}}_1'= o_P(1)$. 
Thus, $\bm{m}_{\bm{w},1}^2 = \bm{S}^2_{\bm{w}} + o_P(1).$ 
Similarly, $\bm{m}_{\bm{w},0}^2 = \bm{S}^2_{\bm{w}} + o_P(1).$ 

Second, we consider $\bm{G}_{11}$. 
By definition, $\bm{G}_{11}$ has a limit as $n\rightarrow \infty$. 
Because
$$
\bm{G}_{11}^{-1} = 
\begin{pmatrix}
1 & r_1\\
r_1 & r_1
\end{pmatrix}^{-1}
= \frac{1}{r_1 r_0}
\begin{pmatrix}
r_1 & -r_1\\
-r_1 & 1
\end{pmatrix},
$$ 
the limit of $\bm{G}_{11}$ is nonsingular.

Third, we consider $\bm{G}_{22}$. From the above and by definition,  
\begin{align*}
\bm{G}_{22} = 
\begin{pmatrix}
\bm{S}^2_{\bm{w}} & r_1 \bm{m}^2_{\bm{w},1}\\
r_1 \bm{m}^2_{\bm{w},1} & r_1 \bm{m}^2_{\bm{w},1}
\end{pmatrix}
= 
\begin{pmatrix}
\bm{S}^2_{\bm{w}} & r_1 \bm{S}^2_{\bm{w}}\\
r_1 \bm{S}^2_{\bm{w}} & r_1 \bm{S}^2_{\bm{w}}
\end{pmatrix} + o_P(1).
\end{align*}
Thus, $\bm{G}_{22}$ has a probability limit as $n\rightarrow \infty$. 
Because the limit of $\bm{S}^2_{\bm{w}}$ is nonsingular, 
and 
\begin{align*}
\begin{pmatrix}
\bm{S}^2_{\bm{w}} & r_1 \bm{S}^2_{\bm{w}}\\
r_1 \bm{S}^2_{\bm{w}} & r_1 \bm{S}^2_{\bm{w}}
\end{pmatrix}^{-1}
=
( \bm{G}_{11} \otimes \bm{S}^2_{\bm{w}} )^{-1}
= \bm{G}_{11}^{-1} \otimes ( \bm{S}^2_{\bm{w}} )^{-1},
\end{align*}
the probability limit of $\bm{G}_{22}$ is nonsingular.

\end{proof}

\begin{lemma}\label{lemma:H}
Under ReM and Condition \ref{con:fp},  
\begin{itemize}
	\item[(i)] for $z=0,1$, 
	$\hat{\sigma}^2_{e,z}  = s^2_{Y(z;\hat{\bm{\beta}}_z)} + o_P(1) = S^2_{Y(z;\tilde{\bm{\beta}}_z)} + o_P(1) = O_P(1)$, 
	and 
	$\bm{H}_{11} = O_P(1)$; 
	\item[(ii)]
	$
	n^{-1}\sum_{i=1}^{n} Z_i  \hat{e}_i^2 \bm{w}_i' = o_P(n^{1/2}),
	$
	$ 
	n^{-1}\sum_{i=1}^{n} (1-Z_i)  \hat{e}_i^2 \bm{w}_i'  = o_P(n^{1/2}),
	$
	and 
	$
	\bm{H}_{12} = \bm{H}_{21}' = o_P(n^{1/2});
	$ 
	\item[(iii)]
	$
	n^{-1}\sum_{i=1}^{n} Z_i  \hat{e}_i^2 \bm{w}_i\bm{w}_i' = o_P(n),
	$
	$
	n^{-1}\sum_{i=1}^{n} (1 - Z_i ) \hat{e}_i^2 \bm{w}_i\bm{w}_i' = o_P(n),
	$
	and 
	$
	\bm{H}_{22} = o_P(n).
	$
\end{itemize}
\end{lemma}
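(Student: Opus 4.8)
The plan is to prove all three parts of Lemma~\ref{lemma:H} by controlling the OLS residuals $\hat e_i$ in terms of the centered ``adjusted'' potential outcomes plus vanishing correction terms, and then invoking the finite-population asymptotics already established. First I would record that for a treated unit, $\hat e_i = \{Y_i(1) - \bar Y(1) - \hat{\bm\beta}_1'\bm w_i\} - \{(\bar Y_1 - \bar Y(1)) - \hat{\bm\beta}_1'\bar{\bm w}_1\}$, i.e. $\hat e_i = \{Y_i(1;\hat{\bm\beta}_1) - \bar Y(1;\hat{\bm\beta}_1)\} - c_1$ where the offset $c_1 = (\bar Y_1 - \bar Y(1)) - \hat{\bm\beta}_1'\bar{\bm w}_1$ satisfies $c_1 = O_P(n^{-1/2})$: indeed $\bar Y_1 - \bar Y(1) = \hat\tau_{Y(1)}\cdot r_0$-type quantity is $O_P(n^{-1/2})$ by Corollary~\ref{corr:diff_rem} applied to pseudo-outcomes $(Y(1),Y(1))$, $\bar{\bm w}_1 = O_P(n^{-1/2})$ by Lemma~\ref{lemma:tau_W}, and $\hat{\bm\beta}_1 = \tilde{\bm\beta}_1 + o_P(1)$ by Lemma~\ref{lemma:var_adjust_outcome_est}. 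Symmetrically for control units. So $\hat\sigma^2_{e,1} = n_1^{-1}\sum_{i:Z_i=1}\{Y_i(1;\hat{\bm\beta}_1) - \bar Y(1;\hat{\bm\beta}_1)\}^2 - c_1^2 = \frac{n_1-1}{n_1}s^2_{Y(1;\hat{\bm\beta}_1)} + o_P(1)$, which by Lemma~\ref{lemma:var_adjust_outcome_est} equals $S^2_{Y(1;\tilde{\bm\beta}_1)} + o_P(1) = O_P(1)$ under Condition~\ref{con:fp}(ii). This gives part~(i), since $\bm H_{11}$ is built from $\hat\sigma^2_{e,z}$, $r_z$, which are all $O_P(1)$.

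For part~(ii), the block $\bm H_{12}$ consists of entries of the form $n^{-1}\sum_i \hat e_i^2 \bm w_i'$ and $n^{-1}\sum_i Z_i \hat e_i^2 \bm w_i'$. I would expand $\hat e_i^2 = \{Y_i(z;\hat{\bm\beta}_z) - \bar Y(z;\hat{\bm\beta}_z)\}^2 - 2c_z\{Y_i(z;\hat{\bm\beta}_z)-\bar Y(z;\hat{\bm\beta}_z)\} + c_z^2$ on the appropriate group, so that $n^{-1}\sum_{i:Z_i=1}\hat e_i^2\bm w_i'$ is, up to the $c_z$ corrections, $r_1$ times a third-order sample moment of the form $n_1^{-1}\sum_{i:Z_i=1}\{Y_i(1;\hat{\bm\beta}_1)-\bar Y(1;\hat{\bm\beta}_1)\}^2\bm w_i'$. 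The key bound is that any such (mixed, centered) third sample moment over a completely randomized subsample is $o_P(n^{1/2})$: writing $u_i = Y_i(z;\hat{\bm\beta}_z)-\bar Y(z;\hat{\bm\beta}_z)$, by Cauchy--Schwarz $\|n^{-1}\sum_i Z_i u_i^2 \bm w_i\| \le \max_i|u_i|\cdot n^{-1}\sum_i Z_i |u_i|\,\|\bm w_i\| \le \max_i|u_i|\cdot(n^{-1}\sum u_i^2)^{1/2}(n^{-1}\sum\|\bm w_i\|^2)^{1/2}$, and $\max_i|u_i| = o(n^{1/2})$ by Condition~\ref{con:fp}(iii) (transferred to the adjusted outcomes via the Cauchy--Schwarz argument already used in the proof of Theorem~\ref{thm:adj_rem}), while the two averaged factors are $O_P(1)$. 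The $c_z$-correction terms are even smaller since $c_z = O_P(n^{-1/2})$ and the remaining sums are $O_P(n^{1/2})$ or $O_P(1)$ in norm. Hence every entry of $\bm H_{12}$ is $o_P(n^{1/2})$, and $\bm H_{21} = \bm H_{12}'$.

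Part~(iii) is the same argument one order up: the entries of $\bm H_{22}$ are $n^{-1}\sum_i \hat e_i^2 \bm w_i\bm w_i'$ and $n^{-1}\sum_i Z_i\hat e_i^2\bm w_i\bm w_i'$; bounding $\|n^{-1}\sum_i Z_i u_i^2 \bm w_i\bm w_i'\| \le \max_i|u_i|^2 \cdot \max_i\|\bm w_i\|^2 \cdot n^{-1}\sum_i Z_i \cdot$ (a normalizing constant), or more carefully $\le \max_i\|\bm w_i\|^2\cdot n^{-1}\sum_i u_i^2 = o(n)\cdot O_P(1)$ using $\max_i\|\bm w_i\|^2 = o(n)$ from Condition~\ref{con:fp}(iii) and $n^{-1}\sum u_i^2 = O_P(1)$. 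The cross terms with $c_z$ again contribute lower order. So $\bm H_{22} = o_P(n)$, completing the proof. The main obstacle is bookkeeping: carefully transferring the maximum-norm regularity conditions from the raw potential outcomes and covariates to the adjusted outcomes $Y_i(z;\hat{\bm\beta}_z)$ with the \emph{estimated} coefficients $\hat{\bm\beta}_z$ (rather than the fixed limits), and making sure the $c_z$ centering offsets are tracked correctly; but since $\hat{\bm\beta}_z - \tilde{\bm\beta}_z = o_P(1)$ and the limits $\tilde{\bm\beta}_z$ are finite, the Cauchy--Schwarz expansions go through exactly as in the proof of Theorem~\ref{thm:adj_rem}, only now with a high-probability (rather than deterministic) small factor.
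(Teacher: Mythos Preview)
Your proof is correct, but it takes a more laborious route than the paper's. The paper's argument is considerably shorter and avoids the $c_z$ bookkeeping entirely.

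For part~(i), the paper observes that $\hat e_i$ for units in arm $z$ is exactly $Y_i(z;\hat{\bm\beta}_z)$ minus its \emph{in-sample} mean over that arm, so $\hat\sigma^2_{e,z} = (n_z-1)n_z^{-1}\,s^2_{Y(z;\hat{\bm\beta}_z)}$ is an identity; there is no need to introduce the offset $c_z$ or argue that $c_z = O_P(n^{-1/2})$. Lemma~\ref{lemma:var_adjust_outcome_est} then gives the limit directly.

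For parts~(ii) and~(iii), the paper never expands $\hat e_i^2$. Instead, having established $\hat\sigma^2_{e,z} = n_z^{-1}\sum_{i:Z_i=z}\hat e_i^2 = O_P(1)$, it simply pulls out $\max_i|w_{ij}|$:
\[
\Bigl|n^{-1}\sum_i Z_i\,\hat e_i^2\,w_{ij}\Bigr| \le \max_i|w_{ij}|\cdot n_1^{-1}\sum_i Z_i\,\hat e_i^2 = o(n^{1/2})\cdot O_P(1),
\]
and analogously for~(iii) by pulling out two max-factors. This sidesteps the decomposition $\hat e_i^2 = u_i^2 - 2c_z u_i + c_z^2$, the need to bound $\max_i|u_i|$ with random $\hat{\bm\beta}_z$, and all the cross-term tracking you carry out.

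What your approach buys is a finer picture of \emph{why} each piece is small, and your decomposition would be needed if one wanted sharper rates than $o_P(n^{1/2})$ or $o_P(n)$. But for the stated lemma the paper's ``pull out the max of $w$, leave $\hat\sigma^2_{e,z}$'' trick is the cleaner move.
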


\begin{proof}[\bf Proof of Lemma \ref{lemma:H}]
	
	First, we prove (i).  By definition and from Lemma \ref{lemma:var_adjust_outcome_est},  
	\begin{align*}
	\hat{\sigma}^2_{e,z}
	=n_z^{-1}  (  n_z-1 )  s^2_{Y(z;\hat{\bm{\beta}}_z)} 
	= 
	s^2_{Y(z;\hat{\bm{\beta}}_z)} + o_P(1) = S^2_{Y(z;\tilde{\bm{\beta}}_z)} + o_P(1) = O_P(1),\quad (z=0,1).
	\end{align*}
	This further implies $\hat{\sigma}^2_{e} = r_1 \hat{\sigma}^2_{e,1} + r_0 \hat{\sigma}^2_{e,0} = O_P(1)$. 
	Thus, $\bm{H}_{11} = O_P(1)$. 
	
	Second, we prove (ii). 
	For any $1\leq j\leq J$,  
	\begin{align*}
	\left| 
	n^{-1}\sum_{i=1}^{n} Z_i  \hat{e}_i^2 w_{ij} 
	\right|
	& \leq 
	\max_{1\leq i\leq n} |w_{ij} | \cdot
	n_1^{-1}\sum_{i=1}^{n} Z_i  \hat{e}_i^2 = 
	\max_{1\leq i\leq n} |w_{ij} | \cdot \hat{\sigma}^2_{e,1}.
	\end{align*}
	Condition \ref{con:fp} implies that $\max_{1\leq i\leq n} |w_{ij} |/n^{1/2} \rightarrow 0$ and thus $\max_{1\leq i\leq n} |w_{ij} | = o(n^{1/2})$. 
	Lemma \ref{lemma:H}(i) implies $\hat{\sigma}^2_{e,1}= O_P(1)$. 
	Thus, $n^{-1}\sum_{i=1}^{n} Z_i  \hat{e}_i^2 w_{ij}  = o(n^{1/2}) O_P(1) = o_P(n^{1/2}).$ 
	This further implies $n^{-1}\sum_{i=1}^{n} Z_i  \hat{e}_i^2 \bm{w}_i'  = o_P(n^{1/2})$. 
	Similarly, $n^{-1}\sum_{i=1}^{n} (1-Z_i)  \hat{e}_i^2 \bm{w}_i'  = o_P(n^{1/2})$. 
	By definition, 
	$$
	n^{-1}\sum_{i=1}^{n} \hat{e}_i^2 \bm{w}_i'  = n^{-1}\sum_{i=1}^{n} Z_i \hat{e}_i^2 \bm{w}_i' + n^{-1}\sum_{i=1}^{n} (1-Z_i) \hat{e}_i^2 \bm{w}_i' = o_P(n^{1/2}).
	$$
	Thus, $\bm{H}_{12} = o_P(n^{1/2})$.  
	
	Third, we prove (iii). 
	For any $1\le l,j\le J$,  
	\begin{align*}
	\left| n^{-1}\sum_{i=1}^{n} Z_i  \hat{e}_i^2 w_{il}  w_{ij}  \right| & \le 
	n^{-1}\sum_{i=1}^{n} Z_i  \hat{e}_i^2 |w_{il} | |w_{ij} |
	\le 
	\max_{1\le i\le n} |w_{il} | \cdot \max_{1\le i\le n} |w_{ij} | \cdot 
	n_1^{-1} \sum_{i=1}^{n} Z_i  \hat{e}_i^2\\
	& = \max_{1\le i\le n} |w_{il} | \cdot \max_{1\le i\le n} |w_{ij} | \cdot 
	\hat{\sigma}^2_{e,1}.
	\end{align*}
	Condition \ref{con:fp} implies $\max_{1\le i\le n} |w_{il} | = o(n^{1/2})$ and $\max_{1\le i\le n} |w_{ij} |  = o(n^{1/2})$. 
	Lemma \ref{lemma:H}(i) implies $\hat{\sigma}^2_{e,1}=O_P(1)$. 
	Thus, 
	$n^{-1}\sum_{i=1}^{n} Z_i  \hat{e}_i^2 w_{il}  w_{ij} = o(n^{1/2}) o(n^{1/2}) O_P(1) = o_P(n)$. 
	This then implies $n^{-1}\sum_{i=1}^{n} Z_i  \hat{e}_i^2 \bm{w}_i\bm{w}_i' = o_P(n)$. 
	Similarly, $n^{-1}\sum_{i=1}^{n} (1 - Z_i ) \hat{e}_i^2 \bm{w}_i\bm{w}_i' = o_P(n)$.
	Thus
	\begin{align*}
	n^{-1}\sum_{i=1}^{n} \hat{e}_i^2 \bm{w}_i \bm{w}_i' = 
	n^{-1}\sum_{i=1}^{n} Z_i \hat{e}_i^2 \bm{w}_i \bm{w}_i' + n^{-1}\sum_{i=1}^{n} (1-Z_i) \hat{e}_i^2 \bm{w}_i \bm{w}_i' = o_P(n).
	\end{align*}
	By definition, $\bm{H}_{22} = o_P(n)$. 
	
\end{proof}

\subsection{Proofs}

\begin{proof}[\bf Proof of Proposition \ref{prop::equivalent}]
We first consider the asymptotic distributions. Let $\hat{\bm{\rcoef}} = r_0 \hat{\bm{\beta}}_1 + r_1 \hat{\bm{\beta}}_0$, which satisfies  $\hat{\bm{\rcoef}} - \tilde{\bm{\rcoef}} =o_P(1)$ by Lemma \ref{lemma:var_adjust_outcome_est}. From Lemma  \ref{lemma:tau_W}, 
$\hat{\bm{\tau}}_{\bm{w}} = O_P(n^{-1/2})$.
Then
\begin{align*}
\hat{\tau}(\hat{\boldsymbol{\beta}}_1, \hat{\boldsymbol{\beta}}_0) - \hat{\tau}(\tilde{\boldsymbol{\beta}}_1, \tilde{\boldsymbol{\beta}}_0)
& = 
\left( \hat{\tau} - \hat{\bm{\rcoef}}' \hat{\bm{\tau}}_{\bm{w}} \right) - \left( \hat{\tau} - \tilde{\bm{\rcoef}}' \hat{\bm{\tau}}_{\bm{w}} \right) = \left(
\tilde{\bm{\rcoef}} - \hat{\bm{\rcoef}}
\right)' \hat{\bm{\tau}}_{\bm{w}} 
=o_P(1) O_P(n^{-1/2})
= o_P(n^{-1/2}), 
\end{align*}
which implies that $n^{1/2}\{ \hat{\tau}(\hat{\boldsymbol{\beta}}_1, \hat{\boldsymbol{\beta}}_0) - \tau\}$ has the same asymptotic distribution as $n^{1/2}\{ \hat{\tau}(\tilde{\boldsymbol{\beta}}_1, \tilde{\boldsymbol{\beta}}_0) -\tau\}$.

We then consider the probability limit of the estimated distribution.
From Lemma \ref{lemma:var_adjust_outcome_est}, 
we can show that 
$\hat{V}_{\tau\tau}(\hat{\bm{\beta}}_1, \hat{\bm{\beta}}_0) - \tilde{V}_{\tau\tau}(\tilde{\bm{\beta}}_1, \tilde{\bm{\beta}}_0) = o_P(1)$ and $
\hat{R}^2_{\tau, \bm{x}}(\hat{\bm{\beta}}_{1}, \hat{\bm{\beta}}_0) 
- 
\tilde{R}^2_{\tau, \bm{x}}(\tilde{\bm{\beta}}_{1}, \tilde{\bm{\beta}}_0) = o_P(1)
$
under ReM. 
Therefore, under ReM and Condition \ref{con:fp}
the estimated distributions of $\hat{\tau}(\hat{\boldsymbol{\beta}}_1, \hat{\boldsymbol{\beta}}_0)$ and $\hat{\tau}(\tilde{\boldsymbol{\beta}}_1, \tilde{\boldsymbol{\beta}}_0)$ have the same probability limit.  
\end{proof}

\begin{proof}[\bf Proof of Theorem \ref{thm::HWequivalent}] 
Let  
\begin{align*}
\bm{\Lambda} = 
\begin{pmatrix}
\bm{G}_{11} & \bm{0}_{2\times 2J}\\
\bm{0}_{2J\times 2} & \bm{G}_{22}
\end{pmatrix}, \quad 
\bm{\Delta} = 
\bm{G} - \bm{\Lambda} = 
\begin{pmatrix}
\bm{0}_{2 \times 2} & \bm{G}_{12}\\
\bm{G}_{21} & \bm{0}_{2J \times 2J}
\end{pmatrix}, \quad
\bm{\Psi} = \bm{G}^{-1} - \bm{\Lambda}^{-1}  .
\end{align*}

First, we first find the stochastic orders of $\bm{\Delta}$ and $\bm{\Psi}$. 
From Lemma \ref{lemma:tau_W}, $\bm{G}_{12} = \bm{G}_{21}' = O_P(n^{-1/2})$, and thus $\bm{\Delta} = O_P(n^{-1/2})$. 
From Lemma \ref{lemma:matrix},  
\begin{align*}
\bm{\Psi} & = \left(
\bm{\Lambda} + \bm{\Delta}
\right)^{-1} - \bm{\Lambda}^{-1}
= 
\bm{\Lambda}^{-1} \bm{\Delta} \left( 
\bm{\Lambda} + \bm{\Delta}
\right)^{-1} 
\bm{\Delta} \bm{\Lambda}^{-1}
- 
\bm{\Lambda}^{-1} \bm{\Delta} \bm{\Lambda}^{-1}. 
\end{align*}
From Lemma \ref{lemma:G22}, the probability limit of $\bm{\Lambda}$ exists and is nonsingular. 
Thus, $\bm{\Psi} = O_P(n^{-1/2})$. 

Second, we consider the difference between $\bm{G}^{-1} \bm{H} \bm{G}^{-1}$ and  $\bm{\Lambda}^{-1} \bm{H} \bm{\Lambda}^{-1}$: 
\begin{align}\label{eq:df_HW}
\bm{G}^{-1} \bm{H} \bm{G}^{-1} - \bm{\Lambda}^{-1} \bm{H} \bm{\Lambda}^{-1}
& = 
\left(
\bm{\Lambda}^{-1} + \bm{\Psi}
\right) \bm{H} \left(
\bm{\Lambda}^{-1} + \bm{\Psi}
\right) - \bm{\Lambda}^{-1} \bm{H} \bm{\Lambda}^{-1}
\nonumber
\\
& = 
\bm{\Psi} \bm{H} \bm{\Lambda}^{-1} + \bm{\Lambda}^{-1} \bm{H} \bm{\Psi} + 
\bm{\Psi} \bm{H} \bm{\Psi}. 
\end{align}
In \eqref{eq:df_HW}, we focus on the sub-matrix of the first two rows and the first two columns. We consider the corresponding submatrices of the
three terms in \eqref{eq:df_HW}. Below let $\left[ \cdot \right]_{(1\text{-}2, 1\text{-}2)}$ denote the sub-matrix of the first two rows and the first two columns, and $\left[ \cdot \right]_{(2,2)}$ denote the $(2,2)$th element of a matrix.
The first term in \eqref{eq:df_HW} is 
$$
\bm{\Psi} \bm{H} \bm{\Lambda}^{-1} 
 = 
\begin{pmatrix}
\bm{\Psi}_{11} & \bm{\Psi}_{12}\\
\bm{\Psi}_{21} & \bm{\Psi}_{22}
\end{pmatrix}
\begin{pmatrix}
\bm{H}_{11} & \bm{H}_{12}\\
\bm{H}_{21} & \bm{H}_{22}
\end{pmatrix}
\begin{pmatrix}
\bm{G}_{11}^{-1} & \bm{0}\\
\bm{0} & \bm{G}_{22}^{-1}
\end{pmatrix}
= 
\begin{pmatrix}
\bm{\Psi}_{11} \bm{H}_{11} \bm{G}_{11}^{-1} + \bm{\Psi}_{12}\bm{H}_{21} \bm{G}_{11}^{-1} & *\\
* & *
\end{pmatrix}.
$$
From Lemmas 
\ref{lemma:G22} and \ref{lemma:H},
$$
[\bm{\Psi} \bm{H} \bm{\Lambda}^{-1}]_{(1\text{-}2, 1\text{-}2)}  
= 
\bm{\Psi}_{11} \bm{H}_{11} \bm{G}_{11}^{-1} + \bm{\Psi}_{12}\bm{H}_{21} \bm{G}_{11}^{-1} \\
= 
O_P(n^{-1/2}) O_P(1) O_P(1) + O_P(n^{-1/2}) o_P(n^{1/2}) O_P(1) . 
$$
For the second term in \eqref{eq:df_HW}, 
$[\bm{\Lambda}^{-1} \bm{H} \bm{\Psi}]_{(1\text{-}2, 1\text{-}2)} = [\bm{\Psi} \bm{H} \bm{\Lambda}^{-1}]_{(1\text{-}2, 1\text{-}2)}' = o_P(1)$. 
For the third term in \eqref{eq:df_HW}, 
Lemma \ref{lemma:H} implies 
$\bm{H} = o_P(n)$ and thus $\bm{\Psi} \bm{H} \bm{\Psi} = O_P(n^{-1/2}) o_P(n) O_P(n^{-1/2})  = o_P(1)$. 
Therefore, from \eqref{eq:df_HW}, 
$
[\bm{G}^{-1} \bm{H} \bm{G}^{-1}]_{(1\text{-}2, 1\text{-}2)} - [\bm{\Lambda}^{-1} \bm{H} \bm{\Lambda}^{-1}]_{(1\text{-}2, 1\text{-}2)} = o_P(1),
$
which further implies 
\begin{align*}
\hat{V}_{\text{HW}} - [\bm{\Lambda}^{-1} \bm{H} \bm{\Lambda}^{-1}]_{(2,2)}
= 
[\bm{G}^{-1} \bm{H} \bm{G}^{-1}]_{(2,2)} - [\bm{\Lambda}^{-1} \bm{H} \bm{\Lambda}^{-1}]_{(2,2)} = o_P(1) .
\end{align*}

Third, we consider the difference between $[\bm{\Lambda}^{-1} \bm{H} \bm{\Lambda}^{-1}]_{(2,2)}$ and $\hat{V}_{\tau\tau}(\hat{\bm{\beta}}_1, \hat{\bm{\beta}}_0)$. 
Because
\begin{align*}
[\bm{\Lambda}^{-1} \bm{H} \bm{\Lambda}^{-1}]_{(1\text{-}2, 1\text{-}2)} & = 
\bm{G}_{11}^{-1} \bm{H}_{11} \bm{G}_{11}^{-1} 
= (r_1r_0)^{-2} 
\begin{pmatrix}
r_1 & -r_1\\
-r_1 & 1
\end{pmatrix}
\begin{pmatrix}
\hat{\sigma}^2_{e} & r_1 \hat{\sigma}^2_{e,1}\\
r_1 \hat{\sigma}^2_{e,1} & r_1 \hat{\sigma}^2_{e,1}
\end{pmatrix}
\begin{pmatrix}
r_1 & -r_1\\
-r_1 & 1
\end{pmatrix}\\
& = 
\begin{pmatrix}
r_0^{-1} \hat{\sigma}^2_{e,0} & - r_0^{-1} \hat{\sigma}^2_{e,0}\\
- r_0^{-1} \hat{\sigma}^2_{e,0} & r_1^{-1} \hat{\sigma}^2_{e,1} + r_0^{-1} \hat{\sigma}^2_{e,0}
\end{pmatrix}.
\end{align*}
using Lemma 
\ref{lemma:H}, we have 
\begin{align*}
[\bm{\Lambda}^{-1} \bm{H} \bm{\Lambda}^{-1}]_{(2,2)} & = 
r_1^{-1} \hat{\sigma}^2_{e,1} + r_0^{-1} \hat{\sigma}^2_{e,0}
= 
r_1^{-1}  \bm{s}^2_{Y(1;\hat{\bm{\beta}}_1)} + r_0^{-1}  \bm{s}^2_{Y(0;\hat{\bm{\beta}}_0)} + o_P(1). 
\end{align*}
The property of OLS implies $\bm{s}_{Y(z;\hat{\bm{\beta}}_z),\bm{w}} = \bm{0}$ for $z=0,1$. 
Using the definition in \eqref{eq:V_beta_hat}, we can then simplify $\hat{V}_{\tau\tau}(\hat{\bm{\beta}}_1, \hat{\bm{\beta}}_0)$ as 
$
\hat{V}_{\tau\tau}(\hat{\bm{\beta}}_1, \hat{\bm{\beta}}_0)  =  
r_1^{-1}s^2_{Y(1;\hat{\bm{\beta}}_1)} + r_0^{-1}s^2_{Y(0;\hat{\bm{\beta}}_0)}.
$
Therefore, 
$[\bm{\Lambda}^{-1} \bm{H} \bm{\Lambda}^{-1}]_{(2,2)} = \hat{V}_{\tau\tau}(\hat{\bm{\beta}}_1, \hat{\bm{\beta}}_0) + o_P(1)$.
From the above, we have 
$\hat{V}_{\text{HW}} = [\bm{\Lambda}^{-1} \bm{H} \bm{\Lambda}^{-1}]_{(2,2)} + o_P(1) = \hat{V}_{\tau\tau}(\hat{\bm{\beta}}_1, \hat{\bm{\beta}}_0) + o_P(1).$ 
\end{proof}

\end{document}